\newtheoremstyle{note}%
  {3pt}%      Space above
  {0pt}%      Space below
  {\itshape}%         Body font
  {}%         Indent amount (empty = no indent, \parindent = para indent)
  {\bfseries}% Thm head font
  {:}%        Punctuation after thm head
  {.5em}%     Space after thm head: " " = normal interword space; %       \newline = linebreak
  {}%         Thm head spec (can be left empty, meaning `normal')
\theoremstyle{note}
\newtheorem{theorem}{\textbf{Theorem}}
\newtheorem{example}{\textbf{Example}}
\newtheorem{lemma}{\textbf{Lemma}}
\newtheorem{corollary}{\textbf{Corollary}}
\newtheorem{proposition}{\textbf{Proposition}}
\newtheorem{remark}{Remark}
\newcommand{\transpose}{{\!\scriptscriptstyle\mathrm T}}
\newcommand{\norm}[1]{\lVert#1\rVert}
\newcommand{\Rbb}{\mathbb{R}}
\renewcommand{\l}{\ell}
\newcommand\ip[2]{\langle #1, #2\rangle}
\renewcommand{\L}{{\mathcal{L}}}
\newcommand{\G}{{\mathcal{G}}}
\newcommand{\E}{{\mathcal{E}}}
\newcommand{\V}{{\mathcal{V}}}
\renewcommand{\S}{{\mathcal{S}}}
\begin{document}
\title{Vertex-Frequency Analysis on Graphs} 
\author
{David I Shuman} 
\ead{david.shuman@epfl.ch}
\author
{Benjamin Ricaud} 
\ead{benjamin.ricaud@epfl.ch} 
\author
{Pierre Vandergheynst\fnref{fn1,fn2}} 
\ead{pierre.vandergheynst@epfl.ch}
\fntext[fn1]{This work was supported by FET-Open grant number 255931 UNLocX.} 
\fntext[fn2]{Part of the work reported here was presented at the \emph{IEEE Statistical Signal Processing Workshop, August 2011, Ann Arbor, MI}.} 
\address
{Signal Processing Laboratory (LTS2), Ecole Polytechnique F{\'e}d{\'e}rale de Lausanne (EPFL), Lausanne, Switzerland} 
\begin{abstract}
One of the key challenges in the area of signal processing on graphs is to design dictionaries and transform methods to 
identify and exploit structure in signals on weighted graphs. To do so, we need to account for the intrinsic geometric structure of the underlying graph data domain. In this paper, we generalize one of the most important signal processing tools - windowed Fourier analysis - to the graph setting. Our approach is to first
define generalized convolution, translation, and modulation operators for signals on graphs, and explore related properties such as the localization of translated and modulated graph kernels. We then use these operators to
define a 
 windowed graph Fourier transform, 
 enabling vertex-frequency analysis. When we apply this transform to a signal with 
frequency components that vary along a path graph, the resulting spectrogram matches our intuition from classical discrete-time signal processing.
Yet, our construction is fully generalized and can be applied to analyze signals on any undirected, connected, weighted graph.
\end{abstract}
\begin{keyword}
Signal processing on graphs; time-frequency analysis; generalized translation and modulation; spectral graph theory; localization; clustering
\end{keyword}
\maketitle

\section{Introduction}
\label{sec:intro}
In 
applications such as social networks, electricity networks, transportation networks, and sensor networks, data naturally reside on the vertices of weighted graphs.  
Moreover, weighted graphs are a flexible tool that can be used to describe similarities between data points in statistical learning problems, functional connectivities between different regions of the brain, and the geometric structures of countless other topologically-complex data domains. 

In order to  
reveal relevant structural properties of such data on graphs
and/or  
sparsely represent different classes of signals on graphs, we can construct 
dictionaries of atoms, and represent graph signals as linear combinations of the dictionary atoms. The design of such dictionaries is one of the fundamental problems of signal processing, and the literature is filled with a wide range of dictionaries, including, e.g.,  Fourier, time-frequency, curvelet, shearlet, and bandlet dictionaries 
(see, e.g., \cite{rubinstein_dict_learning} for an excellent historical overview of dictionary design methods and signal transforms).

Of course, the dictionary needs to be tailored to a given class of signals under consideration. Specifically,
as exemplified in \cite[Example 1]{shuman_SPM}, in order to identify and exploit structure in signals on weighted graphs, we need to account for the intrinsic geometric structure of the underlying data domain when designing dictionaries and signal transforms. When we construct dictionaries of features on weighted graphs, it is also desirable to (i) explicitly control how these features change from vertex to vertex, and (ii) ensure that we treat vertices in a homogeneous way (i.e., the resulting dictionaries are invariant to permutations in the vertex labeling). Unfortunately, weighted graphs are irregular structures that lack a shift-invariant notion of translation, a key component in many 
signal processing techniques 
for data on regular Euclidean spaces. Thus, many of the existing 
dictionary design techniques cannot be directly applied to signals on graphs in a meaningful manner, and an important challenge is to design  
new localized transform methods that account for the structure of the data domain. 

Accordingly, a number of new multiscale wavelet transforms for signals on graphs have been introduced recently (see
\cite{shuman_SPM} and references therein for a review of wavelet transforms for signals on graphs). 
Although the field of signal processing on graphs is still young, the hope is that such transforms can be used 
to efficiently extract information from high-dimensional data on graphs (either statistically or visually), as well as to regularize ill-posed inverse problems. 

Windowed Fourier transforms, 
also called short-time Fourier transforms, are another important class of time-frequency analysis tools in classical signal processing. They 
are particularly useful in extracting information from signals with oscillations that are localized in time or space. Such signals appear frequently in applications such as audio and speech processing, vibration analysis, and radar detection. Our aim here is to generalize windowed Fourier analysis to the graph setting. 

Underlying the classical windowed Fourier transform are the translation and modulation operators. While these fundamental operations seem simple in the classical setting, they become 
significantly 
more challenging when we deal with signals on graphs. For example, when we want to translate the blue Mexican hat wavelet on the real line in Figure \ref{Fig:essence}(a) to the right by 5, the result is the dashed red signal. However, it is not immediately clear what it means to translate the blue signal in Figure \ref{Fig:essence}(c) on the weighted graph in Figure \ref{Fig:essence}(b) ``to vertex 1000.'' Modulating a signal on the real line by a complex exponential corresponds to translation in the Fourier domain. 
However, the analogous spectrum in the graph setting is discrete and bounded, and therefore it is difficult to define a modulation in the vertex domain that corresponds to translation in the graph spectral domain. 

In this paper, an extended version of the short workshop proceeding \cite{shuman_SSP_2012}, we define generalized convolution, translation, and modulation operators for signals on graphs, analyze properties of these operators, and then use them to adapt the classical windowed Fourier transform to the graph setting. The result is a method to construct windowed Fourier frames, dictionaries of atoms adapted to the underlying graph structure that 
 enable vertex-frequency analysis, a generalization of time-frequency analysis to the graph setting. After a brief review of the classical windowed Fourier transform in the next section and some spectral graph theory background in Section \ref{Se:notation}, we introduce and study generalized convolution and translation operators in Section \ref{Se:operators} and generalized modulation operators in Section \ref{Se:modulation}. We then define and explore the properties of windowed graph Fourier frames in Section \ref{Se:frame}, where we also present illustrative examples of a graph spectrogram analysis tool and signal-adapted graph clustering. We conclude in Section \ref{Se:conclusion} with some comments on open issues.

\begin{figure}[h]
%\centering
\hfill
\begin{minipage}[b]{.24\linewidth}
   \centering
   \centerline{\includegraphics[width=\linewidth]{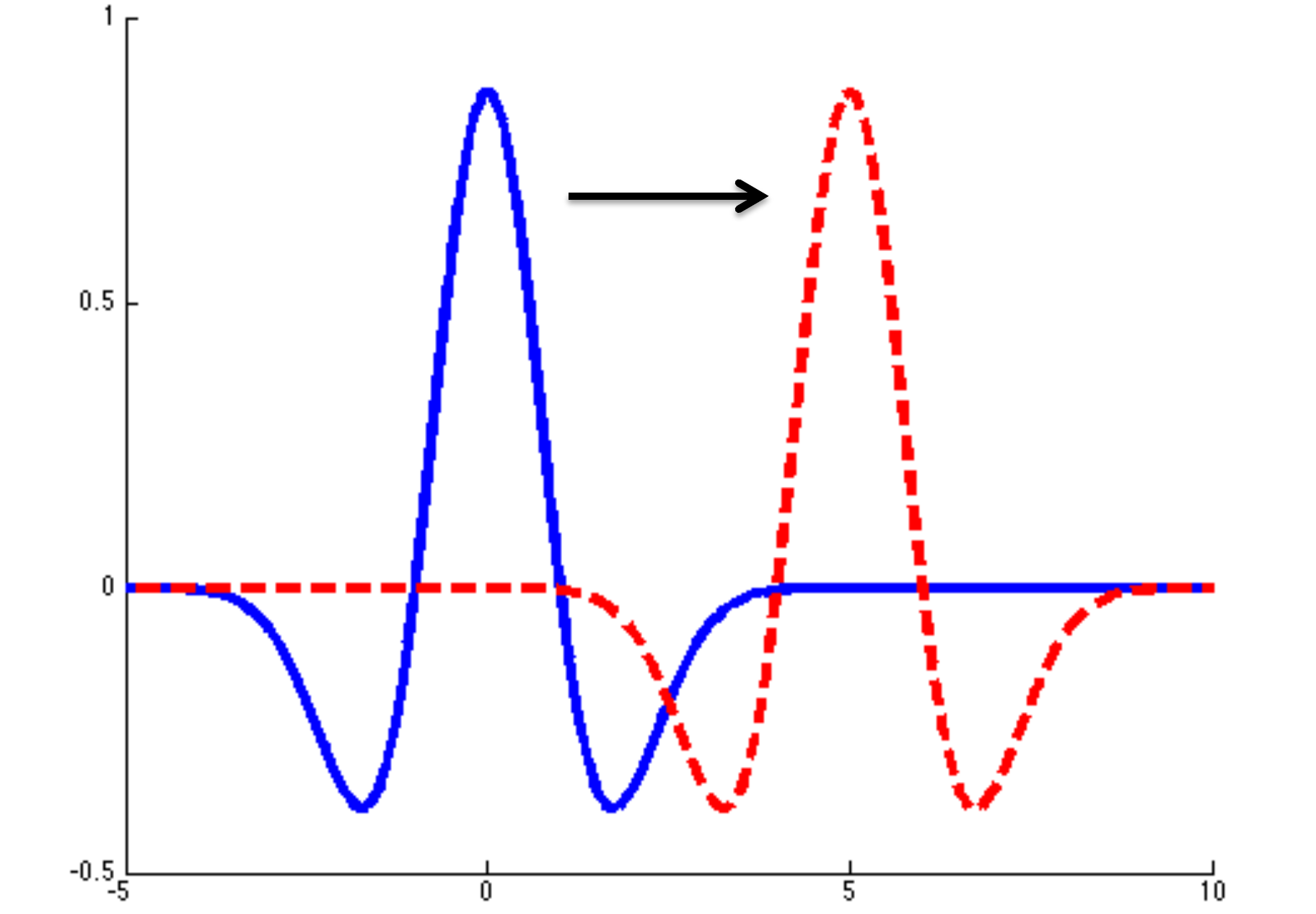}}
\centerline{\small{(a)}}
\end{minipage}
\hfill
\begin{minipage}[b]{.24\linewidth}
   \centering
   \centerline{\includegraphics[width=\linewidth]{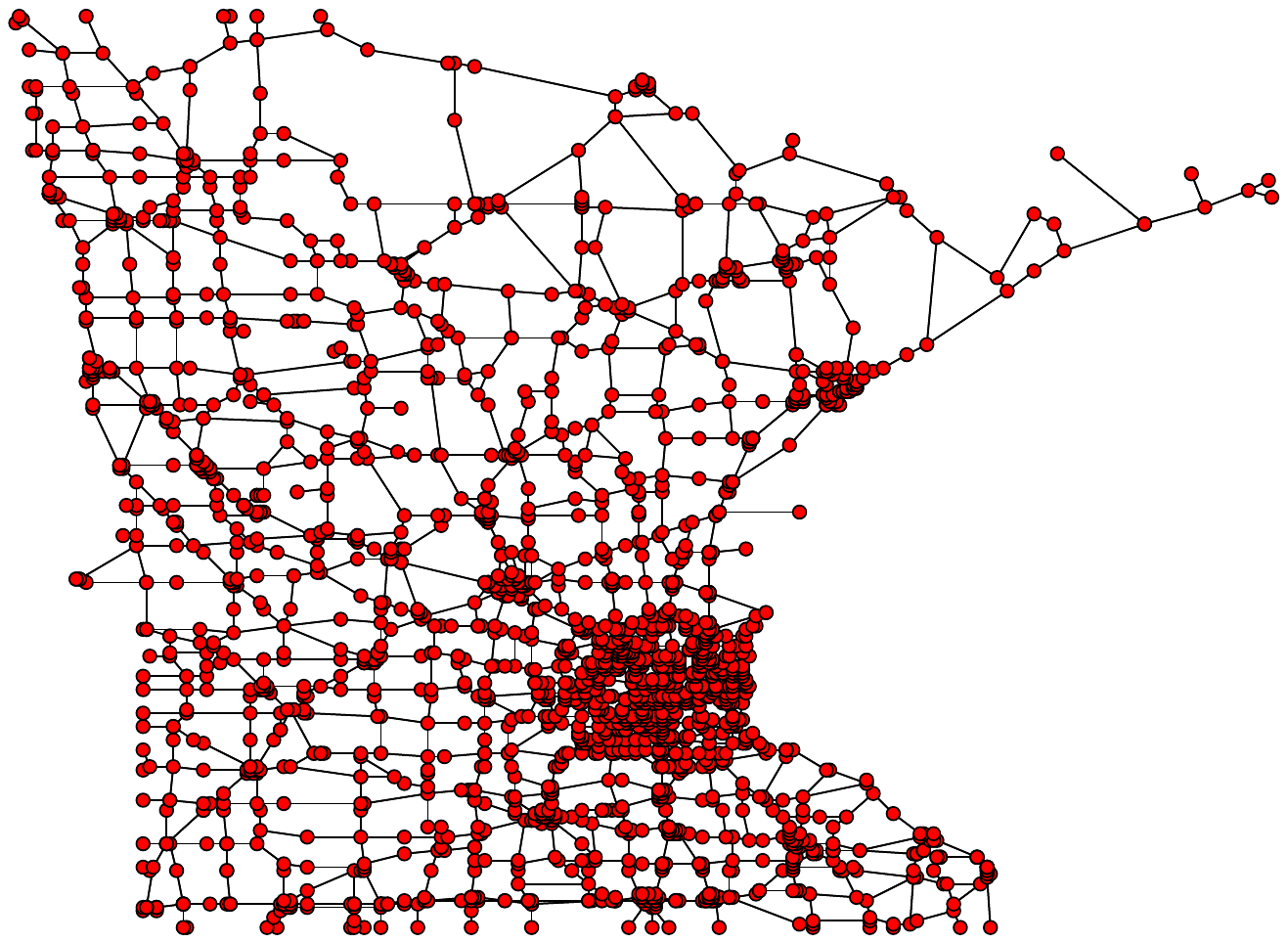}}
\centerline{\small{(b)~~}}
\end{minipage}
\hfill
\begin{minipage}[b]{.3\linewidth}
   \centering
   \centerline{\includegraphics[width=\linewidth]{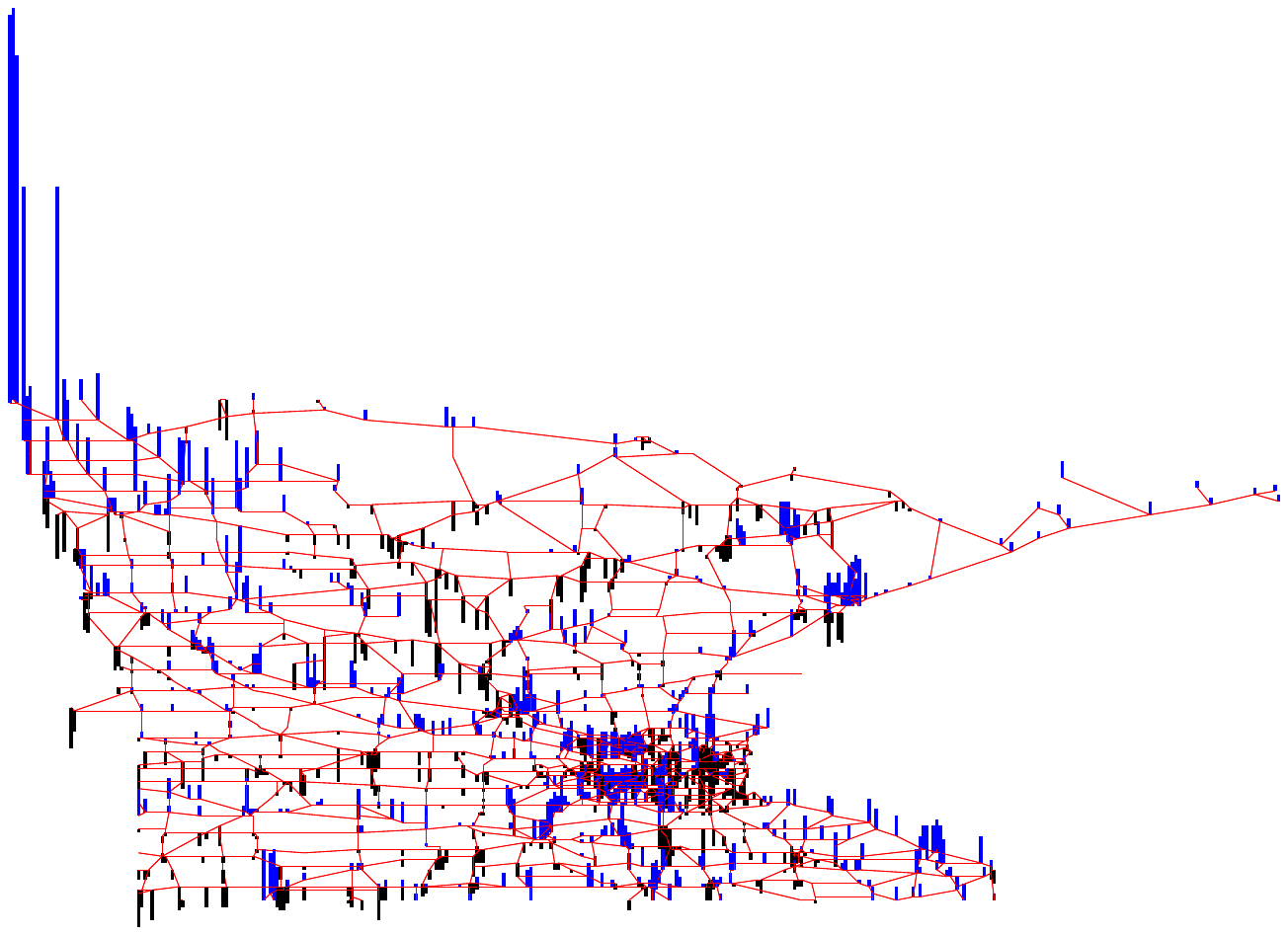}}
\centerline{\small{(c)~~}}
\end{minipage}
\hfill
\caption {(a) Classical translation. (b) The Minnesota road graph \cite{gleich}, whose edge weights are all equal to 1. (c) What does it mean to ``translate'' this signal on the vertices of the Minnesota road graph? The blue and black lines represent the magnitudes of the positive and negative components of the signal, respectively.}
  \label{Fig:essence}
\end{figure}

\section{The Classical Windowed Fourier Transform}
\label{Se:classical}
For any $f \in L^2(\Rbb)$ and $u \in \Rbb$, the translation operator $T_u: L^2(\Rbb) \rightarrow L^2(\Rbb)$ is defined by 
\begin{align}\label{Eq:classical_translation}
\left(T_u f\right)(t) := f(t-u),
\end{align}
and for any $\xi \in \Rbb$, the modulation operator $M_{\xi}: L^2(\Rbb) \rightarrow L^2(\Rbb)$ is defined by 
\begin{align} \label{Eq:classical_modulation}
\left(M_{\xi} f\right)(t) :=e^{2\pi i \xi t}f(t).
\end{align}
Now let $g \in L^2(\Rbb)$ be a window (i.e., a smooth, localized function) with $\norm{g}_2=1$. Then a windowed Fourier atom (see, e.g., \cite{flandrin}, \cite{groechenig}, \cite[Chapter 4.2]{mallat}) is given by
\begin{align}\label{Eq:classical_atom}
g_{u,\xi}(t):=\left(M_{\xi} T_u g\right)(t) = g(t-u)e^{2 \pi i \xi t},
\end{align}
and the windowed Fourier transform (WFT) %(or short-time Fourier transform) 
of a function $f \in L^2(\Rbb)$ is
\begin{align}\label{Eq:classical_STFT}
Sf(u,\xi):=\ip{f}{g_{u,\xi}}=\int_{-\infty}^{\infty}f(t){[g(t-u)]^*}e^{-2 \pi i \xi t} dt.
\end{align}
An example of a windowed Fourier atom is shown in Figure \ref{Fig:classical_atom}.

\begin{figure}[h]
\centering
\hfill
\begin{minipage}[b]{.23\linewidth}
\hspace{.3in}\centerline{{$~~g(t)$}}
\centerline{\includegraphics[width=.85\linewidth]{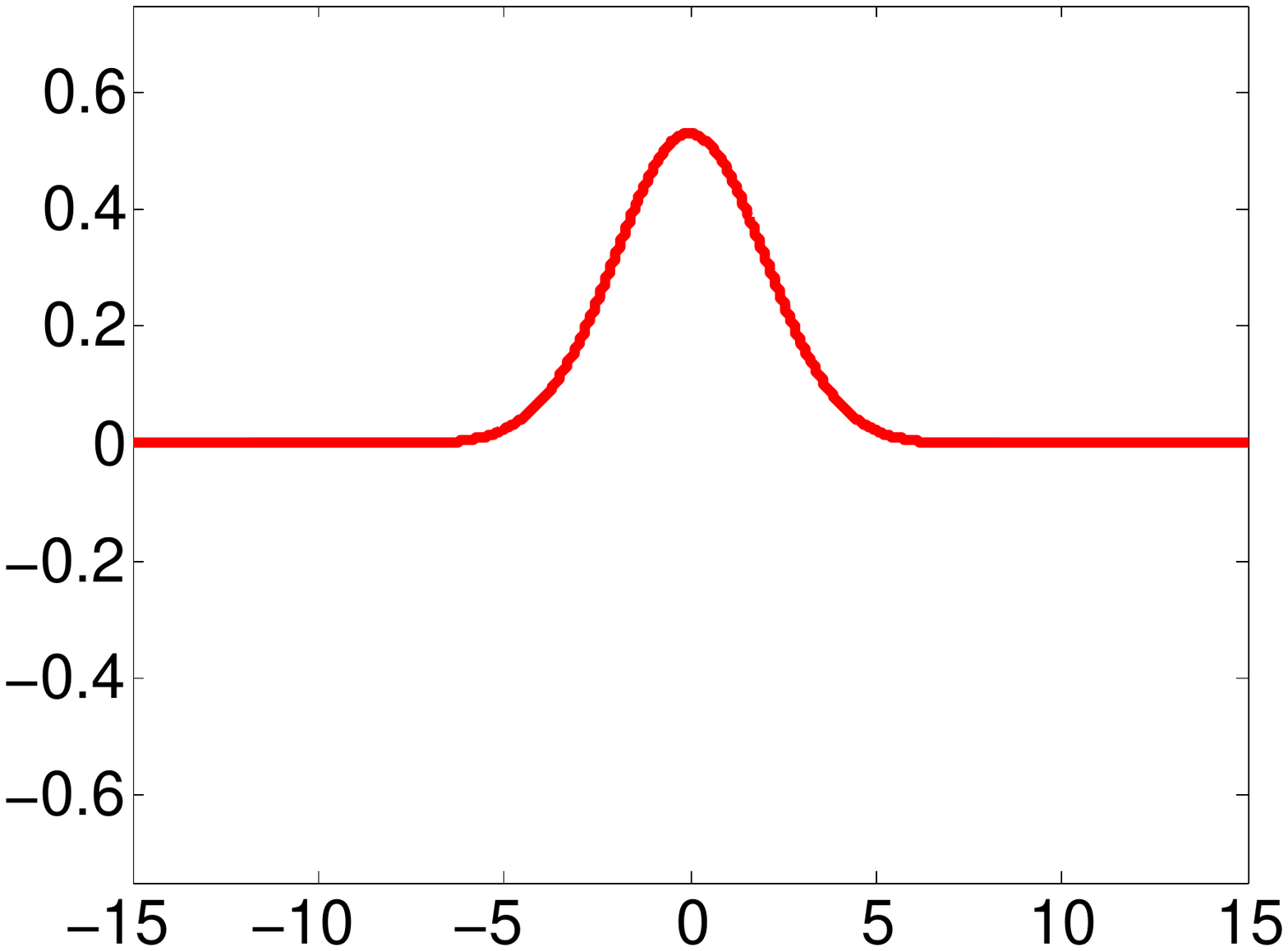}} 
\centerline{\small{~~(a)}}
\end{minipage} 
\hfill
\begin{minipage}[b]{.12\linewidth}
\centerline{{\small{Translation $T_5$}}} 
\centerline{$\Longrightarrow$}
\vspace{.55in}
\end{minipage}
\hfill
\begin{minipage}[b]{.23\linewidth}
\hspace{.3in}\centerline{{$~~(T_5 g)(t)=g(t-5)$}}
\centerline{\includegraphics[width=.85\linewidth]{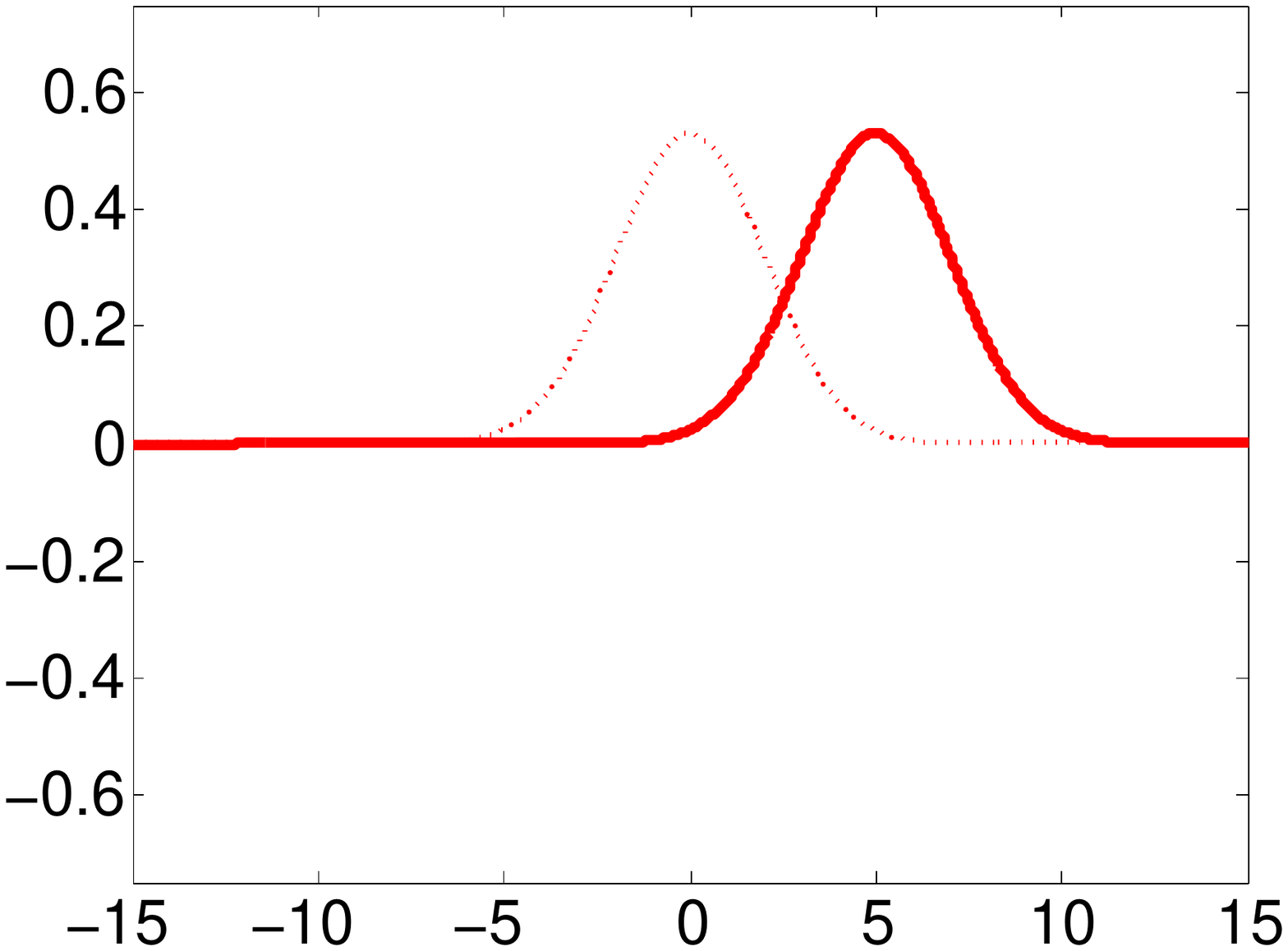}}   
\centerline{\small{~~(b)}}  
\end{minipage}
\hfill
\begin{minipage}[b]{.12\linewidth}
\centerline{{\small{Modulation $M_{\frac{1}{2}}$}}} %\\
\centerline{$\Longrightarrow$}
%\centerline{$\Rightarrow$}
\vspace{.52in}
\end{minipage}
\hfill
\begin{minipage}[b]{.23\linewidth}
\hspace{.3in} \centerline{{$~~g_{5,\frac{1}{2}}(t)=(M_{\frac{1}{2}}T_5 g)(t)=g(t-5)e^{2\pi i \left(\frac{1}{2}\right)t}$}}
\centerline{\includegraphics[width=.85\linewidth]{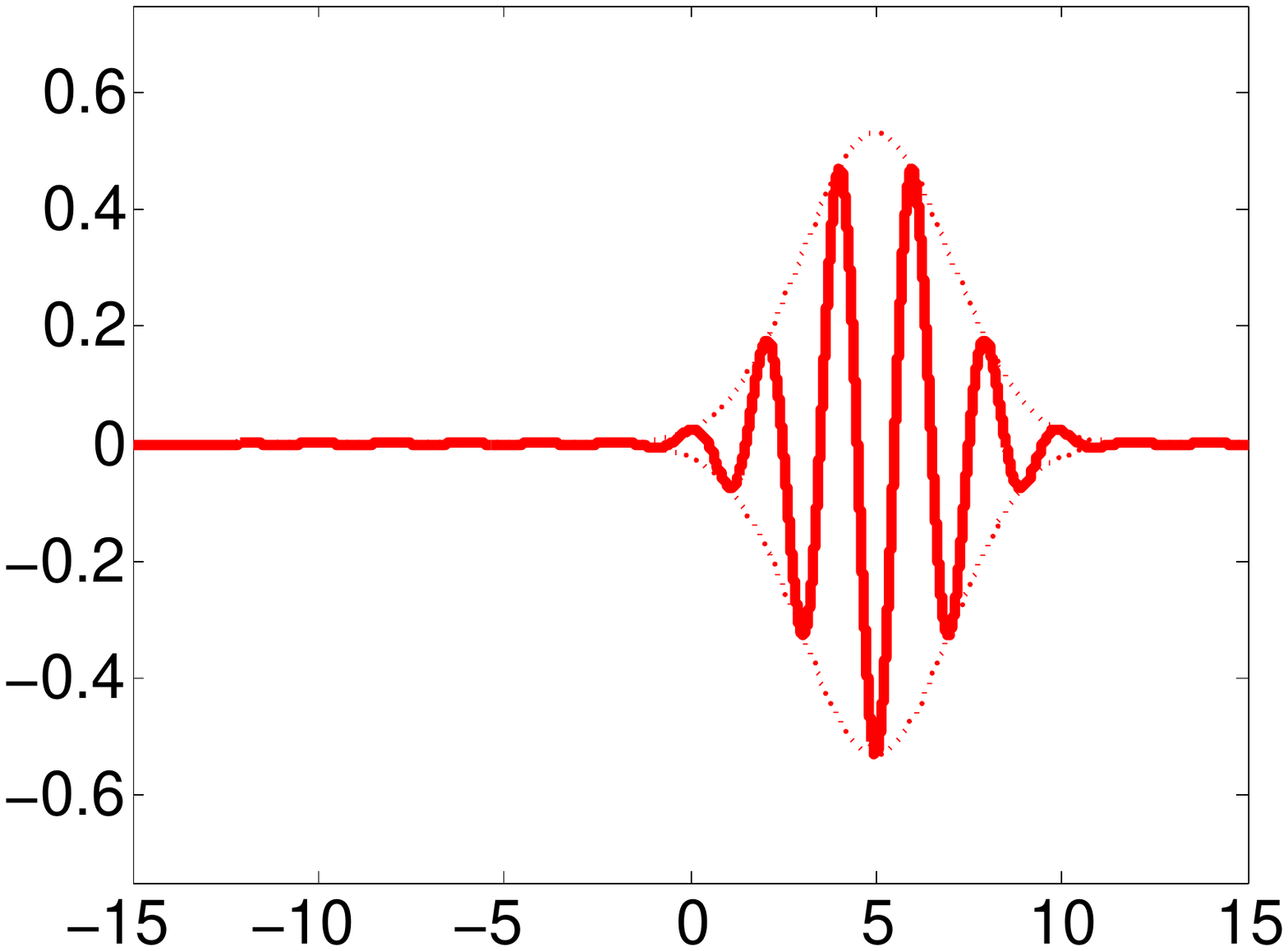}} 
\centerline{\small{~~(c)}}
\end{minipage} 
\hspace{.2in}\hfill
\caption {A classical windowed Fourier atom. In this example, the window is a Gaussian with standard deviation equal to 2 and scaled so that $\norm{g}_2=1$. The real part of the atom $g_{5,\frac{1}{2}}$ is shown in (c).} 
 \label{Fig:classical_atom}
\end{figure}

A second, perhaps more intuitive, way to interpret $Sf(u,\xi)$ is as the Fourier transform of $f (T_u g)^*$, evaluated at frequency $\xi$. That is, we multiply the signal $f$ by (the complex conjugate of) a translated window $T_u g$ in order to localize the signal to a specific area of interest in time, and then perform Fourier analysis on this localized, windowed signal. This interpretation is illustrated in Figure \ref{Fig:classical_sliding}. 

\begin{figure}[h]
\centering
\hfill
\begin{minipage}[b]{.23\linewidth}
\hspace{.3in}\centerline{{$~~f(t)g(t+6)$}}
\centerline{\includegraphics[width=.9\linewidth]{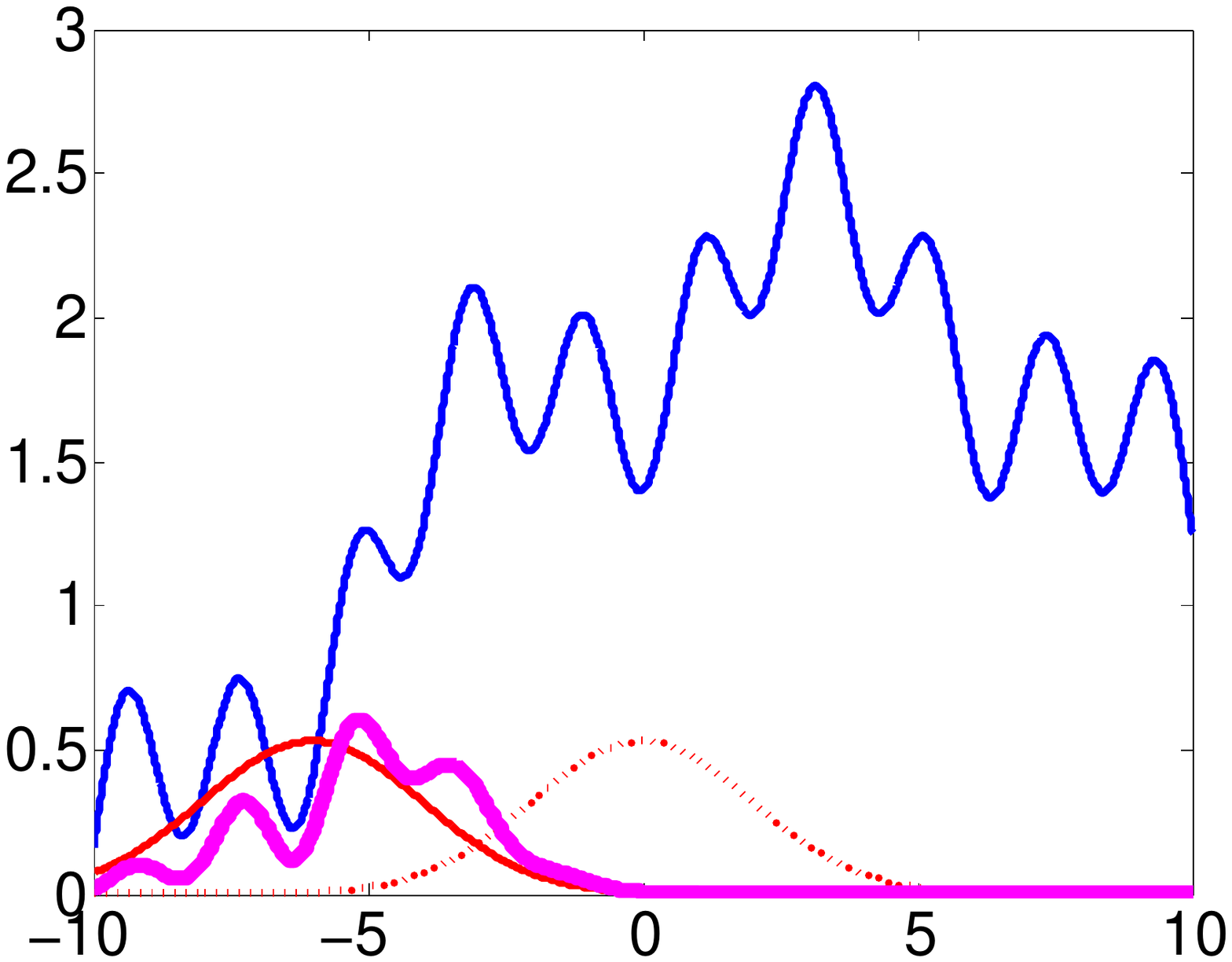}} 
\centerline{\small{~~(a)}}
\end{minipage} 
\hfill
\begin{minipage}[b]{.23\linewidth}
\hspace{.3in}\centerline{{$~~f(t)g(t+2)$}}
\centerline{\includegraphics[width=.9\linewidth]{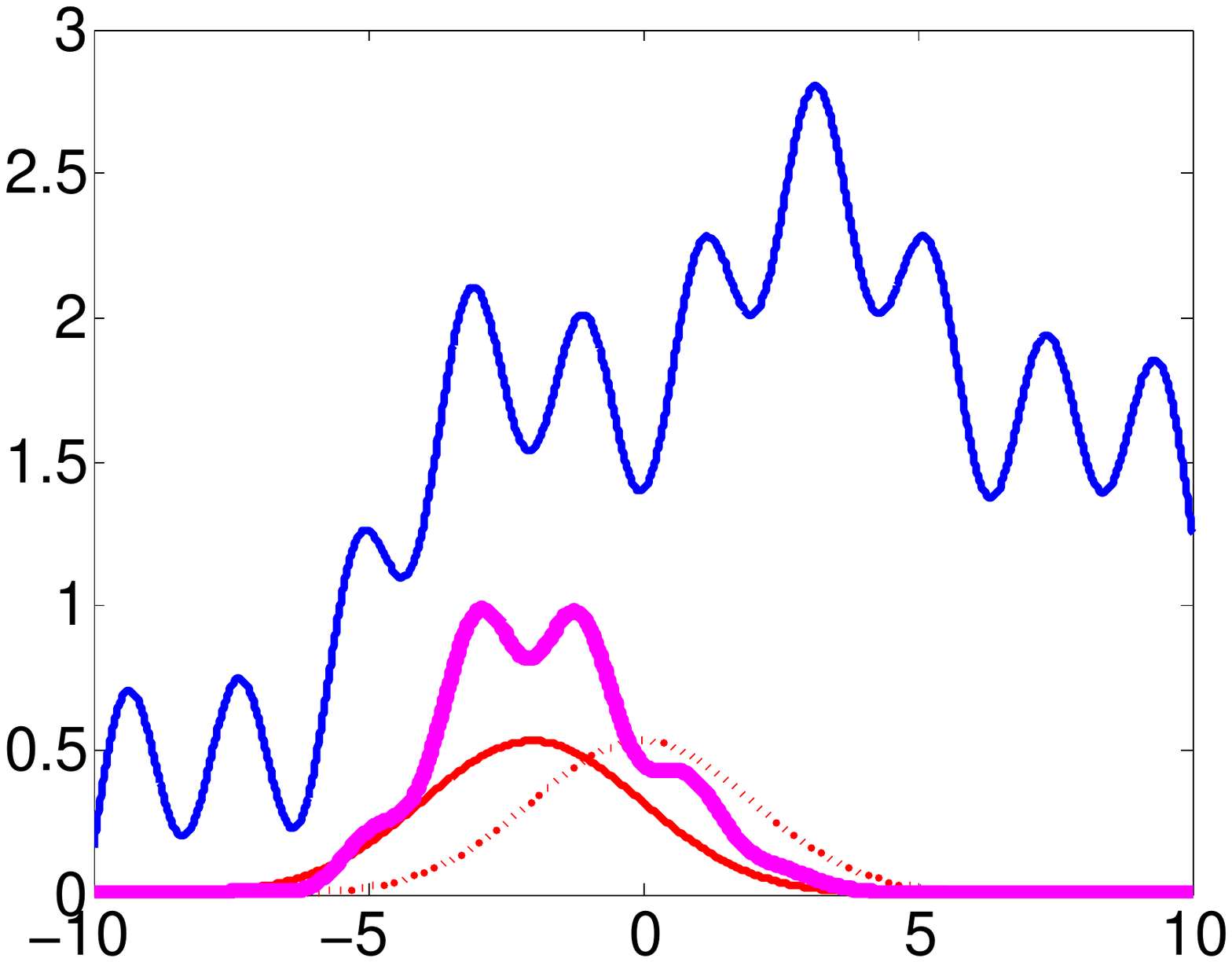}}   
\centerline{\small{~~(b)}}  
\end{minipage}
\hfill
\begin{minipage}[b]{.23\linewidth}
\hspace{.3in} \centerline{{$~~f(t)g(t-2)$}}
\centerline{\includegraphics[width=.9\linewidth]{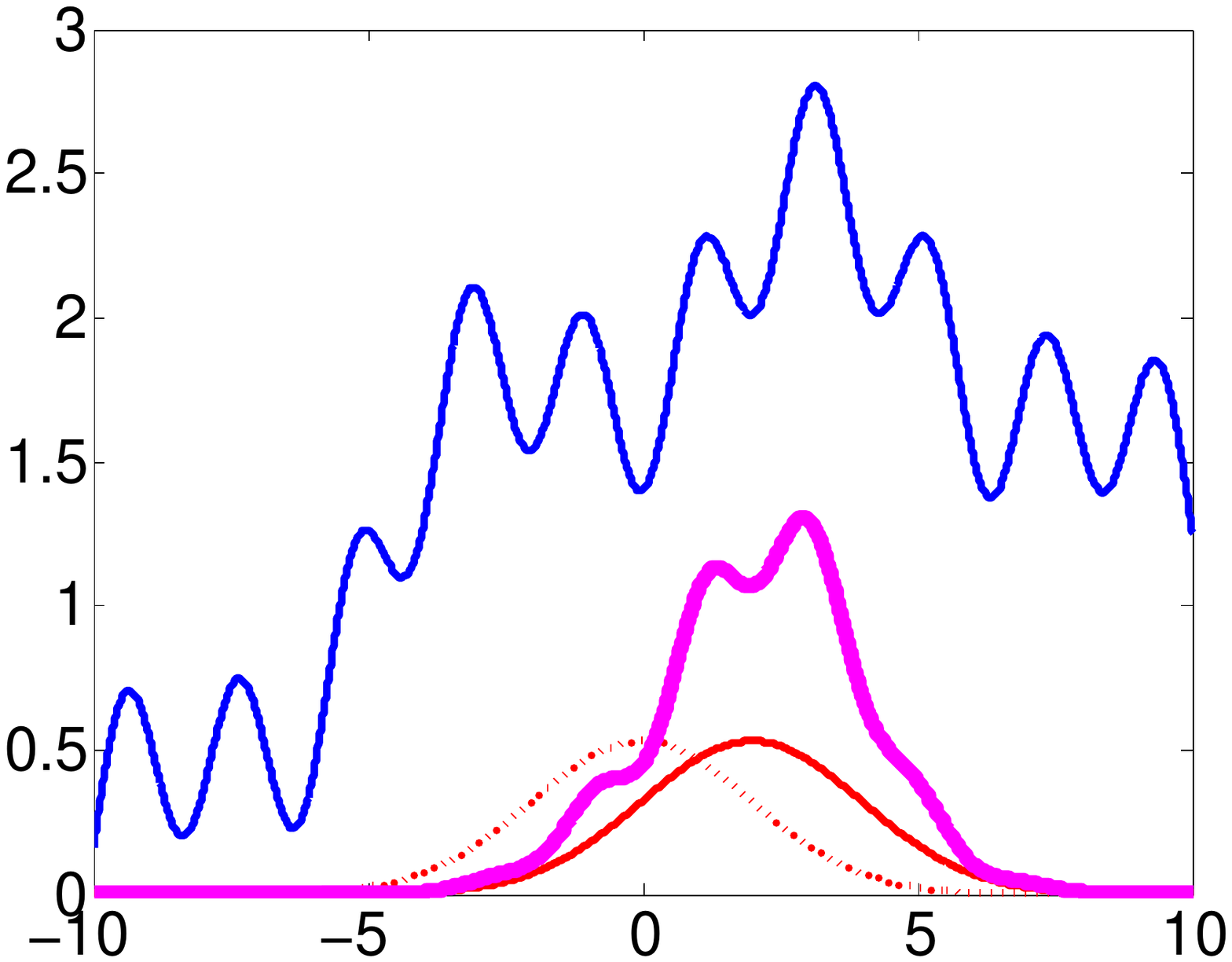}} 
\centerline{\small{~~(c)}}
\end{minipage} 
\hfill
\begin{minipage}[b]{.23\linewidth}
\hspace{.3in} \centerline{{$~~f(t)g(t-6)$}}
\centerline{\includegraphics[width=.9\linewidth]{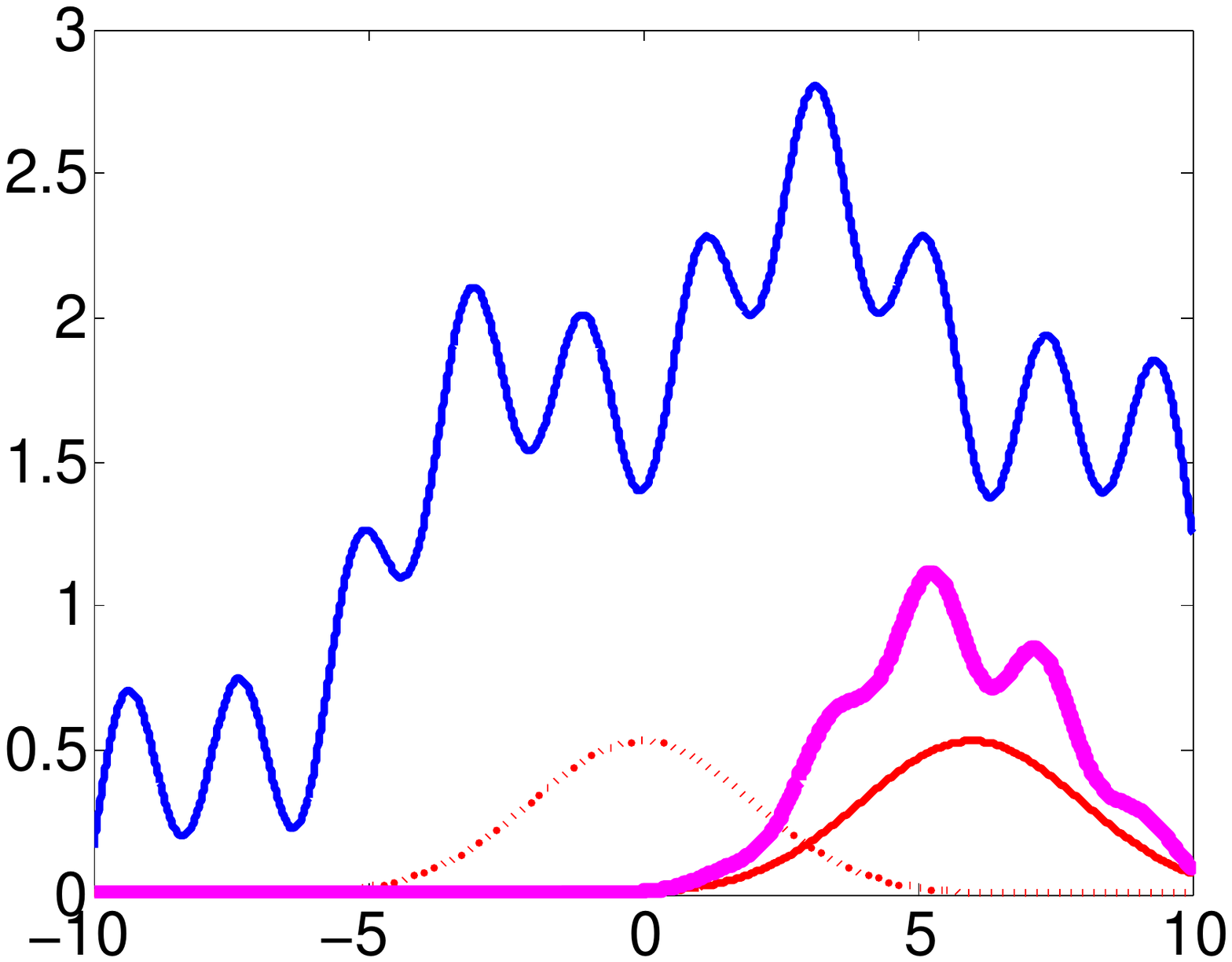}} 
\centerline{\small{~~(d)}}
\end{minipage} 
\hspace{.2in}\hfill
\caption {Second interpretation of the windowed Fourier transform: multiply the signal $f$ (shown in blue) by a sliding window (shown in solid red) to obtain a windowed signal (shown in magenta); then take Fourier transforms of the windowed signals.} 
 \label{Fig:classical_sliding}
\end{figure}

As mentioned in Section \ref{sec:intro}, our plan for the rest of the paper is to generalize the translation and modulation operators to the graph setting, and then mimic the classical windowed Fourier transform construction of \eqref{Eq:classical_atom} and \eqref{Eq:classical_STFT}.

\section{Spectral Graph Theory Notation and Background}
\label{Se:notation}
We consider undirected, connected, weighted graphs $\G = \{\V,\E,W\}$, where  $\V$ is a finite set of vertices $\V$ with $|\V|=N$, $\E$ is a set of edges, and $W$ is a weighted adjacency matrix 
%whose entry ${W}_{m,n}$ is equal to the weight of the edge connecting $m$ and $n$ if they are connected or to 0 otherwise 
(see, e.g., \cite{chung} for all definitions in this section). 
%a weight function $w:\E\to\mathbb{R}^+$ that assigns a
%non-negative weight to each edge. 
A signal $f: \V \rightarrow \Rbb$ defined on the vertices of the graph may be represented as a vector ${f} \in \Rbb^N$, where the $n^{th}$ component of the vector ${f}$ represents the signal value at the $n^{th}$ vertex in $\V$. The non-normalized graph Laplacian is defined as $\L:={D}-{W}$, where $D$ is the diagonal degree matrix. We denote by $d$ the vector of degrees (i.e., the diagonal elements of $D$), so that $d_n=\sum_{m \neq n} W_{mn}$ is the degree of vertex $n$. Then $d_{\min}:=\min_n \{d_n\}$ and $d_{\max}:=\max_n \{d_n\}$. 

As the graph Laplacian $\L$ is a real symmetric
matrix, it has a complete set of %real, 
orthonormal eigenvectors, which we denote %. We denote
%these 
by $\left\{{\chi}_{\l}\right\}_{\l=0,1,\ldots,N-1}$. 
Without loss of generality, we assume that the associated real, non-negative Laplacian eigenvalues 
are ordered as 
$0=\lambda_0 < \lambda_1 \leq \lambda_2 ... \leq \lambda_{N-1}:=\lambda_{\max}$, and we denote the graph Laplacian spectrum by $\sigma(\L):=\{\lambda_0, \lambda_1,\ldots,\lambda_{N-1}\}$.
%\end{equation*}

\subsection{The Graph Fourier Transform and the Graph Spectral Domain}
%Just as t
The classical Fourier transform is the expansion of a function $f$ in terms of the eigenfunctions of the Laplace operator, i.e., 
%\begin{equation*}
$\hat{f}(\xi)= \ip{f}{e^{2 \pi i \xi t}}$. %, % = \int\limits_{\Rbb}f(x)  e^{-i\omega x}~dx,$
%\end{equation*}
Analogously, the \emph{graph Fourier transform} $\hat{{f}}$ of a function ${f}\in\mathbb{R}^{N}$ on the vertices of $\G$
is the expansion of ${f}$ in terms of the eigenfunctions of the graph Laplacian. It is defined by
\begin{align}\label{Eq:graph_FT}
\hat{f}(\lambda_{\l}) := \ip{f}{\chi_{\l}} = \sum_{n=1}^N f(n) \chi^*_{\l}(n), %\chi^*_{\l}(n) f(n),
\end{align}
where we adopt the convention that the inner product be
conjugate-linear in the second argument. 
The \emph{inverse graph Fourier transform} is then given by
\begin{align}\label{Eq:graph_IFT}
f(n) = \sum_{\l=0}^{N-1} \hat{f}(\lambda_{\l}) \chi_{\l}(n).
\end{align}
With this definition of the graph Fourier transform, the Parseval relation holds; i.e., for any $f,g, \in \Rbb^N$,
\begin{align*}
\ip{f}{g}=\ip{\hat{f}}{\hat{g}},
\end{align*}
and thus
\begin{align*}
\sum_{n=1}^N |f(n)|^2=\norm{f}_2^2=\ip{f}{f}=\ip{\hat{f}}{\hat{f}}=\norm{\hat{f}}_2^2=\sum_{\l=0}^{N-1}|\hat{f}(\lambda_{\l})|^2.
\end{align*}

Note that the definitions of the graph Fourier transform and its inverse in \eqref{Eq:graph_FT} and \eqref{Eq:graph_IFT}  depend on the choice of graph Laplacian eigenvectors, which is not necessarily unique. Throughout this paper, we do not specify how to choose these eigenvectors, but assume they are fixed. The ideal choice of the eigenvectors in order to optimize the theoretical analysis conducted here and elsewhere remains an interesting open question; however, in most applications with extremely large graphs, the explicit computation of a full eigendecomposition is not practical anyhow, and methods that only utilize the graph Laplacian through sparse matrix-vector multiplication are preferred. We discuss these computational issues further in Section \ref{Se:comp}.

It is also possible to use other bases to define the forward and inverse graph Fourier transforms. The eigenvectors of the normalized graph Laplacian $\tilde{\L}:=D^{-\frac{1}{2}}\L D^{-\frac{1}{2}}$ comprise one such basis that is prominent in the graph signal processing literature. While we use the non-normalized graph Laplacian eigenvectors as the Fourier basis throughout this paper, the normalized graph Laplacian eigenvectors can also be used to define generalized translation and modulation operators, and we comment briefly on the resulting differences in the Appendix.

%Throughout the paper, therefore, w
We consider signals' representations in both the vertex domain (analogous to the time/space domains in classical Euclidean settings) and the graph spectral domain (analogous to the frequency domain in classical settings). As an example, in Figure \ref{Fig:two_domains}, we show these two different representations of the signal from Figure \ref{Fig:essence}(c). 
In this case, we actually generate the signal in the graph spectral domain, starting with a continuous \emph{kernel}, $\hat{f}: [0,\lambda_{\max}] \rightarrow \Rbb$, given by $\hat{f}(\lambda_{\l}):=C e^{-\tau \lambda_{\l}}$, where $\tau=5$. We then form the discrete signal $\hat{f}$ by evaluating the continuous kernel $\hat{f}(\cdot)$ at each of the graph Laplacian eigenvalues in $\sigma(\L)$. The constant $C$ is chosen so that $\norm{f}_2=1$, and the kernel $\hat{f}(\cdot)$ is referred to as a normalized heat kernel (see, e.g., \cite[Chapter 10]{chung}). %We then 
Finally, we generate the signal shown in both Figure  
\ref{Fig:essence}(c) and \ref{Fig:two_domains}(a) by taking the inverse graph Fourier transform \eqref{Eq:graph_IFT} of $\hat{f}$.
\begin{figure}[h]
%\centering
\hfill
\begin{minipage}[b]{.4\linewidth}
   \centering
   \centerline{\includegraphics[width=\linewidth]{fig_minn_signal_3}} 
   \vspace{.18in}
\centerline{\small{(a)~~~~~~~}}
\end{minipage}
\hfill
\hspace{.4in}
\begin{minipage}[b]{.3\linewidth}
   \centering
   \centerline{\includegraphics[width=\linewidth]{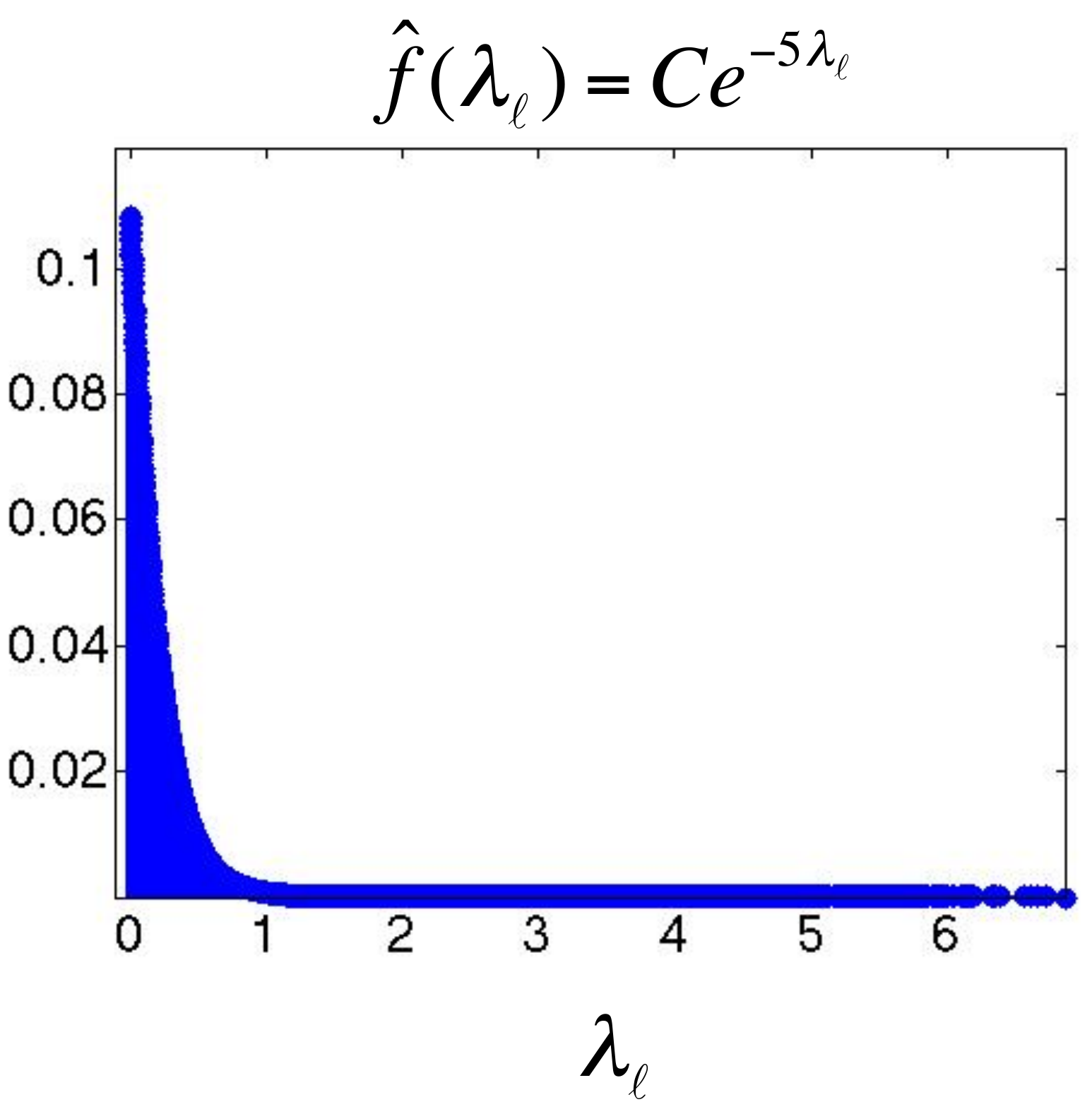}}
\centerline{\small{~~~~~~(b)}}
\end{minipage}
\hspace{.5in}
\hfill
\caption {A signal $f$ represented in two domains. (a) The vertex domain. (b) The graph spectral domain.}
  \label{Fig:two_domains}
\end{figure}

Some intuition about the graph spectrum can also be carried over from the classical setting to the graph setting. In the classical setting, the Laplacian eigenfunctions (complex exponentials) associated with lower eigenvalues (frequencies) are relatively smooth, whereas those associated with higher eigenvalues oscillate more rapidly. The graph Laplacian eigenvalues and associated eigenvectors satisfy
\begin{align*}
\lambda_{\l}=\chi_{\l}^{\transpose} \L \chi_{\l} = \sum_{(m,n)\in \E} W_{mn} [\chi_{\l}(m)-\chi_{\l}(n)]^2.
\end{align*}
Therefore, since each term in the summation of the right-hand side is non-negative, the eigenvectors associated with smaller eigenvalues are smoother; i.e., the component differences between neighboring vertices are small (see, e.g., \cite[Figure 2]{shuman_SPM}). As the eigenvalue or ``frequency'' increases, larger differences in neighboring components of the graph Laplacian eigenvectors may be present (see \cite{ shuman_SPM,zhu_rabbat1} for further discussions of notions of frequency for the graph Laplacian eigenvalues). This well-known property has been extensively utilized in a wide range of problems, including spectral clustering \cite{spectral_clustering}, machine learning \cite[Section III]{chapelle}, and ill-posed inverse problems in image processing \cite{elmoataz}.

\subsection{Localization of Graph Laplacian Eigenvectors and Coherence}
There have recently been a number of interesting research results concerning the localization properties of graph Laplacian eigenvectors. 
For different classes of random graphs, \cite{dekel, dumitriu, tran} show that with high probability for graphs of sufficiently large size, the eigenvectors of the graph Laplacian (or in some cases, the graph adjacency operator), are delocalized; i.e., the restriction of the eigenvector to a large set must have substantial energy, or in even stronger statements, the element of the matrix $\boldsymbol{\chi}:=\left[\chi_0,\chi_1,\ldots,\chi_{N-1}\right]$ with the largest absolute value is small. We refer to this latter value as the \emph{mutual coherence} (or simply \emph{coherence}) between the basis of Kronecker deltas on the graph and the basis of graph Laplacian eigenvectors:
%maximum component of any eigenvector is small. 
\begin{align}\label{Eq:mu_def}
\mu:=\max_{\substack{\l \in \{0,1,\ldots,N-1\} \\ i \in \{1,2,\ldots,N\}}} |\ip{\chi_{\l}}{\delta_i}| \in \left[\frac{1}{\sqrt{N}},1\right],
\end{align}
where
\begin{align*}
\delta_i(n)=
\begin{cases}
1,&\hbox{ if }i=n \\
0,&\hbox{ otherwise }
\end{cases}.
\end{align*}
%The coherence $\mu$ is the element of the matrix $\boldsymbol{\chi}:=\left[\chi_0,\chi_1,\ldots,\chi_{N-1}\right]$ with the largest absolute value. 
While the previously mentioned non-localization results rely on estimates from random matrix theory, Brooks and Lindenstrauss \cite{brooks} also show that for sufficiently large, unweighted, non-random, regular graphs that do not have too many short cycles through the same vertex, in order for $\sum_{i\in \S} \left|\chi_{\l}(i)\right|^2 > \epsilon$ for any $\l$, the subset $\S \subset \V$ must satisfy $|\S|\geq N^{\delta}$, where the constant $\delta$ depends on both $\epsilon$ and structural restrictions placed on the graph. 

These non-localization results are consistent with the intuition one might gain from considering the eigenvectors of the Laplacian for the unweighted path  
and ring graphs shown in Figure \ref{Fig:path_ring}. The eigenvalues of the graph Laplacian of the unweighted path graph with $N$ vertices are given by
\begin{align*}
\lambda_{\l}=2-2 \cos\left(\frac{\pi \l}{N}\right),~\forall \l \in \{0,1,\ldots,N-1\},
\end{align*} 
and one possible choice of associated orthonormal eigenvectors is
\begin{align}\label{Eq:DCT_eigs}
\chi_0(n)&=\frac{1}{\sqrt{N}},~\forall n \in \{1,2,\ldots,N\},\hbox{ and } \nonumber \\
\chi_{\l}(n)&=\sqrt{\frac{2}{N}}\cos\left(\frac{\pi \l (n-0.5)}{N}\right) \hbox{ for } \l=1,2,\ldots,N-1.
\end{align}
%{\color{red} Be careful of ordering, these are unordered, eigenvectors in the figure are probably reordered}
These graph Laplacian eigenvectors, which are shown in Figure \ref{Fig:path_ring}(b), 
%for the path graph with $N$ vertices, which 
are the basis vectors 
in the Discrete Cosine Transform (DCT-II) transform \cite{dct} used in JPEG image compression. Like the continuous complex exponentials, %the graph Laplacian eigenvectors 
they are non-localized, globally oscillating functions. 
The \emph{unordered} eigenvalues of the graph Laplacian of the unweighted ring graph with $N$ vertices are given by (see, e.g., \cite[Chapter 3]{gray},
\cite[Proposition 1]{olfati_ramanujan})
\begin{align*}
\bar{\lambda}_{\l}=2-2 \cos\left(\frac{2\pi \l}{N}\right),~\forall \l \in \{0,1,\ldots,N-1\},
\end{align*} 
and one possible choice of associated orthonormal eigenvectors is
\begin{align}\label{Eq:ring_eigenvectors}
\chi_{\l}=\frac{1}{\sqrt{N}}\left[1,\omega^{\l},\omega^{2\l},\ldots,\omega^{(N-1)\l}\right]^{\transpose},\hbox{ where }\omega=e^{\frac{2\pi j}{N}}.
\end{align}
These eigenvectors correspond to the columns of the Discrete Fourier Transform (DFT) matrix.
With this choice of %graph Laplacian 
eigenvectors, the coherence of the unweighted %path and 
ring graph with $N$ vertices %are $\frac{\sqrt{2}}{\sqrt{N}}$ and 
is $\frac{1}{\sqrt{N}}$, 
%respectively. 
the smallest possible coherence of any graph. In this case, the basis of DFT columns and the basis of Kronecker deltas on vertices of the graph are said to be \emph{mutually unbiased bases}.

\begin{figure}[h]
%\centering
\hfill
\begin{minipage}[b]{.23\linewidth}
   \centering
   \centerline{\includegraphics[width=\linewidth]{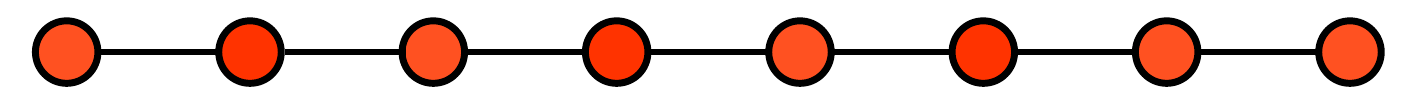}}
    \vspace{.6in}
\centerline{\small{(a)}}
\end{minipage}
\hfill
\begin{minipage}[b]{.4\linewidth}
   \centering
   \centerline{\includegraphics[width=\linewidth]{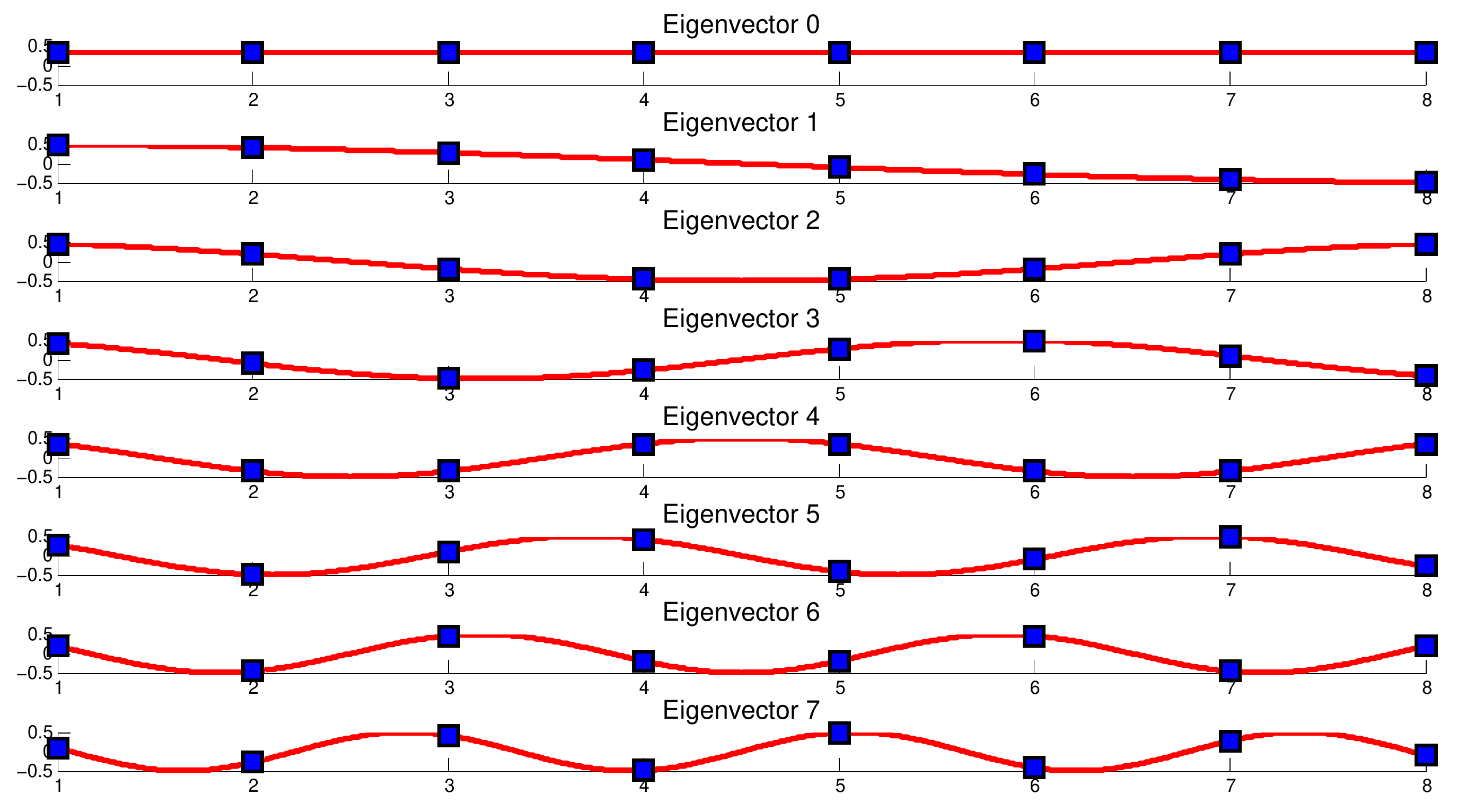}} 
\centerline{\small{(b)}}
\end{minipage}
\hfill
\begin{minipage}[b]{.23\linewidth}
   \centering
   \centerline{\includegraphics[width=\linewidth]{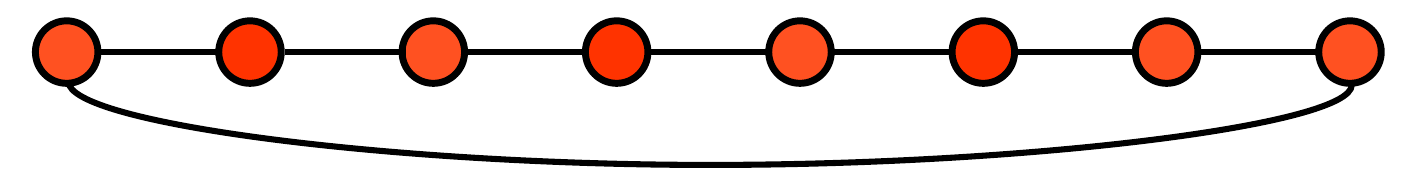}} 
   \vspace{.5in}
\centerline{\small{(c)}}
\end{minipage} \hfill %\\
\caption {(a) The path graph. (b) One possible choice of the graph Laplacian eigenvectors of the path graph is the discrete cosines. (c) The ring graph.}  
\label{Fig:path_ring}
\end{figure}

However, empirical studies such as \cite{mcgraw} show that certain graph Laplacian eigenvectors may in fact be highly localized, especially when the graph features one or more vertices whose degrees are significantly higher or lower than the average degree, or when the graph features a high degree of clustering (many triangles in the graph). Moreover, Saito and Woei \cite{saito} identify a class of starlike trees with highly-localized eigenvectors, some of which are even close to a delta on the vertices of the graph. The following example shows two less structured graphs that also have high coherence.
\begin{example} \label{Ex:high_coherence}
In Figure \ref{Fig:coherence}, we show two weighted, undirected graphs with coherences of 0.96 and 0.94. The sensor network is generated by randomly placing 500 nodes in the $[0,1] \times [0,1]$ plane. The Swiss roll graph is generated by randomly sampling 1000 points from the two-dimensional Swiss roll manifold embedded in $\Rbb^3$. In both cases, the weights are assigned with a thresholded Gaussian kernel weighting function based on the Euclidean distance between nodes:
\begin{align}\label{Eq:gkw}
W_{ij}=
\begin{cases}
\exp\left({-\frac{[d(i,j)]^2}{2\sigma_1^2}}\right) &\mbox{if } d(i,j) \leq \sigma_2 \\
0 &\mbox{otherwise}
\end{cases}.
\end{align}
For the sensor network, $\sigma_1=0.074$ and $\sigma_2 = 0.075$. For the Swiss roll graph, $\sigma_1=0.100$ and $\sigma_2= 0.215$. The coherences are based on the orthonormal Laplacian eigenvectors computed by \verb=MATLAB='s \verb=svd= function.  
%for some parameters $\sigma$ and $\sigma_2$.
\end{example}
\begin{figure}[h]
\hfill
\begin{minipage}[b]{.32\linewidth}
   \centering
   \centerline{\includegraphics[width=\linewidth]{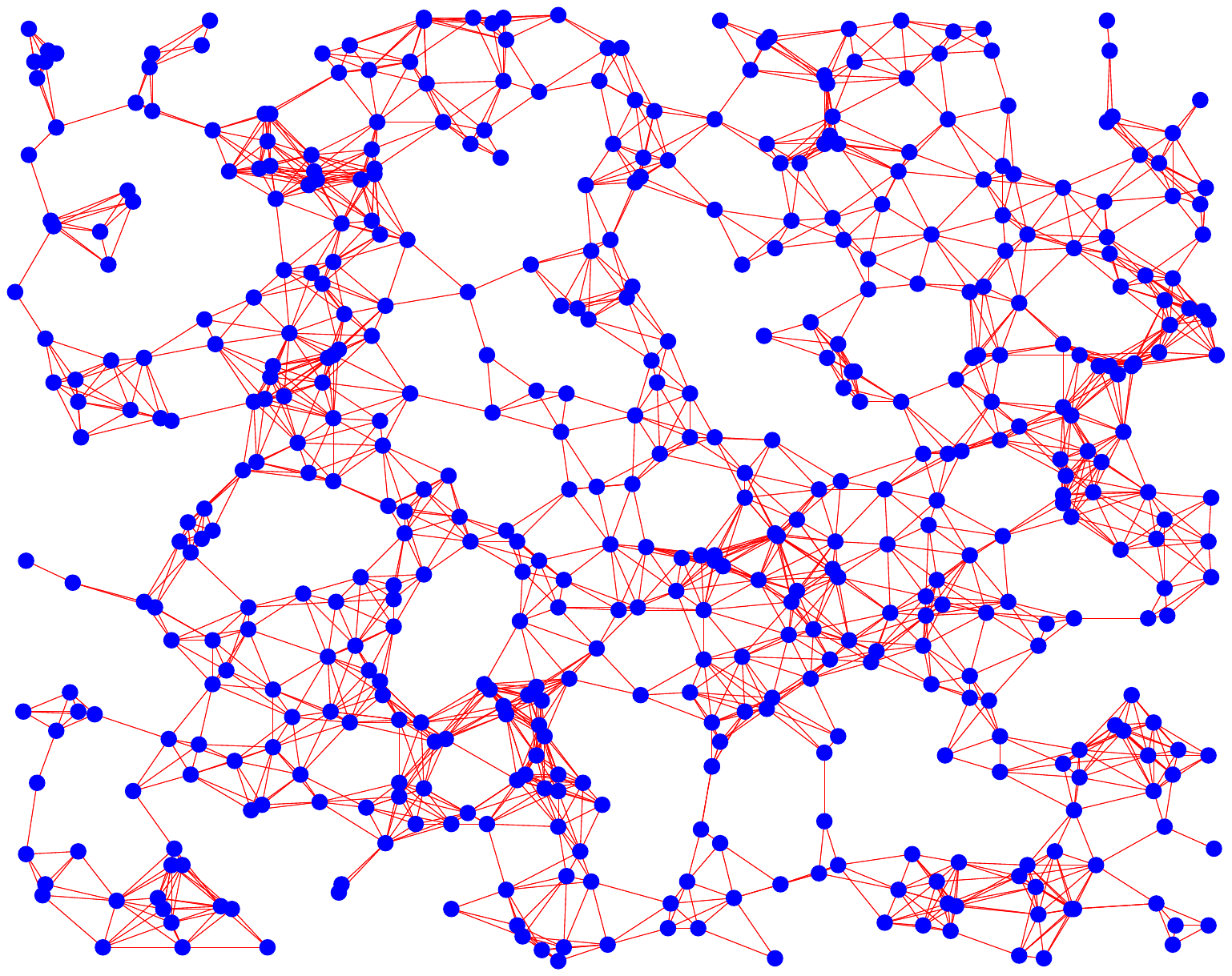}} 
\centerline{\small{(a)}}
\end{minipage}
\hfill
\begin{minipage}[b]{.36\linewidth}
   \centering
   \centerline{\includegraphics[width=\linewidth]{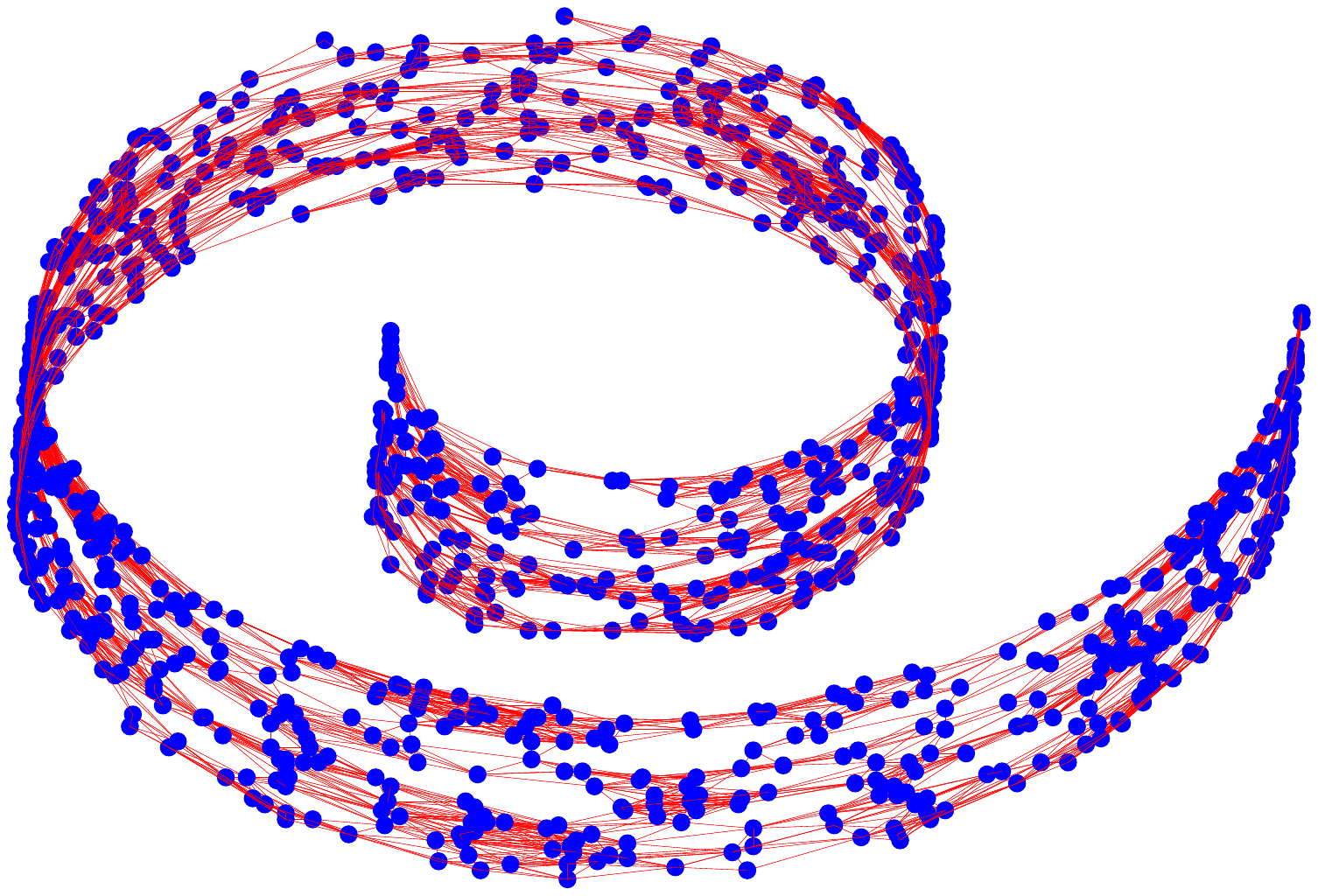}} 
\centerline{\small{(b)}}
\end{minipage} \hfill \hfill
\caption {Two graphs with high coherence. (a) The coherence of this random sensor network is 0.96. (b) The coherence of this graph on a cloud of 1000 points randomly sampled on the Swiss roll manifold is 0.94.} 
  \label{Fig:coherence}
\end{figure}
The existence of localized eigenvectors can limit the degree to which our intuition from classical time-frequency analysis extends to localized vertex-frequency analysis of signals on graphs. We discuss this point in more detail in Sections \ref{Se:translation}, \ref{Se:tiling}, and \ref{Se:limitations}. %\ref{Se:joint_loc}. % \ref{Se:discussion}.

Finally, we define some %related 
quantities that are closely related to the coherence $\mu$  and %that 
will also be useful in our analysis. %for us.  %include t
We denote the largest absolute value of the elements of a given graph Laplacian eigenvector by
\begin{align}\label{Eq:mu_l_def}
\mu_{\l} := \norm{\chi_{\l}}_{\infty}=\max_{i \in \{1,2,\ldots,N\}} \left|\chi_{\l}(i)\right|,
\end{align}
and the largest absolute value of a given row of $\boldsymbol{\chi}$ by
\begin{align}\label{Eq:tilde_mu_i_def}
%\tilde{\mu}
\nu_i:=\max_{\l \in \{0,1,\ldots,N-1\}} \left|\chi_{\l}(i)\right|.
\end{align}
Note that 
\begin{align}\label{Eq:mu_equiv}
\mu=\max_{\l \in \{0,1,\ldots,N-1\}} \left\{ \mu_{\l}\right\}=\max_{i \in \{1,2,\ldots,N\}} \left\{ 
%\tilde{\mu}
\nu_{i}\right\}.
\end{align}

\section{Generalized Convolution and Translation Operators} \label{Se:operators}

The main objective of this section is to define a generalized translation operator that allows us to shift a window around the vertex domain so that it is localized around any given vertex, just as we shift a window along the real line to any center point in the classical windowed Fourier transform for signals on the real line. We use a generalized notion of translation that is -- aside from a constant factor that depends on the number of vertices in the graph -- the same notion used as one component of the spectral graph wavelet transform (SGWT) in \cite[Section 4]{sgwt}.
However, in order to leverage intuition from classical time-frequency analysis, we motivate its definition differently here by first defining a generalized convolution operator for signals on graphs. We then discuss and analyze a number of properties of the generalized translation as a standalone operator, including the localization of translated kernels. 

\subsection{Generalized Convolution of Signals on Graphs}
\label{Se:convolution}
For signals $f,g \in L^2(\Rbb)$, the classical convolution product $h=f \ast g$ 
is defined as
\begin{align} \label{Eq:conv_class}
h(t)=(f \ast g)(t):=\int_{\Rbb} f(\tau)g(t-\tau)d\tau.
\end{align}
Since the simple translation $g(t-\tau)$ cannot be directly extended to the graph setting, we cannot directly generalize \eqref{Eq:conv_class}. However, the classical convolution product also 
satisfies
\begin{align}\label{Eq:conv}
h(t)=(f \ast g)(t)=\int_{\Rbb}\hat{h}(k)\psi_k(t) dk %\nonumber \\
%&\hspace{1.5in}
= \int_{\Rbb}\hat{f}(k)\hat{g}(k)\psi_k(t) dk,
\end{align}
where $\psi_k(t)=e^{2\pi i k t}$. This important property that convolution in the time domain is equivalent to multiplication in the Fourier domain is the notion we generalize instead. Specifically, by replacing the complex exponentials in \eqref{Eq:conv} with the graph Laplacian eigenvectors, we define a \emph{generalized convolution} of signals $f,g \in \Rbb^N$ on a graph by
\begin{align}\label{Eq:gen_convolution}
(f \ast g)(n):=\sum_{\l=0}^{N-1} \hat{f}(\lambda_{\l})\hat{g}(\lambda_{\l}) \chi_{\l}(n).
\end{align}
Using notation from the theory of matrix functions \cite{higham}, we can also write the generalized convolution as 
\begin{align*}
f \ast g =\hat{g}(\L)f=\boldsymbol{\chi}\left[
\begin{array}{ccc}
\hat{g}(\lambda_0) && 0 \\
&\ddots & \\
0&&\hat{g}(\lambda_{N-1})
\end{array}
\right]\boldsymbol{\chi}^* f. %,
\end{align*}
%where the columns of the $N \times N$ matrix $\chi$ are the eigenvectors of $\L$.
\begin{proposition} \label{Prop:conv_prop}
The generalized convolution product defined in \eqref{Eq:gen_convolution}
satisfies the following properties:
\begin{enumerate}
\item Generalized convolution in the vertex domain is multiplication in the graph spectral domain:
\begin{align}\label{Eq:conv_thm}
\widehat{f \ast g} = \hat{f} \hat{g}.
\end{align}
\item Let $\alpha \in \Rbb$ be arbitrary. Then
\begin{align}\label{Eq:scalar_mult}
\alpha (f \ast g) = (\alpha f) \ast g = f \ast (\alpha g).
\end{align}
\item Commutativity:
\begin{align}\label{Eq:commutativity}
f \ast g = g \ast f.
\end{align}
\item Distributivity:
\begin{align}\label{Eq:distributivity}
f \ast (g+h) = f \ast g + f \ast h.
\end{align}
\item Associativity:
\begin{align}\label{Eq:associativity}
(f \ast g) \ast h = f \ast (g \ast h).
\end{align}
\item Define a function $g_0 \in \Rbb^N$ by $g_0(n):=\sum_{\l=0}^{N-1} \chi_{\l}(n)$. Then $g_0$ is an identity for the generalized convolution product:
\begin{align} \label{Eq:identity}
f \ast g_0 = f.
\end{align}
\item An invariance property with respect to the graph Laplacian (a difference operator):
\begin{align}\label{Eq:diff_op}
\L (f \ast g) = (\L f) \ast g = f \ast (\L g).
\end{align}
\item The sum of the generalized convolution product of two signals is a constant times the product of the sums of the two signals:
\begin{align}\label{Eq:integration}
\sum_{n=1}^N (f \ast g)(n) = \sqrt{N} \hat{f}(0)\hat{g}(0) = \frac{1}{\sqrt{N}}\left[\sum_{n=1}^N f(n) \right] \left[\sum_{n=1}^N g(n) \right].
\end{align}
\end{enumerate}
\end{proposition}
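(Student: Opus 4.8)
The plan is to derive all eight properties from the first one, which unpacks essentially directly from the definition \eqref{Eq:gen_convolution}. Note that \eqref{Eq:gen_convolution} already expresses $f \ast g$ in the form of an inverse graph Fourier transform \eqref{Eq:graph_IFT} whose spectral coefficients are $\hat{f}(\lambda_\ell)\hat{g}(\lambda_\ell)$; taking $\ip{\cdot}{\chi_k}$ of both sides and using the orthonormality $\ip{\chi_\ell}{\chi_k}=\delta_{\ell k}$ reads off $\widehat{f \ast g}(\lambda_k)=\hat{f}(\lambda_k)\hat{g}(\lambda_k)$, which is \eqref{Eq:conv_thm}. Since the graph Fourier transform is a bijection on $\Rbb^N$, every remaining identity of the form $u=v$ can then be checked in the graph spectral domain.

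Properties 2--5 follow from \eqref{Eq:conv_thm} combined with the linearity of the graph Fourier transform ($\widehat{\alpha f}=\alpha\hat{f}$ and $\widehat{g+h}=\hat{g}+\hat{h}$) and the fact that pointwise multiplication of the scalar sequences $\hat{f}(\lambda_\ell)$ is commutative, associative, and distributes over addition; for instance associativity reduces to $\widehat{(f\ast g)\ast h}=\widehat{f \ast g}\,\hat{h}=\hat{f}\hat{g}\hat{h}=\hat{f}\,\widehat{g \ast h}=\widehat{f \ast (g\ast h)}$, after which one inverts the transform. Property 7 uses the symmetry of $\L$: since $\widehat{\L f}(\lambda_\ell)=\ip{\L f}{\chi_\ell}=\ip{f}{\L\chi_\ell}=\lambda_\ell\hat{f}(\lambda_\ell)$, both $(\L f)\ast g$ and $f\ast(\L g)$ and $\L(f\ast g)$ have graph Fourier transform $\lambda_\ell\hat{f}(\lambda_\ell)\hat{g}(\lambda_\ell)$, so they coincide.

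The only two steps needing an extra observation are 6 and 8, and both rest on the structure of $\chi_0$ for a connected graph: $\chi_0\equiv 1/\sqrt{N}$, so that $\sum_{n}\chi_0(n)=\sqrt{N}$ and $\hat{f}(0)=\frac{1}{\sqrt{N}}\sum_{n}f(n)$, while $\ip{\chi_\ell}{\chi_0}=0$ for $\ell\geq 1$ forces $\sum_{n}\chi_\ell(n)=0$. For property 6, compute $\hat{g}_0(\lambda_\ell)=\ip{g_0}{\chi_\ell}=\sum_{k}\ip{\chi_k}{\chi_\ell}=1$ for every $\ell$, so $g_0$ has constant unit spectrum and $\widehat{f\ast g_0}=\hat{f}\cdot 1=\hat{f}$, giving $f\ast g_0=f$. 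For property 8, interchange the order of summation in $\sum_{n}(f\ast g)(n)=\sum_{\ell}\hat{f}(\lambda_\ell)\hat{g}(\lambda_\ell)\sum_{n}\chi_\ell(n)$; only the $\ell=0$ term survives, yielding $\sqrt{N}\,\hat{f}(0)\hat{g}(0)$, and substituting the expressions for $\hat{f}(0)$ and $\hat{g}(0)$ gives the right-hand side of \eqref{Eq:integration}.

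I do not anticipate a genuine obstacle; the proposition is a sequence of short verifications. The points to be careful about are purely bookkeeping: tracking the complex conjugates in the conjugate-linear inner product (relevant when one works with a complex eigenbasis, as in the ring-graph example), and keeping in mind the standing assumption that $\G$ is connected, which is exactly what makes $\lambda_0$ simple with constant eigenvector and thus underpins properties 6 and 8. If desired, each identity can alternatively be verified directly from \eqref{Eq:gen_convolution} without passing to the spectral domain, but routing through \eqref{Eq:conv_thm} is the cleanest organization.
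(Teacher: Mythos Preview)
Your proposal is correct; the paper itself states Proposition~\ref{Prop:conv_prop} without proof, so there is no argument to compare against. Your route through property~1 and the bijectivity of the graph Fourier transform is the natural one, and your handling of items~6--8 via the explicit form $\chi_0\equiv 1/\sqrt{N}$ (which uses the standing connectedness assumption) is exactly what is needed.
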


%For a fixed \emph{kernel} $g \in \Rbb^N$, we define a \emph{generalized convolution kernel operator} $K_g: \Rbb^N \rightarrow \Rbb^N$ by
%\begin{align*}
%K_g(f):=f \ast g.
%\end{align*}

\subsection{Generalized Translation on Graphs}
\label{Se:translation}
%Now t
The application of the classical translation operator $T_u$ defined in \eqref{Eq:classical_translation}
to a function $f \in L^2(\Rbb)$ can be seen as a convolution with $\delta_u$:
\begin{align*}
(T_u f)(t):=f(t-u)=(f \ast \delta_u)(t) %\nonumber \\
%&
\stackrel{\eqref{Eq:conv}}= \int_{\Rbb}\hat{f}(k)\widehat{\delta_u}(k)\psi_k(t) dk %\nonumber \\
%&
= \int_{\Rbb}\hat{f}(k){\psi}^*_k(u)\psi_k(t) dk,
\end{align*}
where the equalities are in the weak sense.
Thus, for any signal $f \in \Rbb^N$ defined on the %vertices of 
the graph $\G$ and any $i \in \{1,2,\ldots,N\}$, we also define a \emph{generalized translation operator} $T_{i}: \Rbb^N \rightarrow \Rbb^N$ via generalized convolution with a delta centered at vertex $i$:
%\vspace{-.1cm}
\begin{equation} \label{Eq:new_translation}
\left(T_i f\right)(n):= \sqrt{N}(f \ast \delta_i)(n) \stackrel{\eqref{Eq:gen_convolution}}= %\left(f(\L)\delta_i\right)(n)=
\sqrt{N}\sum_{\l=0}^{N-1}\hat{f}(\lambda_{\l})\chi_{\l}^*(i)\chi_{\l}(n).
\end{equation}
The translation \eqref{Eq:new_translation} is a kernelized operator. The window to be shifted around the graph is defined in the graph spectral domain via the kernel $\hat{f}(\cdot)$. To translate this window to vertex $i$, the $\l^{th}$ component of the kernel is multiplied by $\chi_{\l}^*(i)$, and then an inverse graph Fourier transform is applied. 
%{\color{red} *** kerenelized operator, Think of translating the window around the graph - use heat kernel ***}
As an example, in Figure \ref{Fig:trans}, we apply generalized translation operators to 
the %graph signal 
normalized heat kernel from Figure \ref{Fig:essence}(c). We can see that doing so has the desired effect of shifting a window around the graph, centering it at any given vertex $i$.
%***Insert some pictures of translation and modulation as well as intuition, frame bounds***
\begin{figure}[h]
%\centering
\hfill
\begin{minipage}[b]{.31\linewidth}
   \centering
   \centerline{\includegraphics[width=\linewidth]{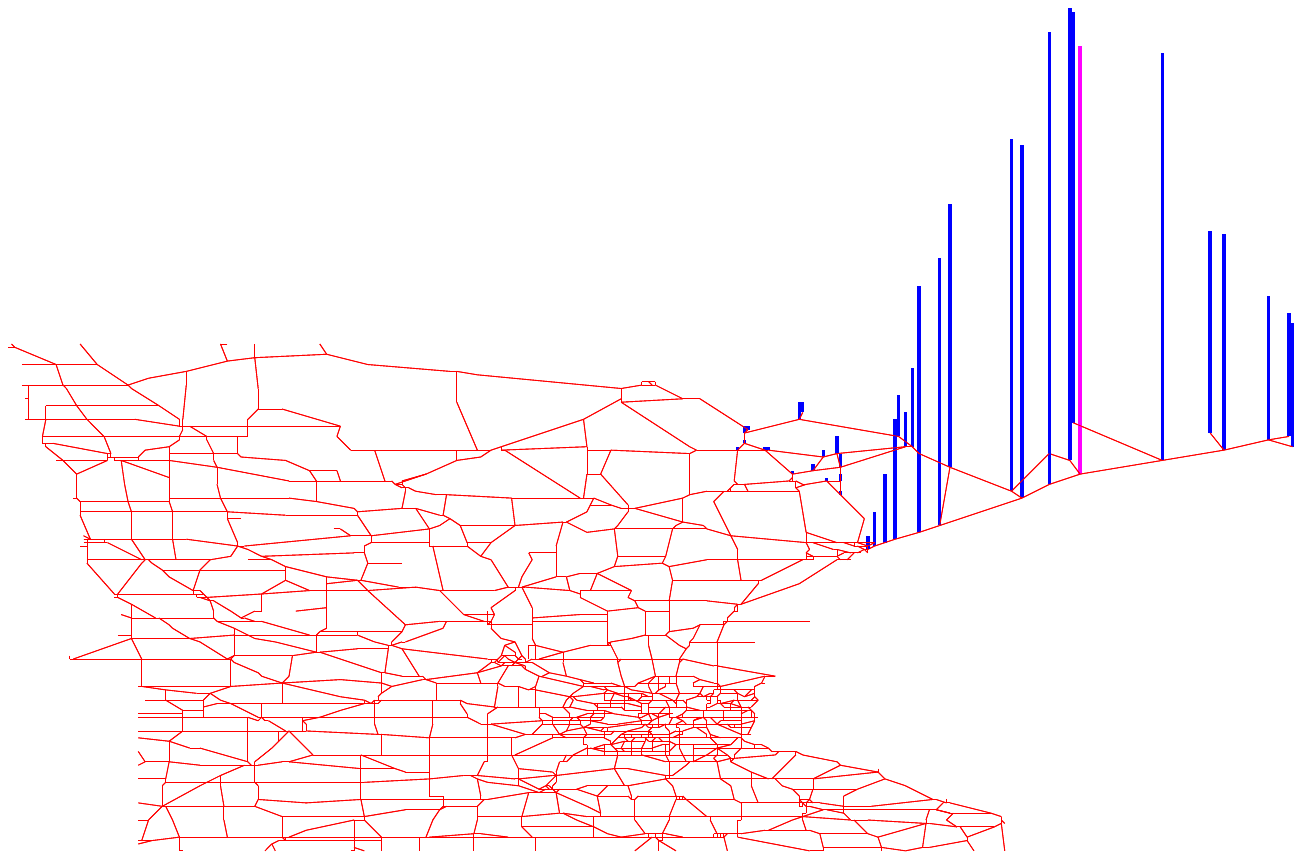}} %{fig_minn_signal2}}
\centerline{\small{(a)}~~}
\end{minipage}
\hfill
\begin{minipage}[b]{.31\linewidth}
   \centering
   \centerline{\includegraphics[width=\linewidth]{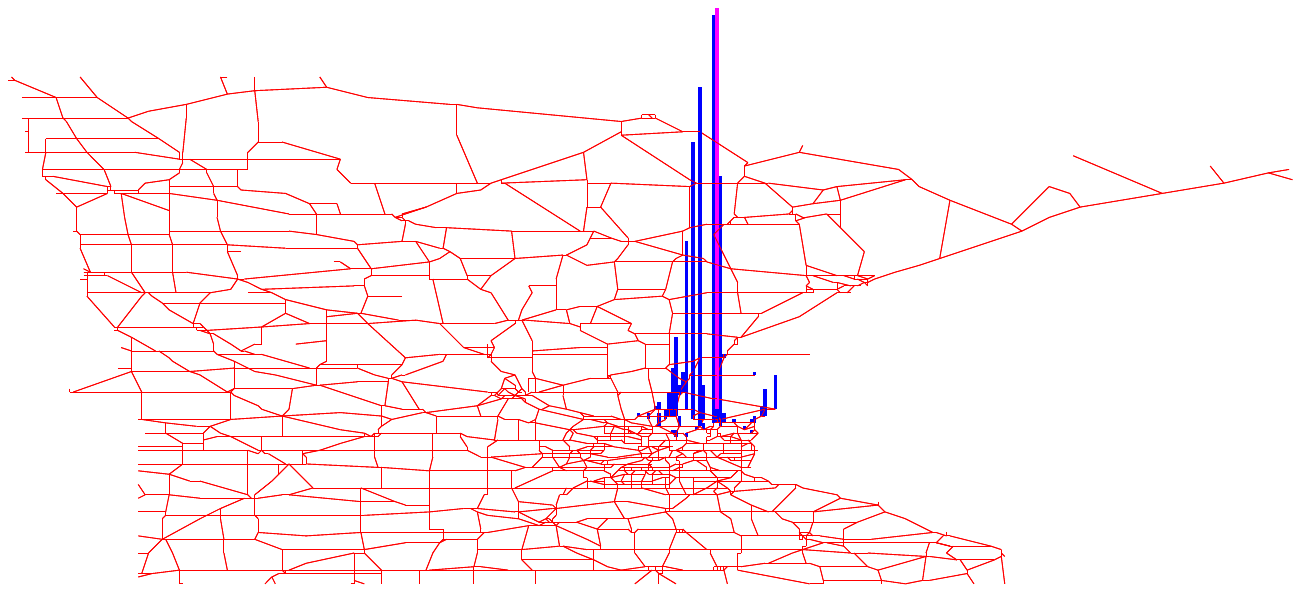}} %{fig_minn_translated}} %{fig_translated}}
\centerline{\small{(b)}~~~~}
\end{minipage}
\hfill
\begin{minipage}[b]{.31\linewidth}
   \centering
   \centerline{\includegraphics[width=\linewidth]{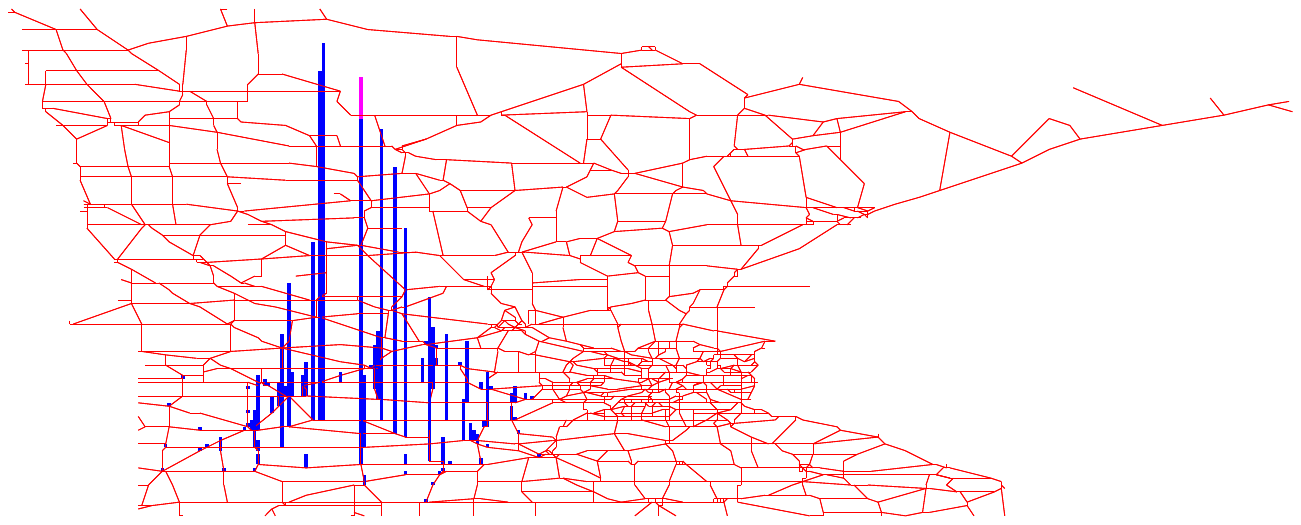}} %{fig_minn_translated}} %{fig_translated}}
\centerline{\small{(c)}~~~~}
\end{minipage} \hfill %\\
%%\vspace{.2cm}
%
%{\hfill
%\begin{minipage}[b]{.45\linewidth}
%   \centering
%   \centerline{\includegraphics[width=.9\linewidth]{fig_spectral_rep}}
%\centerline{\small{(c)}}
%\end{minipage}
%\hfill
%\begin{minipage}[b]{.45\linewidth}
%   \centering
%   \centerline{\includegraphics[width=.9\linewidth]{fig_modulated}}
%\centerline{\small{(d)}}
%\end{minipage}
%\hfill}
\caption {The translated signals (a) $T_{200} f$, (b) $T_{1000} f$, and (c) $T_{2000} f$, where $f$, the signal shown in Figure \ref{Fig:essence}(c), is a normalized heat kernel satisfying $\hat{f}(\lambda_{\l})=Ce^{-5\lambda_{\l}}$. The component of the translated signal at the center vertex is highlighted in magenta.} % (b) The translated signal $T_{2000} f$.} %, with positive components %of the signal 
%in blue and negative in black.} 
%(c) The spectral representation $\hat{f}$ of $f$. (d) The spectral representation $\widehat{M_{1000} f}$ of the modulated signal $M_{1000} f$. Note that $\lambda_{1000}=1.57$.}
  \label{Fig:trans}
\end{figure}

\subsection{Properties of the Generalized Translation Operator}
Some expected properties of the generalized translation operator follow immediately from the generalized convolution properties of Proposition \ref{Prop:conv_prop}.
\begin{corollary}\label{Co:trans_conv}
For any $f, g \in \Rbb^N$ and $i, j \in \{1,2,\ldots,N\}$,
\begin{enumerate}
\item $T_i(f \ast g) = (T_i f) \ast g = f \ast (T_i g)$.
\item $T_i T_j f = T_j T_i f$.
\item $\sum_{n=1}^N (T_i f)(n)= \sqrt{N}\hat{f}(0)=\sum_{n=1}^N f(n)$.
\end{enumerate}
\end{corollary}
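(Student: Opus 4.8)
The plan is to obtain all three identities directly from the definition $(T_i f)(n) = \sqrt{N}\,(f \ast \delta_i)(n)$ in \eqref{Eq:new_translation}, invoking only the convolution properties already collected in Proposition \ref{Prop:conv_prop}; the sole additional ingredient is that for a connected graph with the non-normalized Laplacian one has $\chi_0(n) = \tfrac{1}{\sqrt{N}}$ for all $n$, so that $\hat{f}(0) = \ip{f}{\chi_0} = \tfrac{1}{\sqrt{N}}\sum_{n=1}^N f(n)$ and $\widehat{\delta_i}(0) = \chi_0^*(i) = \tfrac{1}{\sqrt{N}}$.

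For the first statement, I would write $T_i(f \ast g) = \sqrt{N}\,\big((f \ast g) \ast \delta_i\big)$ and reorder the triple convolution using associativity \eqref{Eq:associativity} and commutativity \eqref{Eq:commutativity}, which gives $(f \ast g) \ast \delta_i = (f \ast \delta_i) \ast g = f \ast (g \ast \delta_i)$. Absorbing the constant $\sqrt{N}$ into whichever factor carries the delta, via the scalar-multiplication property \eqref{Eq:scalar_mult}, then yields $T_i(f \ast g) = (T_i f) \ast g = f \ast (T_i g)$. The second statement follows by iterating the definition: $T_i T_j f = N\,\big((f \ast \delta_j) \ast \delta_i\big)$, and associativity together with commutativity make the bracketed product symmetric in $i$ and $j$, which is exactly $T_j T_i f$.

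For the third statement, I would apply the summation identity \eqref{Eq:integration} to the pair $f, \delta_i$: since $\sum_{n=1}^N \delta_i(n) = 1$, it gives $\sum_{n=1}^N (f \ast \delta_i)(n) = \tfrac{1}{\sqrt{N}}\sum_{n=1}^N f(n) = \sqrt{N}\,\hat{f}(0)\,\widehat{\delta_i}(0) = \hat{f}(0)$, using $\widehat{\delta_i}(0) = \tfrac{1}{\sqrt{N}}$. Multiplying through by $\sqrt{N}$ turns this into $\sum_{n=1}^N (T_i f)(n) = \sqrt{N}\,\hat{f}(0) = \sum_{n=1}^N f(n)$, as claimed. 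No step presents a genuine obstacle; the only care required is the bookkeeping of the $\sqrt{N}$ normalization factors and the appeal to the constant eigenvector $\chi_0$, which is where the non-normalized-Laplacian convention enters.
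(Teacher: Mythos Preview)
Your proposal is correct and matches the paper's approach: the paper does not spell out a proof but simply notes that these properties ``follow immediately from the generalized convolution properties of Proposition~\ref{Prop:conv_prop},'' which is exactly the route you take. Your bookkeeping of the $\sqrt{N}$ factors and the use of $\chi_0 = \tfrac{1}{\sqrt{N}}\mathbf{1}$ for part~3 are the details the paper leaves implicit.
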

However, the niceties end there, and we should also point out some properties that are true for the classical translation operator, but not for the generalized translation operator for signals on graphs. First, unlike the classical case, the set of translation operators $\{T_i\}_{i\in \{1,2,\ldots,N\}}$ do not form a mathematical group; i.e., $T_i T_j \neq T_{i+j}$. In the very special case of \emph{shift-invariant} graphs \cite[p. 158]{grady}, which are graphs for which the DFT basis vectors \eqref{Eq:ring_eigenvectors} are graph Laplacian eigenvectors (the unweighted ring graph shown in Figure \ref{Fig:path_ring}(c) is one such graph), we have
%the unweighted ring graph with the eigenvectors \eqref{Eq:ring_eigenvectors}, we have
\begin{align} \label{Eq:ring_per}
T_i T_j = T_{\left[\bigl((i-1)+(j-1)\bigr) \hbox{ mod } N \right] +1},~\forall i,j \in \{1,2,\ldots,N\}.
\end{align}
%{\color{red} Generalize to shift-invariant graphs - Grady Discrete Calculus book, Chapter 5.}
However, \eqref{Eq:ring_per} is not true in general for arbitrary graphs. Moreover, while the idea of successive translations $T_i T_j$ carries a clear meaning in the classical case, it is not a particularly meaningful concept in the graph setting due to our definition of generalized translation as a kernelized operator.

Second, unlike the classical translation operator, the generalized translation operator is not an isometric operator; i.e., $\norm{T_i f}_2 \neq \norm{f}_2$ for all indices $i$ and signals $f$. Rather, we have
\begin{lemma}\label{Le:trans_norm_bounds}
For any $f\in \Rbb^N$,
\begin{align}\label{Eq:trans_norm_bounds}
|\hat{f}(0)| \leq \norm{T_i f}_2 \leq \sqrt{N}%\tilde{\mu}
\nu_i \norm{f}_2 \leq \sqrt{N} \mu \norm{f}_2.
\end{align}
\end{lemma}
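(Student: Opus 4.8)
The plan is to work directly from the spectral definition \eqref{Eq:new_translation} of $T_i f$ and compute its norm via the Parseval relation. Writing $(T_i f)(n) = \sqrt{N}\sum_{\l} \hat f(\lambda_\l) \chi_\l^*(i) \chi_\l(n)$, we see that the graph Fourier coefficients of $T_i f$ are $\widehat{T_i f}(\lambda_\l) = \sqrt{N}\,\hat f(\lambda_\l)\chi_\l^*(i)$. Hence by Parseval,
\begin{align*}
\norm{T_i f}_2^2 = \sum_{\l=0}^{N-1} |\widehat{T_i f}(\lambda_\l)|^2 = N \sum_{\l=0}^{N-1} |\hat f(\lambda_\l)|^2 \, |\chi_\l(i)|^2.
\end{align*}
This exact identity is the engine for all three inequalities.

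For the lower bound, I would simply drop all terms in the sum except $\l = 0$: since $\chi_0(n) = 1/\sqrt N$ for the connected graph (constant eigenvector), $N\,|\hat f(0)|^2 |\chi_0(i)|^2 = N |\hat f(0)|^2 \cdot \tfrac1N = |\hat f(0)|^2$, and all remaining terms are non-negative, giving $\norm{T_i f}_2^2 \geq |\hat f(0)|^2$. For the first upper bound, bound each factor $|\chi_\l(i)|^2$ by $\nu_i^2 = \max_\l |\chi_\l(i)|^2$ (by the definition \eqref{Eq:tilde_mu_i_def}), pull it out of the sum, and use $\sum_\l |\hat f(\lambda_\l)|^2 = \norm{f}_2^2$ (Parseval again) to get $\norm{T_i f}_2^2 \leq N \nu_i^2 \norm{f}_2^2$. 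Taking square roots yields $\norm{T_i f}_2 \leq \sqrt N\, \nu_i \norm{f}_2$. The final inequality $\nu_i \leq \mu$ is immediate from \eqref{Eq:mu_equiv}, which states $\mu = \max_i \nu_i$.

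There is no real obstacle here — the only point requiring a moment of care is confirming that the constant eigenvector $\chi_0$ has entries exactly $1/\sqrt N$, which holds because $\G$ is connected (so $\lambda_0 = 0$ is simple) and the $\chi_\l$ are orthonormal; this is what makes the lower bound clean. One might also remark that the lower bound is attained (e.g. when $\hat f$ is supported only at $\lambda_0$, i.e. $f$ is constant) and that the upper bound is sharp when $\hat f$ concentrates on an eigenvalue $\l$ achieving $|\chi_\l(i)| = \nu_i$, but such sharpness comments are optional and not needed for the statement.
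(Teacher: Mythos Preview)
Your proof is correct and follows essentially the same approach as the paper: both arrive at the identity $\norm{T_i f}_2^2 = N\sum_{\l}|\hat f(\lambda_\l)|^2|\chi_\l(i)|^2$ (you via Parseval on the Fourier coefficients $\widehat{T_i f}(\lambda_\l)=\sqrt{N}\hat f(\lambda_\l)\chi_\l^*(i)$, the paper via direct expansion of the square and orthonormality), and then extract the lower bound from the $\l=0$ term using $\chi_0(i)=1/\sqrt{N}$ and the upper bound by replacing $|\chi_\l(i)|^2$ with $\nu_i^2$.
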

\begin{proof}
\begin{align} \label{Eq:trans_operator_1}
\norm{T_i f}_2^2 &= \sum_{n=1}^N \left(\sqrt{N}\sum_{\l=0}^{N-1} \hat{f}(\lambda_{\l}) \chi_{\l}^*(i) \chi_{\l}(n) \right)^2 \nonumber \\
&=N \sum_{\l=0}^{N-1} \sum_{\l^{\prime}=0}^{N-1} \hat{f}(\lambda_{\l}) \hat{f}(\lambda_{\l^{\prime}}) \chi_{\l}^*(i) \chi_{\l^{\prime}}^*(i) \sum_{n=1}^N  \chi_{\l}(n) \chi_{\l^{\prime}}(n) \nonumber \\
&= N \sum_{\l=0}^{N-1} |\hat{f}(\lambda_{\l})|^2 \left|\chi_{\l}^*(i) \right|^2 \\
& \leq N %\tilde{\mu}
\nu_i^2 \norm{f}_2^2. \label{Eq:trans_operator_2}
\end{align}
Substituting $\chi_{0}(i)=\frac{1}{\sqrt{N}}$ into \eqref{Eq:trans_operator_1} yields the first inequality in \eqref{Eq:trans_norm_bounds}.
\end{proof}
\noindent If $\mu=\frac{1}{\sqrt{N}}$, as is the case for the ring graph with the DFT graph Laplacian eigenvectors, then $|\chi_{\l}^*(i)|^2=\frac{1}{N}$ for all $i$ and $\l$, and \eqref{Eq:trans_operator_1} becomes $\norm{T_i f}_2=\norm{f}_2$.  However, for general graphs, 
$|\chi_{\l}^*(i)|$ may be small or even zero for some $i$ and $\l$, and thus $\norm{T_i f}_2$ may be significantly smaller than $\norm{f}_2$, and in fact may even be zero. Meanwhile, if $\mu$ is close to 1, then we may also have the case that $\norm{T_i f}_2 \approx \sqrt{N} \norm{f}_2$. Figures \ref{Fig:trans_norms_low} and \ref{Fig:trans_norms_high} show examples of $\left\{\norm{T_i f}_2\right\}_{i=1,2,\ldots,N}$ for different graphs and different kernels.

%{ \color{red} Insert some pictures of the norms of translated windows here and draw some conclusions about boundary effects.}

\begin{figure}[h]
\hfill
\begin{minipage}[b]{.31\linewidth}
   \centering
   \centerline{\includegraphics[width=\linewidth]{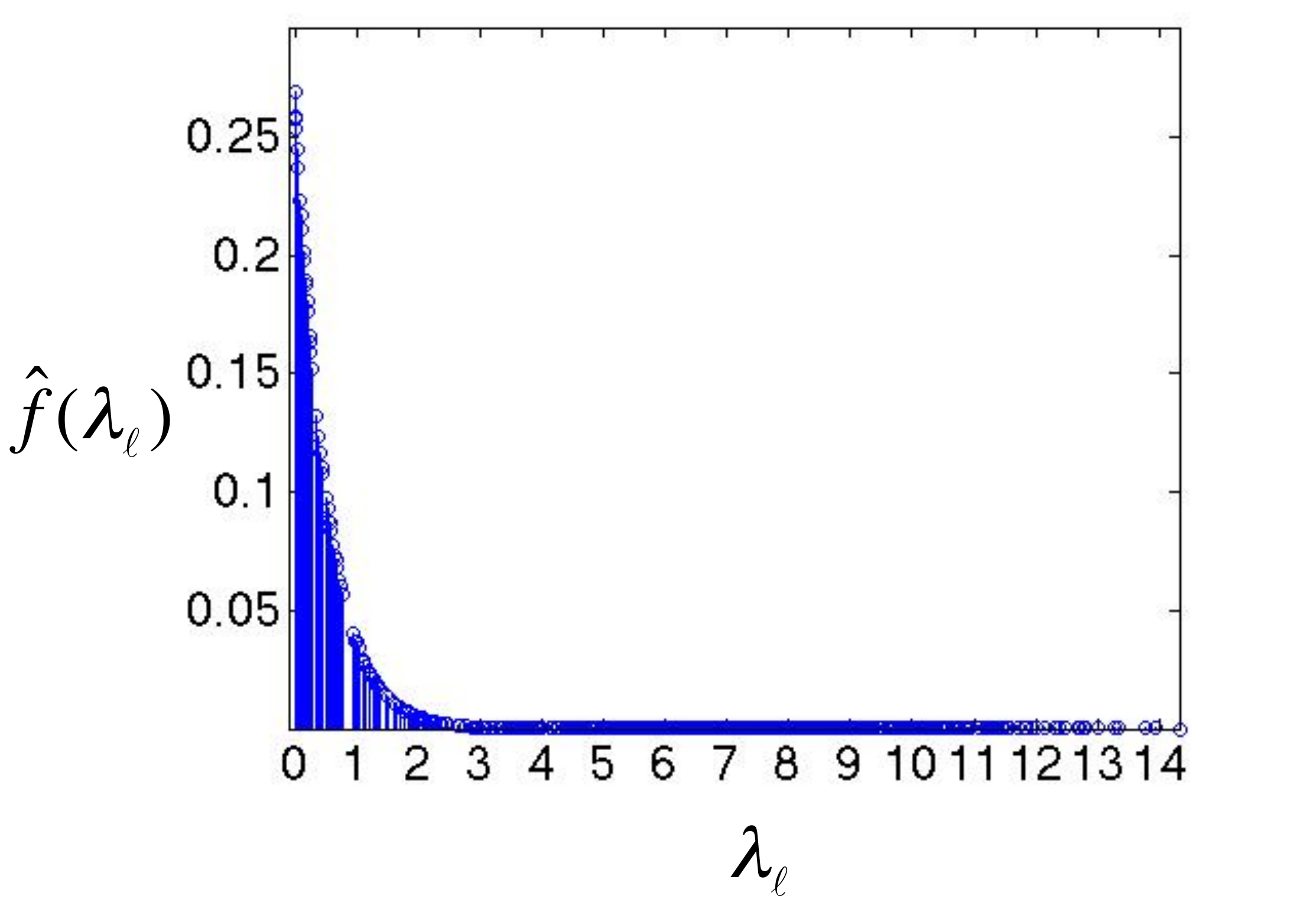}} 
\centerline{\small{(a)}~~}
\end{minipage}
\hfill
\begin{minipage}[b]{.31\linewidth}
   \centering
   \centerline{\includegraphics[width=\linewidth]{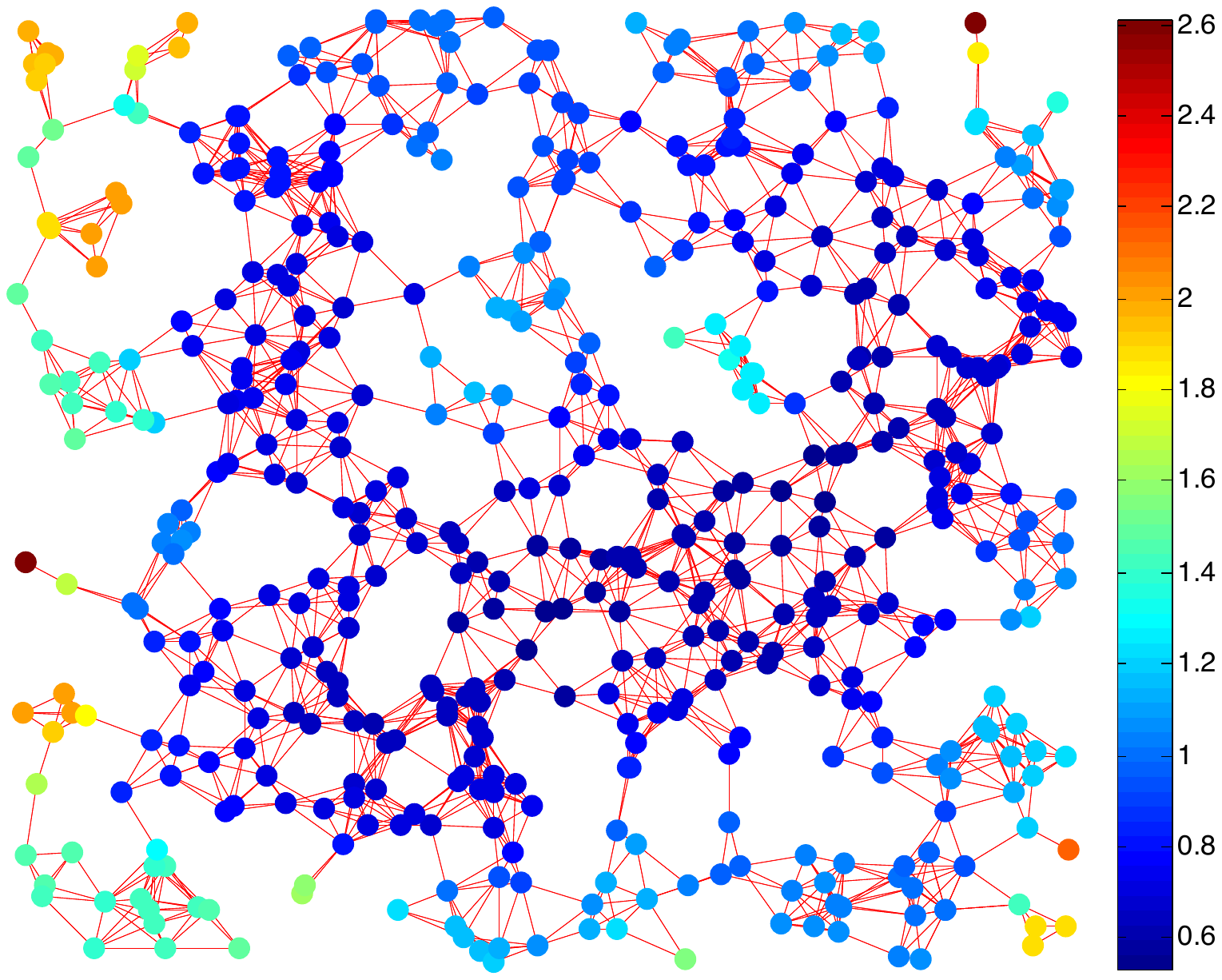}} 
\centerline{\small{(b)}~~~~}
\end{minipage}
\hfill
\begin{minipage}[b]{.31\linewidth}
   \centering
   \centerline{\includegraphics[width=\linewidth]{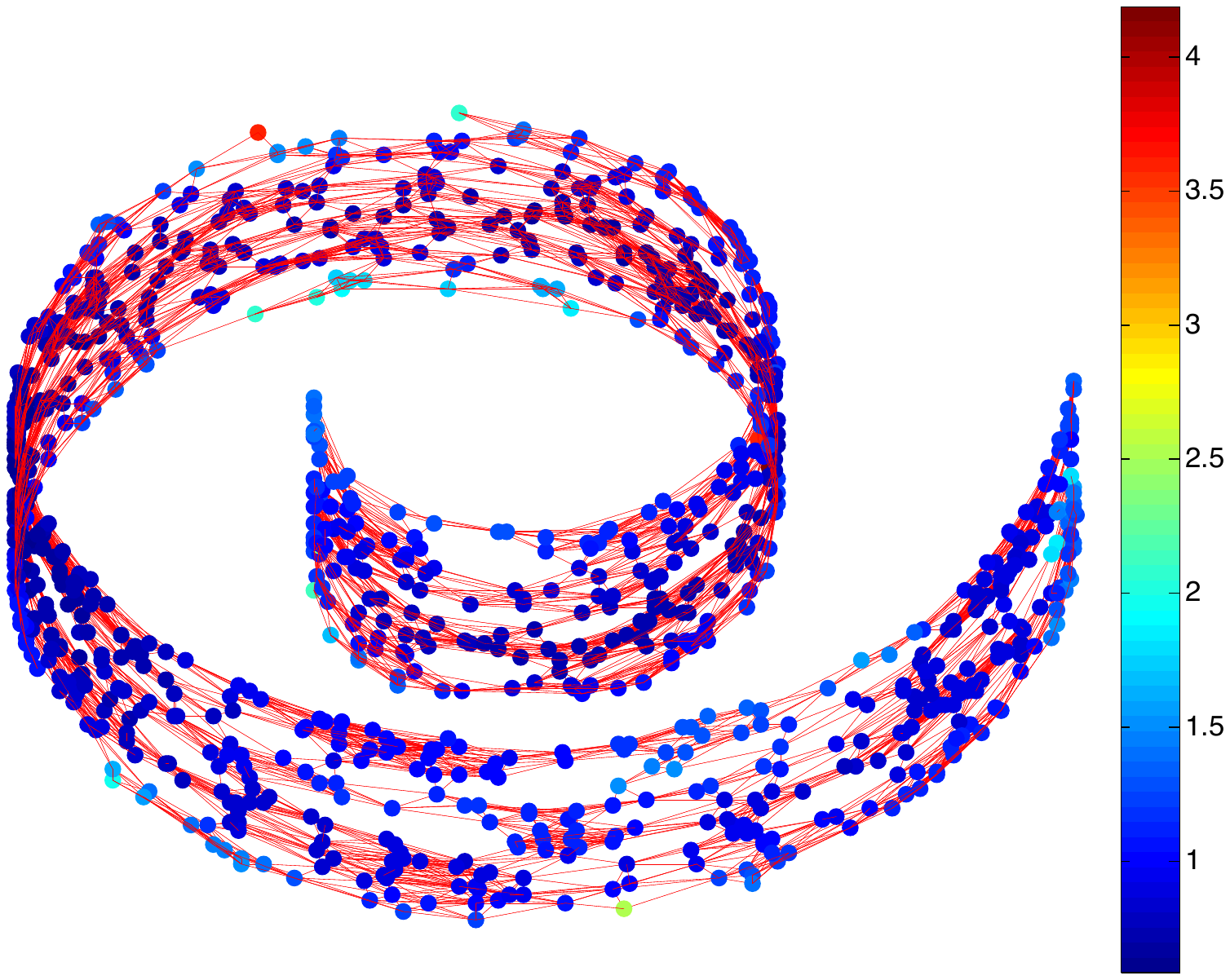}} 
\centerline{\small{(c)}~~~~}
\end{minipage} \hfill \\ 
\hfill
\begin{minipage}[b]{.31\linewidth}
   \centering
   \centerline{\includegraphics[width=\linewidth]{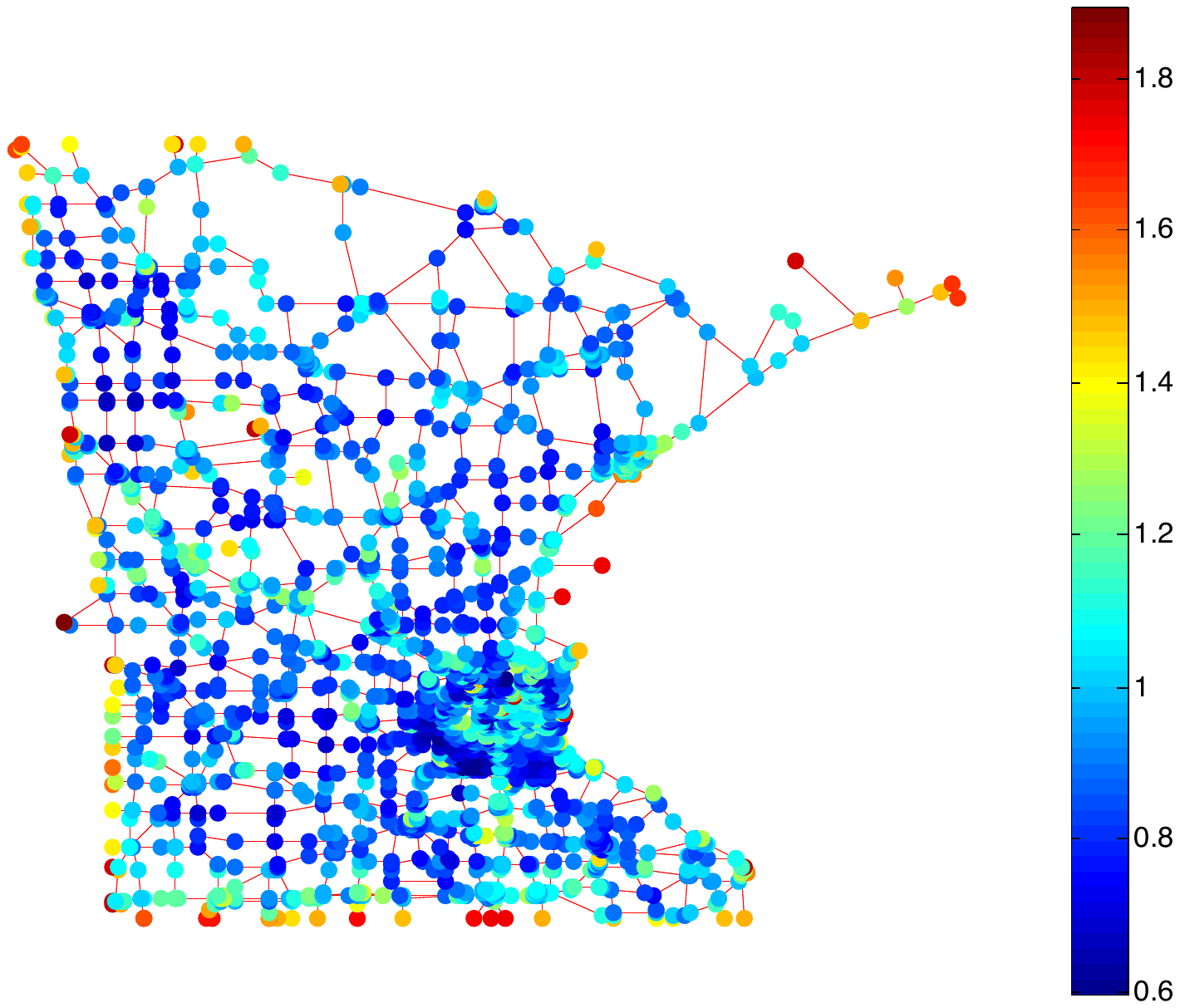}} 
\centerline{\small{(d)}~~}
\end{minipage}
\hfill
\begin{minipage}[b]{.31\linewidth}
   \centering
   \centerline{\includegraphics[width=\linewidth]{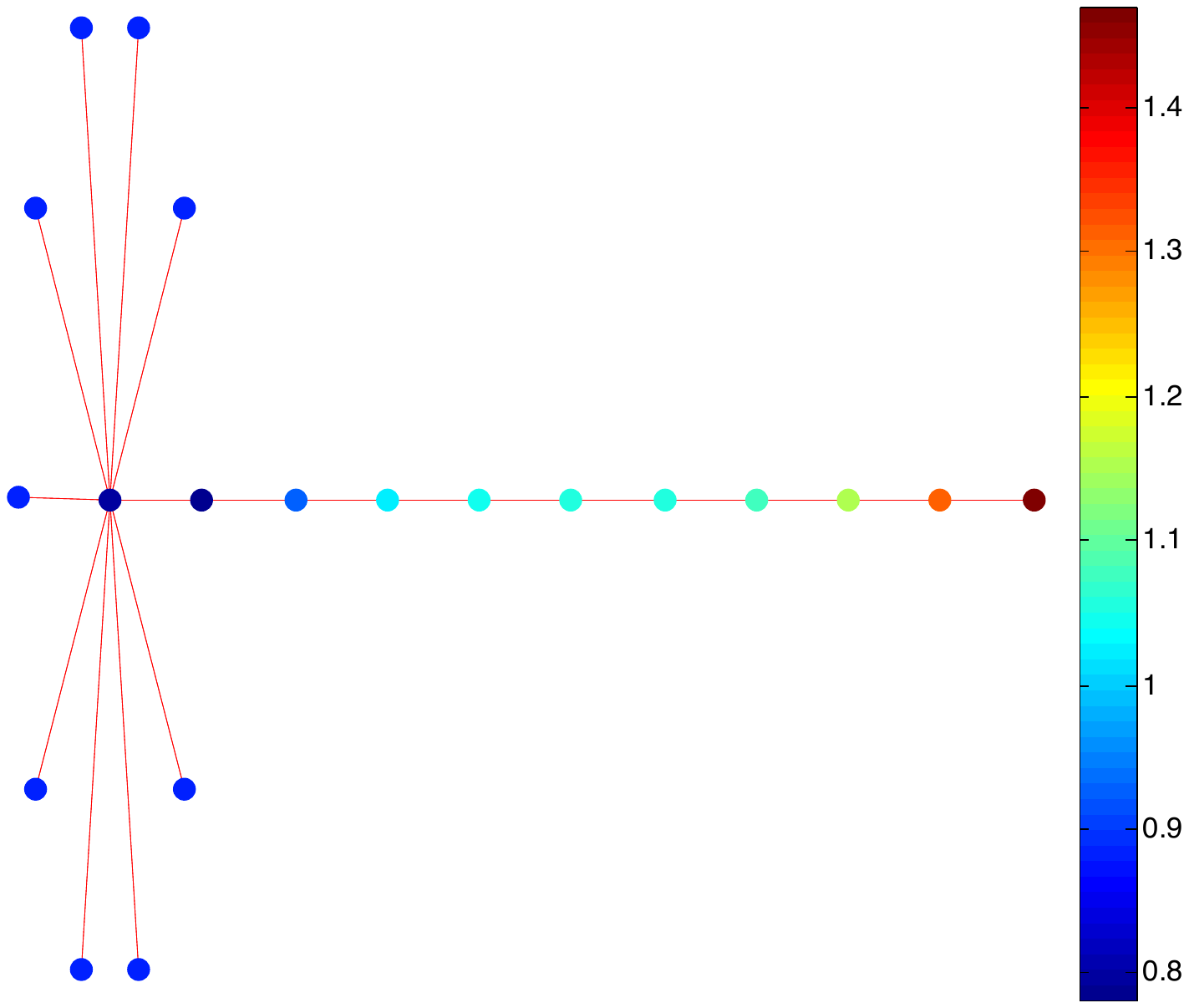}} 
\centerline{\small{(e)}~~~~}
\end{minipage}
\hfill
\begin{minipage}[b]{.31\linewidth}
   \centering
   \centerline{\includegraphics[width=\linewidth]{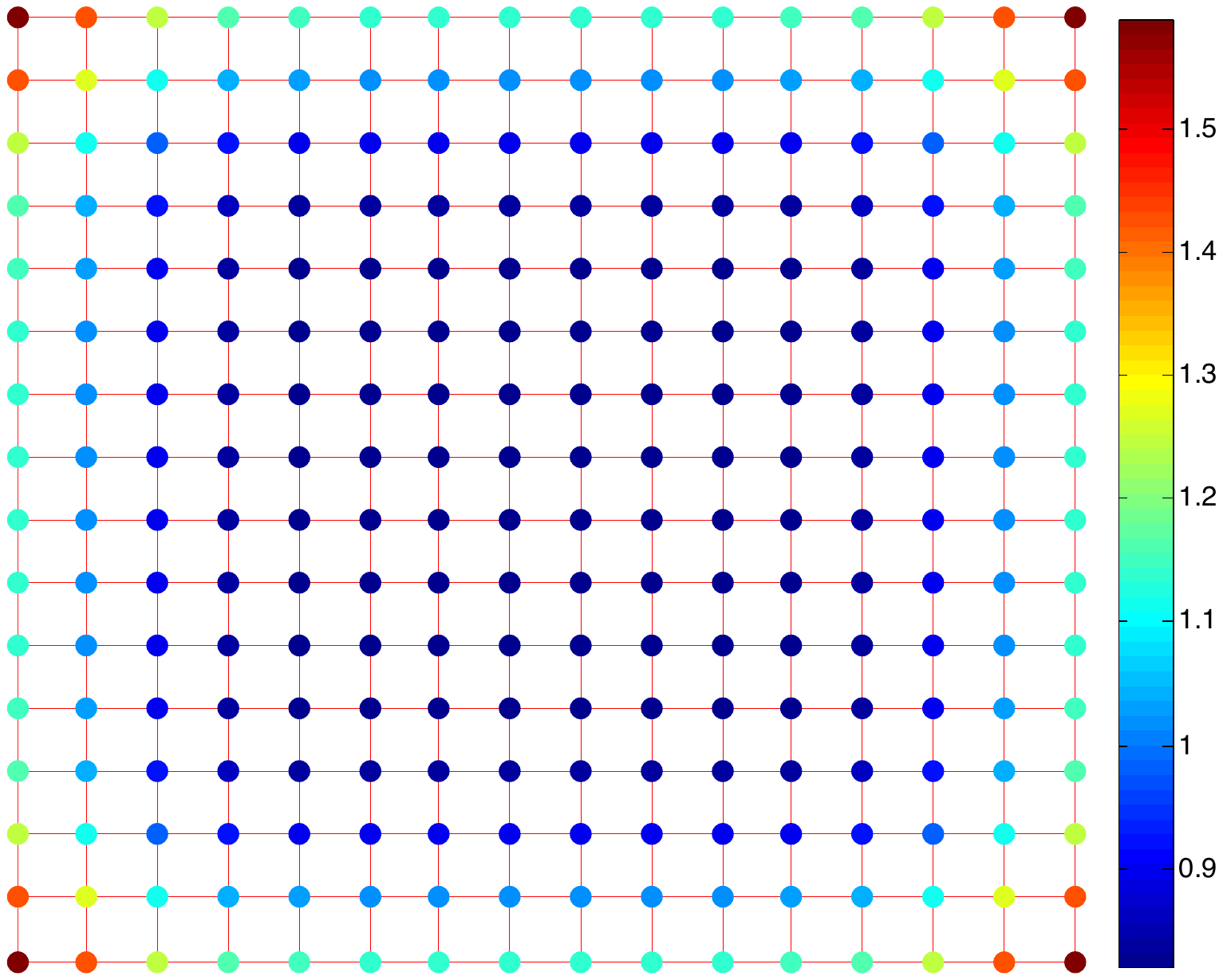}} 
\centerline{\small{(f)}~~~~}
\end{minipage} \hfill
\caption {Norms of translated normalized heat kernels with $\tau=2$. (a) A normalized heat kernel $\hat{f}(\lambda_{\l})=Ce^{-2\lambda_{\l}}$ on the sensor network graph shown in (b). (b)-(f) The value at each vertex $i$ represents $\norm{T_i f}_2$. The edges of the graphs in (b) and (c) are weighted by a thresholded Gaussian kernel weighting function based on the physical distance between nodes \eqref{Eq:gkw}, whereas the edges of the graphs in (d)-(f) all have weights equal to one. In all cases, the norms of the translated windows are not too close to zero, and the larger norms tend to be located at the ``boundary'' vertices in the graph. The lower bound $|\hat{f}(0)|$ and upper bound $\sqrt{N}\mu\norm{f}_2$ of Lemma \ref{Le:trans_norm_bounds} are (b) [0.27,21.38]; (c) [0.20,29.22]; (d) [0.07,42.88]; (e) [0.62,4.26]; (f) [0.36,3.25].} 
  \label{Fig:trans_norms_low}
\end{figure}
\begin{figure}[h]
\hfill
\begin{minipage}[b]{.44\linewidth}
   \centering
   \centerline{\includegraphics[width=\linewidth]{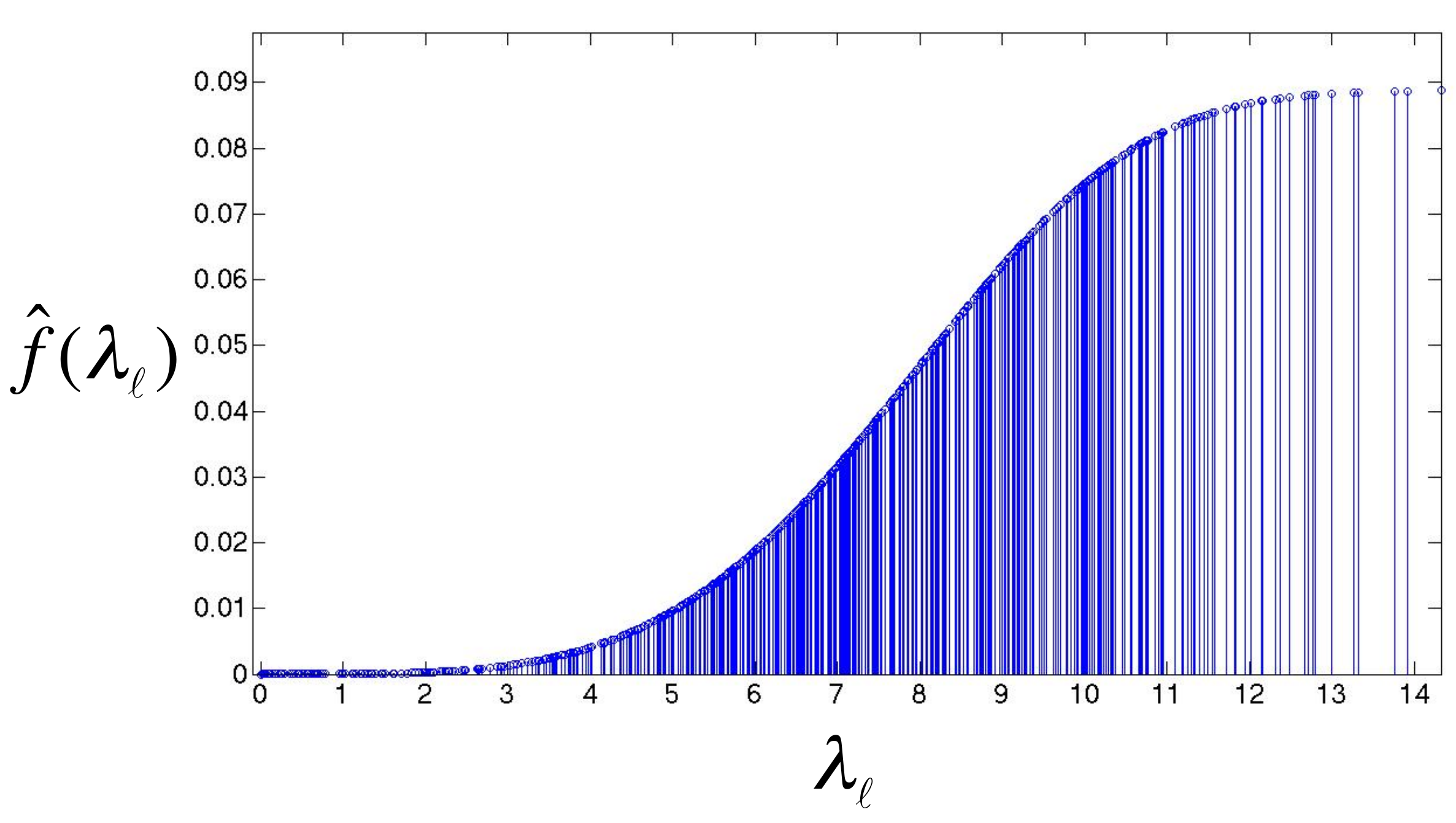}} 
\centerline{\small{~~~~~~~~~~(a)}}
\end{minipage}
\hfill
\begin{minipage}[b]{.31\linewidth}
   \centering
   \centerline{\includegraphics[width=\linewidth]{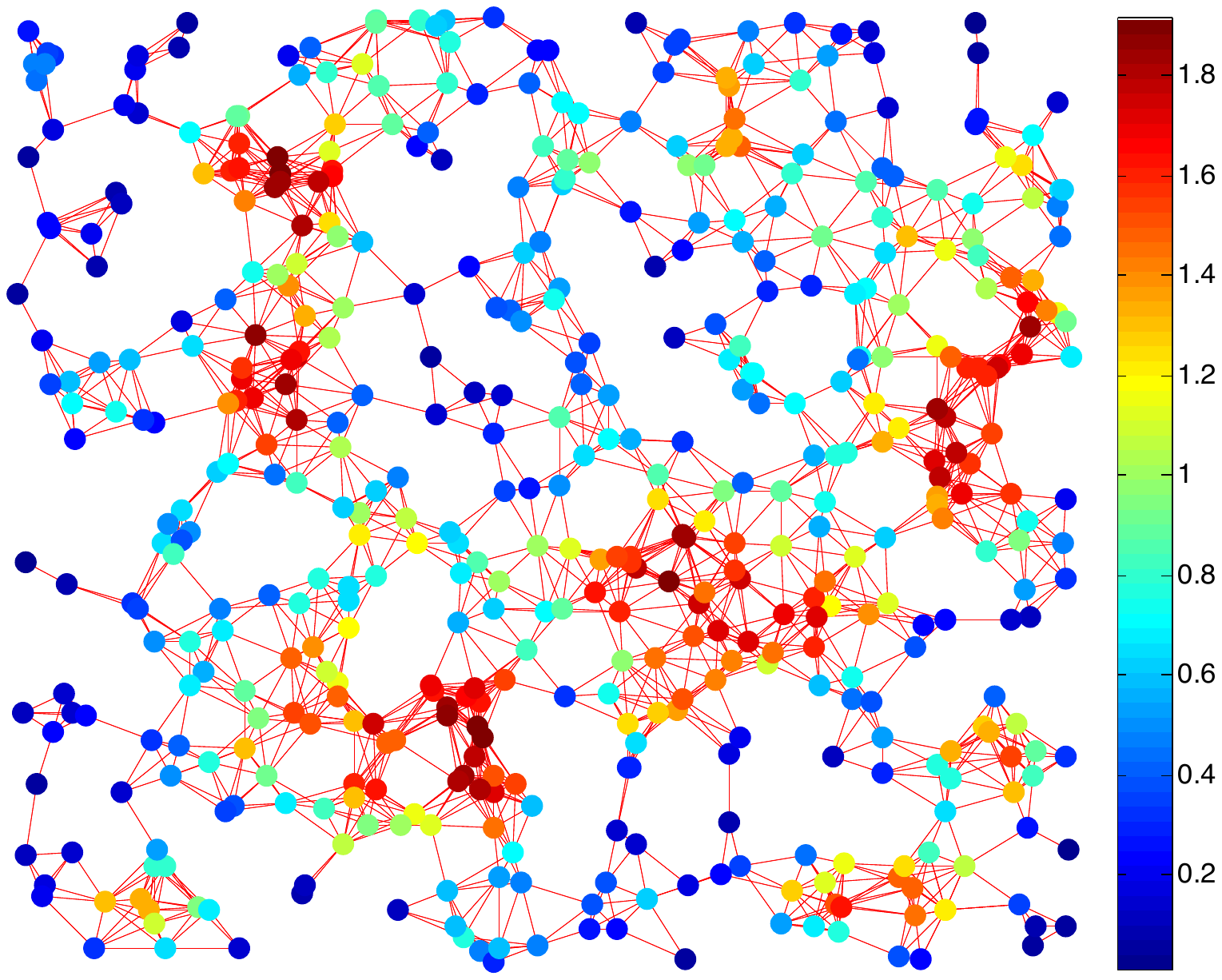}} 
\centerline{\small{(b)}}
\end{minipage} \hfill \hfill
\caption {(a) A smooth kernel with its energy concentrated on the higher frequencies of the sensor network's spectrum. (b) The values of $\norm{T_i f}_2$. Unlike the normalized heat kernels of Figure \ref{Fig:trans_norms_low}, the translated kernels may have norms close to zero. In this example, $\norm{f}_2=1$ and the minimum norm of a translated window is 0.013.} 
  \label{Fig:trans_norms_high}
\end{figure}

\subsection{Localization of Translated Kernels in the Vertex Domain} \label{Se:trans_loc}
We now examine to what extent translated kernels are localized in the vertex domain. First, we note %it is easy to see 
that a polynomial kernel with degree $K$ that is translated to a given center vertex is strictly localized in a ball of radius $K$ around the center vertex, where the distance $d_{\G}(\cdot,\cdot)$ used to define the ball is the \emph{geodesic} or \emph{shortest path} distance (i.e., the distance between two vertices is the minimum number of edges in any path connecting them). Note that this choice of distance measure ignores the weights of the edges and only depends on the unweighted adjacency matrix of the graph.
\begin{lemma} \label{Le:Lap_power}
Let $\widehat{p_K}$ be a polynomial kernel with degree $K$; i.e., %a signal that can be written as 
\begin{align}\label{Eq:poly_kern}
\widehat{p_K}(\l)=\sum_{k=0}^K a_k \lambda_{\l}^k 
\end{align}
for some coefficients $\{a_k\}_{k=0,1,\ldots,K}$. If $d_{\G}(i,n)>K$, then $(T_i p_K)(n) = 0$.
\end{lemma}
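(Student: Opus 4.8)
The plan is to translate the claim into a statement about the sparsity pattern of powers of the graph Laplacian and then exploit that $\widehat{p_K}$ is a polynomial. Using the matrix-function form of the generalized convolution, $f\ast g=\hat g(\L)f$, together with the definition \eqref{Eq:new_translation} of the generalized translation, I would first write
\begin{align*}
(T_i p_K)(n)=\sqrt N\,(p_K\ast\delta_i)(n)=\sqrt N\,\bigl(\widehat{p_K}(\L)\delta_i\bigr)(n)=\sqrt N\,\bigl[\widehat{p_K}(\L)\bigr]_{n,i}=\sqrt N\sum_{k=0}^{K}a_k\,\bigl[\L^k\bigr]_{n,i},
\end{align*}
where the third equality uses that $\delta_i$ is the $i^{th}$ standard basis vector and the last uses \eqref{Eq:poly_kern} together with linearity of the map $A\mapsto[A]_{n,i}$. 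Thus it suffices to show that $\bigl[\L^k\bigr]_{n,i}=0$ whenever $d_\G(i,n)>k$, for each $k\in\{0,1,\ldots,K\}$.

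I would prove this by induction on $k$. For $k=0$, $\L^0=I_N$, so $\bigl[\L^0\bigr]_{n,i}=0$ unless $n=i$, i.e. unless $d_\G(i,n)=0$. For the inductive step, observe that $\L=D-W$ has $\bigl[\L\bigr]_{m,i}\neq 0$ only if $m=i$ or $m$ is adjacent to $i$; in either case $d_\G(m,i)\le 1$. Writing $\bigl[\L^k\bigr]_{n,i}=\sum_{m=1}^{N}\bigl[\L^{k-1}\bigr]_{n,m}\bigl[\L\bigr]_{m,i}$, suppose $d_\G(i,n)>k$. For any $m$ with $\bigl[\L\bigr]_{m,i}\neq0$ we have $d_\G(m,i)\le 1$, so the triangle inequality gives $d_\G(n,m)\ge d_\G(i,n)-d_\G(m,i)>k-1$, whence $\bigl[\L^{k-1}\bigr]_{n,m}=0$ by the induction hypothesis. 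Every term in the sum therefore vanishes, giving $\bigl[\L^k\bigr]_{n,i}=0$.

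Finally, if $d_\G(i,n)>K$ then $d_\G(i,n)>k$ for every $k\in\{0,1,\ldots,K\}$, so each summand $a_k\bigl[\L^k\bigr]_{n,i}$ in the displayed expansion is zero, and hence $(T_i p_K)(n)=0$. I do not expect any serious obstacle: the only place that needs care is the bookkeeping in the inductive step — specifically, noting that the diagonal term $D$ of $\L$ contributes only entries with $d_\G(m,i)=0\le 1$, so the statement ``the support of $\L$ lies within geodesic distance $1$'' is not spoiled, and that $d_\G$ genuinely satisfies the triangle inequality on a connected graph.
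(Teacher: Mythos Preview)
Your proof is correct and follows essentially the same approach as the paper: both reduce $(T_i p_K)(n)$ to $\sqrt{N}\sum_{k=0}^K a_k[\L^k]_{n,i}$ and then invoke the fact that $[\L^k]_{n,i}=0$ whenever $d_\G(i,n)>k$. The only difference is that the paper cites an external result (\cite[Lemma~5.2]{sgwt}) for this sparsity property of $\L^k$, whereas you supply a self-contained inductive proof; your version is therefore slightly more complete but not a genuinely different route.
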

\begin{proof}
By \cite[Lemma 5.2]{sgwt}, $d_{\G}(i,n)>K$ implies $(\L^K)_{i,n}=0$. Combining this with the fact that
\begin{align*}
(\L^k)_{i,n}=\sum_{\l=0}^{N-1} \lambda_{\l}^k \chi_{\l}^*(i)\chi_{\l}(n),
\end{align*}
and with the definitions \eqref{Eq:new_translation}
 and \eqref{Eq:poly_kern} of the generalized translation and the polynomial kernel, we have
\begin{align*}
\left(T_i p_K\right)(n) &= \sqrt{N}\sum_{\l=0}^{N-1}\widehat{p_K}(\lambda_{\l})\chi_{\l}^*(i)\chi_{\l}(n) \\
&= \sqrt{N}\sum_{\l=0}^{N-1}\sum_{k=0}^K a_k \lambda_{\l}^k \chi_{\l}^*(i)\chi_{\l}(n) \\
&= \sqrt{N} \sum_{k=0}^K a_k (\L^k)_{i,n} = 0,~\forall~i,n \in \V \hbox{ s.t. } d_{\G}(i,n)>K. 
\end{align*}
\end{proof}

More generally, as seen in Figure \ref{Fig:trans}, if we translate a smooth kernel to a given center vertex $i$, the magnitude of the translated kernel at another vertex $n$ decays as the distance between $i$ and $n$ increases.
%, so that the energy of the translated kernel is localized around the center vertex $i$. 
In Theorem \ref{Th:trans_loc}, we provide one estimate of this localization by combining the strict localization of polynomial kernels
with
%, the lower bound on the norm of a translated kernel from Lemma \ref{Le:trans_norm_bounds}, and 
%a classical 
the following %result 
upper bound on the minimax polynomial approximation error. 

\begin{lemma}[{{\cite[Equation (4.6.10)]{atkinson}}}] 
If a function $f(x)$ is $(K+1)$-times continuously differentiable on an interval $[a,b]$, then 
\begin{align}\label{Eq:atkinson}
\inf_{q_K}\bigl\{\norm{f-q_K}_{\infty}\bigr\} \leq \frac{\left[\frac{b-a}{2}\right]^{K+1}}{(K+1)!~2^K}\norm{f^{(K+1)}}_{\infty},
\end{align}
where the infimum in \eqref{Eq:atkinson} is taken over all polynomials $q_K$ of degree $K$.
\end{lemma}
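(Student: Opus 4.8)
The plan is to prove \eqref{Eq:atkinson} by \emph{exhibiting} one polynomial of degree $K$ whose uniform error already meets the bound; since the left-hand side is an infimum, a single such witness suffices and we never need to characterize the true minimax polynomial. The natural witness is the Lagrange interpolant $q_K$ of $f$ at a cleverly chosen set of $K+1$ nodes in $[a,b]$: the classical remainder formula for polynomial interpolation reduces the estimate to controlling the uniform norm of a monic polynomial of degree $K+1$, and the Chebyshev polynomial tells us exactly how small that can be made.

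First I would record the interpolation remainder: if $q_K$ has degree at most $K$ and agrees with $f$ at distinct nodes $x_0,\dots,x_K\in[a,b]$, and $f\in C^{K+1}[a,b]$, then for each $x\in[a,b]$ there is $\xi=\xi(x)\in[a,b]$ with
\[
f(x)-q_K(x)=\frac{f^{(K+1)}(\xi)}{(K+1)!}\,\omega(x),\qquad \omega(x):=\prod_{j=0}^{K}(x-x_j).
\]
This comes from applying Rolle's theorem repeatedly to $t\mapsto[f(t)-q_K(t)]-\tfrac{f(x)-q_K(x)}{\omega(x)}\,\omega(t)$, which vanishes at the $K+1$ nodes and at $x$. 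Taking absolute values and suprema gives $\norm{f-q_K}_\infty\le\frac{\norm{f^{(K+1)}}_\infty}{(K+1)!}\,\norm{\omega}_\infty$, so everything now hinges on choosing the nodes to minimize $\norm{\omega}_\infty$. Next I would pass to the reference interval via the affine map $\phi(t)=\frac{a+b}{2}+\frac{b-a}{2}t$ carrying $[-1,1]$ onto $[a,b]$; a monic degree-$(K+1)$ polynomial in $x$ pulls back to $\left(\frac{b-a}{2}\right)^{K+1}$ times a monic degree-$(K+1)$ polynomial in $t$. Finally I would take $x_0,\dots,x_K$ to be the $\phi$-images of the zeros of the Chebyshev polynomial $T_{K+1}$; then $\prod_j(t-t_j)=2^{-K}T_{K+1}(t)$ (dividing out the leading coefficient $2^K$), so $\omega(x)=\left(\frac{b-a}{2}\right)^{K+1}2^{-K}T_{K+1}(\phi^{-1}(x))$, whence $\norm{\omega}_\infty=\left(\frac{b-a}{2}\right)^{K+1}/2^{K}$ because $\norm{T_{K+1}}_\infty=1$ on $[-1,1]$. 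Substituting into the previous display gives exactly \eqref{Eq:atkinson}.

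I expect the main obstacle — really the only nontrivial ingredient — to be the two standard facts about Chebyshev polynomials: that $T_{K+1}$ has leading coefficient $2^{K}$ and uniform norm $1$ on $[-1,1]$ (both immediate from $T_{K+1}(\cos\theta)=\cos((K+1)\theta)$ together with the recurrence), and, if one wanted the bound to be tight, that no monic degree-$(K+1)$ polynomial beats norm $2^{-K}$ (the equioscillation/exchange argument: a better monic $p$ would make $2^{-K}T_{K+1}-p$ a nonzero polynomial of degree $\le K$ with sign changes at the $K+2$ extrema of $T_{K+1}$, a contradiction). For the purpose here, though, optimality is not needed: the Chebyshev nodes already produce $\norm{\omega}_\infty=\left(\frac{b-a}{2}\right)^{K+1}/2^{K}$, and the remaining work is just the bookkeeping of the affine change of variables and the remainder formula above.
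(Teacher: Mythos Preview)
Your argument is correct and is precisely the standard proof: interpolate at the (affinely mapped) Chebyshev nodes, apply the Lagrange remainder formula, and use that the monic Chebyshev polynomial $2^{-K}T_{K+1}$ has sup norm $2^{-K}$ on $[-1,1]$. The paper does not actually prove this lemma; it simply quotes it from \cite[Equation (4.6.10)]{atkinson}, so there is no alternative approach to compare against --- your proof is exactly what one finds behind that citation.
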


\begin{theorem} \label{Th:trans_loc}
Let %$K:=d_{\G}(i,n)-1$,
$\hat{g}:[0,\lambda_{\max}]\rightarrow \Rbb$ be a kernel, %with $\hat{g}(0)\neq 0$,
 and define $d_{in}:=d_{\G}(i,n)$ and $K_{in}:=d_{in}-1$. %, and define the minimax polynomial approximation error
%\begin{align*}
%B_{\hat{g}}(K):=\inf_{\widehat{p_K}}\left\{\sup_{\lambda \in [0,\lambda_{\max}]} \left|\hat{g}(\lambda)-\widehat{p_K}(\lambda)\right|\right\},
%\end{align*}
%where the infimum is taken over all polynomials of degree $K$, as defined in \eqref{Eq:poly_kern}.
%$\widehat{p_K}(\cdot)$ is the $K^{th}$ order shifted Chebyshev polynomial approximation to $\hat{g}$:
 Then 
\begin{align} \label{Eq:loc_bound0}
|(T_i g)(n)| \leq \sqrt{N} \inf_{\widehat{p_{K_{in}}}}\left\{\sup_{\lambda \in [0,\lambda_{\max}]} \left|\hat{g}(\lambda)-\widehat{p_{K_{in}}}(\lambda)
\right|\right\}=\sqrt{N} \inf_{\widehat{p_{K_{in}}}} \left\{ \norm{\hat{g}-\widehat{p_{K_{in}}}
}_{\infty}\right\}, 
% B_{\hat{g}}(d_{in}-1)}. % \leq \frac{\sqrt{N}B_g^{\prime}(K)}{2^K (K+1)!~|\hat{g}(0)|},
\end{align}
where the infimum is taken over all polynomial kernels of degree $K_{in}$, as defined in \eqref{Eq:poly_kern}. 
%{\color{red} (Can we eliminate the $\sqrt{N}$?)} 
More generally, for $p,q \geq 1$ such that $\frac{1}{p}+\frac{1}{q}=1$, 
\begin{align} \label{Eq:loc_bound0}
|(T_i g)(n)| \leq \sqrt{N} \mu^{\frac{2(q-1)}{q}} \inf_{\widehat{p_{K_{in}}}} \left\{ \norm{\hat{g}-\widehat{p_{K_{in}}}
}_{p}\right\}.
% B_{\hat{g}}(d_{in}-1)}. % \leq \frac{\sqrt{N}B_g^{\prime}(K)}{2^K (K+1)!~|\hat{g}(0)|},
\end{align}
%where
%
%\begin{align*}
%B_g^{\prime}(K):=\sup_{\lambda \in [0,\lambda_{\max}]} |\hat{g}^{(K+1)}(\lambda)|.
%\end{align*}
%\end{theorem}
%
%{\color{red} A cleaner version:}
%\begin{theorem}
%Let $d_{in}:=d_{\G}(i,n)$. Then 
%\begin{align} \label{Eq:loc_bound2}
%\frac{|(T_i g)(n)|}{\norm{T_i g}_2} \leq \left[\frac{\sqrt{N}}{2^{d_{in}-1}~d_{in}!~|\hat{g}(0)|} \right] \sup_{\lambda \in [0,\lambda_{\max}]} |\hat{g}^{(d_{in})}(\lambda)|
%\end{align}
\end{theorem}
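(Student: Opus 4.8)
The plan is to combine the strict localization of polynomial kernels from Lemma~\ref{Le:Lap_power} with H\"older's inequality and the coherence bound $|\chi_\l(i)|\le\mu$; no auxiliary machinery beyond Proposition~\ref{Prop:conv_prop} is needed.

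First I would fix $i$ and $n$ and make the key observation: for \emph{any} polynomial kernel $\widehat{p_{K_{in}}}$ of degree $K_{in}=d_{in}-1$, we have $d_{\G}(i,n)=d_{in}>K_{in}$, so Lemma~\ref{Le:Lap_power} gives $(T_i p_{K_{in}})(n)=0$. Since $T_i$ is a linear operator on $\Rbb^N$ (generalized convolution with $\delta_i$ is linear by \eqref{Eq:scalar_mult} and \eqref{Eq:distributivity}, and $T_i f=\sqrt N\,(f\ast\delta_i)$), it follows that $(T_i g)(n)=\bigl(T_i(g-p_{K_{in}})\bigr)(n)$. Hence it suffices to bound $|(T_i h)(n)|$ for $h:=g-p_{K_{in}}$ and then take the infimum over all polynomial kernels of degree $K_{in}$, as defined in \eqref{Eq:poly_kern}.

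Next I would expand $(T_i h)(n)$ with the definition \eqref{Eq:new_translation} and apply the triangle inequality to get $|(T_i h)(n)|\le\sqrt N\sum_{\l=0}^{N-1}|\hat h(\lambda_\l)|\,|\chi_\l(i)|\,|\chi_\l(n)|$. Splitting the summand as $|\hat h(\lambda_\l)|$ times $|\chi_\l(i)|\,|\chi_\l(n)|$ and applying H\"older with exponents $p,q$ yields the factor $\bigl(\sum_\l|\hat h(\lambda_\l)|^p\bigr)^{1/p}=\norm{\hat g-\widehat{p_{K_{in}}}}_p$ times $\bigl(\sum_\l(|\chi_\l(i)|\,|\chi_\l(n)|)^q\bigr)^{1/q}$. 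To control the second factor I would use $|\chi_\l(i)|,|\chi_\l(n)|\le\mu$ together with $q-1\ge0$ to write $(|\chi_\l(i)|\,|\chi_\l(n)|)^q\le\mu^{2(q-1)}\,|\chi_\l(i)|\,|\chi_\l(n)|$, and then Cauchy--Schwarz together with the fact that each row of $\boldsymbol{\chi}$ has unit $\ell^2$ norm gives $\sum_\l|\chi_\l(i)|\,|\chi_\l(n)|\le1$, so $\bigl(\sum_\l(|\chi_\l(i)|\,|\chi_\l(n)|)^q\bigr)^{1/q}\le\mu^{2(q-1)/q}$. Assembling these bounds and taking the infimum over degree-$K_{in}$ polynomial kernels gives the general estimate; the case $q=1$ (so $p=\infty$), with $\max_\l|\hat h(\lambda_\l)|\le\sup_{\lambda\in[0,\lambda_{\max}]}|\hat h(\lambda)|$, recovers the first displayed inequality.

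I do not expect a genuine obstacle: the one substantive idea is recognizing, via Lemma~\ref{Le:Lap_power}, that every degree-$K_{in}$ polynomial kernel is annihilated at the pair $(i,n)$ by $T_i$, so only the polynomial-approximation residual contributes. The only mildly delicate point is the two-stage estimate of $\sum_\l(|\chi_\l(i)|\,|\chi_\l(n)|)^q$ — first pulling out $\mu^{2(q-1)}$, which is valid precisely because $q\ge1$, and then reducing to a Cauchy--Schwarz bound — since this is what produces the exact exponent $\tfrac{2(q-1)}{q}$ on $\mu$.
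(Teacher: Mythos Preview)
Your proposal is correct and follows essentially the same route as the paper's own proof: subtract an arbitrary degree-$K_{in}$ polynomial kernel (which vanishes at $(i,n)$ by Lemma~\ref{Le:Lap_power}), expand $T_i$ via \eqref{Eq:new_translation}, apply H\"older, and then bound $\sum_\l |\chi_\l(i)\chi_\l(n)|^q$ by first extracting $\mu^{2(q-1)}$ and then using Cauchy--Schwarz on the rows of $\boldsymbol{\chi}$. The paper's argument is identical in substance, including the two-stage handling of the eigenvector factor and the recovery of the $p=\infty$ case.
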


\begin{proof}
%Let $K:=d_{\G}(i,n)-1$. 
From Lemma \ref{Le:Lap_power}, $(T_i p_{K_{in}})(n) = 0$ for all polynomial kernels $\widehat{p_{K_{in}}}$ of degree $K_{in}$. Thus, we have
\begin{align} \label{Eq:ratio_num_bounda}
|(T_i g)(n)|=\inf_{\widehat{p_{K_{in}}}}|(T_i g)(n)-(T_i p_{K_{in}})(n)| &= \inf_{\widehat{p_{K_{in}}}} \left|\sqrt{N}\sum_{\l=0}^{N-1}\left[\hat{g}(\lambda_{\l})-\widehat{p_{K_{in}}}(\lambda_{\l})\right]\chi_{\l}^*(i)\chi_{\l}(n)\right| \nonumber \\
& \leq \sqrt{N} \inf_{\widehat{p_{K_{in}}}} \left\{ \sum_{\l=0}^{N-1}\Bigl|\left[\hat{g}(\lambda_{\l})-\widehat{p_{K_{in}}}(\lambda_{\l})\right]\chi_{\l}^*(i)\chi_{\l}(n)\Bigr| \right\} \nonumber \\
& \leq \sqrt{N} \inf_{\widehat{p_{K_{in}}}} \left\{ \left(\sum_{\l=0}^{N-1}\left|\hat{g}(\lambda_{\l})-\widehat{p_{K_{in}}}(\lambda_{\l})\right|^p \right)^{\frac{1}{p}} \left(\sum_{\l=0}^{N-1}\left|\chi_{\l}^*(i)\chi_{\l}(n)\right|^q\right)^{\frac{1}{q}} \right\} \\
& \leq \sqrt{N} \mu^{\frac{2(q-1)}{q}}\inf_{\widehat{p_{K_{in}}}} \left\{ \norm{\hat{g}-\widehat{p_{K_{in}}}
}_{p}\right\}, \label{Eq:ratio_num_bound} 
%& \leq \sqrt{N} \inf_{\widehat{p_{K_{in}}}} \left\{ \sup_{\lambda \in [0,\lambda_{\max}]} \left|\hat{g}(\lambda)-\widehat{p_{K_{in}}}(\lambda)\right| \sum_{\l=0}^{N-1} \left| \chi_{\l}^*(i)\chi_{\l}(n) \right| \right\} \nonumber \\
%& \leq \sqrt{N} \inf_{\widehat{p_{K_{in}}}}\left\{\sup_{\lambda \in [0,\lambda_{\max}]} \left|\hat{g}(\lambda)-\widehat{p_{K_{in}}}(\lambda)
%\right|\right\},
%B_{\hat{g}}(K), 
\end{align}
where \eqref{Eq:ratio_num_bounda} follows from H\"{o}lder's inequality, and \eqref{Eq:ratio_num_bound} follows from
\begin{align*}
\sum_{\l=0}^{N-1} \left| \chi_{\l}^*(i)\chi_{\l}(n) \right|^q \leq \mu^{2(q-1)} \sum_{\l=0}^{N-1} \left| \chi_{\l}^*(i)\chi_{\l}(n) \right|\leq \mu^{2(q-1)} \sqrt{\sum_{\l=0}^{N-1} \left| \chi_{\l}(i)\right|^2}\sqrt{\sum_{\l=0}^{N-1} \left| \chi_{\l}(n)\right|^2 } =\mu^{2(q-1)} .
\end{align*}

 %Substituting  \eqref{Eq:atkinson} into \eqref{Eq:ratio_num_bound} yields \eqref{Eq:dis_loc_bound0}.
%Combining \eqref{Eq:ratio_num_bound} with the lower bound in Lemma \ref{Le:trans_norm_bounds} yields the first inequality in \eqref{Eq:loc_bound}.
%The second inequality in \eqref{Eq:loc_bound} then follows from a standard numerical analysis bound on the minimax polynomial approximation error (see, e.g., {\cite[Equation (4.6.10)]{atkinson}}).
\end{proof}
%{\color{red} Insert three pictures: heat kernel, sin filter, and a power, and plot the window and the bound.}
Substituting  \eqref{Eq:atkinson} into \eqref{Eq:ratio_num_bound} yields the following corollary to Theorem \ref{Th:trans_loc}.
%The following corollary follows immediately from Theorem \ref{Eq:loc_bound0} by 
\begin{corollary}\label{Co:trans_loc_diff}
If $\hat{g}(\cdot)$ is $d_{in}$-times continuously differentiable on $[0,\lambda_{\max}]$, then 
\begin{align} \label{Eq:dis_loc_bound0}
{|(T_i g)(n)|} \leq \left[
%\left(4+\frac{4}{\pi^2}\ln(d_{in}-1)\right)
\frac{2\sqrt{N}}{d_{in}!}\left(\frac{\lambda_{\max}}{4}\right)^{d_{in}} \right] \sup_{\lambda \in [0,\lambda_{\max}]} \left|\hat{g}^{(d_{in})}(\lambda)\right|.
\end{align}
\end{corollary}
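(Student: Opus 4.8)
This corollary follows almost immediately by chaining two results already in hand, so the plan is essentially a substitution argument. First I would invoke Theorem \ref{Th:trans_loc} in its simplest form, namely inequality \eqref{Eq:loc_bound0} with $p=\infty$ (equivalently $q=1$, so the factor $\mu^{2(q-1)/q}$ collapses to $1$):
\begin{align*}
|(T_i g)(n)| \leq \sqrt{N}\,\inf_{\widehat{p_{K_{in}}}}\left\{\norm{\hat{g}-\widehat{p_{K_{in}}}}_{\infty}\right\},
\end{align*}
where the infimum runs over polynomial kernels of degree $K_{in}=d_{in}-1$.

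Next I would apply the minimax approximation bound \eqref{Eq:atkinson} with the identifications $f=\hat{g}$, the interval $[a,b]=[0,\lambda_{\max}]$, and $K=K_{in}=d_{in}-1$, so that $K+1=d_{in}$. The hypothesis of Corollary \ref{Co:trans_loc_diff} --- that $\hat{g}(\cdot)$ is $d_{in}$-times continuously differentiable on $[0,\lambda_{\max}]$ --- is exactly the regularity condition $(K+1)$-times continuously differentiable needed to apply \eqref{Eq:atkinson}. This gives
\begin{align*}
\inf_{\widehat{p_{K_{in}}}}\left\{\norm{\hat{g}-\widehat{p_{K_{in}}}}_{\infty}\right\} \leq \frac{\left[\frac{\lambda_{\max}}{2}\right]^{d_{in}}}{d_{in}!~2^{d_{in}-1}}\,\norm{\hat{g}^{(d_{in})}}_{\infty}.
\end{align*}

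Finally I would combine the two displays and simplify the constant: $\left[\frac{\lambda_{\max}}{2}\right]^{d_{in}} / 2^{d_{in}-1} = 2\left(\frac{\lambda_{\max}}{4}\right)^{d_{in}}$, which, together with the leading $\sqrt{N}$ and the $1/d_{in}!$, reproduces exactly the bracketed prefactor in \eqref{Eq:dis_loc_bound0}, with $\norm{\hat{g}^{(d_{in})}}_{\infty} = \sup_{\lambda\in[0,\lambda_{\max}]}|\hat{g}^{(d_{in})}(\lambda)|$ the remaining factor. There is no real obstacle here --- the only things to be careful about are matching the degree bookkeeping ($K_{in}=d_{in}-1$ so that the number of derivatives is $d_{in}$) and the arithmetic simplification of the power-of-two factors; the substructure of why $q=1$ is the right choice (it removes the coherence penalty, giving the cleanest bound) is worth a one-line remark but requires no argument beyond noting $\mu^{0}=1$.
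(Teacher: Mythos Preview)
Your proposal is correct and matches the paper's own argument: the paper states the corollary follows by substituting the minimax approximation bound \eqref{Eq:atkinson} into the sup-norm estimate of Theorem~\ref{Th:trans_loc}, which is exactly the chain you describe, and your degree bookkeeping ($K_{in}=d_{in}-1$, so $K+1=d_{in}$) and arithmetic simplification of the constant are both right.
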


When the kernel $\hat{g}(\cdot)$ has a significant DC component, as is the case for the most logical candidate window functions, such as the heat kernel, then we can combine Theorem \ref{Th:trans_loc} and Corollary \ref{Co:trans_loc_diff} with the lower bound on the norm of a translated kernel from Lemma \ref{Le:trans_norm_bounds} to show that the energy of the translated kernel is localized around the center vertex $i$.

\begin{corollary} \label{Co:loc_extra}
If $\hat{g}(0)\neq0$, then 
\begin{align} \label{Eq:loc_bound}
\frac{|(T_i g)(n)|}{\norm{T_i g}_2} \leq \frac{\sqrt{N}}{|\hat{g}(0)|}~\inf_{\widehat{p_{K_{in}}}}\left\{\sup_{\lambda \in [0,\lambda_{\max}]} \left|\hat{g}(\lambda)-\widehat{p_{K_{in}}}(\lambda)
\right|\right\}. %B_{\hat{g}}(d_{in}-1) 
\end{align}
Moreover, if $\hat{g}(\cdot)$ is $d_{in}$-times continuously differentiable on $[0,\lambda_{\max}]$, then 
\begin{align} \label{Eq:dis_loc_bound}
\frac{|(T_i g)(n)|}{\norm{T_i g}_2} \leq \left[
\frac{2\sqrt{N}}{d_{in}!~|\hat{g}(0)|}\left(\frac{\lambda_{\max}}{4}\right)^{d_{in}} \right] \sup_{\lambda \in [0,\lambda_{\max}]} \left|\hat{g}^{(d_{in})}(\lambda)\right|.
\end{align}
\end{corollary}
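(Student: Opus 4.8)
The plan is to bound the numerator $|(T_i g)(n)|$ and the denominator $\norm{T_i g}_2$ separately, and then divide. For the denominator I would invoke the first inequality of Lemma \ref{Le:trans_norm_bounds}, applied with $f = g$, which gives $\norm{T_i g}_2 \geq |\hat g(0)|$; the hypothesis $\hat g(0)\neq 0$ is precisely what makes this lower bound strictly positive, so that the ratio in \eqref{Eq:loc_bound} is well-defined and the inequality meaningful. For the numerator, the first displayed estimate in Theorem \ref{Th:trans_loc} (the one without the $\mu$ factor, i.e.\ the $q=1$ case) already gives $|(T_i g)(n)| \leq \sqrt N\,\inf_{\widehat{p_{K_{in}}}}\{\norm{\hat g - \widehat{p_{K_{in}}}}_\infty\} = \sqrt N\,\inf_{\widehat{p_{K_{in}}}}\{\sup_{\lambda\in[0,\lambda_{\max}]}|\hat g(\lambda) - \widehat{p_{K_{in}}}(\lambda)|\}$. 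Combining the two bounds yields \eqref{Eq:loc_bound}.

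For the sharper estimate \eqref{Eq:dis_loc_bound} I would instead bound the numerator using Corollary \ref{Co:trans_loc_diff}: under the added hypothesis that $\hat g(\cdot)$ is $d_{in}$-times continuously differentiable on $[0,\lambda_{\max}]$, that corollary gives $|(T_i g)(n)| \leq \bigl[\frac{2\sqrt N}{d_{in}!}(\frac{\lambda_{\max}}{4})^{d_{in}}\bigr]\sup_{\lambda\in[0,\lambda_{\max}]}|\hat g^{(d_{in})}(\lambda)|$. Dividing again by the lower bound $|\hat g(0)|$ on $\norm{T_i g}_2$ from Lemma \ref{Le:trans_norm_bounds} produces \eqref{Eq:dis_loc_bound}.

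There is no real obstacle here: the corollary is an immediate consequence of results already in hand, and the only things to verify are bookkeeping — that the differentiability hypothesis matches the one in Corollary \ref{Co:trans_loc_diff}, and that $\hat g(0)\neq 0$ is exactly the condition under which the lower bound of Lemma \ref{Le:trans_norm_bounds} can be used. It is worth adding a remark on interpretation: since $d_{in} = d_{\G}(i,n)$ increases with the distance from $n$ to the center vertex $i$ while the bracketed prefactor decays super-exponentially in $d_{in}$ (for fixed $\lambda_{\max}$ and uniformly bounded derivatives of $\hat g$), the bound shows that the normalized translated kernel $T_i g / \norm{T_i g}_2$ is concentrated near $i$ whenever the generating kernel is smooth with a nonzero DC component $\hat g(0)$.
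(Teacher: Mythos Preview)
Your proposal is correct and matches the paper's approach exactly: the paper states just before the corollary that one combines Theorem~\ref{Th:trans_loc} and Corollary~\ref{Co:trans_loc_diff} for the numerator with the lower bound of Lemma~\ref{Le:trans_norm_bounds} for the denominator, which is precisely what you do.
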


\begin{example} If $\hat{g}(\lambda)=e^{-\tau \lambda}$, then $\hat{g}(0)=1$, $\sup_{\lambda \in [0,\lambda_{\max}]} \left|\hat{g}^{(d_{in})}(\lambda)\right|=\tau^{d_{in}}$, and \eqref{Eq:dis_loc_bound} becomes
\begin{align} \label{Eq:heat_kernel_spread_bound}
\frac{|(T_i g)(n)|}{\norm{T_i g}_2} \leq 
%\left[\left(4+\frac{4}{\pi^2}\ln(d_{in}-1)\right)
\frac{2\sqrt{N} }{d_{in}!} \left(\frac{\tau \lambda_{\max}}{4}\right)^{d_{in}} \leq \sqrt{\frac{2N}{d_{in} \pi}} e^{-\frac{1}{12d_{in}+1}} \left(\frac{\tau \lambda_{\max} e}{4 d_{in}}\right)^{d_{in}},
%\right].
\end{align}
where the second inequality follows from Stirling's approximation: $m! \geq \sqrt{2\pi m} \left(\frac{m}{e}\right)^m e^{\frac{1}{12m+1}}$ (see, e.g. \cite[p.~257]{abram}). %{\color{red} 
Interestingly, a term of the form $\left(\frac{C \tau e}{d_{in}}\right)^{d_{in}}$ also appears in the upper bound of \cite[Theorem 1]{metzger}, which is specifically tailored to the heat kernel and derived via a rather different approach.
%\begin{align*}
%e^{}
%\end{align*}
% (Compare to heat kernel bounds \cite{metzger} or else remove second inequality in previous equation array)} 

Agaskar and Lu \cite{agaskar_spie,agaskar_icassp} define the graph spread of a signal $f$ around a given vertex $i$ as 
\begin{align*}
\Delta_i^2(f):=\frac{1}{\norm{f}_2^2} \sum_{n=1}^N[d_{\G}(i,n)]^2 [f(n)]^2.
\end{align*} 
We can now use \eqref{Eq:heat_kernel_spread_bound} to bound the spread of a translated heat kernel around the center vertex $i$ in terms of the diffusion parameter $\tau$ as follows:
\begin{align}\label{Eq:heat_spread_vertex1}
\Delta_i^2(T_i g)&~=\frac{1}{\norm{T_i g}_2^2} \sum_{n=1}^N d_{in}^2 |T_i g(n)|^2 \nonumber \\
&~= \sum_{n\neq i} d_{in}^2 \left[\frac{|T_i g(n)|}{\norm{T_i g}_2}\right]^2 \nonumber \\
& \stackrel{\eqref{Eq:heat_kernel_spread_bound}}\leq 4N \sum_{n\neq i} \frac{1}{[(d_{in}-1)!]^2} \left(\frac{\tau^2 \lambda_{\max}^2}{16}\right)^{d_{in}} \nonumber \\
&~= \frac{N \tau^2 \lambda_{\max}^2}{4} \sum_{r=1}^{diam(\G)} \left|{\cal R}(i,r)\right| \frac{1}{[(r-1)!]^2} \left(\frac{\tau^2 \lambda_{\max}^2}{16}\right)^{r-1}, 
\end{align}
where $\left|{\cal R}(i,r)\right|$ is the number of vertices whose distance from vertex $i$ is exactly $r$. Note that
\begin{align} \label{Eq:heat_spread_vertex2}
\left|{\cal R}(i,r)\right| \leq d_i(d_{\max}-1)^{r-1},
\end{align}
so, substituting \eqref{Eq:heat_spread_vertex2} into \eqref{Eq:heat_spread_vertex1}, we have
\begin{align}\label{Eq:heat_spread_vertex3}
\Delta_i^2(T_i g) &\leq \frac{N \tau^2 \lambda_{\max}^2}{4} \sum_{r=1}^{diam(\G)} \frac{d_i}{[(r-1)!]^2} \left(\frac{\tau^2 \lambda_{\max}^2}{16(d_{\max}-1)}\right)^{r-1} \nonumber \\
&= \frac{N \tau^2 \lambda_{\max}^2 d_i}{4} \sum_{r=0}^{diam(\G)-1} \frac{1}{[(r)!]^2} \left(\frac{\tau^2 \lambda_{\max}^2}{16(d_{\max}-1)}\right)^{r} \nonumber \\
&\leq \frac{N \tau^2 \lambda_{\max}^2 d_i}{4} \sum_{r=0}^{\infty} \frac{1}{r!} \left(\frac{\tau^2 \lambda_{\max}^2}{16(d_{\max}-1)}\right)^{r}  \nonumber \\
& = \frac{N \tau^2 \lambda_{\max}^2 d_i}{4} \exp\left(\frac{\tau^2 \lambda_{\max}^2}{16(d_{\max}-1)}\right),
\end{align}
where the final equality in \eqref{Eq:heat_spread_vertex3} follows from the Taylor series expansion of the exponential function around zero.
While the above bounds %in \eqref{Eq:heat_spread_vertex} 
can be quite loose (in particular for weighted graphs as we have not incorporated the graph weights into the bounds), 
%{\color{red} (can we do better than just dropping one $r!$ factor?)} 
the analysis nonetheless shows that we can control the spread of translated heat kernels around their center vertices through the diffusion parameter $\tau$.
%Thus, the diffusion parameter $\tau$ controls . 
For any $\epsilon > 0$, in order to ensure $\Delta_i^2(T_i g) \leq \epsilon$, it suffices to take 
%\begin{align*}
%\tau \leq \frac{1}{4} + \sqrt{1-\frac{64 \ln\left(\frac{2\pi\epsilon}{ N \lambda_{\max} e D_i}\right)}{{{ \lambda_{\max} e (D_{\max}-1)}}}}
%\end{align*}
\begin{align}\label{Eq:Lambert}
\tau \leq \frac{4}{\lambda_{\max}}\sqrt{(d_{\max}-1)~\Omega\left(\frac{\epsilon}{4Nd_i(d_{\max}-1)}\right)},
%\frac{4}{\lambda_{\max}e^2 (D_{\max}-1)}\Omega\left(\frac{\sqrt{2}\pi^{\frac{3}{2}}e(D_{\max}-1)\epsilon}{ND_i} \right),
\end{align}
where $\Omega(\cdot)$ is the Lambert W function. Note that the right-hand side of \eqref{Eq:Lambert} is increasing in $\epsilon$.

In the limit, as $\tau \rightarrow 0$, $T_i g \rightarrow \delta_i$, and the spread $\Delta_i^2(T_i g) \rightarrow 0$. On the other hand, as $\tau \rightarrow \infty$, $T_i g(n) \rightarrow \frac{1}{\sqrt{N}}$ for all $n$, $\norm{T_i g}_2 \rightarrow 1$, and $\Delta_i^2(T_i g) \rightarrow \frac{1}{N} \sum_{n=1}^{N} d_{in}^2$.
\end{example}
 
\section{Generalized Modulation of Signals on Graphs}
\label{Se:modulation}
Motivated by the fact that the classical modulation \eqref{Eq:classical_modulation} is a multiplication by a Laplacian eigenfunction, we define, for any $k \in \{0,1,\ldots,N-1\}$, a \emph{generalized modulation operator} $M_{k}: \Rbb^N \rightarrow \Rbb^N$ by
\begin{eqnarray}\label{Eq:mod_def1}
\left(M_{k}f\right)(n):=\sqrt{N}f(n)\chi_{k}(n).
\end{eqnarray}

\subsection{Localization of Modulated Kernels in the Graph Spectral Domain}
%First, 
Note 
first that, for connected graphs, $M_0$ is the identity operator, as $\chi_{0}(n)=\frac{1}{\sqrt{N}}$ for all $n$. 
In the classical case, the modulation operator represents a translation in the Fourier domain:
\begin{align*}
\widehat{M_{\xi}f}(\omega)=\hat{f}(\omega-\xi), \forall \omega \in \Rbb.
\end{align*}
This property is not true in general for our modulation operator on graphs due to the discrete nature of the graph. However, we do have the nice property that if $\hat{g}(\l)=\delta_0(\lambda_{\l})$, then % (i.e., $f$ is constant), then %$f=\chi_0=\frac{1}{\sqrt{N}}\mathbf{1}$), then 
\begin{align*}
\widehat{M_k g}(\lambda_{\l}) &=\sum_{n=1}^N \chi_{\l}^*(n) (M_k g)(n) \\
&=  \sum_{n=1}^N \chi_{\l}^*(n) \sqrt{N} \chi_k(n) \frac{1}{\sqrt{N}} =\delta_0(\lambda_{\l}-\lambda_k)=
\begin{cases}
1,&\hbox{ if }\lambda_{\l}=\lambda_k \\
0,&\hbox{ otherwise}
\end{cases}
, %\\
%& \hspace{1.62in}=\hat{f}(\l-k).
\end{align*}
so $M_k$ maps the DC component of any signal ${f} \in \Rbb^N$ to 
%$\left(\frac{1}{\sqrt{N}}\sum_{n=1}^N f(n)\right)\chi_k$. 
$\hat{f}(0)\chi_k$.
%So $M_k$ maps the DC component of any signal $f \in \Rbb^N$ to $\left(\frac{1}{\sqrt{N}}\sum_{i=1}^N f(n)\right)\chi_k$. 
%Moreover, if we start with a window $g$ that is localized around the eigenvalue 0 in the graph spectral domain, then $M_k g$ should be localized around the eigenvalue $\lambda_k$ in the graph spectral domain. 
%Additionally,
%$M_k$ maps the DC component of any signal ${f} \in \Rbb^N$ to 
%$\left(\frac{1}{\sqrt{N}}\sum_{n=1}^N f(n)\right)\chi_k$. 
%$\hat{f}(0)\chi_k$.
Moreover, if we start with a function ${f}$ that is localized around the eigenvalue 0 in the graph spectral domain, as in Figure \ref{Fig:mod}, then $M_k {f}$ %is 
will be localized around the eigenvalue $\lambda_k$ in the graph spectral domain. %We can see this effect in Figure \ref{Fig:mod},
%, where we modulate  %shows the graph We can see this in Figure \ref{Fig:WGF}, where we translate and modulate 
%the graph signal from Figure \ref{Fig:essence}(b). 
\begin{figure}[h]
\centering
{\hfill
\begin{minipage}[b]{.4\linewidth}
   \centering
   \centerline{\includegraphics[width=.92\linewidth]{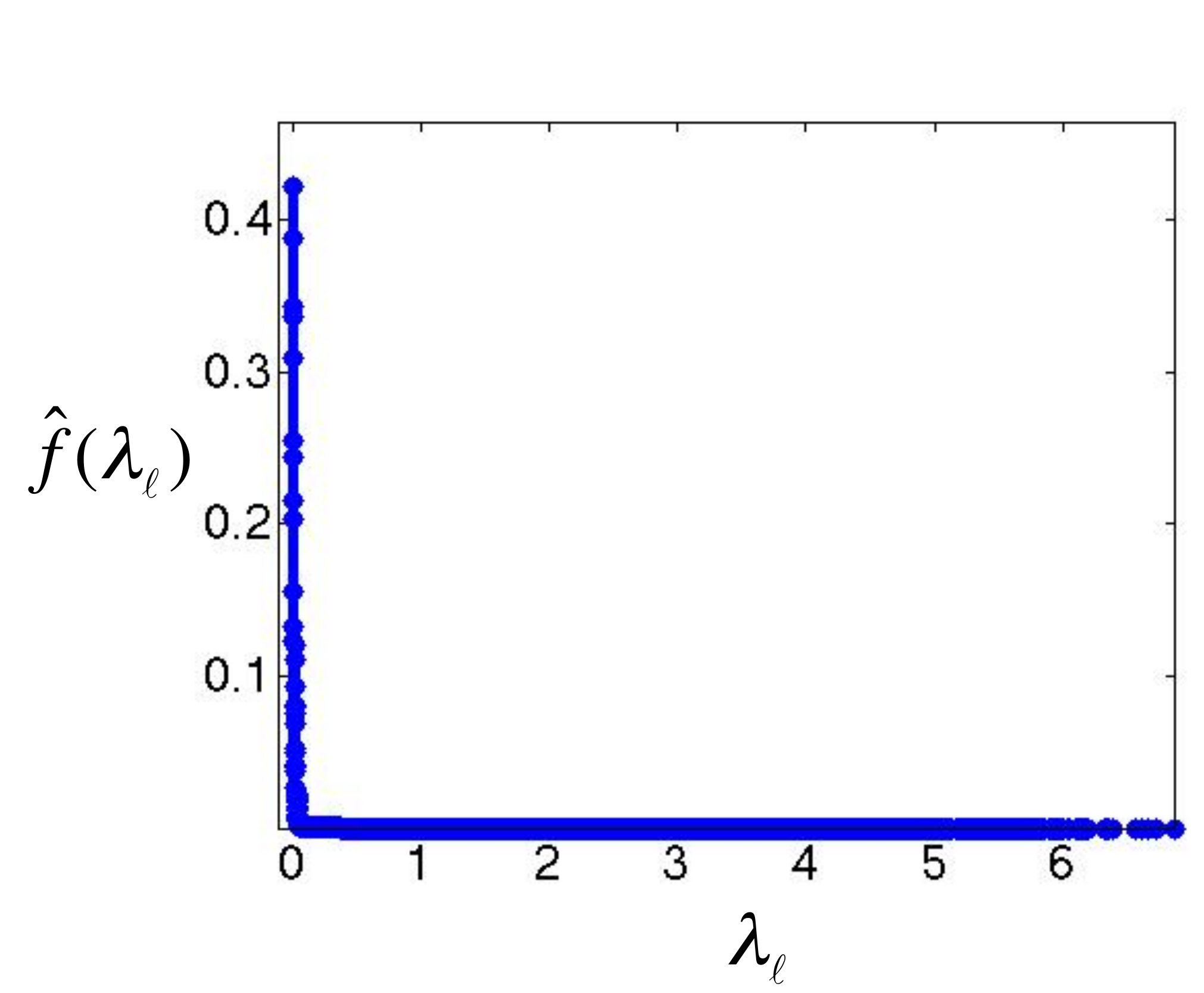}} %{fig_spectral_rep2}}
\centerline{\small{~~~~~~~~~~~~~~(a)}}
\end{minipage}
\hfill
\begin{minipage}[b]{.45\linewidth}
   \centering
   \centerline{\includegraphics[width=\linewidth]{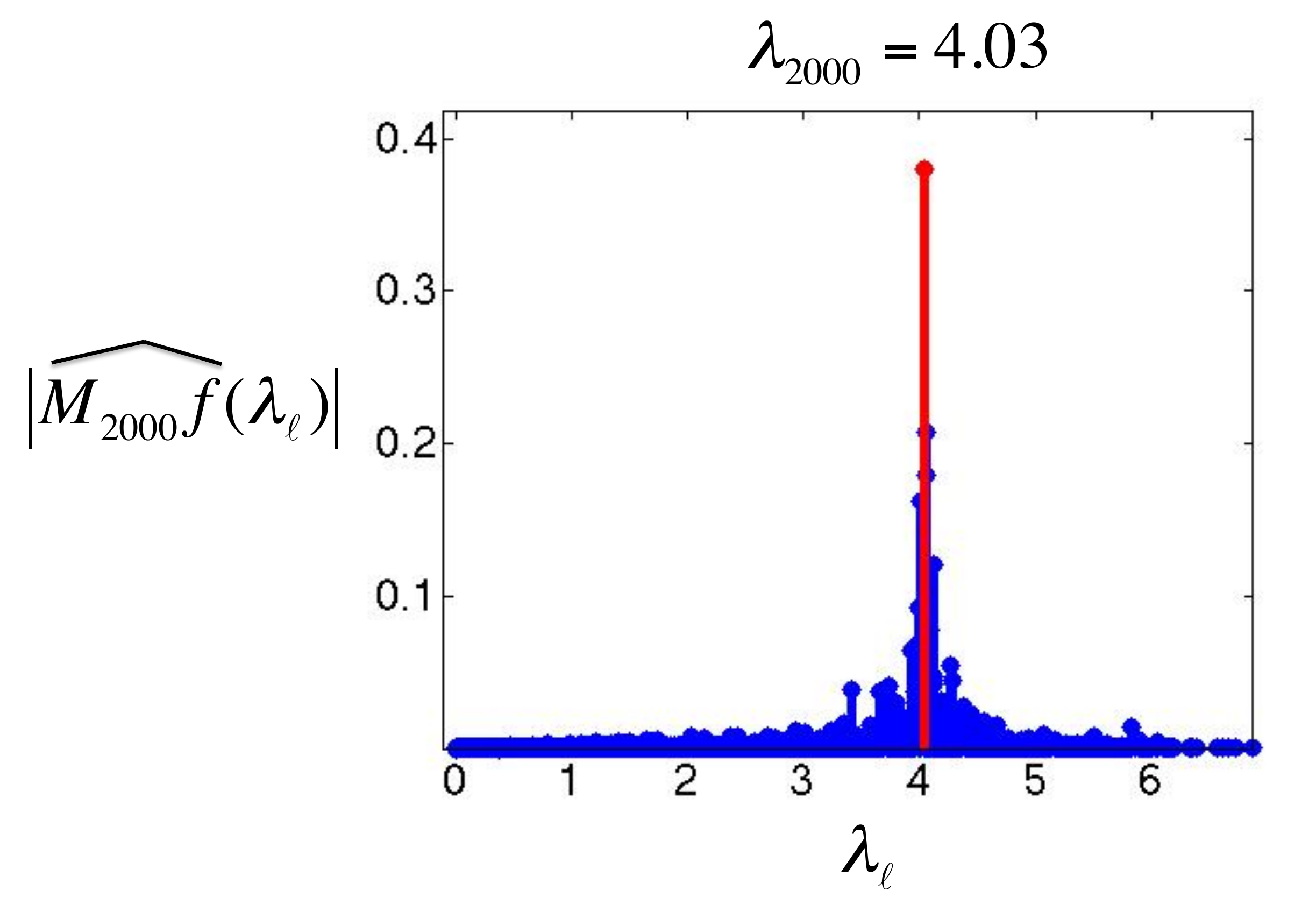}} %{fig_modulated2}}
\centerline{\small{~~~~~~~~~~~~~~~~~~~~~~(b)}}
\end{minipage}
\hfill}
\caption {(a) The graph spectral representation of a signal $f$ with $\hat{f}(\l)=Ce^{-100 \lambda_{\l}}$ on the Minnesota graph, where the constant $C$ is chosen such that $\norm{f}_2=1$. % of $f$ Figure \ref{Fig:essence}(b). 
%(b) The spectral representation $\widehat{M_{1500} f}$ of the modulated signal $M_{1500} f$. Note that $\lambda_{1500}=2.71$.} %1000=1.57
 (b) The graph spectral representation $\widehat{M_{2000} f}$ of the modulated signal $M_{2000} f$. Note that the modulated signal is localized around $\lambda_{2000}=4.03$ in the graph spectral domain.} %1000=1.57
  \label{Fig:mod}
\end{figure}
%and we quantify the localization in the next theorem. 

We quantify this localization in the next theorem, which is an improved version of \cite[Theorem 1]{shuman_SSP_2012}.
\begin{theorem} \label{Th:mod_trans}
Given a weighted graph $\G$ with $N$ vertices, %let $C_1(\G)$ be a constant such that 
%\begin{align}\label{Eq:C1_cond}
%\max_{\footnotesize \begin{array}{c}\l=0,1,\ldots,N-1 \\ i=1,2,\ldots,N\end{array}} \left\{|\chi_{\l}(i)|\right\} \leq \frac{C_1}{\sqrt{N}}.
%\end{align}
%I
if for some $\gamma > 0$, a %given 
%signal $f$ 
kernel $\hat{f}$ %with $\norm{g}_2=1$ 
satisfies 
\begin{align} \label{Eq:gsum_cond}
\sqrt{N}\sum_{\l=1}^{N-1}{\mu_{\l}|\hat{f}(\lambda_{\l})|} \leq \frac{|\hat{f}(0)|}{1+\gamma},
%\frac{1}{|\hat{f}(0)|}\sum_{\l=1}^{N-1}{|\hat{f}(\l)|} \leq \frac{1}{C_1+\kappa (C_1)^3},
\end{align}
 then
 \begin{align}\label{Eq:mod_trans_result}
 |\widehat{M_k f}(\lambda_{k})| \geq \gamma |\widehat{M_k f}(\lambda_{\l})|~\hbox{ for all }\l\neq k.
 \end{align}
\end{theorem}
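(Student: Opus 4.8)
The plan is to expand $\widehat{M_k f}(\lambda_\ell)$ explicitly and separate the ``DC-generated'' main term from the remainder. Recall from \eqref{Eq:mod_def1} that $(M_k f)(n) = \sqrt{N} f(n)\chi_k(n)$, and that $f(n) = \sum_{\ell'} \hat{f}(\lambda_{\ell'})\chi_{\ell'}(n)$. Taking the graph Fourier transform,
\begin{align*}
\widehat{M_k f}(\lambda_\ell) = \sqrt{N}\sum_{n=1}^N \chi_\ell^*(n)\,\chi_k(n)\sum_{\ell'=0}^{N-1}\hat{f}(\lambda_{\ell'})\chi_{\ell'}(n) = \sqrt{N}\sum_{\ell'=0}^{N-1}\hat{f}(\lambda_{\ell'})\sum_{n=1}^N \chi_\ell^*(n)\chi_k(n)\chi_{\ell'}(n).
\end{align*}
The $\ell'=0$ term is special: since $\chi_0(n) = 1/\sqrt{N}$, it contributes $\hat{f}(0)\sum_n \chi_\ell^*(n)\chi_k(n) = \hat{f}(0)\,\delta_0(\lambda_\ell - \lambda_k)$ by orthonormality (this is exactly the computation already done in the excerpt right before the theorem). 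So for $\ell = k$ the main term is $\hat{f}(0)$, while for $\ell \neq k$ the main term vanishes, and in both cases the remainder is $R_\ell := \sqrt{N}\sum_{\ell'=1}^{N-1}\hat{f}(\lambda_{\ell'})\sum_n \chi_\ell^*(n)\chi_k(n)\chi_{\ell'}(n)$.

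Next I would bound $|R_\ell|$ uniformly in $\ell$. By the triangle inequality and then bounding one factor $|\chi_{\ell'}(n)| \leq \mu_{\ell'}$ (from \eqref{Eq:mu_l_def}),
\begin{align*}
|R_\ell| \leq \sqrt{N}\sum_{\ell'=1}^{N-1}|\hat{f}(\lambda_{\ell'})|\sum_{n=1}^N |\chi_\ell(n)|\,|\chi_k(n)|\,|\chi_{\ell'}(n)| \leq \sqrt{N}\sum_{\ell'=1}^{N-1}\mu_{\ell'}|\hat{f}(\lambda_{\ell'})|\sum_{n=1}^N |\chi_\ell(n)|\,|\chi_k(n)|,
\end{align*}
and then Cauchy--Schwarz on the inner sum gives $\sum_n |\chi_\ell(n)||\chi_k(n)| \leq \|\chi_\ell\|_2\|\chi_k\|_2 = 1$. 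Hence $|R_\ell| \leq \sqrt{N}\sum_{\ell'=1}^{N-1}\mu_{\ell'}|\hat{f}(\lambda_{\ell'})|$, which by hypothesis \eqref{Eq:gsum_cond} is at most $|\hat{f}(0)|/(1+\gamma)$. Write this bound as $B := |\hat{f}(0)|/(1+\gamma)$.

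Finally I would combine the pieces. For $\ell = k$: $|\widehat{M_k f}(\lambda_k)| \geq |\hat{f}(0)| - |R_k| \geq |\hat{f}(0)| - B = \frac{\gamma}{1+\gamma}|\hat{f}(0)|$. For $\ell \neq k$: $|\widehat{M_k f}(\lambda_\ell)| = |R_\ell| \leq B = \frac{1}{1+\gamma}|\hat{f}(0)|$. Dividing, $|\widehat{M_k f}(\lambda_k)| \geq \gamma\,|\widehat{M_k f}(\lambda_\ell)|$ for all $\ell \neq k$, which is \eqref{Eq:mod_trans_result}. I do not anticipate a serious obstacle here; the only point requiring a little care is handling the possibility that $\lambda_k$ is a repeated eigenvalue, in which case $\delta_0(\lambda_\ell - \lambda_k)$ is $1$ for several indices $\ell$ — but the statement only claims a comparison with indices $\ell \neq k$, and for any such $\ell$ with $\lambda_\ell = \lambda_k$ the argument still goes through provided one uses the chosen orthonormal eigenvectors consistently (the cross term $\sum_n \chi_\ell^*(n)\chi_k(n) = 0$ by orthonormality even within an eigenspace), so the main term still vanishes and the remainder bound is unchanged. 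The main ``work'' is really just the bookkeeping in isolating the DC term, which the excerpt has essentially already carried out.
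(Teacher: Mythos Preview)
Your proposal is correct and follows essentially the same approach as the paper: expand $\widehat{M_k f}(\lambda_\ell)$, isolate the DC term $\hat{f}(0)\delta_{\ell k}$, and bound the remainder by $\sqrt{N}\sum_{\ell'\geq 1}\mu_{\ell'}|\hat{f}(\lambda_{\ell'})|$ via $|\chi_{\ell'}(n)|\leq\mu_{\ell'}$ and Cauchy--Schwarz on $\sum_n|\chi_\ell(n)||\chi_k(n)|\leq 1$. The paper's argument is line-for-line the same, only splitting the $\ell=k$ and $\ell\neq k$ remainder estimates into separate displays rather than handling them with a single uniform bound as you do.
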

\begin{proof} %[Proof of Theorem \ref{Th:mod_trans}]
\begin{align}\label{Eq:mod_1}
%&
\widehat{M_k f}(\lambda_{\l^{\prime}}) %\nonumber \\
%&~~~~
=\sum_{n=1}^N \sqrt{N} \chi_{\l^{\prime}}^*(n)\chi_k(n)f(n) %\nonumber \\
&%~~~~
=\sum_{n=1}^N \sqrt{N} \chi_{\l^{\prime}}^*(n) \chi_k(n)\sum_{\l^{\prime \prime}=0}^{N-1}\chi_{{\l^{\prime \prime}}}(n) \hat{f}(\lambda_{\l^{\prime \prime}}) \nonumber \\
&%~~~~
=\sum_{n=1}^N \sqrt{N} \chi_{\l^{\prime}}^*(n) \chi_k(n) \left[\frac{\hat{f}(0)}{\sqrt{N}}+\sum_{\l^{\prime \prime}=1}^{N-1}\chi_{\l^{\prime \prime}}(n) \hat{f}(\lambda_{\l^{\prime \prime}})\right] \nonumber \\
&=\hat{f}(0) \delta_{\l^{\prime} k} + \sum_{n=1}^N \sqrt{N} \chi_{\l^{\prime}}^*(n) \chi_k(n) \sum_{\l^{\prime \prime}=1}^{N-1}\chi_{\l^{\prime \prime}}(n) \hat{f}(\lambda_{\l^{\prime \prime}}).
\end{align}
Therefore, we have
\begin{align}\label{Eq:mod_2a}
%&
|\widehat{M_k f}(\lambda_{k})| %\nonumber \\
&%~~~~
=\left|\hat{f}(0)+ \sum_{n=1}^N \sqrt{N} |\chi_{k}(n)|^2 \sum_{\l^{\prime \prime}=1}^{N-1}\chi_{\l^{\prime \prime}}(n) \hat{f}(\lambda_{\l^{\prime \prime}})\right|  \\
&
%~~~~
\geq |\hat{f}(0)| - \left|\sum_{n=1}^N \sqrt{N} |\chi_{k}(n)|^2 \sum_{\l^{\prime \prime}=1}^{N-1}\chi_{\l^{\prime \prime}}(n) \hat{f}(\lambda_{\l^{\prime \prime}})\right| \nonumber \\
&
%~~~~
\geq |\hat{f}(0)| -\sum_{n=1}^N \sqrt{N} |\chi_{k}(n)|^2 \sum_{\l^{\prime \prime}=1}^{N-1}\left|\chi_{\l^{\prime \prime}}(n)\right| |\hat{f}(\lambda_{\l^{\prime \prime}})| \nonumber \\
&%~~~~
\geq |\hat{f}(0)| - \sqrt{N}  \sum_{\l^{\prime \prime}=1}^{N-1}\mu_{\l^{\prime \prime}} |\hat{f}(\lambda_{\l^{\prime \prime}})| \nonumber \\
%\geq |\hat{f}(0)| -C_1\sum_{\l^{\prime \prime}=1}^{N-1} |\hat{f}(\l^{\prime \prime})| \nonumber \\
%&%~~~~ 
%\geq |\hat{f}(0)| \left(1 -\frac{C_1}{C_1+\kappa (C_1)^3}\right),
& \geq |\hat{f}(0)| \left(1-\frac{1}{1+\gamma}\right) \label{Eq:mod_2}
\end{align}
where the last two inequalities follow from \eqref{Eq:mu_l_def} and \eqref{Eq:gsum_cond}, respectively. 
%where the last two inequalities follow from \eqref{Eq:C1_cond} and \eqref{Eq:gsum_cond}, respectively. 
Returning to \eqref{Eq:mod_1} for $\l \neq k$, we have
\begin{align}\label{Eq:mod_3}
\gamma | \widehat{M_k f}(\lambda_{\l})|%&
&=\gamma \left|\sum_{n=1}^N \sqrt{N}  \chi_{\l}^*(n)\chi_k(n)\sum_{\l^{\prime \prime}=1}^{N-1}\chi_{\l^{\prime \prime}}(n) \hat{f}(\lambda_{\l^{\prime \prime}})\right| \nonumber \\
& \leq \gamma \sum_{n=1}^N \left|\sqrt{N}  \chi_{\l}^*(n)\chi_k(n)\right|\sum_{\l^{\prime \prime}=1}^{N-1}|\chi_{\l^{\prime \prime}}(n)| |\hat{f}(\lambda_{\l^{\prime \prime}})| \nonumber \\
& \leq \gamma \sum_{n=1}^N \left|\sqrt{N}  \chi_{\l}^*(n)\chi_k(n)\right|\sum_{\l^{\prime \prime}=1}^{N-1}\mu_{\l^{\prime \prime}} |\hat{f}(\lambda_{\l^{\prime \prime}})| \nonumber \\
& \leq \gamma \sqrt{N} \sum_{\l^{\prime \prime}=1}^{N-1}\mu_{\l^{\prime \prime}} |\hat{f}(\lambda_{\l^{\prime \prime}})| \nonumber \\
%& \leq \kappa \sum_{n=1}^N \sum_{\l^{\prime \prime}=1}^{N-1} \sqrt{N} \left|\chi_{\l}^*(n)\chi_k(n)\chi_{\l^{\prime \prime}}(n) \right| |\hat{f}(\l^{\prime \prime})| \nonumber \\
& \leq |\hat{f}(0)| \left(\frac{\gamma}{1+\gamma}\right)
%& \leq \kappa C_1^3 \sum_{\l^{\prime \prime}=1}^{N-1} |\hat{f}(\l^{\prime \prime}) | \nonumber \\
%& 
%\leq |\hat{f}(0)|\frac{\kappa C_1^3}{C_1+\kappa C_1^3},
\end{align}
where the last three inequalities follow from \eqref{Eq:mu_l_def}, H\"{o}lder's inequality,  %\eqref{Eq:C1_cond} 
and \eqref{Eq:gsum_cond}, respectively. Combining \eqref{Eq:mod_2} and \eqref{Eq:mod_3} yields \eqref{Eq:mod_trans_result}.
\end{proof}

\begin{corollary}
Given a weighted graph $\G$ with $N$ vertices, if for some $\gamma > 0$, a 
kernel $\hat{f}$ satisfies \eqref{Eq:gsum_cond}, then
\begin{align*}
\frac{|\widehat{M_k f} (\lambda_{k})|^2}{\norm{M_k f}_2^2} \geq \frac{\gamma^2}{N+3+4\gamma+\gamma^2}.
\end{align*}
\end{corollary}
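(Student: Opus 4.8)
The plan is to combine the pointwise spectral-dominance estimate \eqref{Eq:mod_trans_result} of Theorem \ref{Th:mod_trans} with the Parseval relation applied to the modulated signal. Since $\norm{M_k f}_2^2 = \norm{\widehat{M_k f}}_2^2 = \sum_{\l=0}^{N-1} |\widehat{M_k f}(\lambda_{\l})|^2$, I would split off the $\l=k$ term and write $\norm{M_k f}_2^2 = |\widehat{M_k f}(\lambda_{k})|^2 + \sum_{\l \neq k} |\widehat{M_k f}(\lambda_{\l})|^2$. The hypothesis \eqref{Eq:gsum_cond} is exactly the condition under which Theorem \ref{Th:mod_trans} guarantees $|\widehat{M_k f}(\lambda_{\l})| \leq \tfrac1\gamma |\widehat{M_k f}(\lambda_{k})|$ for every $\l \neq k$, so each of the $N-1$ off-diagonal squared terms is at most $\gamma^{-2} |\widehat{M_k f}(\lambda_{k})|^2$. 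Hence $\norm{M_k f}_2^2 \leq \bigl(1 + \tfrac{N-1}{\gamma^2}\bigr)|\widehat{M_k f}(\lambda_{k})|^2$, which rearranges to $\tfrac{|\widehat{M_k f}(\lambda_{k})|^2}{\norm{M_k f}_2^2} \geq \tfrac{\gamma^2}{\gamma^2 + N - 1}$. Since $\gamma^2 + N - 1 \leq N + 3 + 4\gamma + \gamma^2$ for every $\gamma > 0$, this already implies the claimed bound.

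If one wants to reproduce the stated denominator $N + 3 + 4\gamma + \gamma^2$ exactly, I would instead extract the explicit absolute estimates from inside the proof of Theorem \ref{Th:mod_trans} rather than only its conclusion. From \eqref{Eq:mod_2a}--\eqref{Eq:mod_2}, the diagonal coefficient obeys $\tfrac{\gamma}{1+\gamma}|\hat f(0)| \leq |\widehat{M_k f}(\lambda_{k})| \leq \tfrac{2+\gamma}{1+\gamma}|\hat f(0)|$, where the upper bound comes from applying the triangle inequality to \eqref{Eq:mod_2a} together with \eqref{Eq:gsum_cond}; and the chain leading to \eqref{Eq:mod_3} shows $|\widehat{M_k f}(\lambda_{\l})| \leq \tfrac{1}{1+\gamma}|\hat f(0)|$ for each $\l \neq k$. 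Feeding these into the Parseval sum gives $\norm{M_k f}_2^2 \leq \tfrac{(2+\gamma)^2 + (N-1)}{(1+\gamma)^2}|\hat f(0)|^2 = \tfrac{N + 3 + 4\gamma + \gamma^2}{(1+\gamma)^2}|\hat f(0)|^2$, while $|\widehat{M_k f}(\lambda_{k})|^2 \geq \tfrac{\gamma^2}{(1+\gamma)^2}|\hat f(0)|^2$; dividing and cancelling $(1+\gamma)^2$ yields the corollary verbatim.

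There is essentially no hard step here: the statement is a short consequence of Theorem \ref{Th:mod_trans} plus Parseval, and the ratio on the left is manifestly the energy concentration of $\widehat{M_k f}$ at the single eigenvalue $\lambda_k$. The only points that need care are (i) recognizing that Parseval lets us pass from $\norm{M_k f}_2$ in the vertex domain to a sum of squared graph-spectral coefficients, so that \eqref{Eq:mod_trans_result} becomes directly applicable, and (ii) the elementary bookkeeping of the $N-1$ off-diagonal terms and the identity $(2+\gamma)^2 + (N-1) = N + 3 + 4\gamma + \gamma^2$. I would lead with the first, cleaner argument and simply remark that it in fact delivers the slightly sharper constant $\gamma^2/(\gamma^2 + N - 1)$.
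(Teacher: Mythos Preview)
Your proposal is correct. Your second argument is exactly the paper's proof: the paper lower-bounds the numerator by squaring \eqref{Eq:mod_2}, upper-bounds the denominator via Parseval by combining the triangle-inequality bound $|\widehat{M_k f}(\lambda_k)| \leq \tfrac{2+\gamma}{1+\gamma}|\hat f(0)|$ (obtained at \eqref{Eq:mod_2a}) with the off-diagonal bound $|\widehat{M_k f}(\lambda_{\l})| \leq \tfrac{1}{1+\gamma}|\hat f(0)|$ from \eqref{Eq:mod_3}, and then simplifies $(2+\gamma)^2 + N - 1 = N + 3 + 4\gamma + \gamma^2$.

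Your first argument is a genuinely different and cleaner route: you use only the \emph{conclusion} \eqref{Eq:mod_trans_result} of Theorem~\ref{Th:mod_trans} rather than the internal absolute estimates in $|\hat f(0)|$, and you bound each off-diagonal term relative to the diagonal one. This buys you a strictly sharper constant $\gamma^2/(\gamma^2 + N - 1)$, since the paper's detour through the separate upper bound on $|\widehat{M_k f}(\lambda_k)|$ introduces the extra slack $4 + 4\gamma$. The paper's approach, on the other hand, makes the dependence on $|\hat f(0)|$ explicit at every step, which is slightly more informative if one cares about absolute (not just relative) sizes of the spectral coefficients. Either argument is fine; your remark that the first already implies the stated bound and in fact improves it is accurate.
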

\begin{proof}
We lower bound the numerator by squaring \eqref{Eq:mod_2}. We upper bound the denominator as follows:
\begin{align*}
\norm{M_k f}_2^2 = |\widehat{M_k f}(\lambda_k)|^2 + \sum_{\l\neq k}|\widehat{M_k f}(\lambda_{\l})|^2 \leq |\hat{f}(0)|^2 \left(\frac{2+\gamma}{1+\gamma}\right)^2 + (N-1) |\hat{f}(0)|^2 \left(\frac{1}{1+\gamma}\right)^2
\end{align*}
which follows from squaring \eqref{Eq:mod_3} and from applying the triangle inequality at \eqref{Eq:mod_2a}, following the same steps until \eqref{Eq:mod_2}, and squaring the result.
\end{proof}
%{\color{red}
\begin{remark}
It would also be interesting to find conditions on $\hat{f}$ such that 
\begin{align*}
\frac{\sum\limits_{\l: \lambda_{\l} \in {\cal B}(\lambda_k,r)} \left|\widehat{M_{k} f} (\lambda_{\l})\right|^2}{\norm{M_k f}_2^2} \geq \gamma_2(r),
\end{align*}
or such that the spread of $\widehat{M_k f}$ around $\lambda_k$, defined as either \cite[p.~93]{shuman_SPM}
\begin{align}\label{Eq:correct_spectral_spread}
\frac{\sum\limits_{\l=0}^{N-1} (\lambda_{\l}-\lambda_{k})^2 \left|\widehat{M_{k} f} (\lambda_{\l})\right|^2}{\norm{M_k f}_2^2} % \leq \kappa_3
\hbox{~~~or~~~}
\frac{\sum\limits_{\l=0}^{N-1} (\sqrt{\lambda_{\l}}-\sqrt{\lambda_{k}})^2 \left|\widehat{M_{k} f} (\lambda_{\l})\right|^2}{\norm{M_k f}_2^2} % \leq \kappa_3
\end{align}
is upper bounded.\footnote{Agaskar and Lu \cite{agaskar_spie,agaskar_icassp} suggest to define the spread of $\hat{f}$ in the graph spectral domain as $\frac{1}{\norm{f}_2^2}\sum\limits_{\l=0}^{N-1} \lambda_{\l} |\hat{f} (\lambda_{\l})|^2$. With this definition, the ``spread'' is always taken around the eigenvalue 0, as opposed to the mean of the signal, and, as a result, the signal with the highest possible spread is actually a (completely localized) Kronecker delta at $\lambda_{\max}$ in the graph spectral domain, or, equivalently, $\chi_{\max}$ in the vertex domain.} In Section \ref{Se:mod_alt} of the Appendix, we present an alternative definition of the generalized modulation and localization results regarding that modulation operator that %have 
contain a spread form similar to \eqref{Eq:correct_spectral_spread}.
\end{remark}

\section{Windowed Graph Fourier Frames}
\label{Se:frame}
Equipped with these generalized notions of translation and modulation of signals on graphs, we can now define windowed graph Fourier atoms and a windowed graph Fourier transform analogously  
%Analogously 
to \eqref{Eq:classical_atom} and \eqref{Eq:classical_STFT} in the classical case. 
%Throughout the remainder of the paper, we stick with the original definition \eqref{Eq:mod_def1} of generalized modulation.

\subsection{Windowed Graph Fourier Atoms and a Windowed Graph Fourier Transform}
For a window $g \in \Rbb^N$, % with $\norm{g}_2=1$
we define a windowed graph Fourier atom by\footnote{An alternative definition of an atom is $g_{i,k}:=T_i M_k g$. In the classical setting, the translation and modulation operators do not commute, but the difference between the two definitions of a windowed Fourier atom is a phase factor \cite[p. 6]{groechenig}. In the graph setting, it is more difficult to characterize the difference between the two definitions. In our numerical experiments, defining the atoms as $g_{i,k}:=M_k T_i g$ tended to lead to more informative analysis when using the modulation definition \eqref{Eq:mod_def1}, but defining the atoms as $g_{i,k}:=T_i {M}_k  g$ tended to lead to more informative analysis when using the alternative modulation definition presented Section \ref{Se:mod_alt}. In this paper, we always use $g_{i,k}:=M_k T_i g$.} 
%\vspace{-.3cm}
\begin{align}\label{Eq:wgft_atom_comp}
g_{i,k}(n):=\left(M_{k} T_{i} g\right)(n)= %\sqrt
{N} \chi_{k}(n) \sum_{\l=0}^{N-1}\hat{g}(\lambda_{\l})\chi_{\l}^*(i)\chi_{\l}(n),
\end{align} 
and the windowed graph Fourier transform of a function $f\in \Rbb^N$ by 
%Sf(i,k):=\ip{f}{g_{i,k}}$.
\begin{align}\label{Eq:wgft_comp}
Sf(i,k):=\ip{f}{g_{i,k}}. %=\sqrt{N}\sum_{n=1}^N f(n)  \chi_{k}(n) \sum_{\l=0}^{N-1}\hat{g}(\l)\chi_{\l}^*(i)\chi_{\l}(n).
\end{align}
Note that as discussed in Section \ref{Se:translation}, we usually define the window directly in the graph spectral domain, as $\hat{g}(\lambda): [0,\lambda_{\max}] \rightarrow \Rbb$.

\begin{example} \label{Ex:wgft1}

We consider a random sensor network graph with $N=64$ vertices, and thresholded Gaussian kernel edge weights \eqref{Eq:gkw} with
$\sigma_1=\sigma_2=0.2$. We consider the signal $f$ shown in Figure \ref{Fig:wgft1}(a), and wish to compute the windowed graph Fourier transform coefficient $Sf(27,11)=\ip{f}{g_{27,11}}$ using a window kernel $\hat{g}(\lambda_{\l})=Ce^{-\tau\lambda_{\l}}$, where $\tau=3$ and $C=1.45$ is chosen such that $\norm{g}_2=1$. The windowed graph Fourier atom $g_{27,11}$ is shown in the vertex and graph spectral domains in Figure \ref{Fig:wgft1}(b) and \ref{Fig:wgft1}(c), respectively.

\begin{figure}[h]
\centering
%\hfill
\begin{minipage}[b]{.32\linewidth}
%\centering
\hspace{.3in}\centerline{{$f$}} \vspace{.15in}

\centerline{\hspace{.5in}\includegraphics[width=\linewidth]{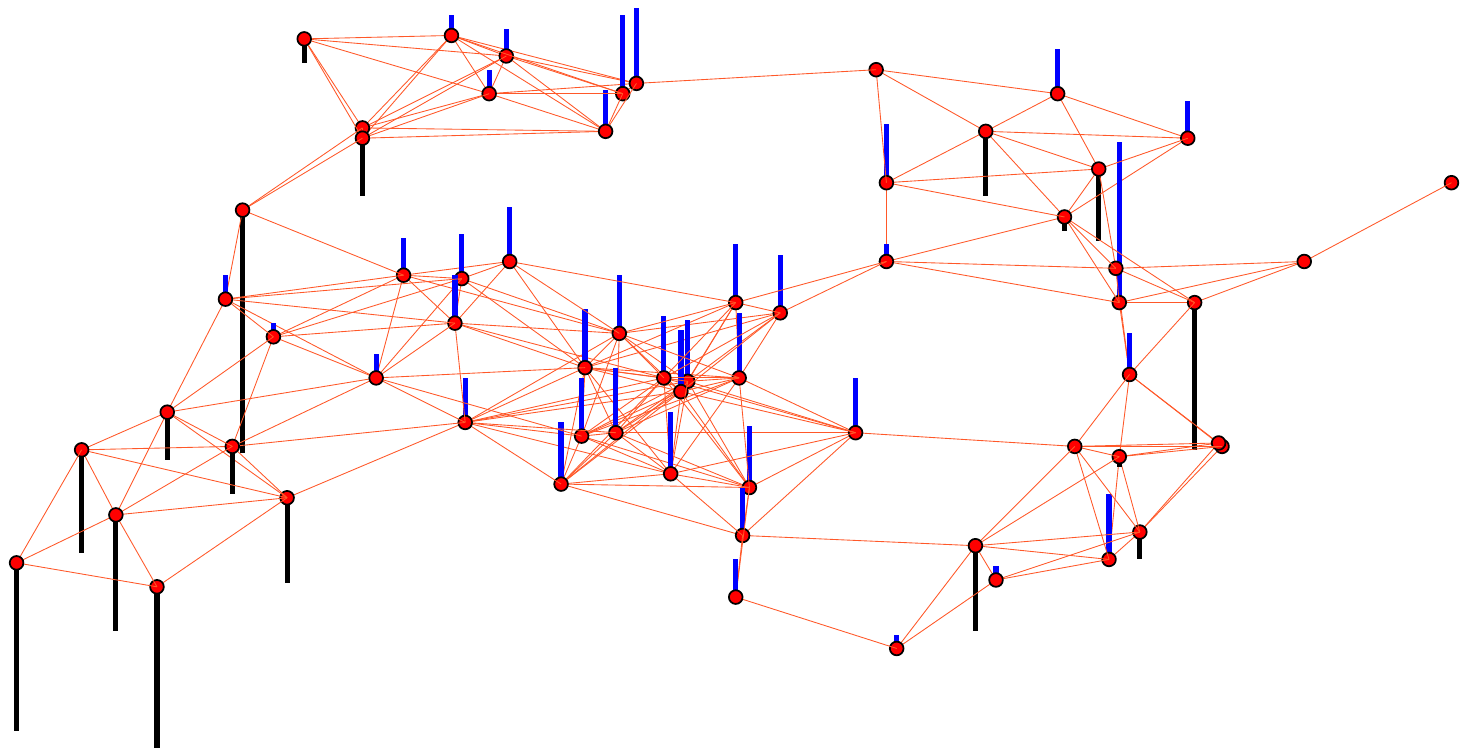}} %{fig_swiss_vertex1}}
\vspace{.18in}
\centerline{\small{~~~~~~~~~~~~~~(a)}}
\end{minipage} 
\hfill
\begin{minipage}[b]{.32\linewidth}
 %  \centering
\hspace{.3in}\centerline{{$g_{27,11}$}} \vspace{.015cm}

\centerline{\hspace{.5in}\includegraphics[width=\linewidth]{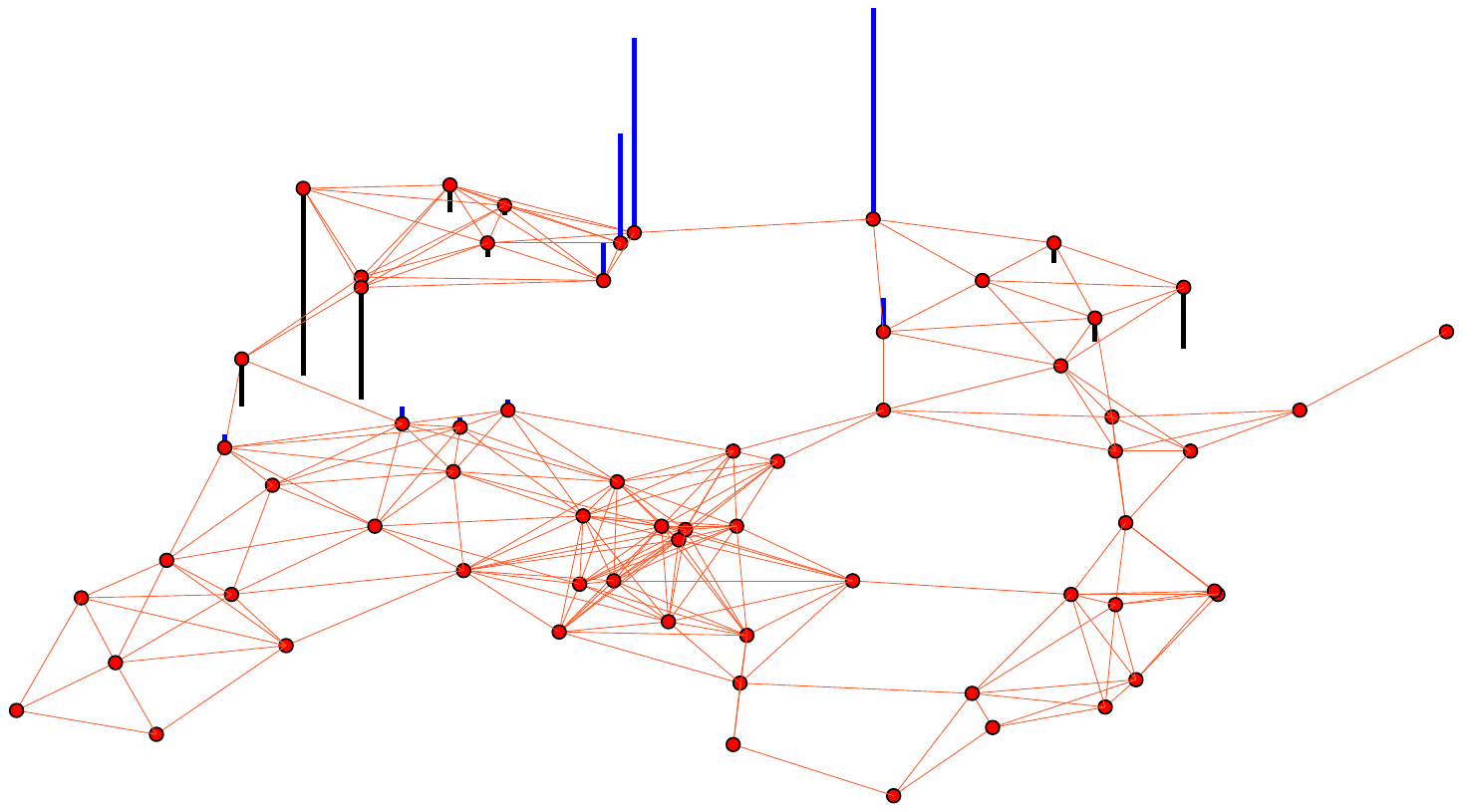}}   %{fig_swiss_vertex2}} 
\vspace{.18in}
\centerline{\small{~~~~~~~~~~~~~~(b)}}  
\end{minipage} %\hfill \\
%\centering
\hfill
\begin{minipage}[b]{.32\linewidth}
 %  \centering
\hspace{.3in} \centerline{{$\hat{g}_{27,11}$}}
\centerline{\hspace{.5in}\includegraphics[width=.85\linewidth]{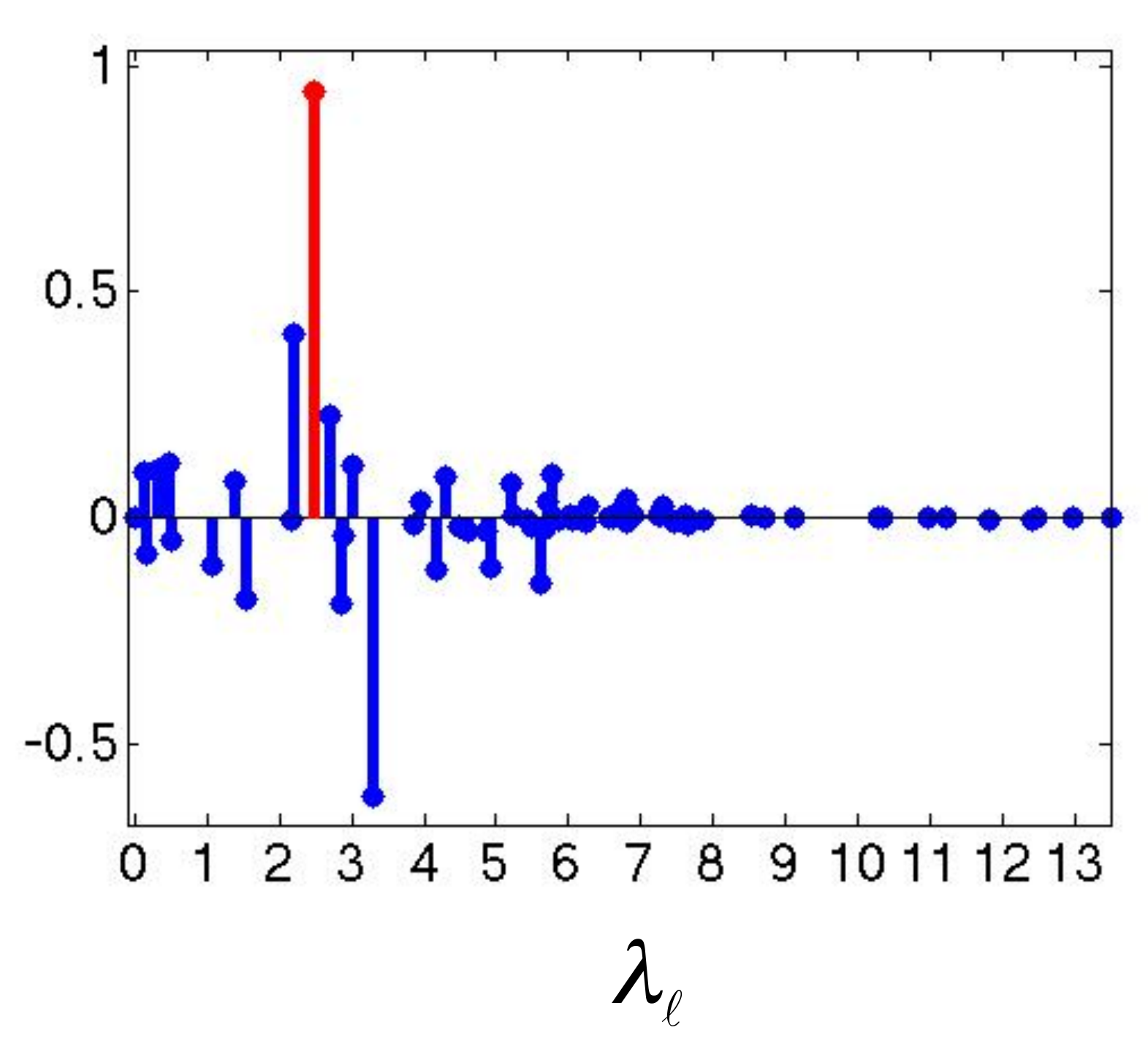}} %{fig_swiss_vertex3}}
\centerline{~~~~~~~~~~~~~~~\small{(c)}}
\end{minipage} 
%\hspace{.15in}\hfill
\caption {Windowed graph Fourier transform example. (a) A signal $f$ on a random sensor network with 64 vertices. (b) A windowed graph Fourier atom $g_{27,11}$ centered at vertex 27 and frequency $\lambda_{11}=2.49$. (c) The same atom plotted in the graph spectral domain. Note that the atom is localized in both the vertex domain and the graph spectral domain. To determine the corresponding windowed graph Fourier transform coefficient, we take an inner product of the signals in (a) and (b): $S f(27,11)=\ip{f}{g_{27,11}}=0.358$.} 
 \label{Fig:wgft1}
\end{figure}

\end{example}

%{\color{red} (Insert alternative explanation)}
As with the classical windowed Fourier transform described in Section \ref{Se:classical}, we can interpret the computation of the windowed graph Fourier transform coefficients in a second manner. Namely, we can first multiply the signal $f$ by (the complex conjugate of) a translated window:
\begin{align}\label{Eq:dot_star}
f_{windowed,i}(n):=((T_i g)^* .* f)(n):=[T_i g(n)]^*[f(n)],
\end{align}
and then compute the windowed graph Fourier transform coefficients as $\sqrt{N}$ times the graph Fourier transform of the windowed signal:
\begin{align*}
\sqrt{N}\hat{f}_{windowed,i}(k) =\sqrt{N}\ip{f_{windowed,i}}{\chi_k} %\\
&=\sqrt{N}\sum_{n=1}^N [T_i g(n)]^*[f(n)] \chi_k^*(n) \\
&=\sum_{n=1}^N f(n) \sum_{\l=0}^{N-1} N \hat{g}(\lambda_{\l}) \chi_{\l}(i) \chi_{\l}^*(n) \chi_k^*(n) \\
&=\sum_{n=1}^N f(n) g_{ik}^*(n) = \ip{f}{g_{ik}} = Sf(i,k). %\frac{1}{\sqrt{N}}
\end{align*}
Note that the $.*$ in \eqref{Eq:dot_star} represents component-wise multiplication.

\addtocounter{example}{-1}
\begin{example}[cont.]
In Figure \ref{Fig:wgft2}, we illustrate this alternative interpretation of the windowed graph Fourier transform by first multiplying the signal $f$ of Figure \ref{Fig:wgft1}(a) by the window $T_{27}g$ (componentwise), and then taking the graph Fourier transform of the resulting windowed signal.
\begin{figure}[h]
\centering
%\hfill
\begin{minipage}[b]{.32\linewidth}
%\centering
\hspace{.3in}\centerline{{$T_{27} g$}} %\vspace{.005cm} 

\centerline{\hspace{.5in}\includegraphics[width=\linewidth]{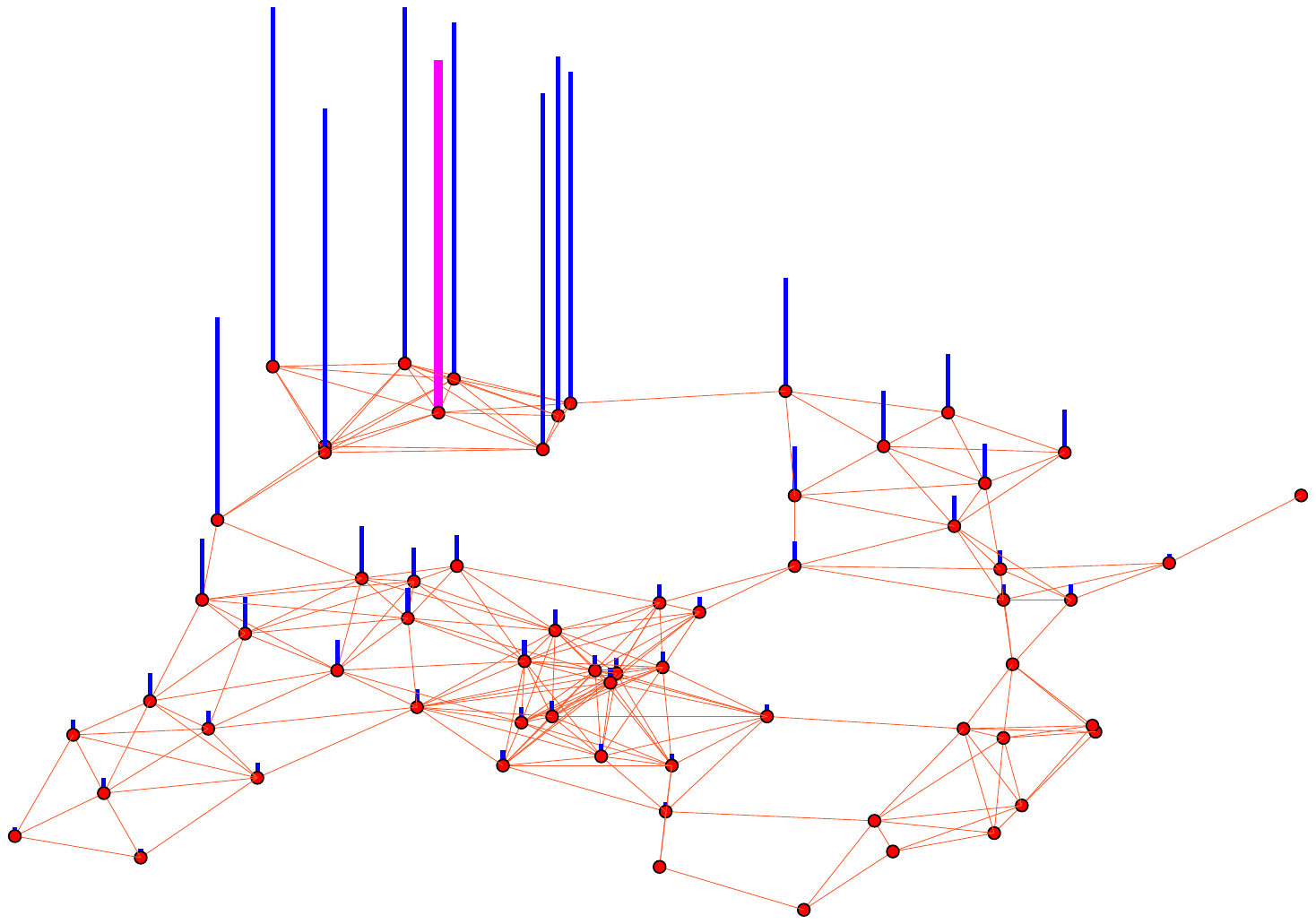}} %{fig_swiss_vertex1}}
\vspace{.24in}
\centerline{\small{~~~~~~~~~~~~~~(a)}}
\end{minipage} 
\hfill
\begin{minipage}[b]{.32\linewidth}
 %  \centering
\hspace{.3in}\centerline{{$(T_{27} g).*f$}} \vspace{.1cm}

\centerline{\hspace{.5in}\includegraphics[width=\linewidth]{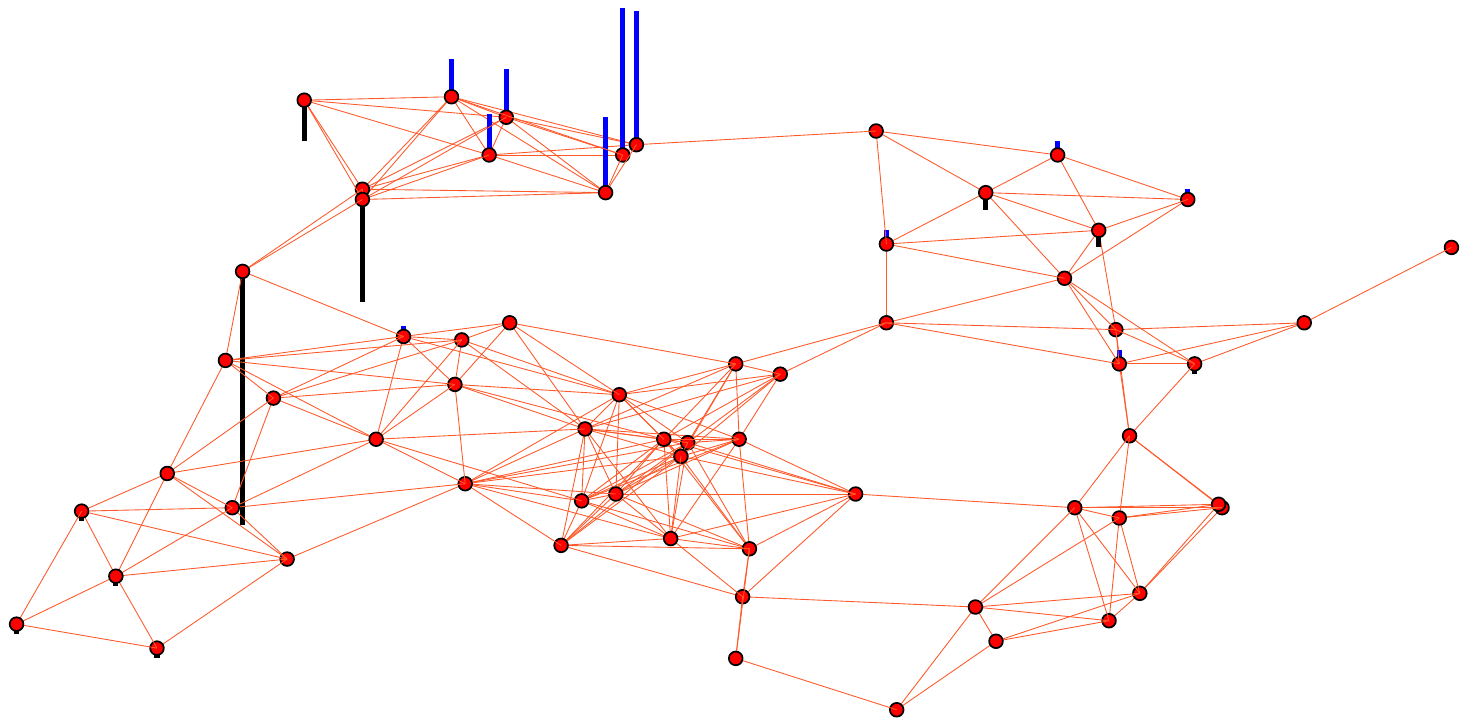}}   %{fig_swiss_vertex2}} 
\vspace{.24in}
\centerline{\small{~~~~~~~~~~~~~~(b)}}  
\end{minipage} %\hfill \\
%\centering
\hfill
\begin{minipage}[b]{.32\linewidth}
 %  \centering
\hspace{.3in} \centerline{{$\sqrt{N}\widehat{\left[(T_{27} g).*f\right]}$}} \vspace{.0005cm}

\centerline{\hspace{.5in}\includegraphics[width=.85\linewidth]{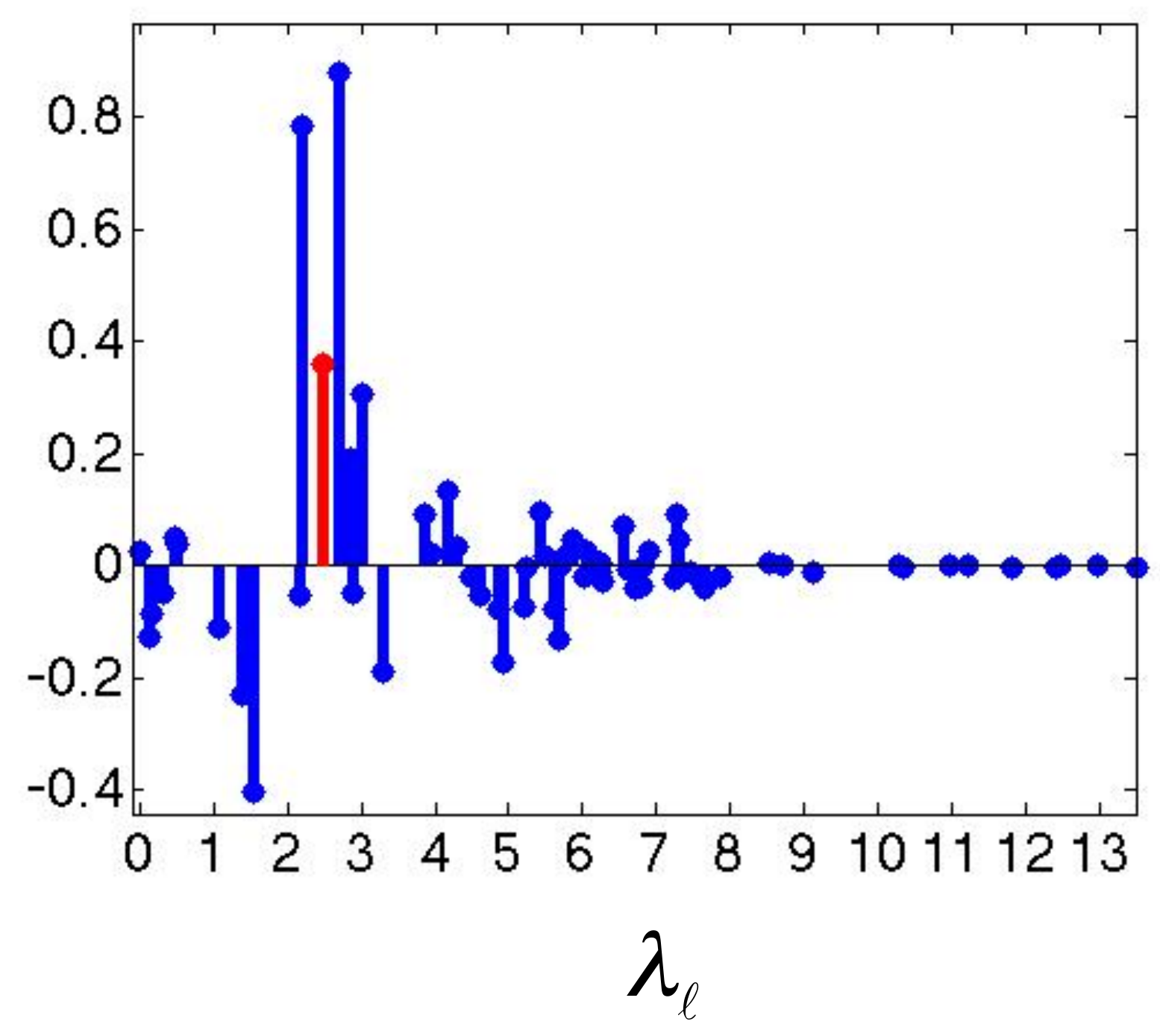}} %{fig_swiss_vertex3}}
\centerline{~~~~~~~~~~~~~~~~\small{(c)}}
\end{minipage} 
%\hspace{.15in}\hfill
\caption {Alternative interpretation of the windowed graph Fourier transform. (a) A translated (real) window centered at vertex 27. (b) We multiply the window by the signal (pointwise) to compute the windowed signal in the vertex domain. (c) The windowed signal in the graph spectral domain (multiplied by a normalizing constant). The corresponding windowed graph Fourier transform coefficient is $\sqrt{N}\widehat{(T_{27} g).*f}(\lambda_{11})=0.358$, the same as $\ip{f}{g_{27,11}}$, which was computed in Figure \ref{Fig:wgft1}.} % of Example \ref{Ex:wgft1}.} 
 \label{Fig:wgft2}
\end{figure}

\end{example}

\subsection{Frame Bounds} \label{Se:frame_bounds}
We now provide a simple sufficient condition for the collection of windowed graph Fourier atoms to form a \emph{frame} (see, e.g., \cite{christensen,kovacevic_frames1,kovacevic_frames2})
%With this construction, 
\begin{theorem} \label{Th:frame_bounds} 
%If $\hat{g}(\l)\neq 0$ for all $\l \in \{0,1,\ldots,N-1\}$, 
If $\hat{g}(0) \neq 0$, then
$\left\{g_{i,k}\right\}_{i=1,2,\ldots,N;~k=0,1,\ldots,N-1}$ is a frame; i.e., for all $f\in \Rbb^N$,
\begin{align*}
A \norm{f}_2^2 \leq \sum_{i=1}^N \sum_{k=0}^{N-1} \left|\ip{f}{g_{i,k}}\right|^2 \leq B \norm{f}_2^2,
\end{align*}
where 
\begin{align*}
0 < N |\hat{g}(0)|^2 \leq A :=\min_{n\in \{1,2,\ldots,N\}} \left\{N \norm{T_n g}_2^2\right\} \leq B :=\max_{n\in \{1,2,\ldots,N\}} \left\{N \norm{T_n g}_2^2\right\} \leq N^2 \mu^2 \norm{g}_2^2.
\end{align*}
% with lower frame bound 
%\begin{align*}
%A:=\min_{n\in \{1,2,\ldots,N\}} \left\{N \norm{T_n g}_2^2\right\},
%\end{align*}
%and upper frame bound 
%\begin{align*}
%B:=\max_{n\in \{1,2,\ldots,N\}} \left\{N \norm{T_n g}_2^2\right\}.
%\end{align*}
\end{theorem}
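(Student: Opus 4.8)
The plan is to compute the analysis-operator sum $\sum_{i=1}^N\sum_{k=0}^{N-1}|Sf(i,k)|^2$ in closed form and then read off the (tight) frame bounds directly. The starting point is the alternative interpretation of the windowed graph Fourier transform established just above the theorem: $Sf(i,k)=\sqrt{N}\,\widehat{f_{windowed,i}}(k)$ with $f_{windowed,i}(n)=[T_i g(n)]^*f(n)$. For each fixed $i$, Parseval's relation for the graph Fourier transform gives
\begin{align*}
\sum_{k=0}^{N-1}|Sf(i,k)|^2 = N\sum_{k=0}^{N-1}\bigl|\ip{f_{windowed,i}}{\chi_k}\bigr|^2 = N\norm{f_{windowed,i}}_2^2 = N\sum_{n=1}^N |T_i g(n)|^2\,|f(n)|^2 .
\end{align*}

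\noindent Next I would sum over $i$ and interchange the order of summation:
\begin{align*}
\sum_{i=1}^N\sum_{k=0}^{N-1}|Sf(i,k)|^2 = N\sum_{n=1}^N |f(n)|^2\left(\sum_{i=1}^N |T_i g(n)|^2\right).
\end{align*}
The key step is to evaluate the inner sum $\sum_{i=1}^N|T_i g(n)|^2$. Expanding $T_i g(n)$ via \eqref{Eq:new_translation}, squaring, and using orthonormality $\sum_{i=1}^N\chi_{\l}^*(i)\chi_{\l^{\prime}}(i)=\delta_{\l \l^{\prime}}$ collapses the double sum over eigenvalue indices to a single one, yielding $\sum_{i=1}^N|T_i g(n)|^2 = N\sum_{\l=0}^{N-1}|\hat g(\lambda_{\l})|^2|\chi_{\l}(n)|^2$, which is exactly $\norm{T_n g}_2^2$ by the computation \eqref{Eq:trans_operator_1}. (Equivalently, since the kernel $\hat g$ is real one has $|T_i g(n)|=|T_n g(i)|$, so $\sum_i|T_i g(n)|^2=\sum_i|T_n g(i)|^2=\norm{T_n g}_2^2$.) Substituting back,
\begin{align*}
\sum_{i=1}^N\sum_{k=0}^{N-1}|Sf(i,k)|^2 = N\sum_{n=1}^N \norm{T_n g}_2^2\,|f(n)|^2,
\end{align*}
which is sandwiched between $\bigl(N\min_n\norm{T_n g}_2^2\bigr)\norm{f}_2^2$ and $\bigl(N\max_n\norm{T_n g}_2^2\bigr)\norm{f}_2^2$. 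This identifies $A$ and $B$ as claimed, and in fact shows they are the optimal frame bounds.

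\noindent Finally, the outer inequalities on $A$ and $B$ come for free from Lemma \ref{Le:trans_norm_bounds}: $|\hat g(0)|\le\norm{T_n g}_2\le\sqrt{N}\mu\norm{g}_2$ for every $n$, so $N|\hat g(0)|^2\le A$ and $B\le N^2\mu^2\norm{g}_2^2$; and the hypothesis $\hat g(0)\neq0$ then gives $A>0$, so $\{g_{i,k}\}$ is genuinely a frame. I do not anticipate a serious obstacle: the only place requiring care is the collapse of the double eigenvalue sum (a routine use of orthonormality) and bookkeeping the $\sqrt{N}$ factors between the two equivalent expressions for $Sf$.
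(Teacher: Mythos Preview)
Your proposal is correct and follows essentially the same route as the paper: apply Parseval in $k$ to reduce to $N\sum_{i}\sum_{n}|f(n)|^2|T_i g(n)|^2$, then use the symmetry $|T_i g(n)|=|T_n g(i)|$ (the paper phrases this as ``the symmetry of $\L$'') to obtain $N\sum_n\|T_n g\|_2^2|f(n)|^2$, and finally invoke Lemma~\ref{Le:trans_norm_bounds} for the outer inequalities. Your additional remark that $A$ and $B$ are in fact the optimal frame bounds (seen by taking $f=\delta_{n^*}$ at the extremizing vertex) is a nice observation the paper does not make explicit.
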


\begin{proof}
\begin{align}\label{Eq:frame_bound_4}
\sum_{i=1}^N \sum_{k=0}^{N-1} \left|\ip{f}{g_{i,k}}\right|^2 
&=\sum_{i=1}^N \sum_{k=0}^{N-1} \left|\ip{f}{M_k T_i g}\right|^2 \nonumber \\
&= N \sum_{i=1}^N \sum_{k=0}^{N-1} \left|\ip{f (T_i g)^*}{\chi_k}\right|^2 \nonumber \\
&=N \sum_{i=1}^N \ip{f (T_i g)^*}{f (T_i g)^*} \\
&=N \sum_{i=1}^N \sum_{n=1}^N \left|f(n)\right|^2 \left|(T_i g)(n)\right|^2 \nonumber \\ \label{Eq:frame_bound_5}
&=N \sum_{i=1}^N \sum_{n=1}^N \left|f(n)\right|^2 \left|(T_n g)(i)\right|^2 \\ \label{Eq:frame_bound_6}
&=N\sum_{n=1}^N \left|f(n)\right|^2 \norm{T_n g}_2^2, 
\end{align}
where \eqref{Eq:frame_bound_4} is due to Parseval's identity, and \eqref{Eq:frame_bound_5} follows from the symmetry of $\L$ and the definition \eqref{Eq:new_translation} of $T_i$. Moreover, under the hypothesis that %$\hat{g}(\l)\neq 0$ for all $\l$, 
$\hat{g}(0)\neq 0$,
we have
\begin{align} \label{Eq:frame_bound_7}
%\norm{T_n g}_2^2 = \sum_{\l=0}^{N-1} \left|\hat{g}(\l)\right|^2 \left|\chi_{l}(n)\right|^2>0.
\norm{T_n g}_2^2 = N \sum_{\l=0}^{N-1} \left|\hat{g}(\lambda_{\l})\right|^2 \left|\chi_{l}(n)\right|^2\geq {|\hat{g}(0)|^2}>0.
\end{align}
Combining \eqref{Eq:frame_bound_6} and \eqref{Eq:frame_bound_7}, for $f \neq 0$,
\begin{align*}
0 < A \norm{f}_2^2 \leq \sum_{i=1}^N \sum_{k=0}^{N-1} \left|\ip{f}{g_{i,k}}\right|^2  \leq B \norm{f}_2^2 < \infty. 
\end{align*}
The upper bound on $B$ follows directly from \eqref{Eq:trans_operator_2}.
\end{proof}
\begin{remark}
If $\mu=\frac{1}{\sqrt{N}}$, then $\left\{g_{i,k}\right\}_{i=1,2,\ldots,N;~k=0,1,\ldots,N-1}$ is a tight frame with $A=B=N\norm{g}_2^2$.
\end{remark}

In Table \ref{Ta:empirical_frame_bounds}, we compare the lower and upper frame bounds derived in Theorem \ref{Th:frame_bounds} to the empirical optimal frame bounds, $A$ and $B$, for different graphs.

\begin{table}[htb] 
\centering
\begin{tabular}{|l*{6}{>{\centering\arraybackslash}m{1.4cm}}|}
\hline
 Graph               & $\mu$ & $\tau$ & $N|\hat{g}(0)|^2$ & A  & B & $N^2 \mu^2 \norm{g}_2^2$ \\
\hline
 &  & 0.5 & 3.2 &  498.4 & 846.2 &  1000.0  \\
Path graph  & 0.063 & 5.0 & 11.0 &  494.5 & 976.5 &  1000.0  \\
  &  & 50.0 & 34.2 &  482.9 & 964.6 &  1000.0  \\
  \hline
 &  & 0.5 & 150.8 &  465.0 & 591.8 &  12676.9  \\
Random regular graph  & 0.225 & 5.0 & 500.0 &  500.0 & 500.0 &  12676.9  \\
  &  & 50.0 & 500.0 &  500.0 & 500.0 &  12676.9  \\
\hline
  &  & 0.5 & 13.5 & 142.7 & 3702.2 & 228607.0  \\
Random sensor network  & 0.956 & 5.0 & 71.7 &  158.4 & 2530.7 &  228607.0  \\
  &  & 50.0 & 387.1 & 389.3 & 1185.6 &  228607.0  \\
\hline
  & & 0.5 & 3.0 &  7.4 & 786.3 &  248756.2  \\
Comet graph  & 0.998 & 5.0 &   17.9 & 43.9 & 1584.1 & 248756.2  \\
  &  & 50.0 & 52.9 &  119.1 & 1490.8 &  248756.2  \\
\hline
\end{tabular}
\caption {Comparison of the empirical optimal frame bounds, $A$ and $B$, to the theoretical lower and upper bounds derived in Theorem \ref{Th:frame_bounds}. The number of vertices for all graphs is $N=500$, and in all cases, the window is $\hat{g}(\lambda_{\l})=Ce^{-\tau \lambda_{\l}}$, where the constant $C$ is chosen such that $\norm{g}_2=1$. In the random regular graph, every vertex has degree 8. The random sensor network is the one shown in Figure \ref{Fig:trans_norms_low}(b). In the comet graph, which is structured like the one shown in Figure \ref{Fig:trans_norms_low}(e), the degree of the center vertex is 200.} 
 \label{Ta:empirical_frame_bounds}
\end{table}

\subsection{Reconstruction Formula}
Provided the window $g$ has a non-zero mean, a signal $f\in \Rbb^N$ can be recovered from its windowed graph Fourier transform coefficients.
\begin{theorem}\label{Th:reconstruction_formula}
If $\hat{g}(0)\neq 0$, then for any $f \in \Rbb^N$,
\begin{align*}
f(n)=\frac{1}{N\norm{T_n g}_2^2} \sum_{i=1}^N \sum_{k=0}^{N-1} Sf(i,k) g_{i,k}(n).
\end{align*}
\end{theorem}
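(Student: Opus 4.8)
The plan is to show that the analysis/synthesis composition $f\mapsto \sum_{i,k}\ip{f}{g_{i,k}}\,g_{i,k}$ acts \emph{diagonally} in the basis of Kronecker deltas on the vertices, multiplying the $n^{th}$ coordinate by $N\norm{T_n g}_2^2$; the stated identity is then that relation read off coordinatewise and divided through. Invertibility of the frame operator is guaranteed a priori by Theorem \ref{Th:frame_bounds} (under $\hat g(0)\neq 0$), so the real content is the \emph{explicit closed form} of the inverse, and this diagonal structure is essentially already contained in the computation \eqref{Eq:frame_bound_4}--\eqref{Eq:frame_bound_6} used there. Consequently the proof is a short, direct calculation rather than anything with a genuine obstacle.

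Concretely, I would fix $n$ and expand the right-hand side. Writing $Sf(i,k)=\ip{f}{g_{i,k}}=\sum_{m=1}^N f(m)\,g_{i,k}^{*}(m)$ and interchanging the (finite) sums gives
\begin{align*}
\sum_{i=1}^N\sum_{k=0}^{N-1} Sf(i,k)\,g_{i,k}(n)
&= \sum_{m=1}^N f(m)\sum_{i=1}^N\sum_{k=0}^{N-1} g_{i,k}^{*}(m)\,g_{i,k}(n).
\end{align*}
Substituting the explicit form $g_{i,k}(n)=\sqrt{N}\,\chi_k(n)(T_i g)(n)$ from \eqref{Eq:wgft_atom_comp}, the inner double sum factors as $N\bigl(\sum_{i=1}^N (T_i g)^{*}(m)(T_i g)(n)\bigr)\bigl(\sum_{k=0}^{N-1}\chi_k^{*}(m)\chi_k(n)\bigr)$. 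The completeness of the orthonormal eigenvectors, $\sum_{k=0}^{N-1}\chi_k^{*}(m)\chi_k(n)=\delta_{mn}$ (equivalently $\boldsymbol{\chi}\boldsymbol{\chi}^{*}=I$), collapses the sum over $m$, leaving $N f(n)\sum_{i=1}^N |(T_i g)(n)|^2$. Then, exactly as in the passage from \eqref{Eq:frame_bound_4} to \eqref{Eq:frame_bound_5} — using the symmetry of $\L$ and the definition \eqref{Eq:new_translation} of $T_i$ — we have $|(T_i g)(n)|=|(T_n g)(i)|$, hence $\sum_{i=1}^N |(T_i g)(n)|^2=\norm{T_n g}_2^2$. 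This yields $\sum_{i,k}Sf(i,k)\,g_{i,k}(n)=N\norm{T_n g}_2^2\,f(n)$, and dividing by $N\norm{T_n g}_2^2$ — strictly positive for every $n$ by \eqref{Eq:frame_bound_7} since $\hat g(0)\neq 0$ — gives the claim.

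The only point worth emphasizing, rather than an obstacle, is \emph{why} the computation is so clean: the translation-center index $i$ and the evaluation vertex $n$ decouple through the completeness relation for $\{\chi_k\}$ together with $|(T_i g)(n)|=|(T_n g)(i)|$, so the frame operator is a positive \emph{diagonal} operator; the canonical dual atoms are therefore the coordinatewise rescalings $\tilde g_{i,k}(n)=g_{i,k}(n)/(N\norm{T_n g}_2^2)$, requiring no matrix inversion. (One should also note that the interchange of summations is trivially justified since all sums are finite, and that the argument goes through verbatim for complex $f$, although the theorem only asserts it for $f\in\Rbb^N$.)
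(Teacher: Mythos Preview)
Your proof is correct and follows essentially the same approach as the paper: a direct computation showing that $\sum_{i,k}Sf(i,k)\,g_{i,k}(n)=N\norm{T_n g}_2^2\,f(n)$ via the completeness of the eigenbasis $\{\chi_k\}$. The only cosmetic difference is organizational: the paper fully expands $T_i g$ into eigenvectors and uses orthonormality in both the $i$- and $k$-sums, whereas you keep $T_i g$ intact, use completeness once in the $k$-sum, and then invoke the symmetry $|(T_i g)(n)|=|(T_n g)(i)|$ (exactly as in \eqref{Eq:frame_bound_4}--\eqref{Eq:frame_bound_5}) to handle the $i$-sum; both routes arrive at the same diagonal form.
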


\begin{proof} %[Proof of Theorem \ref{Th:reconstruction_formula}]
\begin{align*}
&\sum_{i=1}^N \sum_{k=0}^{N-1} Sf(i,k) g_{i,k}(n) \nonumber \\
&=\sum_{i=1}^N \sum_{k=0}^{N-1} 
\left( N \sum_{m=1}^N f(m) \chi_k^*(m) \sum_{\l=0}^{N-1} \hat{g}(\lambda_{\l}) \chi_{\l}(i)\chi_{\l}^*(m)\right)
\left({N} \chi_{k}(n) \sum_{\l^{\prime}=0}^{N-1}\hat{g}(\lambda_{\l^{\prime}})\chi_{\l^{\prime}}^*(i)\chi_{\l^{\prime}}(n) \right) \nonumber \\
&= N^2 \sum_{m=1}^N f(m)  \sum_{\l=0}^{N-1} \sum_{\l^{\prime}=0}^{N-1} \hat{g}(\lambda_{\l})\hat{g}(\lambda_{\l^{\prime}}) \chi_{\l}^*(m) \chi_{\l^{\prime}}(n) \sum_{i=1}^N \chi_{\l}(i) \chi_{\l^{\prime}}^*(i) \sum_{k=0}^{N-1} \chi_k^*(m) \chi_{k}(n)\nonumber \\
&=N^2 \sum_{m=1}^N f(m) \sum_{\l=0}^{N-1} \sum_{\l^{\prime}=0}^{N-1} \hat{g}(\lambda_{\l})\hat{g}(\lambda_{\l^{\prime}}) \chi_{\l}^*(m) \chi_{\l^{\prime}}(n)~\delta_{\l \l^{\prime}}~\delta_{mn} %\nonumber %\\
%&
=N^2 f(n) \sum_{\l=0}^{N-1} |\hat{g}(\lambda_{\l})|^2 |\chi_{\l}^*(n)|^2 %\nonumber \\
%&
=N \norm{T_n g}_2^2 ~f(n),
%\sqrt{N}\sum_{j=1}^N f(j)  \chi_{k}(j) \sum_{\l=0}^{N-1}\hat{g}(\l)\chi_{\l}(i)\chi_{\l}(n)
%\right)\left(\sqrt{N} \chi_{k}(i) \sum_{\l^{\prime}=0}^{N-1}\hat{g}(\l^{\prime})\chi_{\l^{\prime}}(i)\chi_{\l^{\prime}}(n)\right) %\nonumber \\
%&=N \sum_{j=1}^N f(j) \sum_{k=0}^{N-1} \chi_{k}(i) \chi_{k}(j) \sum_{\l=0}^{N-1}\sum_{\l^{\prime}=0}^{N-1} \hat{g}(\l)\hat{g}(\l^{\prime})
%\chi_{\l}(j) \chi_{\l^{\prime}}(i)\sum_{n=1}^N \chi_{\l}(n)\chi_{\l^{\prime}}(n) \nonumber \\
%&=N \sum_{j=1}^N f(j) \delta_{ij} \sum_{\l=0}^{N-1}\sum_{\l^{\prime}=0}^{N-1} \hat{g}(\l)\hat{g}(\l^{\prime})
%\chi_{\l}(j)\chi_{\l^{\prime}}(i) \delta_{\l \l^{\prime}} \nonumber \\
%&=\left(N \sum_{\l=0}^{N-1} \left|\hat{g}(\l)\right|^2 \left|\chi_{\l}(i)\right|^2\right) f(i) \nonumber \\
%&=N \norm{T_i g}_2^2 f(i),
\end{align*}
where the last equality follows from \eqref{Eq:trans_operator_1}.
\end{proof}

\begin{remark}
In the classical case, $\norm{T_n g}_2^2=\norm{g}_2^2=1$, so this term does not appear in the reconstruction formula (see, e.g., \cite[Theorem 4.3]{mallat}).
\end{remark}

\subsection{Spectrogram Examples} \label{Se:spec_examples}
\label{Se:examples}

As in classical time-frequency analysis, we can now examine $|Sf(i,k)|^2$, the squared magnitudes of the windowed graph Fourier transform coefficients of a given signal $f$. In the classical case (see, e.g., \cite[Theorems 4.1 and 4.3]{mallat}), the windowed Fourier atoms form a tight frame, and therefore this \emph{spectrogram} of squared magnitudes can be viewed as an energy density function of the signal across the time-frequency plane. In the graph setting, the windowed graph Fourier atoms do not always form a tight frame, and we cannot therefore, in general, interpret the graph spectrogram as an energy density function. Nonetheless, it can still be a useful tool to elucidate underlying structure in graph signals. In this section,    
%{\color{red}
%\begin{itemize}
%%\item Add frame bounds
%\item Define spectrogram
%\item Comment on lack of energy conservation, except when there is a tight frame. Compare to classical case, Theorems 4.1 and 4.3 in Mallat
%\end{itemize}
%}We now 
we present %three 
some examples to illustrate this concept and provide further intuition behind the proposed windowed graph Fourier transform. 
%In the first example, w
\begin{example} \label{Example:path}
We %first 
consider a path graph of 180 vertices, with all the weights equal to one. 
%in which each vertex is connected to one neighbor on the left and one on the right, except for the endpoints, which only have one neighbor. The weights are all equal to one. 
The graph Laplacian eigenvectors, given in \eqref{Eq:DCT_eigs}, are
%for the path graph with $N$ vertices, which are 
the basis vectors 
in the DCT-II transform.
%\cite{dct}, are 
%$\chi_0(n)=\frac{1}{\sqrt{N}},~\forall n \in \{1,2,\ldots,N\}$, and $\chi_{\l}(n)=\sqrt{\frac{2}{N}}\cos\left(\frac{\pi \l (n-0.5)}{N}\right)$ for $\l=1,2,\ldots,N-1$.
%t is {\color{red} (see, e.g., add citation)}
%\begin{align*}
%%\begin{array}{c}
%&\chi_0(n)=\frac{1}{\sqrt{N}},~\forall n \in \{1,2,\ldots,N\}, \hbox{ and }  \\
%&\chi_{\l}(n)=\sqrt{\frac{2}{N}}\cos\left(\frac{\pi \l (n-0.5)}{N}\right),~\l=1,2,\ldots,N-1.
%%\end{array}
%\end{align*}
We compose the signal shown in Figure \ref{Fig:path}(a) on the path graph by summing three signals: $\chi_{10}$ restricted to the first 60 vertices, 
$\chi_{60}$ restricted to the next 60 vertices, and $\chi_{30}$ restricted to the final 60 vertices. We design a window $g$ by setting $\hat{g}(\lambda_{\l})=Ce^{-\tau\lambda_{\l}}$ with $\tau=300$ and $C$ chosen such that 
%then normalizing it so 
$\norm{g}_2=1$. The ``spectrogram'' in Figure \ref{Fig:path}(b) shows 
$\left|Sf(i,k)\right|^2$ for all $i \in \{1,2,\ldots,180\}$ and $k \in \{0,1,\ldots,179\}$. 
Consistent with %the
intuition from discrete-time signal processing (see, e.g., \cite[Chapter 2.1]{groechenig}), the spectrogram shows the discrete cosines at different frequencies with the appropriate spatial localization.    

%\balance

\begin{figure}[htb]
\centering
\hfill
\begin{minipage}[b]{.5\linewidth}
\centerline{\includegraphics[width=\linewidth]{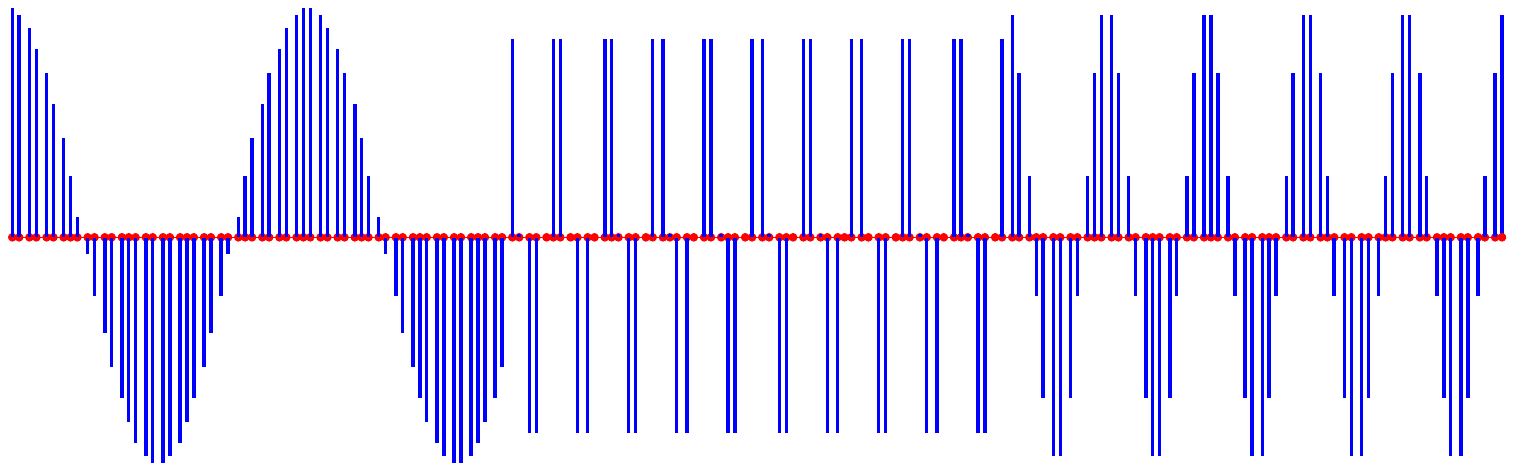}}
\centerline{\small{(a)}}
\end{minipage} 
\hfill
\begin{minipage}[b]{.42\linewidth}
   \centering
   \centerline{\includegraphics[width=\linewidth]{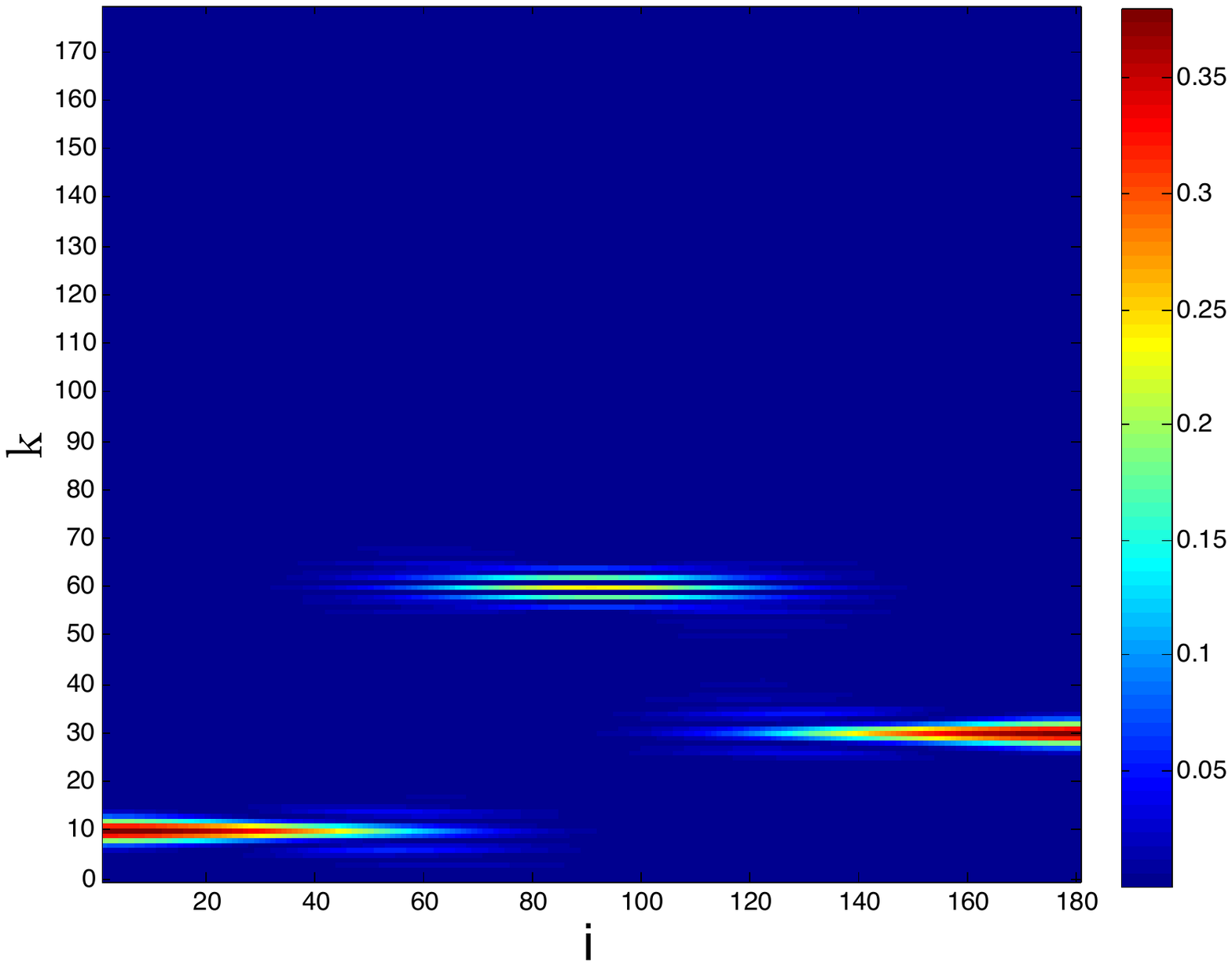}}
\centerline{\small{(b)}~~~~}
\end{minipage} \hfill \hfill
\caption {Spectrogram example on a path graph. (a) A signal $f$ on the path graph that is comprised of three different graph Laplacian eigenvectors restricted to three different segments of the graph. (b) A spectrogram of $f$. %with t
The vertex indices are on the horizontal axis, and the frequency indices 
%$\l$ of the frequencies $\lambda_{\l}$ 
are on the vertical axis.}  \label{Fig:path}
\end{figure}
\end{example}

%In the second example, w
\begin{example} \label{Ex:sensor}
We now compute the spectrogram of the signal $f$ on the random sensor network of Example \ref{Ex:wgft1}, using the same window $\hat{g}(\lambda_{\l})=Ce^{-\tau\lambda_{\l}}$ from Example \ref{Ex:wgft1}, with $\tau=3$. It is not immediately obvious upon inspection that the signal $f$ shown in both Figure \ref{Fig:wgft1}(a) and Figure \ref{Fig:sensor}(a) is a highly structured signal; however,  
the spectrogram, shown in Figure \ref{Fig:sensor}(b), elucidates the structure of $f$, as we can clearly see  
three different frequency components present in three different areas of the graph. In order to mimic Example \ref{Example:path} on a more general graph, we constructed the signal $f$ by first using spectral clustering (see, e.g., \cite{spectral_clustering}) to partition the network into three sets of vertices, which are shown in red, blue, and green in Figure \ref{Fig:sensor}(a). Like Example \ref{Example:path}, we then took the signal $f$ to be the sum of three signals: $\chi_{10}$ restricted to the red set of vertices, $\chi_{27}$ restricted to the blue set of vertices, and $\chi_{5}$ restricted to the green set of vertices. 
%
%each of the three clusters of vertices.
%
%We now repeat %the first example 
%Example \ref{Example:path} on the random sensor network of Example 
%a more general graph, which is a random sensor network with 64 vertices. Using spectral clustering (see, e.g., \cite{spectral_clustering}), we partition the network into three sets of vertices, which are shown in red, blue, and green in Figure \ref{Fig:sensor}(a). We take a signal $f$ to be the sum of three signals: $\chi_{10}$ restricted to the red set of vertices, $\chi_{27}$ restricted to the blue set of vertices, and $\chi_{5}$ restricted to the green set of vertices. We use the same window as in Example \ref{Example:path} with $\tau=3$ as the heat kernel parameter. 

\begin{figure}[htb]
\centering
\hfill
\begin{minipage}[b]{.5\linewidth}
   \centering
   \centerline{\includegraphics[width=\linewidth]{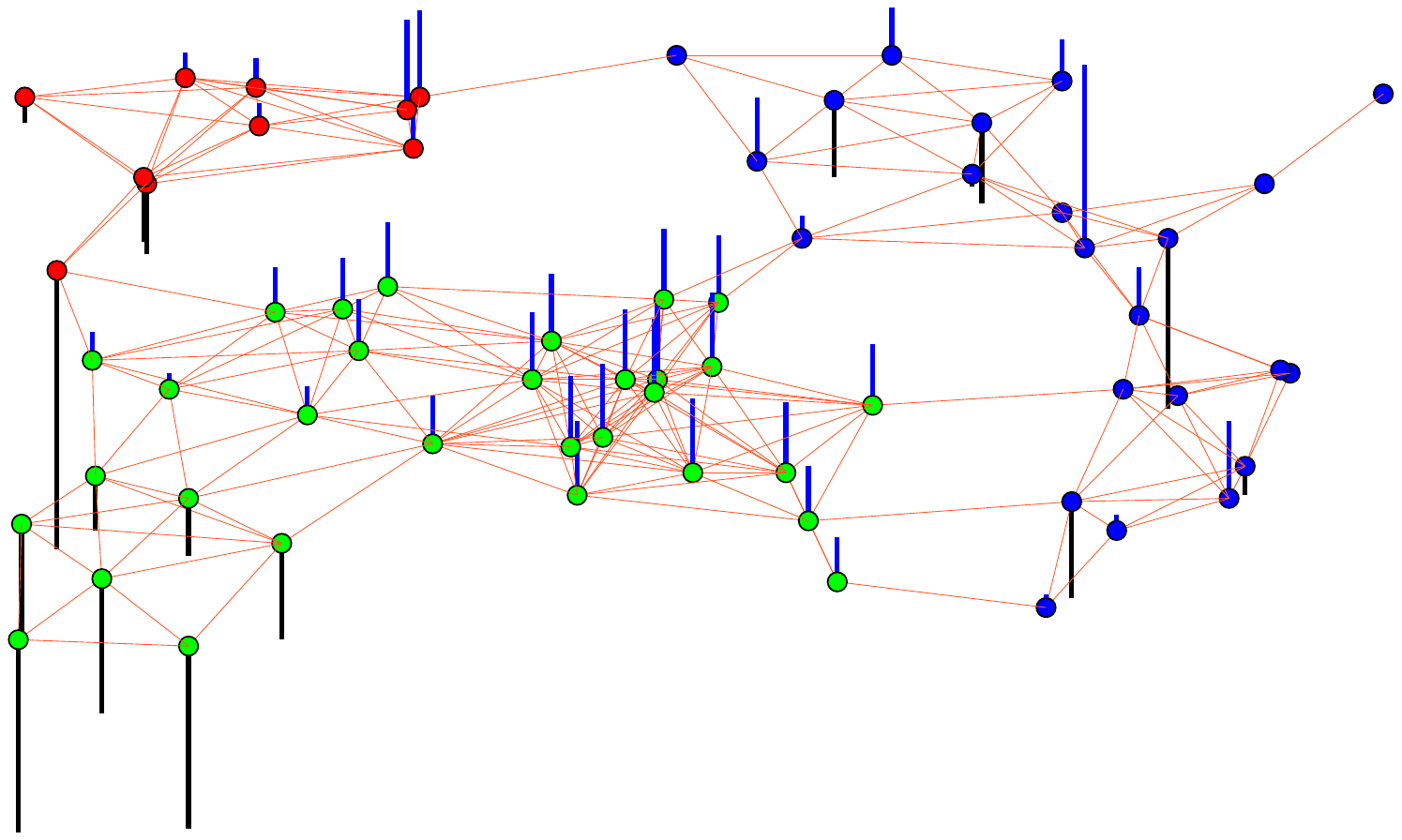}}
\centerline{\small{(a)}~~}
\end{minipage} 
\hfill
\begin{minipage}[b]{.42\linewidth}
   \centering
   \centerline{\includegraphics[width=\linewidth]{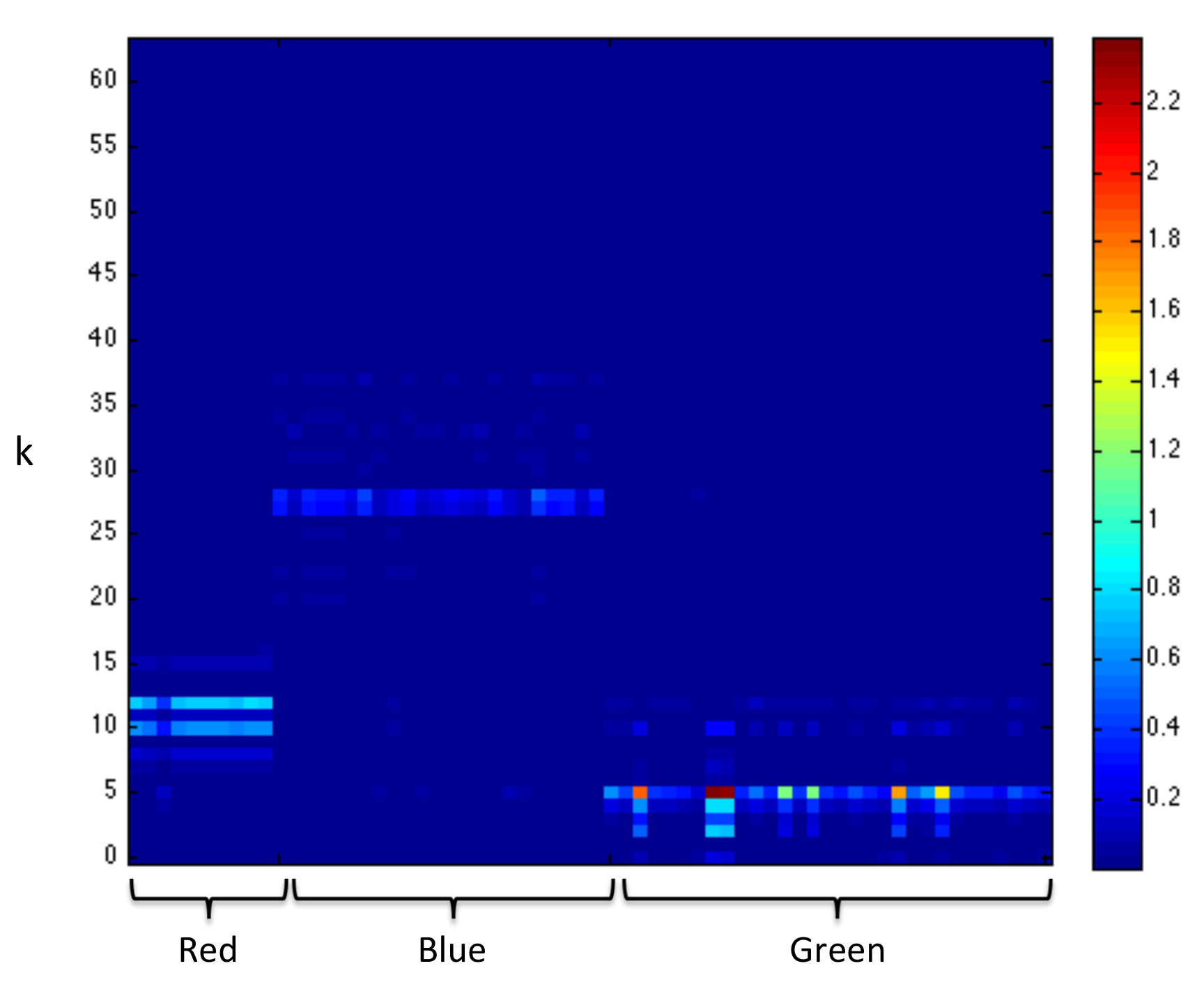}}
\centerline{\small{(b)}}
\end{minipage} \hfill \hfill
\caption {Spectrogram example on a random sensor network. (a) A signal comprised of three different graph Laplacian eigenvectors restricted to three different clusters of a random sensor network. 
%Positive components of the signal are in blue, and negative components are in black. 
(b) The spectrogram shows the different frequency components in the red, blue, and green clusters.} 
 \label{Fig:sensor}
\end{figure}

\end{example}

%\subsection{Path Graph}
%\subsection{Swiss Roll}
%\begin{itemize}
%\item Path graph with different frequencies along the path
%\item Repeat path graph example on partitioned sensor network or Minnesota graph
%\item Swiss roll (include frame bounds, pictures of some atoms)
%\end{itemize}

\begin{remark}
In Figure \ref{Fig:sensor}(b), in order to make the structure of the signal more evident, we arranged the vertices of the sensor graph on the horizontal axis according to clusters, so that vertices in the same cluster are close to each other. Of course, this would not be possible if we did not have an idea of the structure \emph{a priori}. A more general way to view the spectrogram of a graph signal without such \emph{a priori} knowledge is as a sequence of images, with one image per graph Laplacian eigenvalue and the sequence arranged monotonically according to the corresponding eigenvalue. Then, as we scroll through the sequence (i.e., play a frequency-lapse video), the areas of the graph where the signal contains low frequency components ``light up'' first, then the areas where the signal contains middle frequency components, and so forth. In Figure \ref{Fig:spectro_sequence}, we show a subset of the images that would comprise such a sequence for the signal from Example \ref{Ex:sensor}.  
\end{remark}

\begin{figure}[htb]
\centering
\centerline{\includegraphics[width=\linewidth]{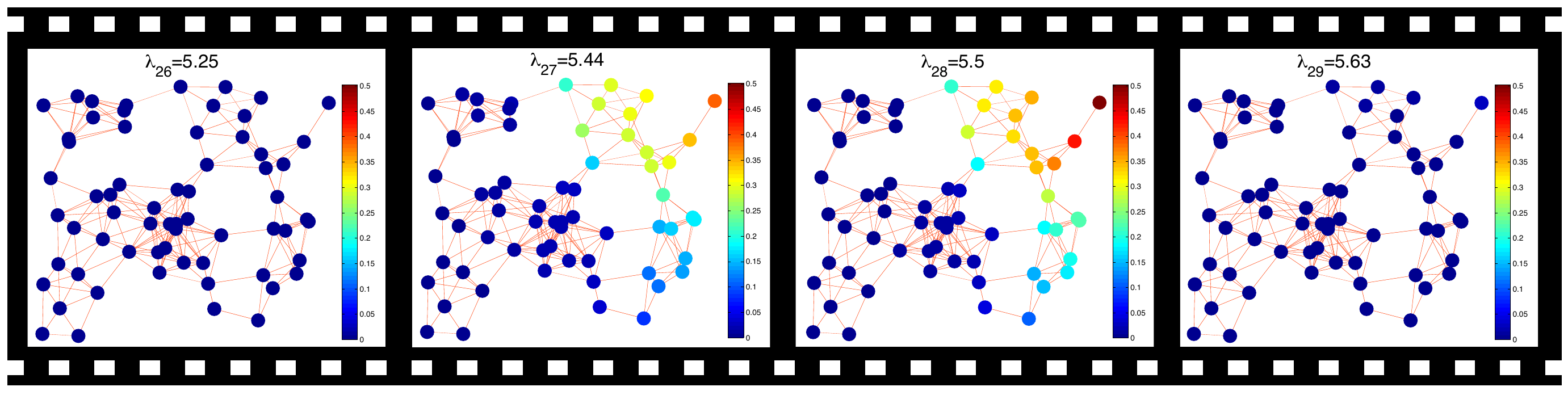}}
\caption {Frequency-lapse video representation of the spectrogram of a graph signal. The graph spectrogram can be viewed as a sequence of images, with each frame corresponding to the spectrogram coefficients at all vertices and a single frequency, $|Sf(:,k)|^2$. This particular subsequence of frames shows the spectrogram coefficients from eigenvalues $\lambda_{26}$ through $\lambda_{29}$. At frequencies close to $\lambda_{27}=5.44$, we can see the coefficients ``light up'' in the blue cluster of vertices from Figure \ref{Fig:sensor}(a), corresponding to the spectrogram coefficients in the middle of Figure \ref{Fig:sensor}(b).} 
 \label{Fig:spectro_sequence}
\end{figure}

%{\color{red} 
%\begin{example}
%(Add a more practical example?)
%%MAJOR TASK 2 - should be something more practical
%\end{example}
%}

\subsection{Application Example: Signal-Adapted Graph Clustering}
Motivated by Examples \ref{Example:path} and \ref{Ex:sensor}, we can also use the windowed graph Fourier transform coefficients as feature vectors to cluster a graph into sets of vertices, taking into account both the graph structure and a given signal $f$. In particular, for $i=1,2,\ldots,N$, we can define $y_i:=Sf(i,:)\in \Rbb^N$, and then use a standard clustering algorithm to cluster the points $\left\{y_i\right\}_{i=1,2,\ldots,N}$.\footnote{Note that if we take the window to be a heat kernel $\hat{g}(\lambda)=Ce^{-\tau \lambda}$, as $\tau \rightarrow 0$, $T_i g \rightarrow \delta_i$ and $Sf(i,k)\rightarrow \ip{\sqrt{N}M_k \delta_i}{f}=\sqrt{N} \chi_k(i) f(i)$. Thus, this method of clustering reduces to spectral clustering \cite{spectral_clustering} when (i) the signal $f$ is constant across all vertices, and (ii) each window is a delta in the vertex domain.}
 
\begin{example}\label{Ex:signal_adapted_clustering}
We generate a signal $f$ on the 500 vertex random sensor network of Example \ref{Ex:high_coherence} as follows. First, we generate four random signals $\left\{f_i\right\}_{i=1,2,3,4}$, with each random component uniformly distributed between 0 and 1. Second, we generate four graph spectral filters $\left\{\widehat{h_i}(\cdot)\right\}_{i=1,2,3,4}$ that cover different bands of the graph Laplacian spectrum, as shown in Figure \ref{Fig:signal_adapted_clustering}(a). Third, we generate four clusters, $\left\{C_i\right\}_{i=1,2,3,4}$, on the graph, taking the first three to be balls of radius 4 around different center vertices, and the fourth to be the remaining vertices. These clusters are shown in Figure \ref{Fig:signal_adapted_clustering}(b). Fourth, we generate a signal on the graph as
\begin{align*}
f=\sum_{i=1}^4 \left(f_i \ast h_i\right)|_{C_i};
\end{align*}
i.e., we filter each random signal by the corresponding filter (to shift its frequency content to a given band), and then restrict that signal to the given cluster by setting the components outside of that cluster to 0. The resulting signal $f$ is shown in Figure \ref{Fig:signal_adapted_clustering}(c). Fifth, we compute the windowed graph Fourier transform coefficients of $f$ using a window $\hat{g}(\lambda)=e^{-0.3\lambda}$. Sixth, we use a classical trick of applying a nonlinear transformation to the coefficients (see, e.g., \cite[Section 3]{jain}), and define the vectors $\left\{y_i\right\}_{i=1,2,\ldots,N}$ as
\begin{align*}
y_i(k):=\tanh\Bigl(\alpha |Sf(i,k)|\Bigr)=\frac{1-e^{-2\alpha |Sf(i,k)|}}{1+e^{-2\alpha |Sf(i,k)|}},
\end{align*}
with $\alpha=0.75$. Finally, we perform $k$-means clustering on the points $\left\{y_i\right\}_{i=1,2,\ldots,N}$, searching for 6 clusters to give the algorithm some extra flexibility. The resulting signal-adapted graph clustering is shown in Figure \ref{Fig:signal_adapted_clustering}(d).

\begin{figure}[h]
\hfill
\begin{minipage}[b]{.22\linewidth}
   \centering
      \centerline{\small{Filters}}
   \centerline{\includegraphics[width=\linewidth]{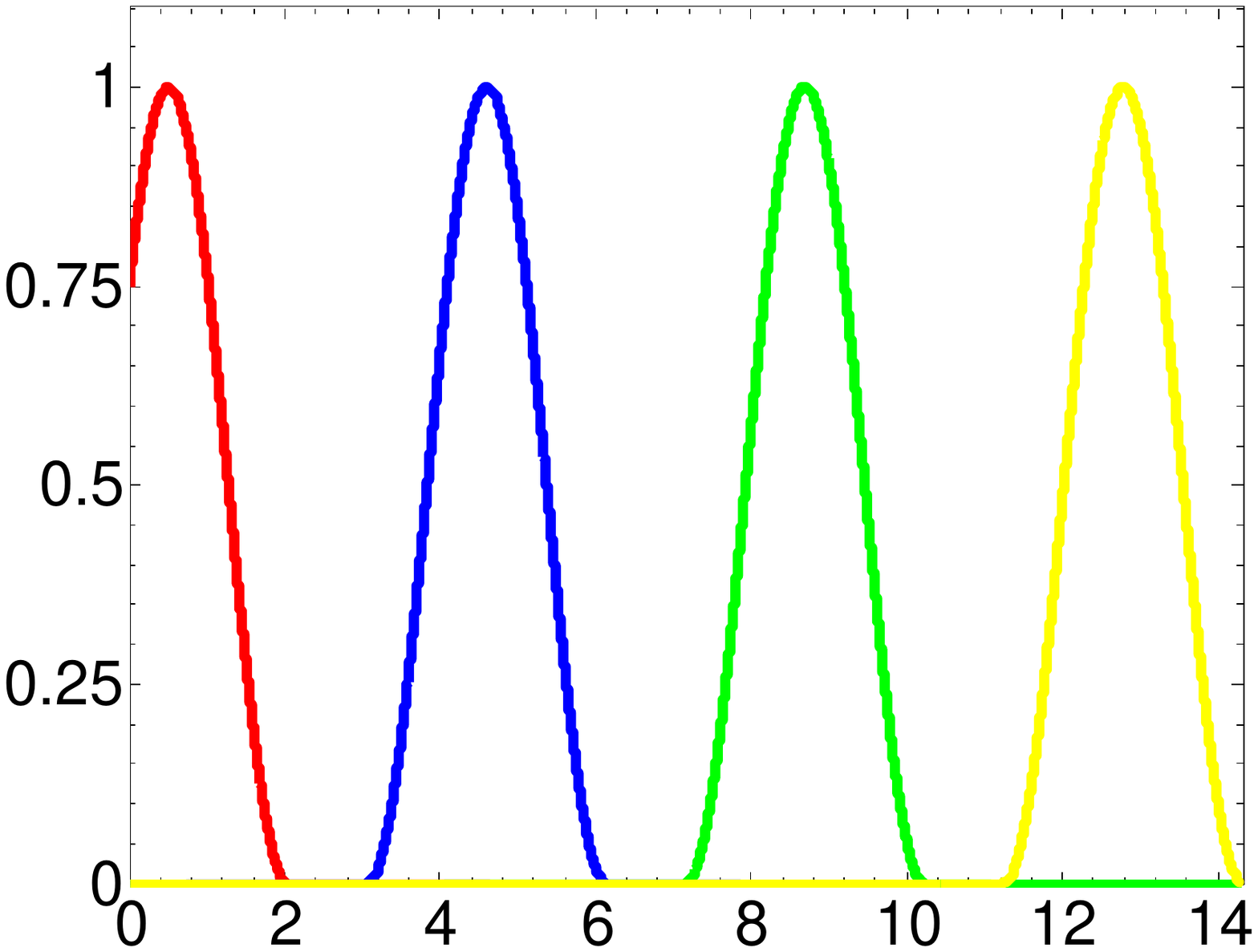}} 
   \centerline{\small{$\lambda$}}
\centerline{\small{(a)}}
\end{minipage}
\hfill
\begin{minipage}[b]{.22\linewidth}
   \centering
   \centerline{\small{Ground Truth Clusters}}
   \centerline{\includegraphics[width=\linewidth]{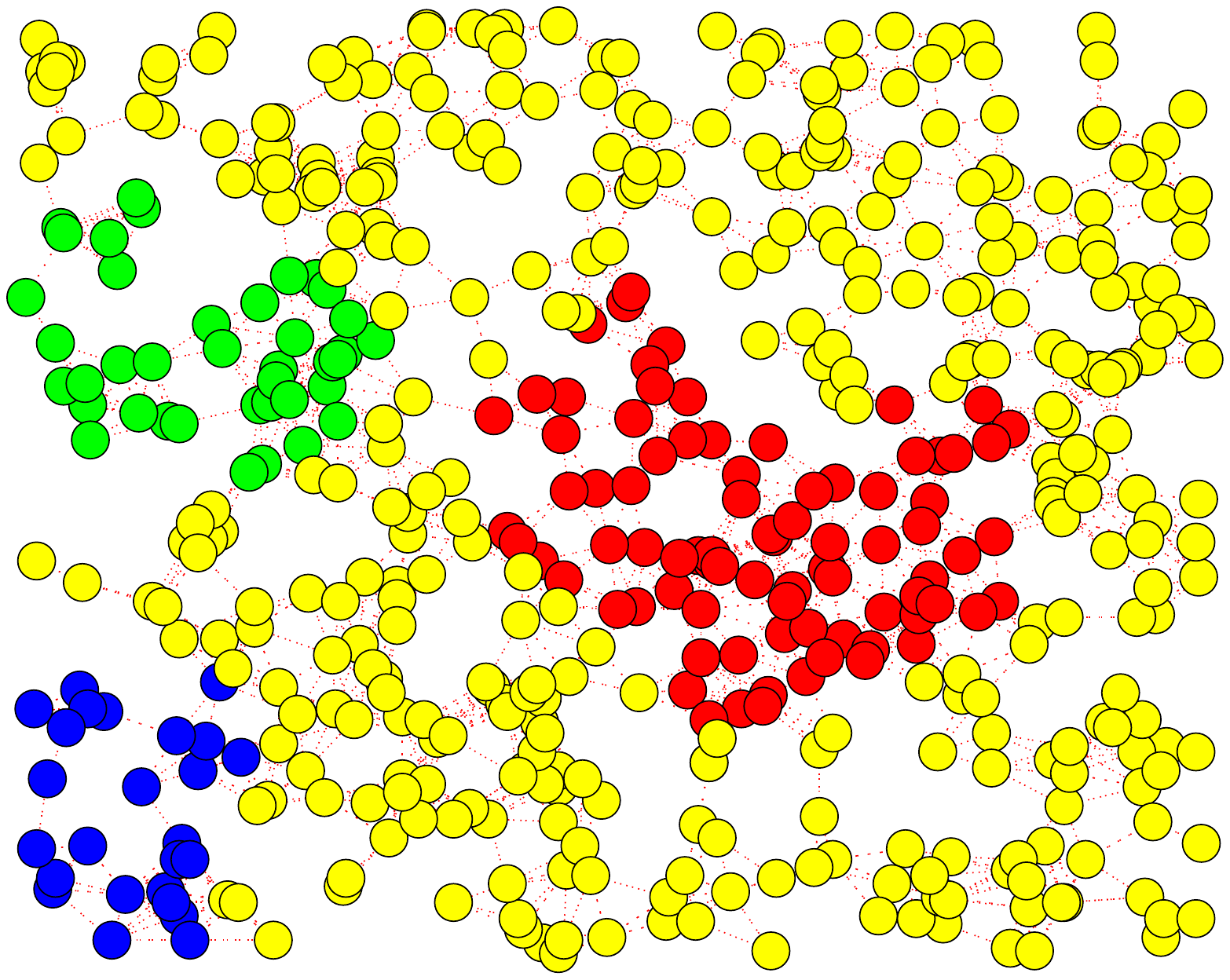}} 
      \vspace{.05in}
\centerline{\small{(b)}}
\end{minipage}
%\hfill \\ 
\hfill
\begin{minipage}[b]{.3\linewidth}
   \centering
      \centerline{\small{Signal}}
   \centerline{\includegraphics[width=\linewidth]{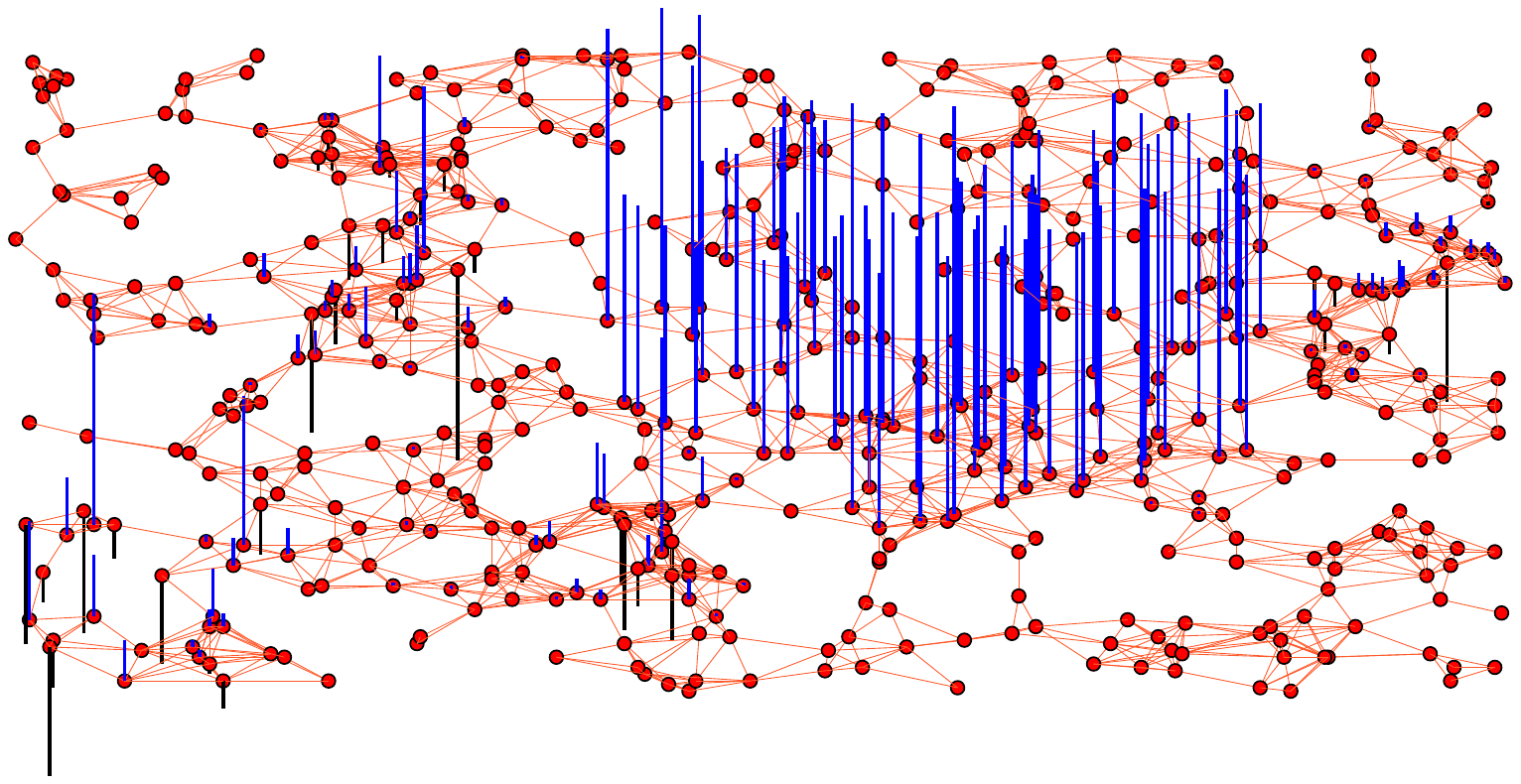}} 
   \vspace{.1in}
\centerline{\small{(c)}}  
\end{minipage}
\hfill
\begin{minipage}[b]{.22\linewidth}
   \centering
         \centerline{\small{Result of}} %Signal-Adapted Clustering
                  \centerline{\small{Signal-Adapted Clustering}}
   \centerline{\includegraphics[width=\linewidth]{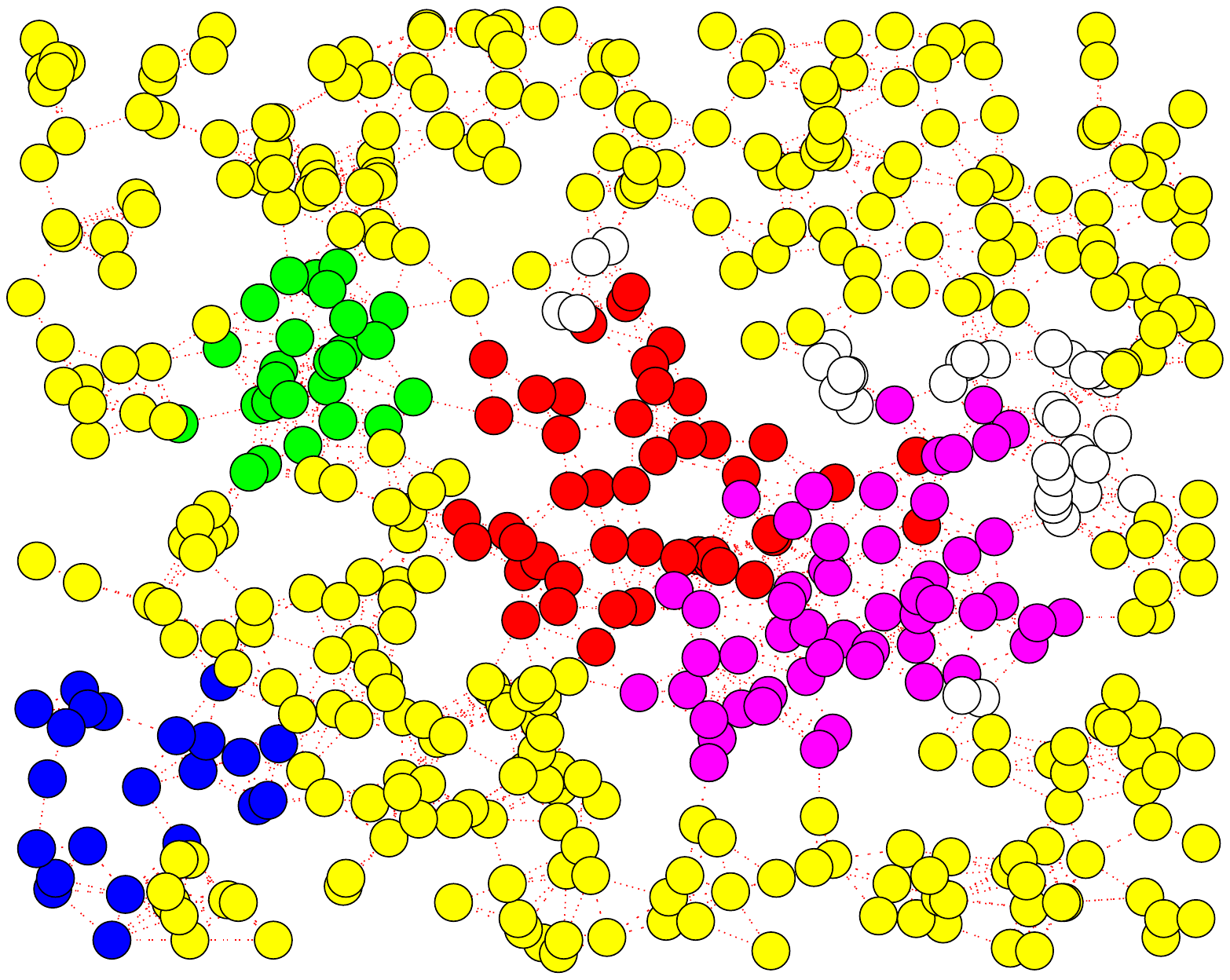}} 
      \vspace{.05in}
\centerline{\small{(d)}}
\end{minipage}
\hfill
\caption {Signal-adapted graph clustering example. (a) The graph spectral filters $\left\{\widehat{h_i}(\cdot)\right\}_{i=1,2,3,4}$ covering different bands of the graph Laplacian spectrum. (b) The four clusters used to generate the signal $f$. (c) The signal $f$ is a combination of signals in different frequency bands $(f_i \ast h_i)$ restricted to different areas of the graph. We can see for example that the red cluster is generated by the lowpass red filter in (a), and therefore the signal $f$ is relatively smooth around the red cluster of vertices. (d) The signal-adapted graph clustering results. We see that the red and magenta clusters roughly correspond to the original red cluster, and the white and yellow clusters roughly correspond to the original yellow cluster. There are some errors around the edges of the original red, blue, and green clusters, particularly in regions of low connectivity.} 
  \label{Fig:signal_adapted_clustering}
\end{figure}
\end{example}

\subsection{Tiling} \label{Se:tiling}
Thus far, we have generalized the classical notions of translation and modulation in order to mimic the construction of the classical windowed Fourier transform. We have seen from the examples in the previous subsections that the spectrogram may be an informative tool for signals on both regular and %random 
irregular graphs. In this section and the following section, we examine the extent to which our intuitions from classical time-frequency analysis carry over to the graph setting, and where they deviate due to the irregularity of the data domain, and, in turn, the possibility of localized Laplacian eigenfunctions.

First, we compare tilings of the time-frequency plane (or, in the graph setting, the vertex-frequency plane). Recall that Heisenberg boxes represent the time-frequency resolution of a given dictionary atom (including, e.g., windowed Fourier atoms or wavelets) in the time-frequency plane (see, e.g., \cite[Chapter 4]{mallat}). As shown in the tiling diagrams of Figure \ref{Fig:path_comparison}(a) and \ref{Fig:path_comparison}(b), respectively, the Heisenberg boxes of classical windowed Fourier atoms have the same size throughout the time-frequency plane, while the Heisenberg boxes of classical wavelets have different sizes at different wavelet scales. While one can trade-off time and frequency resolutions (e.g., change the length and width of the Heisenberg boxes of classical windowed Fourier atoms by changing the shape of the analysis window), the Heisenberg uncertainty principle places a lower limit on the area of each Heisenberg box.
%{\color{red} (Time-frequency resolution trade-off by changing $\tau$)}

%\subsection{Comparison with Spectral Graph Wavelets on the Path Graph}
%{\color{red}
%\begin{itemize}
%\item Include pictures of tiling comparison on the path graph
%\end{itemize}
%}

\begin{figure}[h]
\centering
\hfill
\begin{minipage}[b]{.26\linewidth}
   \centering
   \centerline{\small{~~~~~Classical Windowed}}
      \centerline{\small{~~~~~~Fourier Atoms}}
   \centerline{\includegraphics[width=\linewidth]{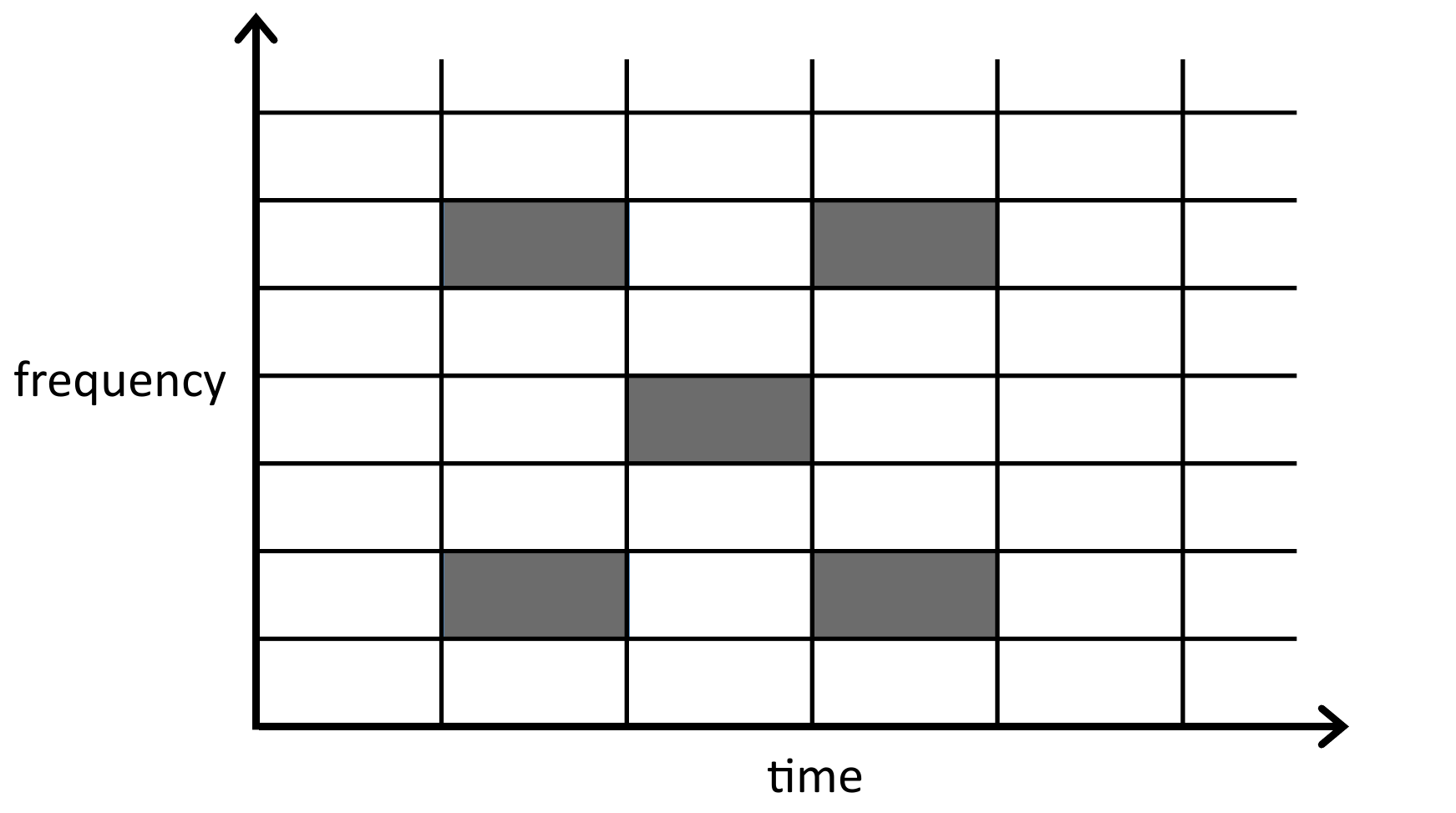}}
\centerline{\small{~~~~~~~~(a)}}
\end{minipage} 
\hfill
\begin{minipage}[b]{.26\linewidth}
   \centering
      \centerline{\small{~~~~Classical Wavelets}}
   \centerline{\includegraphics[width=\linewidth]{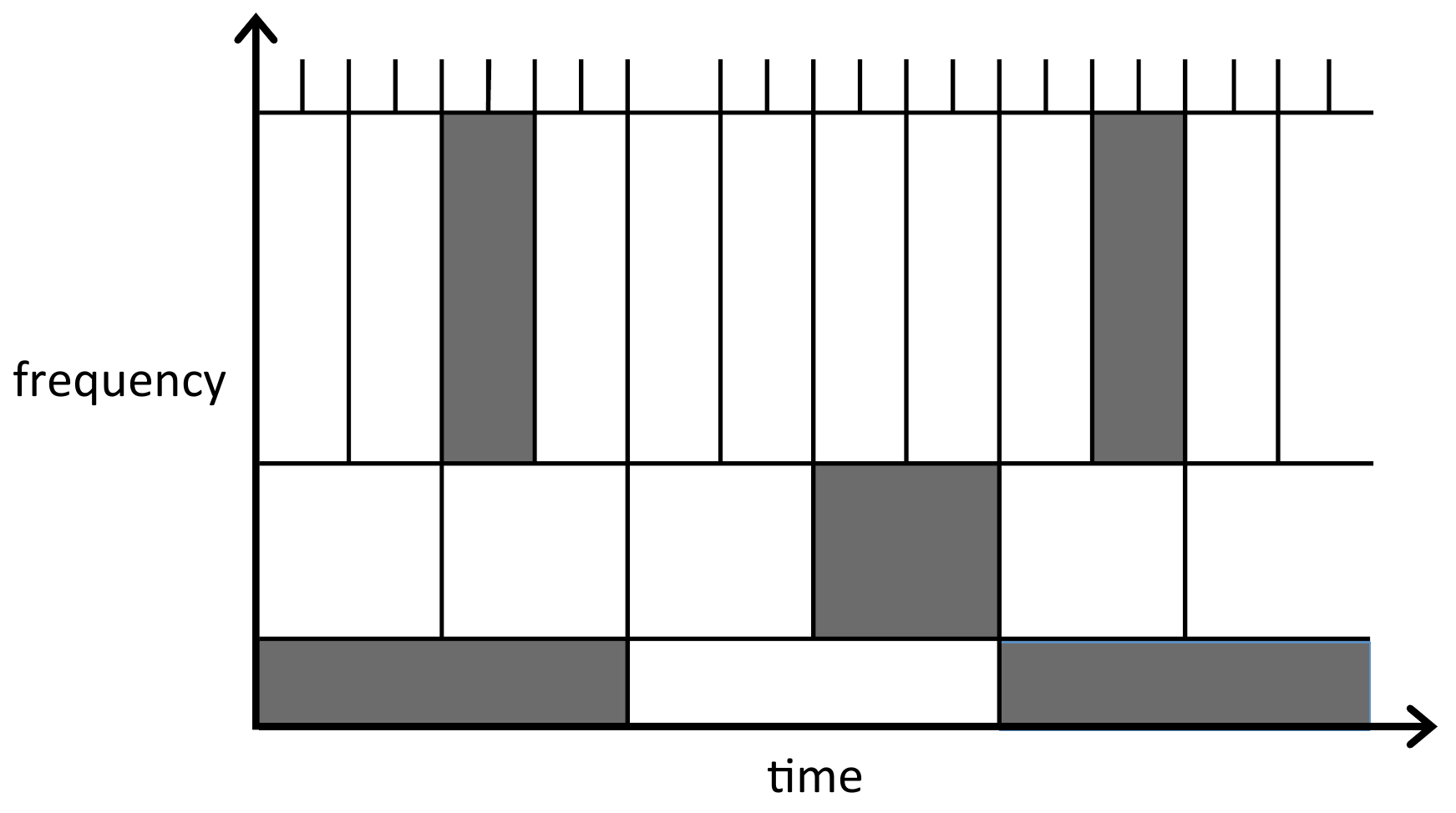}} %{fig_atoms_b}}
\centerline{\small{~~~~~~~~(b)}}
\end{minipage} %\hfill \\
%\centering
\hfill
\begin{minipage}[b]{.2\linewidth}
   \centering
         \centerline{\small{Windowed Graph}}
         \centerline{\small{Fourier Atoms}}
   \centerline{\includegraphics[width=\linewidth]{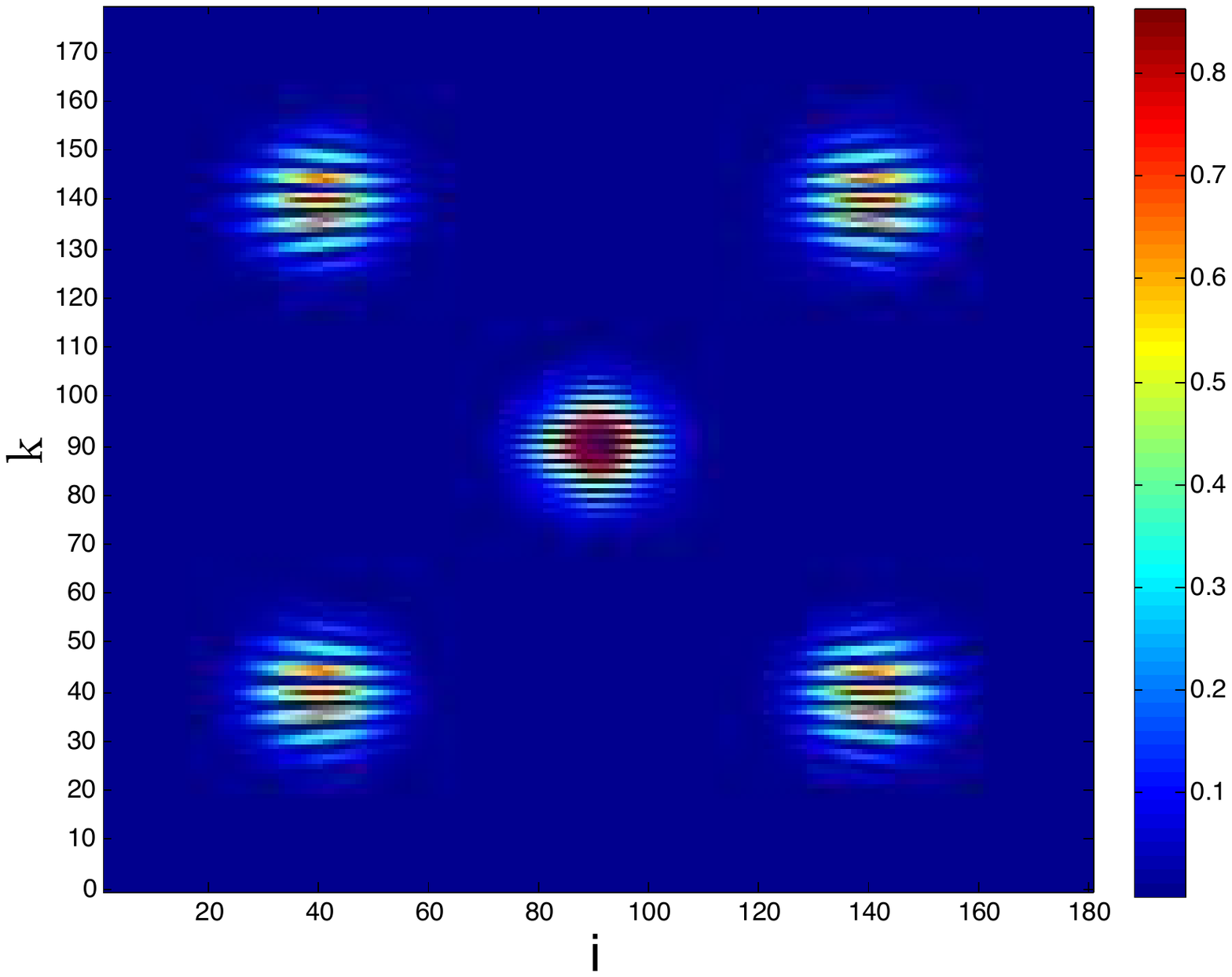}}
\centerline{\small{(c)~}}
\end{minipage} 
\hfill
\begin{minipage}[b]{.2\linewidth}
   \centering
           \centerline{\small{Spectral Graph}}
           \centerline{\small{Wavelets}}
   \centerline{\includegraphics[width=\linewidth]{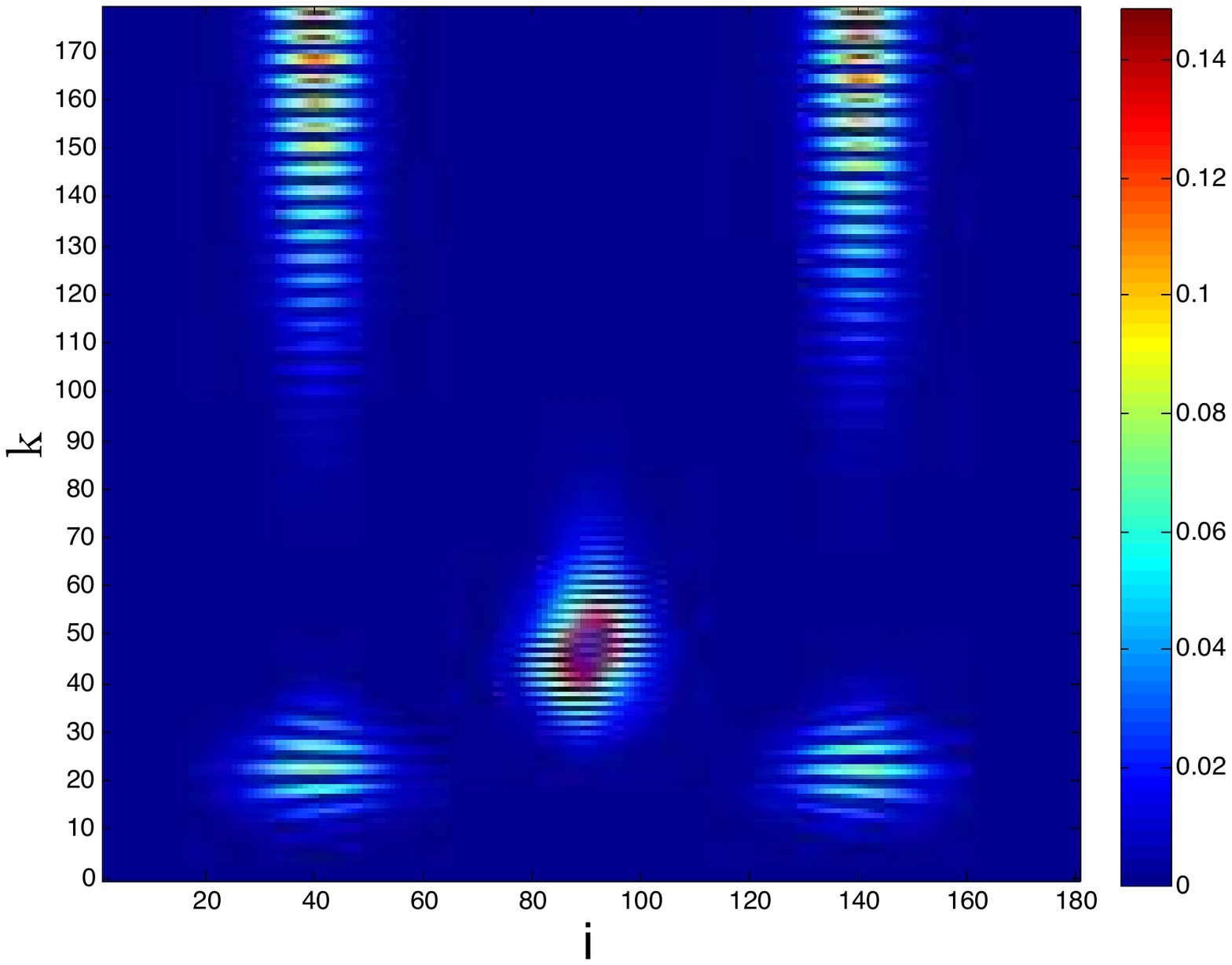}} %{fig_atoms_d}}
\centerline{\small{(d)~}}
\end{minipage} \hfill 
\caption {(a) Tiling of the time-frequency plane by classical windowed Fourier atoms. (b) Tiling of the time-frequency plane by classical wavelets. (c) Sum of the spectrograms of five windowed graph Fourier atoms on the path graph with 180 vertices. (d) Sum of the spectrograms of five spectral graph wavelets on the path graph with 180 vertices.} 
 \label{Fig:path_comparison}
\end{figure}

In Figure \ref{Fig:path_comparison}(c) and \ref{Fig:path_comparison}(d), we use the windowed graph Fourier transform to show the sums of spectrograms of five windowed graph Fourier atoms on a path graph and five spectral graph wavelets on a path graph, respectively. The plots are not too different from what intuition from classical time-frequency analysis might suggest. Namely, the sizes of the Heisenberg boxes for different windowed graph Fourier atoms are roughly the same, while the sizes of the Heisenberg boxes of spectral graph wavelets are similar at a fixed scale, but vary across scales. 

In Figure \ref{Fig:atoms_jointly}, we plot three different windowed graph Fourier atoms -- all with the same center vertex -- on the Swiss roll graph. Note that all three atoms are jointly localized in the vertex domain around the center vertex 62, and in the graph spectral domain around the frequencies to which they have been respectively modulated. However, unlike the path graph example in Figure \ref{Fig:path_comparison}, the sizes of the Heisenberg boxes of the three atoms are quite different. In particular, the atom $g_{62,983}$ is extremely close to a delta function in both the vertex domain and the graph spectral domain, which of course is not possible in the classical setting due to the Heisenberg uncertainty principle. The reason this happens is that  the coherence of this Swiss roll graph is $\mu=0.94$, and the eigenvector $\chi_{983}$ is highly localized, with a value of -0.94 at vertex 62. The takeaway is that highly localized eigenvectors can limit the extent to which intuition from the classical setting carriers over to the graph setting.

\begin{figure}[h]
\centering
\hfill
\begin{minipage}[b]{.31\linewidth}
%\centering
\hspace{.3in}\centerline{{$g_{62,100}$}}
\centerline{\hspace{.5in}\includegraphics[width=.85\linewidth]{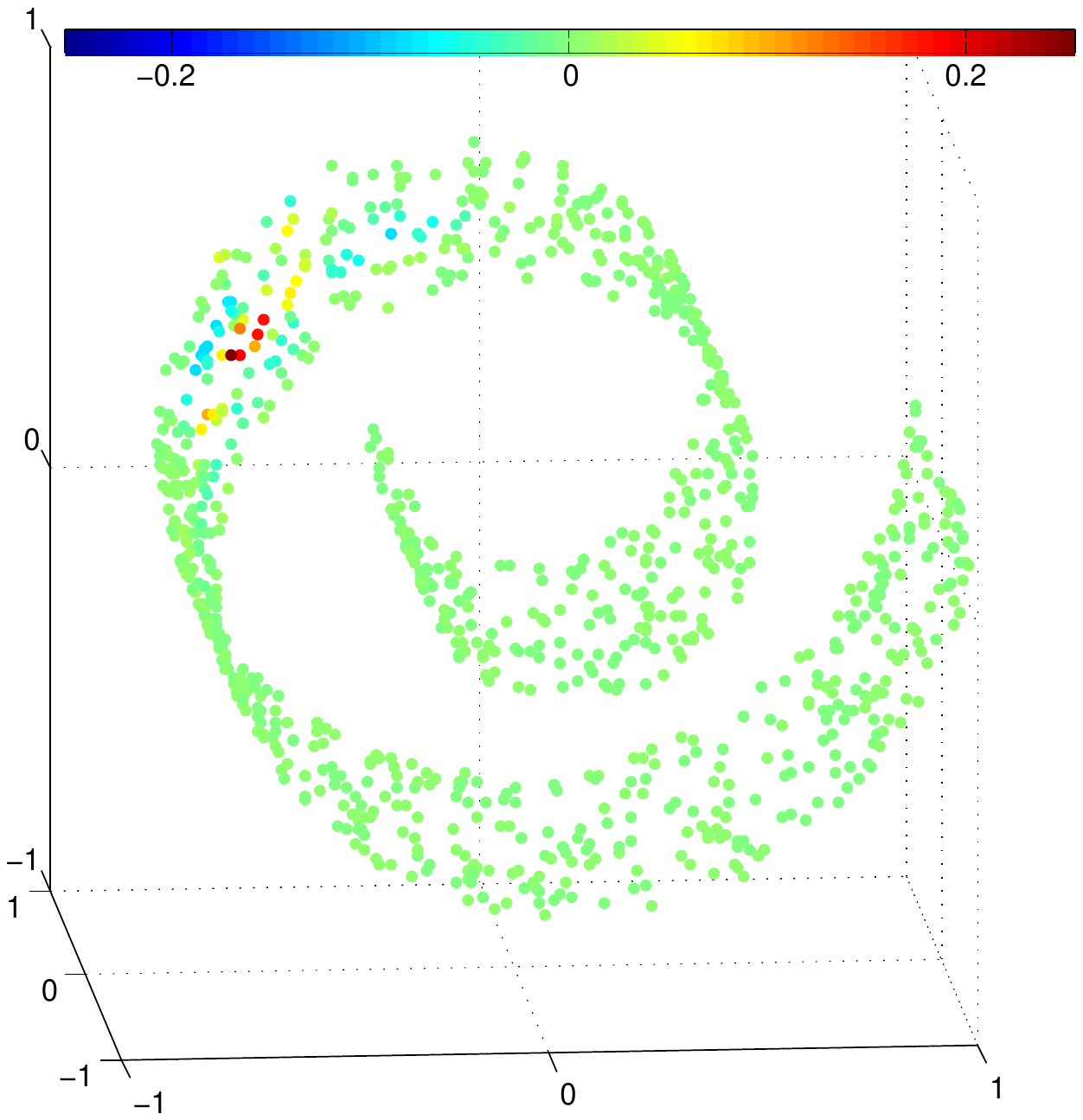}} %{fig_swiss_vertex1}}
\centerline{\includegraphics[width=\linewidth]{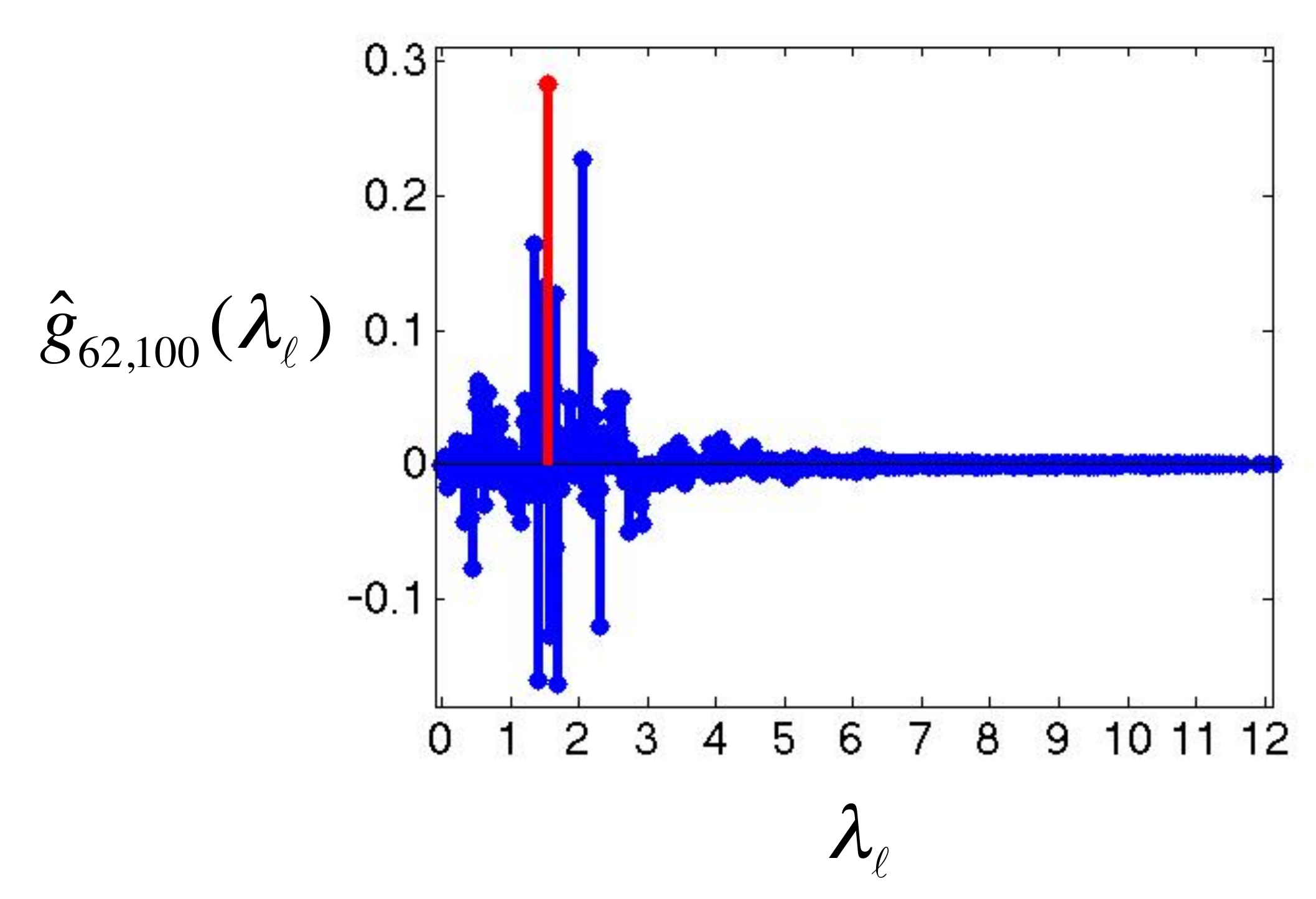}} % {fig_swiss_gsd1f}}
\centerline{\small{~~~~~~~~~~~~~~(a)}}
\end{minipage} 
\hfill
\begin{minipage}[b]{.31\linewidth}
 %  \centering
\hspace{.3in}\centerline{{$g_{62,450}$}}
\centerline{\hspace{.5in}\includegraphics[width=.85\linewidth]{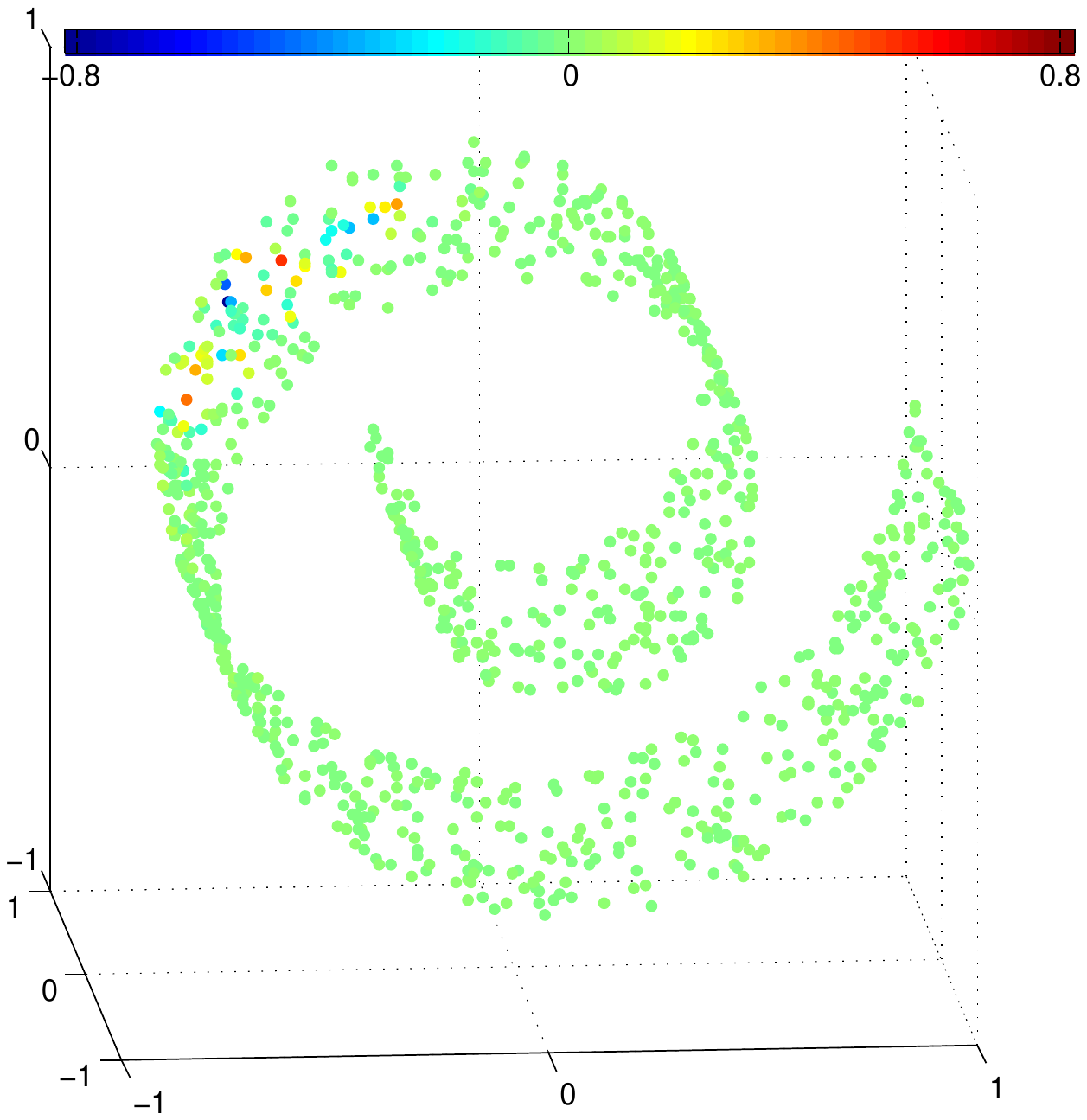}}   %{fig_swiss_vertex2}} 
\centerline{\includegraphics[width=\linewidth]{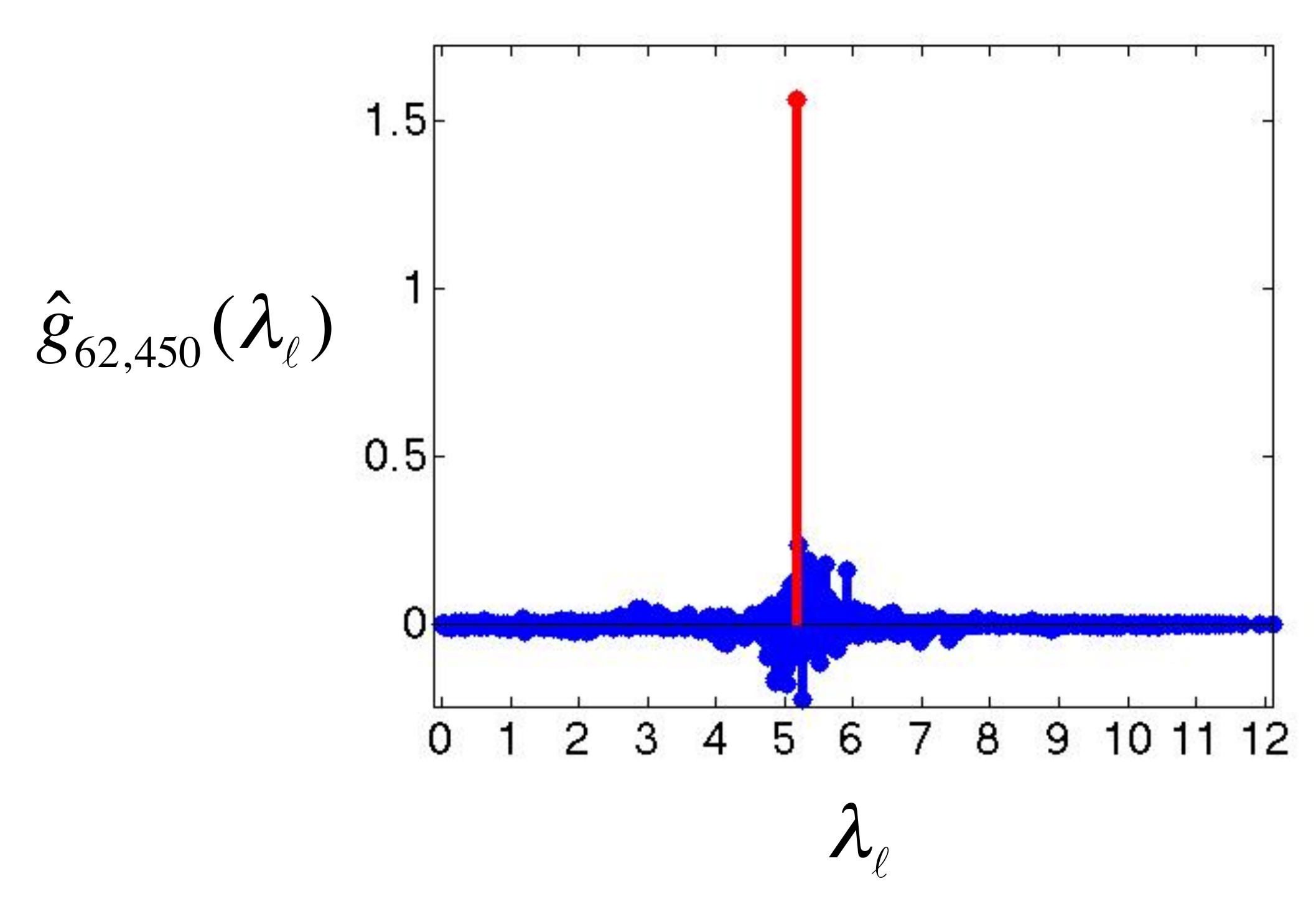}}  %{fig_swiss_gsd2f}}
\centerline{\small{~~~~~~~~~~~~~~(b)}}  
\end{minipage} %\hfill \\
%\centering
\hfill
\begin{minipage}[b]{.31\linewidth}
 %  \centering
\hspace{.3in} \centerline{{$g_{62,983}$}}
\centerline{\hspace{.5in}\includegraphics[width=.85\linewidth]{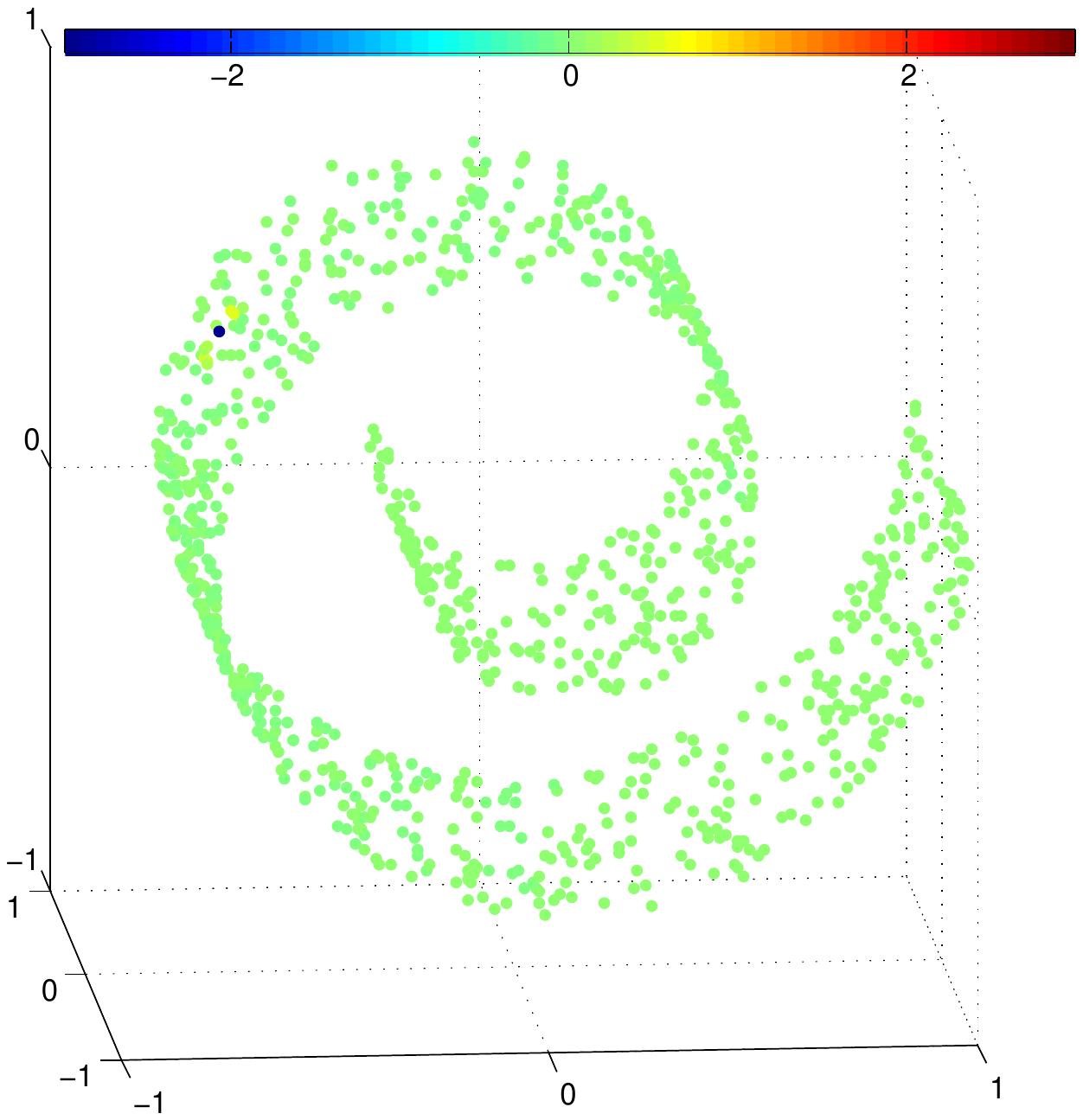}} %{fig_swiss_vertex3}}
\centerline{\includegraphics[width=\linewidth]{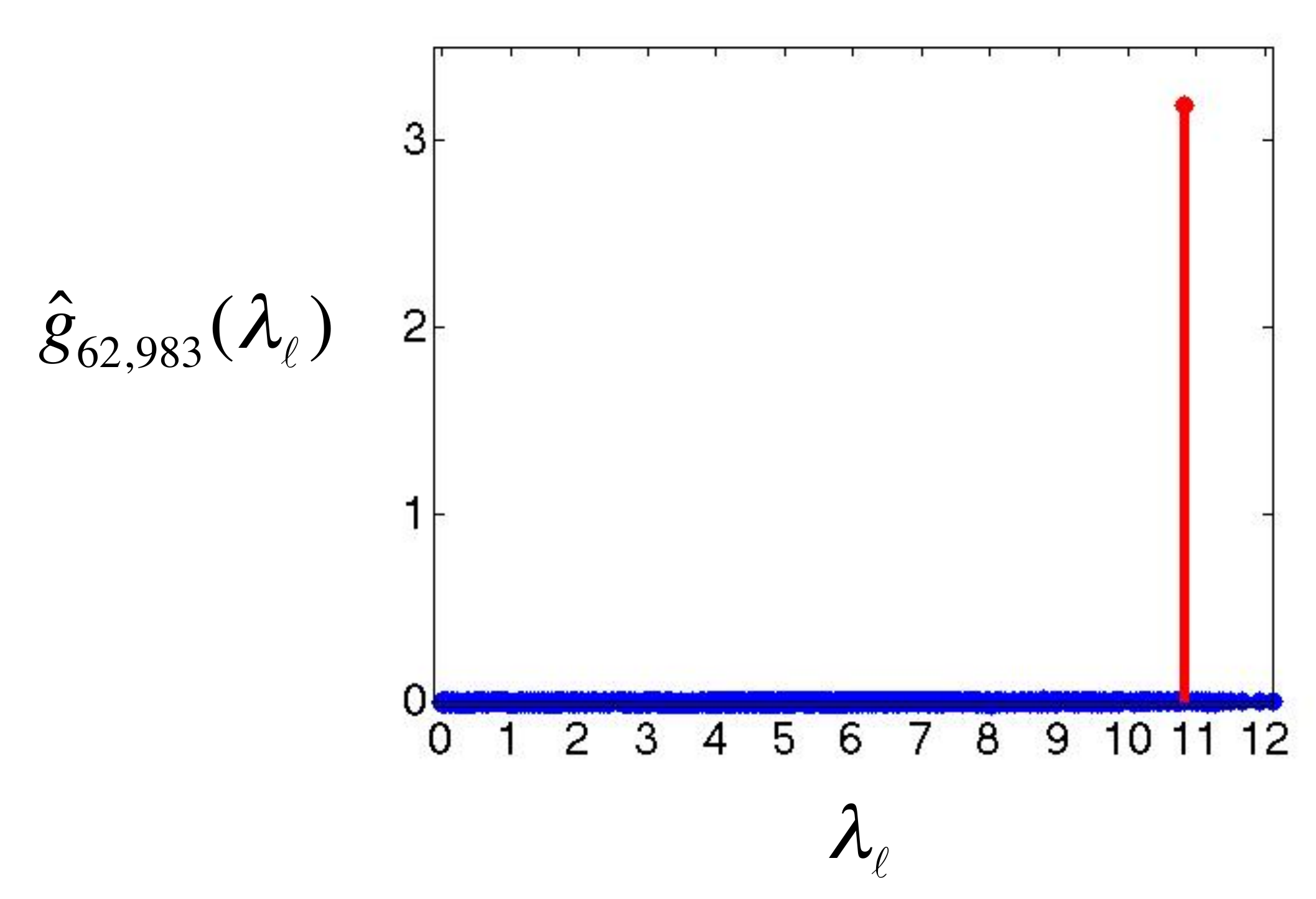}} %{fig_swiss_gsd3f}}
\centerline{~~~~~~~~~~~~~\small{(c)}}
\end{minipage} 
\hspace{.15in}\hfill
\caption {Three different windowed graph Fourier atoms on the Swiss roll, shown in both domains.}
% (a) $k=X$, $i=X$. (b) $k=X$, $i=X$. (c) $k=X$, $i=X$. } 
 \label{Fig:atoms_jointly}
\end{figure}

%{
%\color{red}
%Add empirical spread comparison like Figure 6 of \cite{shuman_SPM}?
%}

%\subsection{Joint Localization of the Atoms in the Vertex and Graph Spectral Domains}\label{Se:joint_loc}

%\subsection{Discussion}

%\subsection{Discussion and Comparison to Spectral Graph Wavelets}
%\label{Se:discussion}

%\begin{figure}[t]
%\centering
%\hfill
%\begin{minipage}[b]{.5\linewidth}
% {\includegraphics[width=.48\linewidth]{}}
%  {\includegraphics[width=.48\linewidth]{fig_wavelet_tiling}}
%{\small{(a)}}{\small{(b)}}
%\end{minipage} 
%\hfill
%\begin{minipage}[b]{.45\linewidth}
%  {\includegraphics[width=.45\linewidth]{fig_path_wgft_tf}}
%     {\includegraphics[width=.45\linewidth]{fig_path_sgwt_tf}}
%     {\small{(b)}}
%\end{minipage}
%\hfill
%\caption {(a) XXX. (b) YYY.} 
% \label{Fig:path_comparison}
%\end{figure}
%
%

%{\color{red}
%\subsection{Localization Analysis}
%\begin{itemize}
%\item Localization analysis of the atoms (by combining localization analyses of translation and modulation)
%\end{itemize}
%}

%{\color{red}
%\subsection{Variants}
%\begin{itemize}
%\item Use a subset of the atoms to improve frame bounds? %Still get a frame when you take a smaller collection
%\item Discussion about normalizing the coefficients and alternative ways to define frames - switch order of mod/trans - normalize, etc. see Benjamin's translation notes
%\end{itemize}
%}

%\subsection{Implementation Issues}
%\label{Se:implementation}
\subsection{Limitations}
\label{Se:limitations}
In this section, we briefly discuss a few limitations of the proposed windowed graph Fourier transform.

\subsubsection{Computational Complexity}\label{Se:comp}
While the exact computation of the windowed graph Fourier transform coefficients via \eqref{Eq:wgft_atom_comp} and \eqref{Eq:wgft_comp} is feasible for smaller graphs (e.g., less than 10,000 vertices), the computational cost may be prohibitive for much larger graphs. Therefore, it would be of interest to develop an approximate computational method that scales more efficiently with the size of the graph. Recall that 
\begin{align}\label{Eq:wgft_comp_summary}
Sf(i,k)=\ip{f}{g_{ik}}=\ip{f}{M_k T_i g}=\ip{f\left(T_i g\right)^*}{\chi_k}=\widehat{f\left(T_i g\right)^*}(\lambda_k).
\end{align}
The quantity $T_i g$ in the last term of \eqref{Eq:wgft_comp_summary} can be approximately computed in an efficient manner via the Chebyshev polynomial method of \cite[Section 6]{sgwt}, and therefore $f\left(T_i g\right)^*$ can be approximately computed in an efficient manner. Thus, if there was a fast approximate graph Fourier transform, we could apply that to the fast approximation of $f\left(T_i g\right)^*$ in order to approximately compute the windowed graph Fourier transform coefficients $\left\{Sf(i,k)\right\}_{k=0,1,\ldots,N-1}$. Unfortunately, we are not yet aware of a good fast approximate graph Fourier transform method.

\subsubsection{Lack of a Tight Frame}
As discussed in Sections \ref{Se:frame_bounds} and \ref{Se:spec_examples}, the collection of windowed graph Fourier atoms need not form a tight frame, meaning that the spectrogram can not always be interpreted as an energy density function. Furthermore, the lack of a tight frame may lead to (i) less numerical stability when reconstructing a signal from (potentially noisy) windowed graph Fourier transform coefficients \cite{christensen,kovacevic_frames1,kovacevic_frames2}, or (ii) slower computations, for example when computing proximity operators in convex regularization problems \cite{combettes_chapter}.

\subsubsection{No Guarantees on the Joint Localization of the Atoms in the Vertex and Graph Spectral Domains}

Thus far, we have seen that (i) if we translate a smooth kernel $\hat{g}$ to vertex $i$, the resulting signal $T_i g$ will be localized around vertex $i$ in the vertex domain (Section \ref{Se:trans_loc}); (ii) if we modulate a kernel $\hat{g}$ that is localized around 0 in the graph spectral domain, the resulting kernel $\widehat{M_k g}$ will be localized around $\lambda_k$ in the graph spectral domain (Section \ref{Se:modulation}); and (iii) a windowed graph Fourier atom, $g_{ik}$, is often jointly localized around vertex $i$ in the vertex domain and frequency $\lambda_k$ in the graph spectral domain (e.g., Figure \ref{Fig:atoms_jointly}). In classical time-frequency analysis, the windowed Fourier atoms $g_{u,\xi}$ are all jointly localized around time $u$ and frequency $\xi$. So we now ask %the natural question 
whether the windowed graph Fourier atoms are always jointly localized around vertex $i$ and frequency $\lambda_k$? The answer is no, and the reason once again follows from the possibility of localized graph Laplacian eigenvectors. For a smooth window $\hat{g}$, the translated window $T_i g$ is indeed localized around vertex $i$; however, $g_{ik}=\sqrt{N}\chi_k .* (T_i g)$ may not be localized around vertex $i$ when $\chi_k(n)$ is close to zero for all vertices $n$ in a neighborhood around $i$. One such example is shown in Figure \ref{Fig:joint_counter}. Similarly, in order for $\hat{g}_{ik}$ to be localized around frequency $\lambda_k$ in the graph spectral domain, it suffices for $\widehat{T_i g}$ to be localized around 0 in the graph spectral domain. However, 
\begin{align*}
\widehat{T_i g}(\lambda_{\l})=\sqrt{N}\hat{g}(\lambda_{\l})\chi_{\l}^*(i),
\end{align*}
and, therefore, it is possible that the multiplication by a graph Laplacian eigenvector changes the localization of the translated window in the graph spectral domain. In classical time-frequency analysis, these phenomena never occur, because the complex exponentials are always delocalized.

\begin{figure}[h]
\centering
\hfill
\begin{minipage}[b]{.31\linewidth}
\centerline{{$~~T_{48} g$}}
\centerline{\includegraphics[width=\linewidth]{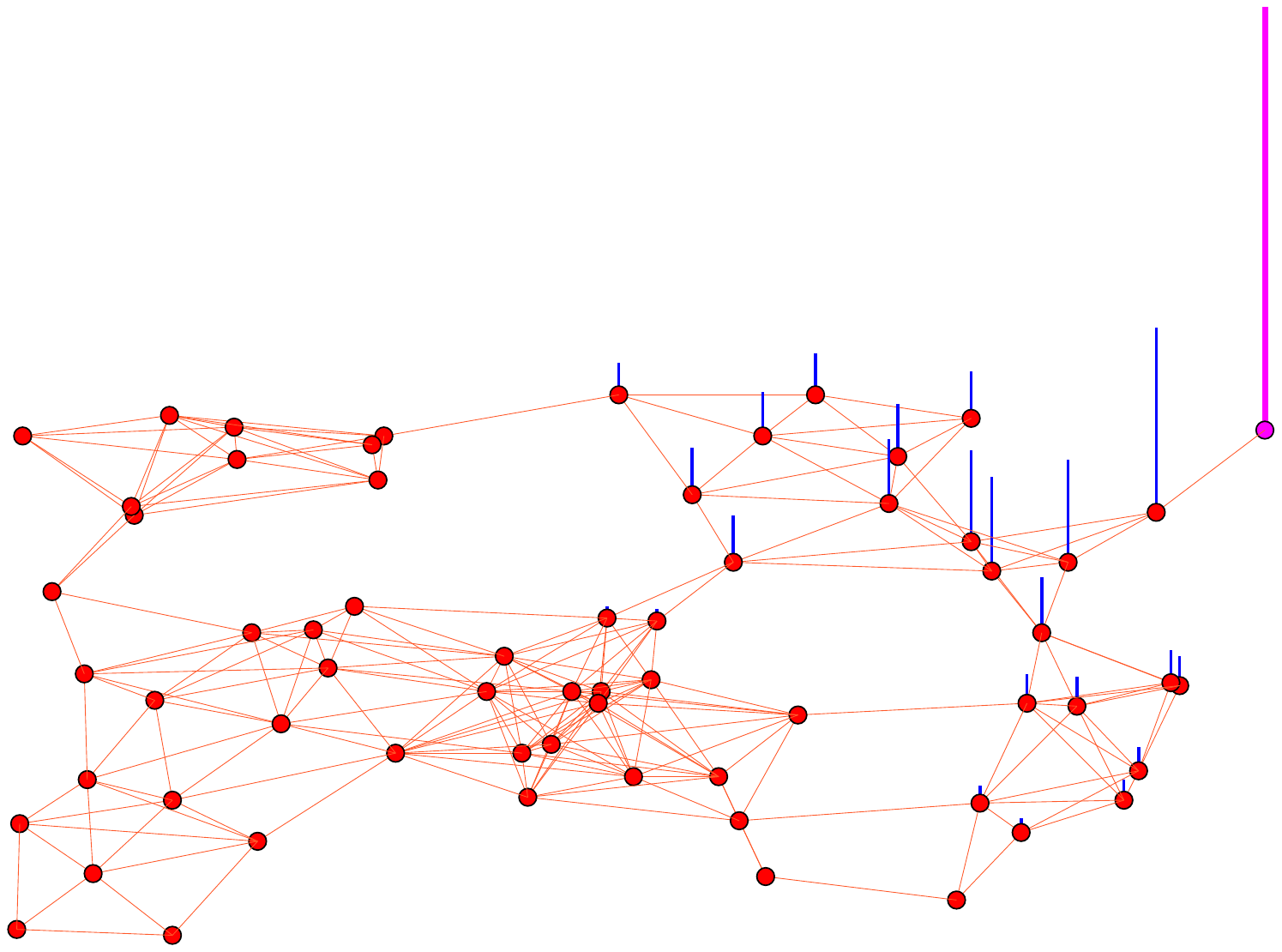}}
\centerline{\small{(a)}}
\end{minipage} 
\hfill
\begin{minipage}[b]{.31\linewidth}
\centerline{{$~~\chi_{N-2}$}}
\centerline{\includegraphics[width=\linewidth]{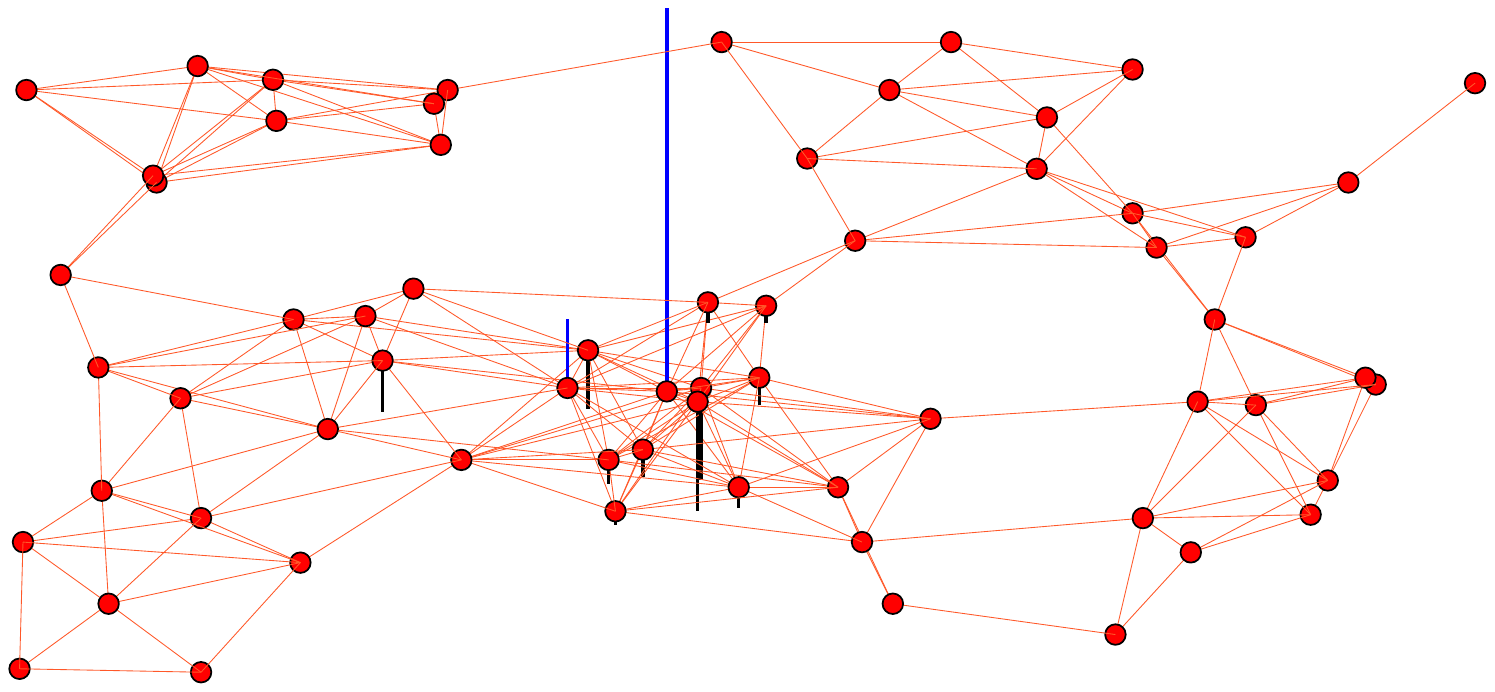}}  
\centerline{\small{(b)}}  
\end{minipage} 
\hfill
\begin{minipage}[b]{.31\linewidth}
\centerline{{$~~g_{48,N-2}$}}
\centerline{\includegraphics[width=\linewidth]{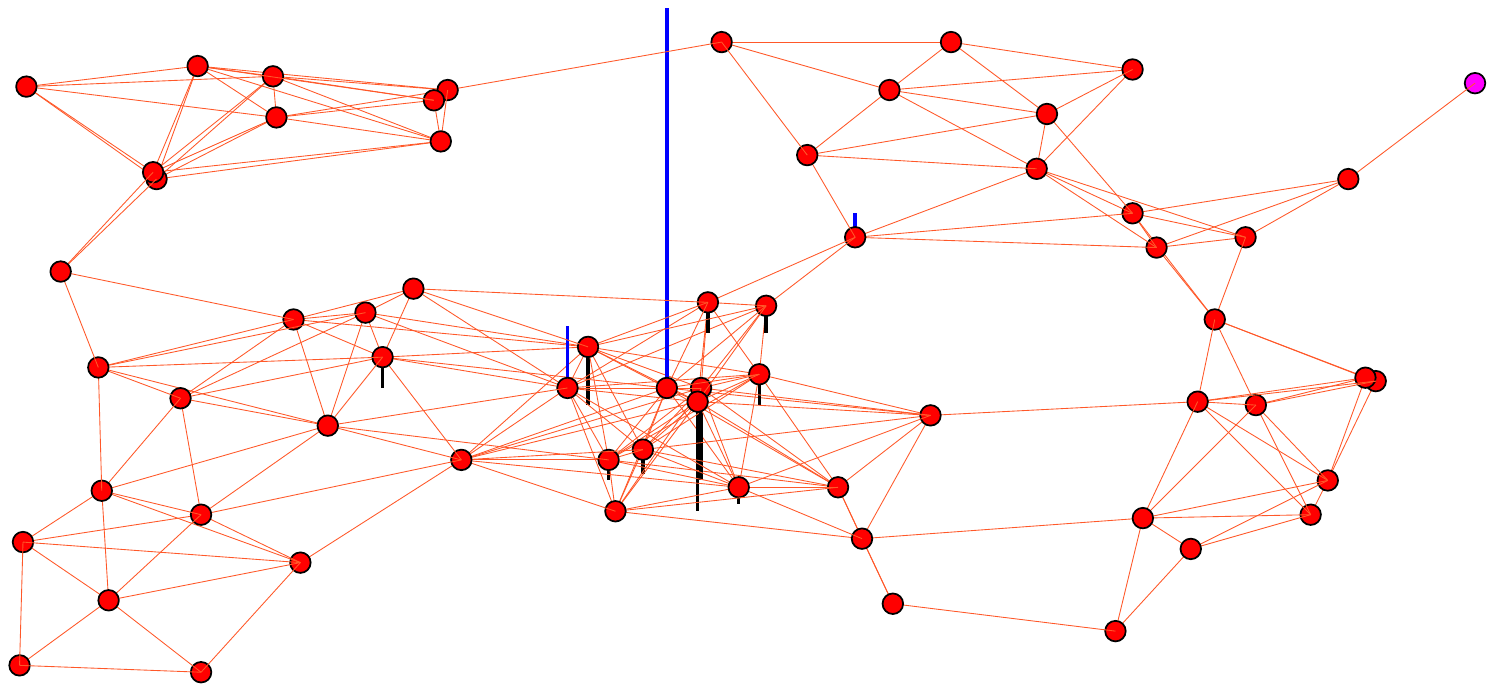}}
\centerline{\small{(c)}}
\end{minipage} 
\hfill
\caption {A windowed graph Fourier atom that is not localized around its center vertex. (a) The translated window $T_{48} g$ is localized around the center vertex 48, which is shown in magenta. (b) The graph Laplacian eigenvector $\chi_{N-2}$ is highly localized around a different vertex in the graph and is extremely close to zero in the neighborhood of vertex 48. (c) Therefore, when we modulate the translated window $T_{48}g$ by $\chi_{N-2}$, the resulting atom $g_{48,N-2}$ is not centered around vertex 48. Note that the three figures are drawn on different scales; the magnitudes of the largest components of the signals are 1.37, 0.59, and 0.15, respectively.}
 \label{Fig:joint_counter}
\end{figure}

Despite the lack of guarantees on joint localization, the windowed graph Fourier transform is still a useful analysis tool.
First, if the coherence $\mu$ is low (close to $\frac{1}{\sqrt{N}}$), the graph Laplacian eigenvectors are delocalized, the atoms are jointly localized in the vertex and graph spectral domains, and much of the intuition from classical time-frequency analysis carriers over to the graph setting. Even when the coherence is close to 1, however, it often happens that the majority of the atoms are in fact jointly localized in time and frequency. This is because only those atoms whose computations include highly localized eigenvectors are affected.
We have observed empirically that there tends to be only a few graph Laplacian eigenvectors, most commonly those associated with the higher frequencies (eigenvalues close to $\lambda_{\max}$). Moreover, if $\chi_k$ is highly localized around a vertex $j$, then $Sf(i,k)$ will be close to 0 for any vertex $i$ not close to $j$, so it is not particularly problematic that $g_{ik}$ may not be localized around $i$ in the vertex domain.

%{\color{red}
%\begin{itemize}
%\item How far does our intuition extend? Need to be careful of localized eigenvectors
%\item Affect of translation in spectral domain? If $\widehat{T_i g}$ is not localized around zero in the graph spectral domain, then we have no guarantee that $\hat{g}_{ik}$ will be localized around $\lambda_{k}$. Also, if $\chi_k(n)$ is close to zero in a wide neighborhood around vertex $i$, then $g_{ik}$ may not be localized around $i$ in the vertex domain.  
%\item Which types of tasks is the WGFT better suited for than the SGWT?
%\item Can conclude the intuitions from classical analysis carry over better when the coherence is low.
%\item Empirically, tends to happen at higher frequencies
%\item Even when coherence high, it often happens that the majority of the atoms are in fact jointly localized in time and frequency, but then there exist a few that are not.
%\end{itemize}
%}

%{\color{red} MAJOR TASK 3 - efficient implementation - fast GFT}
%{\color{red}
%\begin{itemize}
%\item Other methods / more about fast GFT?
%\item Switch trans/mod? Approximate mod by translation and symmetrization in kernel
%\item Reconstruction in practice - frame algorithm, dual frames?
%\item Approximate frame bounds with Chebyshev polynomials
%%\item Segue into time-frequency filter banks
%\end{itemize}
%}

%\subsection{Atoms Jointly Localized in the Vertex and Graph Spectral Domains}

\section{Conclusion and Future Work}
\label{Se:conclusion}

%{\color{red}
%Segue from limitations into time-frequency inspired filter banks
%}

%{\color{red}
%\subsection{Segue into time-frequency inspired filter banks - limitations of WGFT}
%\begin{itemize}
%\item Joint localization from smoothness results
%\item Computational efficiency
%\item Tight frame
%\end{itemize}
%}

%{\color{red} 
We defined generalized notions of translation and modulation through multiplication with a graph Laplacian eigenvector in the graph spectral and vertex domains, respectively. We leveraged these generalized operators to design a windowed graph Fourier transform, which enables vertex-frequency analysis for signals on graphs. We showed that when the chosen window is smooth in the graph spectral domain, translated windows are localized in the vertex domain. Moreover, when the chosen window is localized around zero in the graph spectral domain, %we showed that 
the modulation operator is close to a translation in the graph spectral domain.
If we apply this windowed graph Fourier transform to a signal with frequency components that vary along a path graph, the resulting spectrogram matches our intuition from classical discrete-time signal processing.
Yet, our construction is fully generalized and can be applied to analyze signals on any undirected, connected, weighted graph. The example in Figure \ref{Fig:sensor} %in Section \ref{Se:examples} showed 
shows that the windowed graph Fourier transform may be a valuable tool for extracting information from signals on graphs, as structural properties of the data that are hidden %not clear 
in the vertex domain may become obvious in the transform domain.    
%Your examples also show that localized transforms might also be valuable for visualization/information discovery: for instance it is difficult to guess any particular structure in the data of Fig 5, but i becomes obvious in the transform domain.
%These structures are also significant for regularizing inverse problems.
%

%Incorporate graph weights into translation localization bounds - cite literature from matrix functions (2x Benzi papers, check pseudo spectra, Strohmer paper - another mention of laplacian eigenvector localization role)

%Set up adaptive filter paper
%}

One line of future work is to continue to improve the localization results for the translated kernels presented in Section \ref{Se:trans_loc}, preferably by incorporating the graph weights. This issue is closely related to the study of both the localization of eigenvectors and recent work in the theory of matrix functions \cite{higham}. In particular, it is related to the off-diagonal decay of entries of a matrix function $g(\L)$, as studied in \cite{benzi1,benzi2}. To our knowledge, existing results in this area also do not incorporate the entries of the matrix $\L$ (other than through the eigenvalues), but rather depend primarily on the sparsity pattern of $\L$. For more precise numerical localization results, numerical linear algebra researchers have also turned to quadrature methods to approximate the quantity $\delta_i^* g(\L) \delta_j$ (see, e.g., \cite{golub_quadrature} and references therein). 

Motivated by the spirit of vertex-frequency analysis introduced in this paper, we are also investigating a new, more computationally efficient dictionary design method to generate tight frames of atoms that are jointly localized in the vertex and graph spectral domains. 

\section{Appendix}

\subsection{The Normalized Laplacian Graph Fourier Basis Case}
We now briefly consider the case when the normalized graph Laplacian eigenvectors are used as the Fourier basis, and revisit the definitions and properties of the generalized translation and modulation operators from Sections \ref{Se:operators} and \ref{Se:modulation}. Throughout we use a ${\tilde{~}}$ to denote the corresponding quantities derived from the normalized graph Laplacian $\tilde{\L}$. To prove many of the following properties, we use the fact that for connected graphs
\begin{align*}
\tilde{\chi}_0(n)=\frac{\sqrt{d_n}}{\norm{\sqrt{d}}_2}=\frac{\sqrt{N}\sqrt{d_n}}{\norm{\sqrt{d}}_2}\chi_0(n),
\end{align*}
where the square root in $\sqrt{d}$ is applied component-wise. 

\subsubsection{Generalized Convolution and Translation in the Normalized Laplacian Graph Fourier Basis}

%{\color{red} 
%(Keep here or put all of the normalized Laplacian results in the appendix?)
%}

 We can keep the definition \eqref{Eq:gen_convolution} of the generalized convolution, with the normalized graph Laplacian eigenvalues and eigenvectors replacing those of the combinatorial graph Laplacian. Statements 1-7 in Proposition \ref{Prop:conv_prop} are still valid; however, \eqref{Eq:integration} becomes
\begin{align*}
\sum_{n=1}^N (f \ast g)(n) \sqrt{d_n} = \norm{\sqrt{d}}_2 \hat{f}(0)\hat{g}(0) = \frac{1}{\norm{\sqrt{d}}_2}\left[\sum_{n=1}^N f(n)\sqrt{d_n}  \right] \left[\sum_{n=1}^N g(n)\sqrt{d_n} \right].
\end{align*}
Accordingly, 
%in order to preserve the property $\sum_{n=1}^N (T_i f)(n)=\sum_{n=1}^N f(n)$ in Corollary \ref{Co:trans_conv}, 
we can redefine the generalized translation operator as
\begin{align*}
(\tilde{T}_i f)(n):=\norm{\sqrt{d}}_2 (f \ast \delta_i)(n) = \norm{\sqrt{d}}_2 \sum_{\l=0}^{N-1} \hat{f}\left(\tilde{\lambda}_{\l}\right)\tilde{\chi}_{\l}^*(i)\tilde{\chi}_{\l}(n),
\end{align*}
so that Property 3 of Corollary \ref{Co:trans_conv} becomes
\begin{align*}
\sum_{n=1}^{N} (\tilde{T}_i f)(n) \sqrt{d_n} = \sqrt{d_i} \sum_{n=1}^N f(n) \sqrt{d_n},
\end{align*}
and for any $f \in \Rbb^N$, Lemma \ref{Le:trans_norm_bounds} becomes
\begin{align*}
\sqrt{d_i}|\hat{f}(0)| \leq \norm{\tilde{T}_i f}_2 \leq 
\tilde{\nu}_i \norm{\sqrt{d}}_2 \norm{f}_2 \leq  \tilde{\mu} \norm{\sqrt{d}}_2 \norm{f}_2.
\end{align*}
Because they only depend on the graph structure and not the specific graph weights, the localization results of Theorem \ref{Th:trans_loc}, Corollary \ref{Co:trans_loc_diff}, and Corollary \ref{Co:loc_extra} also hold, with the constant $\sqrt{N}$ replaced by $\norm{\sqrt{d}}_2$ and an extra factor of $\sqrt{d_i}$ in the denominator of \eqref{Eq:loc_bound} and \eqref{Eq:dis_loc_bound}.

\subsubsection{Generalized Modulation in the Normalized Laplacian Graph Fourier Basis}
%{\color{red}
%(Delete this section? Or move all normalized Laplacian results to a separate section at the end? Get rid of the example)
%}

When we use the normalized Laplacian eigenvectors as a graph Fourier basis instead of the combinatorial Laplacian eigenvectors, we can define a generalized modulation as
\begin{align*}
(\tilde{M}_k f)(n):=f(n)\frac{\tilde{\chi}_k(n)}{\tilde{\chi}_0(n)}= f(n) \frac{\tilde{\chi}_k(n)\norm{\sqrt{d}}_2}{\sqrt{d_n}},
\end{align*} 
so that $\tilde{M}_0$ is also the identity operator and $\widehat{\tilde{M}_k\delta_0}(\tilde{\lambda}_{\l})=\delta_0(\tilde{\lambda}_{\l}-\tilde{\lambda}_{k})$; i.e., $\tilde{M}_k$ maps the graph spectral component $\hat{f}(0)$ from eigenvalue $\tilde{\lambda}_0$ to eigenvalue $\tilde{\lambda}_k$ in the graph spectral domain. The following theorem on the localization of a modulated kernel in the graph spectral domain follows essentially the same line of argument as Theorem \ref{Th:mod_trans}, with $\frac{1}{\tilde{\chi}_0(n)}$ replacing $\sqrt{N}$ in the proof. 
\begin{theorem} \label{Th:norm_mod}
Given a weighted graph $\G$ with $N$ vertices, 
if for some $\gamma > 0$, a 
kernel $\hat{f}$ 
satisfies 
\begin{align} \label{Eq:gsum_cond}
\sum_{\l=1}^{N-1}{\tilde{\mu}_{\l}\left|\hat{f}\left(\tilde{\lambda}_{\l}\right)\right|} \leq \frac{\sqrt{d_{\min}}~|\hat{f}(0)|}{\norm{\sqrt{d}}_2~(1+\gamma)},
\end{align}
 then
 \begin{align}\label{Eq:mod_trans_result}
 \left|\widehat{\tilde{M}_k f}\left(\tilde{\lambda}_{k}\right)\right| \geq \gamma \left|\widehat{\tilde{M}_k f}\left(\tilde{\lambda}_{\l}\right)\right|~\hbox{ for all }\l\neq k.
 \end{align}
\end{theorem}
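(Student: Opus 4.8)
The plan is to follow the proof of Theorem~\ref{Th:mod_trans} essentially line for line, with the one structural change that the constant $\sqrt{N}$ — which appeared there only because the combinatorial Laplacian has the constant eigenvector $\chi_0(n)=1/\sqrt{N}$ — is replaced everywhere by the vertex-dependent quantity $1/\tilde{\chi}_0(n)=\norm{\sqrt{d}}_2/\sqrt{d_n}$. First I would record the two facts about the normalized basis that I need: (i) $\tilde{\L}=D^{-1/2}\L D^{-1/2}$ is real symmetric, so $\{\tilde{\chi}_{\l}\}$ is an orthonormal basis of $\Rbb^N$ and $\sum_{n}\tilde{\chi}_{\l'}^*(n)\tilde{\chi}_k(n)=\delta_{\l' k}$; and (ii) for a connected graph $\tilde{\chi}_0(n)=\sqrt{d_n}/\norm{\sqrt{d}}_2$, so that $0<\tfrac{1}{\tilde{\chi}_0(n)}\le \norm{\sqrt{d}}_2/\sqrt{d_{\min}}$ uniformly in $n$. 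It is precisely fact (ii) — the fact that now we only have a uniform bound in place of an exact pointwise identity — that forces the extra factor $\sqrt{d_{\min}}/\norm{\sqrt{d}}_2$ into the hypothesis \eqref{Eq:gsum_cond}.

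Next I would expand a single graph-spectral coefficient of the modulated kernel. Writing $f(n)=\hat{f}(0)\tilde{\chi}_0(n)+\sum_{\l''=1}^{N-1}\hat{f}(\tilde{\lambda}_{\l''})\tilde{\chi}_{\l''}(n)$ (the inverse normalized graph Fourier transform with the DC term split off) and substituting into $\widehat{\tilde{M}_k f}(\tilde{\lambda}_{\l'})=\sum_{n}\tilde{\chi}_{\l'}^*(n)\tfrac{\tilde{\chi}_k(n)}{\tilde{\chi}_0(n)}f(n)$, the $\hat{f}(0)$ term collapses through orthonormality (fact (i)) to $\hat{f}(0)\,\delta_{\l' k}$, leaving
\[
\widehat{\tilde{M}_k f}(\tilde{\lambda}_{\l'}) = \hat{f}(0)\,\delta_{\l' k} + \sum_{n=1}^N \frac{\tilde{\chi}_{\l'}^*(n)\tilde{\chi}_k(n)}{\tilde{\chi}_0(n)}\sum_{\l''=1}^{N-1}\hat{f}(\tilde{\lambda}_{\l''})\tilde{\chi}_{\l''}(n),
\]
which is the exact analogue of \eqref{Eq:mod_1}.

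Finally I would estimate the two pieces separately. For $\l'=k$: discard the error term with the reverse triangle inequality, bound $|\tilde{\chi}_{\l''}(n)|\le\tilde{\mu}_{\l''}$ (the analogue of \eqref{Eq:mu_l_def}) and $1/\tilde{\chi}_0(n)\le\norm{\sqrt{d}}_2/\sqrt{d_{\min}}$, and use $\sum_n|\tilde{\chi}_k(n)|^2=1$ to obtain $|\widehat{\tilde{M}_k f}(\tilde{\lambda}_k)|\ge |\hat{f}(0)|-\tfrac{\norm{\sqrt{d}}_2}{\sqrt{d_{\min}}}\sum_{\l''\ge1}\tilde{\mu}_{\l''}|\hat{f}(\tilde{\lambda}_{\l''})|\ge |\hat{f}(0)|\bigl(1-\tfrac{1}{1+\gamma}\bigr)$ by \eqref{Eq:gsum_cond}. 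For $\l'=\l\neq k$: the same two bounds together with Cauchy--Schwarz, $\sum_n|\tilde{\chi}_{\l}(n)||\tilde{\chi}_k(n)|\le 1$, give $\gamma|\widehat{\tilde{M}_k f}(\tilde{\lambda}_{\l})|\le \gamma\tfrac{\norm{\sqrt{d}}_2}{\sqrt{d_{\min}}}\sum_{\l''\ge1}\tilde{\mu}_{\l''}|\hat{f}(\tilde{\lambda}_{\l''})|\le |\hat{f}(0)|\tfrac{\gamma}{1+\gamma}$. Chaining the two inequalities yields $|\widehat{\tilde{M}_k f}(\tilde{\lambda}_k)|\ge |\hat{f}(0)|\tfrac{\gamma}{1+\gamma}\ge \gamma|\widehat{\tilde{M}_k f}(\tilde{\lambda}_{\l})|$ for every $\l\neq k$, which is \eqref{Eq:mod_trans_result}. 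There is no genuine obstacle: the only point requiring care is that the substitution $\sqrt{N}\mapsto 1/\tilde{\chi}_0(n)$ must be carried out via the uniform bound $1/\tilde{\chi}_0(n)\le\norm{\sqrt{d}}_2/\sqrt{d_{\min}}$ rather than a pointwise identity, and one must keep track of where the resulting $\sqrt{d_{\min}}$ lands — namely in the numerator on the right-hand side of \eqref{Eq:gsum_cond} — which is the single place a misplacement could occur.
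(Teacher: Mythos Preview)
Your proposal is correct and matches the paper's approach exactly: the paper does not give a separate proof but simply states that the argument of Theorem~\ref{Th:mod_trans} goes through with $\frac{1}{\tilde{\chi}_0(n)}$ replacing $\sqrt{N}$, which is precisely the substitution you carry out, and you correctly track how the uniform bound $1/\tilde{\chi}_0(n)\le\norm{\sqrt{d}}_2/\sqrt{d_{\min}}$ introduces the factor $\sqrt{d_{\min}}/\norm{\sqrt{d}}_2$ into the hypothesis.
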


%%%%%%%%%%%%%%%%%%%%
%
\subsubsection{Example: Resolution Tradeoff in the Normalized Laplacian Graph Fourier Basis}
We once again consider a heat kernel of the form $\hat{g}(\tilde{\lambda})=e^{-\tau \tilde{\lambda}}$. First, for the spread of a translated kernel in the vertex domain, we can replace the upper bound \eqref{Eq:heat_spread_vertex3} by
\begin{align}\label{Eq:heat_spread_vertex3_norm}
\Delta_i^2(\tilde{T}_i g) \leq \norm{\sqrt{d}}_2^2 \tau^2 \exp\left(\frac{\tau^2}{4(d_{\max}-1)}\right).
\end{align}
%any $\epsilon>0$, in order to ensure $\Delta_i^2(\tilde{T}_i g) \leq \epsilon$,  

%\begin{itemize}
%\item First we translate, then we modulate it
%\item Translation result - 1 line
%%\item Modulation below
%\end{itemize}

Next, our localization result on the generalized modulation, Theorem \ref{Th:norm_mod}, tells us that 
\begin{align}\label{Eq:mod_wts}
\sum_{\l=1}^{N-1} e^{-\tau\tilde{\lambda}_{\l}} \leq \frac{\sqrt{d_{\min}}}{\tilde{\mu}\norm{\sqrt{d}}_2~(1+\gamma)}
\end{align}
for some $\gamma > 0$ implies $|\widehat{\tilde{M}_k g}(\tilde{\lambda}_{k})| \geq \gamma |\widehat{\tilde{M}_k g}(\tilde{\lambda}_{\l})|$ for all $\l$ not equal to $k$. For a graph $\G$ with known isoperimetric dimension (see, e.g., \cite{chung_sobolev}, \cite[Chapter 11]{chung}), the following result upper bounds the left-hand side of \eqref{Eq:mod_wts}.
%\begin{theorem}
\begin{theorem}[Chung and Yau, {\cite[Theorem 7]{chung_sobolev}}]
The normalized graph Laplacian eigenvalues of a graph $\G$ satisfy 
\begin{align}\label{Eq:chung_bound}
\sum_{\l=1}^{N-1} e^{-\tau \tilde{\lambda}_{\l}} \leq \frac{C_{\delta}\norm{d}_1}{\tau^{\frac{\delta}{2}}},
\end{align}
where for every subset $\V_1 \subset \V$, $\delta$, the isoperimetric dimension of $\G$,
and $c_{\delta}$, the isoperimetric constant, satisfy
\begin{align*}
|\E(\V_1,\V_1^c)| \geq c_{\delta} \left[\sum_{n \in \V_1}d_n\right]^{\frac{\delta-1}{\delta}},
\end{align*}
and  $C_{\delta}$ is a constant that only depends on $\delta$. %in \eqref{Eq:chung_bound} is $C_{\delta}=9\delta^{}$.
\end{theorem}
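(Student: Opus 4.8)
The statement \eqref{Eq:chung_bound} is a heat-trace estimate, so the natural route is the classical chain \emph{isoperimetric dimension} $\Rightarrow$ \emph{Sobolev/Nash inequality} $\Rightarrow$ \emph{on-diagonal heat kernel decay} $\Rightarrow$ \emph{trace bound}. First I would set up the bookkeeping. Write $h_\tau:=e^{-\tau\tilde{\L}}$, so that $\mathrm{Tr}(h_\tau)=\sum_{\l=0}^{N-1}e^{-\tau\tilde\lambda_\l}$; since $\tilde\chi_0(n)^2=d_n/\norm{d}_1$ and $\sum_n\tilde\chi_0(n)^2=1$, the $\l=0$ term contributes exactly $1$ to the trace, whence the left-hand side of \eqref{Eq:chung_bound} equals
\[ \mathrm{Tr}(h_\tau)-1=\sum_{n=1}^N\Bigl[(h_\tau)_{n,n}-\tfrac{d_n}{\norm{d}_1}\Bigr]=\sum_{n=1}^N d_n\Bigl[q_\tau(n,n)-\tfrac{1}{\norm{d}_1}\Bigr], \]
where $q_\tau(m,n):=(h_\tau)_{m,n}/\sqrt{d_md_n}$ is the continuous-time random-walk heat kernel with respect to the measure $n\mapsto d_n$. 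Thus it suffices to prove the pointwise bound $q_\tau(n,n)-\norm{d}_1^{-1}\le C_\delta\,\tau^{-\delta/2}$ for each vertex $n$ and sum it against $d_n$.

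Next I would convert the hypothesis into a functional inequality. Applying the isoperimetric bound $|\E(\V_1,\V_1^c)|\ge c_\delta[\sum_{n\in\V_1}d_n]^{(\delta-1)/\delta}$ to the super-level sets $\V_1=\{f>t\}$ of a function $f$ on $\V$, together with a discrete co-area (layer-cake) comparison between $\sum_{(m,n)\in\E}W_{mn}|f(m)-f(n)|$ and $\int_0^\infty|\E(\{f>t\},\{f\le t\})|\,dt$, yields an $L^1$-Sobolev inequality; truncating $f$ at geometric scales and applying Cauchy--Schwarz then upgrades it to a Nash-type inequality
\[ \norm{u}_{2,d}^{\,2+\frac4\delta}\le c_\delta'\,E(u)\,\norm{u}_{1,d}^{\frac4\delta},\qquad E(u):=\!\!\sum_{(m,n)\in\E}\!\!W_{mn}[u(m)-u(n)]^2, \]
valid for every $u$ with $\sum_n d_nu(n)=0$ (here $\norm{\cdot}_{p,d}$ is the $\ell^p$ norm with respect to the measure $d$). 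Now set $u_\tau:=q_\tau(n,\cdot)-\norm{d}_1^{-1}$; conservation of mass of the walk gives $\sum_m d_mu_\tau(m)=0$ and $\norm{u_\tau}_{1,d}\le 2$, while $\frac{d}{d\tau}\norm{u_\tau}_{2,d}^2=-2E(u_\tau)$. Feeding the Nash inequality into this identity produces the differential inequality $\phi'(\tau)\le -c\,\phi(\tau)^{1+2/\delta}$ for $\phi(\tau):=\norm{u_\tau}_{2,d}^2$, which integrates to $\phi(\tau)\le C\tau^{-\delta/2}$; combined with the semigroup identity $q_{2\tau}(n,n)-\norm{d}_1^{-1}=\norm{u_\tau}_{2,d}^2$ this gives exactly the desired on-diagonal bound, and summing $d_n$ times it over $n$ yields \eqref{Eq:chung_bound} with $C_\delta$ absorbing $c_\delta$, $\delta$, and the absolute constants from the two functional-inequality steps.

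The main obstacle is the passage from the combinatorial isoperimetric inequality to the Nash inequality: on a graph the co-area formula and the $L^1\to L^2$ Sobolev upgrade are only approximate, so the discretization errors must be controlled without losing powers of $\delta$, and every constant must be tracked so that it depends on $\delta$ alone. A secondary technical point is uniformity in $\tau$: the Nash iteration delivers the bound cleanly in the regime $\tau\lesssim\norm{d}_1^{2/\delta}$, and for larger $\tau$ one patches with the crude exponential estimate $\sum_{\l\ge1}e^{-\tau\tilde\lambda_\l}\le (N-1)e^{-\tau\tilde\lambda_1}$ governed by the spectral gap, enlarging $C_\delta$ if necessary so that the single bound \eqref{Eq:chung_bound} holds for all $\tau>0$.
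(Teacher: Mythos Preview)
The paper does not supply its own proof of this statement: it is quoted verbatim as a result of Chung and Yau \cite[Theorem 7]{chung_sobolev} and used as a black box to pass from \eqref{Eq:mod_wts} to \eqref{Eq:mod_tau_kappa}. So there is nothing in the paper to compare your argument against.

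That said, your outline is the standard route and is essentially how the result is proved in the cited reference: one passes from the isoperimetric hypothesis to a Sobolev/Nash inequality via the discrete co-area formula, then runs Nash's differential-inequality argument on $\tau\mapsto\norm{u_\tau}_{2,d}^2$ to obtain the on-diagonal heat-kernel decay $q_\tau(n,n)\lesssim C_\delta\tau^{-\delta/2}$, and finally sums against the degree measure to recover the trace bound. Two small remarks on your write-up: first, the discrete co-area identity $\sum_{(m,n)\in\E}W_{mn}|f(m)-f(n)|=\int_0^\infty |\E(\{f>t\},\{f\le t\})|\,dt$ is in fact \emph{exact} on weighted graphs, not merely approximate, so no discretization error needs to be controlled at that step; second, the Nash iteration already yields $\phi(\tau)\le C_\delta\tau^{-\delta/2}$ for all $\tau>0$ (the differential inequality $\phi'\le -c\phi^{1+2/\delta}$ integrates globally), so the spectral-gap patching you describe for large $\tau$ is unnecessary. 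With those simplifications your sketch is a faithful reconstruction of the Chung--Yau argument.
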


%Chung and Yau \cite[Theorem 7]{chung_yau} show that 
%\begin{align}\label{Eq:chung_bound}
%\sum_{\l=1}^{N-1} e^{-\tau\lambda} \leq \frac{C_{\delta}~vol(\G)}{\tau^{\frac{\delta}{2}}},
%\end{align}
%where $vol(\G)=\hbox{trace}(D)$, $\delta$ is the isoperimetric dimension of the graph $\G$ (see, e.g., \cite[Chapter 11]{chung} {\color{red} add section}), and $C_{\delta}$ is a constant that depends only on $\delta$ and the isoperimetric constant $c_{\delta}$. {\color{red} This result is for the normalized Laplacian - need to adjust calculations for the combinatorial Laplacian.} 

Combining \eqref{Eq:mod_wts} and \eqref{Eq:chung_bound}, for a fixed $\tau$,
\begin{align}\label{Eq:mod_tau_kappa}
\left|\widehat{\tilde{M}_k g}\left(\tilde{\lambda}_{k}\right)\right| \geq \left(\frac{\sqrt{d_{\min}}~\tau^{\frac{\delta}{2}}}{\tilde{\mu} C_{\delta}\norm{\sqrt{d}}_2%~vol(\G)
\norm{d}_1}-1\right)\left|\widehat{\tilde{M}_k g}\left(\tilde{\lambda}_{\l}\right)\right|,~\forall \l \neq k.
\end{align}
Similarly, to ensure $\left|\widehat{\tilde{M}_k g}\left(\tilde{\lambda}_{k}\right)\right| \geq \gamma \left|\widehat{\tilde{M}_k g}\left(\tilde{\lambda}_{\l}\right)\right|,~\forall \l \neq k$ for a desired $\gamma>0$, it suffices to choose the diffusion parameter of the heat kernel as
\begin{align}\label{Eq:mod_tau_kappa2}
\tau \geq \left(\frac{\tilde{\mu} C_{\delta}\norm{\sqrt{d}}_2\norm{d}_1(1+\gamma)}{\sqrt{d_{\min}}}\right)^{\frac{2}{\delta}}.
\end{align} 
Comparing \eqref{Eq:mod_tau_kappa} and \eqref{Eq:mod_tau_kappa2} to \eqref{Eq:heat_spread_vertex3_norm}, we see that as the diffusion parameter $\tau$ increases, our guarantee on the localization of $\widehat{M_k g}$ around frequency $\lambda_k$ in the graph spectral domain improves, whereas the guarantee on the localization of $T_i g$ around vertex $i$ in the vertex domain becomes weaker, and vice versa.

%%%%%%%%%%%%%%%%%
%{\color{red}
%\begin{itemize}
%\item Condition such that $\frac{\left|\widehat{M_k g} (k)\right|^2}{\norm{M_k g}_2^2} \geq \kappa_2$?
%\item Condition such that $\frac{\sum_{\l \in {\cal B}_k} \left|\widehat{M_{\l} g} (k)\right|^2}{\norm{M_k g}_2^2} \geq \kappa_2$?
%\end{itemize}
%}

\subsection{Alternative Definition of Generalized Modulation} \label{Se:mod_alt}
%As mentioned above, t
The classical modulation operator corresponds to translation in the spectral domain. Therefore, a second approach to generalize modulation to the graph setting is to define a new weighted graph $\check{\G}$ on the graph Laplacian spectrum and then define the generalized modulation on ${\G}$ as a generalized translation on $\check{\G}$. More specifically, we first define a new graph $\check{\G}$, whose vertices are the eigenvalues of the original graph Laplacian ${\L}$. One simple choice of graphs is a weighted path graph, where each eigenvalue $\lambda_{\l}$ from the original graph ${\G}$ is connected only to $\lambda_{\l-1}$ and $\lambda_{\l+1}$, and the weights are inversely proportional to the distances between neighboring eigenvalues. Other possibilities include assigning exponential weights based on the distance between eigenvalues, or connecting each eigenvalue to multiple neighbors on each side and assigning weights according to a thresholded weighting function like \eqref{Eq:gkw}. Next, we form the Laplacian $\check{L}$ on this new graph $\check{\G}$, and denote its eigenpairs by $(\check{\lambda}_j,\check{\chi}_j)$. Then generalized modulation on $\G$ is defined as generalized translation on $\check{\G}$: 
\begin{align}\label{Eq:alt_mod}
\widehat{\left({M}_k f\right)}(\lambda_{\l}) := \left(\check{T}_{k} \hat{f}\right)(\l) = \sqrt{N} \hat{\hat{f}}(\check{\L})_{k,\l} =  \sqrt{N} \sum_{j=0}^{N-1} \hat{\hat{f}}\left(\check{\lambda}_j\right)\check{\chi}_j^*(k) \check{\chi}_j(\l), 
\end{align}
where we have indexed the vertices in $\check{\G}$ as $\l=0,1,\ldots,N-1$, rather than the usual 1 to $N$. In \eqref{Eq:alt_mod}, $f$ lives on the vertices of the original graph $\G$, $\hat{f}$ lives on both the spectrum $\sigma(\L)$ of the original graph and the vertices of $\check{\G}$, and $\hat{\hat{f}}$ (a graph Fourier transform of $f$ with respect to the Laplacian eigenvectors of $\L$ followed by a graph Fourier transform of $\hat{f}$ with respect to the Laplacian eigenvectors of $\check{L}$) lives on the spectrum $\sigma(\check{L})$ of $\check{\G}$. Taking an inverse graph Fourier transform of \eqref{Eq:alt_mod} (with respect to the eigenvectors of the original graph Laplacian $\L$) yields
\begin{align*}
\left({M}_k f\right)(n) = \sqrt{N} \sum_{\l=0}^{N-1} \sum_{j=0}^{N-1} \hat{\hat{f}}\left(\check{\lambda}_j\right)\check{\chi}_j^*(k) \check{\chi}_j(\l)\chi_{\l}^*(n). 
\end{align*}

We %usually 
can define the %signal 
kernel $f$ in \eqref{Eq:alt_mod} in one of two ways. First, to maintain direct comparability to the generalized modulation, we can define the kernel $\hat{f}(\lambda_{\l})$ directly on $\sigma(\L)$. For example,
in Figure \ref{Fig:mod_alt1}, we let $\hat{f}(\lambda_{\l})=Ce^{\tau \lambda_{\l}}$ (with $C$ chosen such that $\norm{\hat{f}}_2=1$), and then use the definition \eqref{Eq:alt_mod} to modulate a kernel on the Laplacian spectrum of the Minnesota graph.
\begin{figure}[h]
\centering
{\hfill
\begin{minipage}[b]{.23\linewidth}
   \centering
   \centerline{\includegraphics[width=\linewidth]{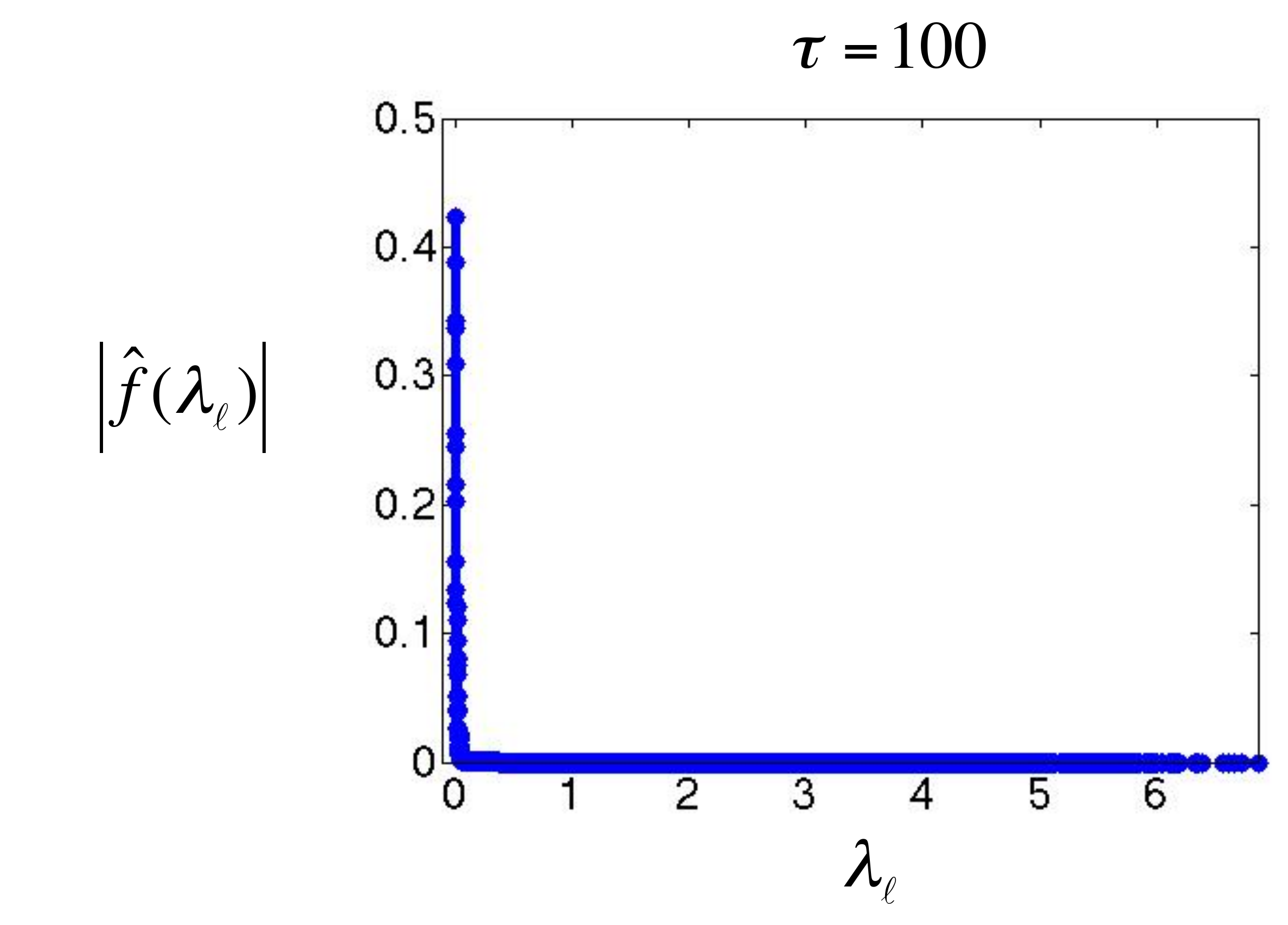}}%{fig_mod2_1000}} %{fig_spectral_rep2}}
\centerline{\small{~~~~~~~~~~~~(a)}}
\end{minipage}
\hfill
\begin{minipage}[b]{.23\linewidth}
   \centering
   \centerline{\includegraphics[width=\linewidth]{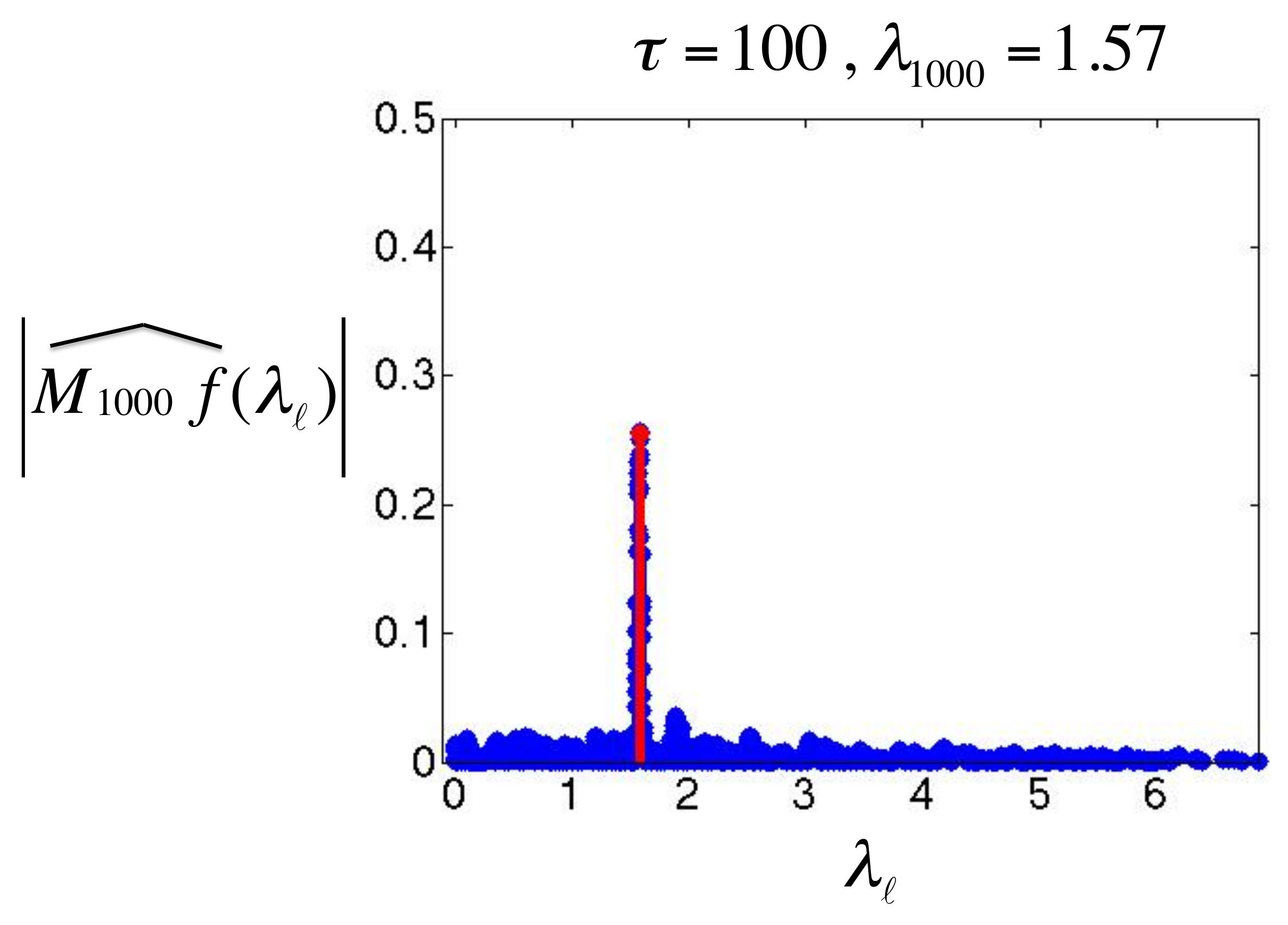}}%{fig_mod2_2000}} %{fig_modulated2}}
\centerline{\small{~~~~~~~~~~~~(b)}}
\end{minipage}
%\hfill \\
\hfill
\begin{minipage}[b]{.23\linewidth}
   \centering
   \centerline{\includegraphics[width=\linewidth]{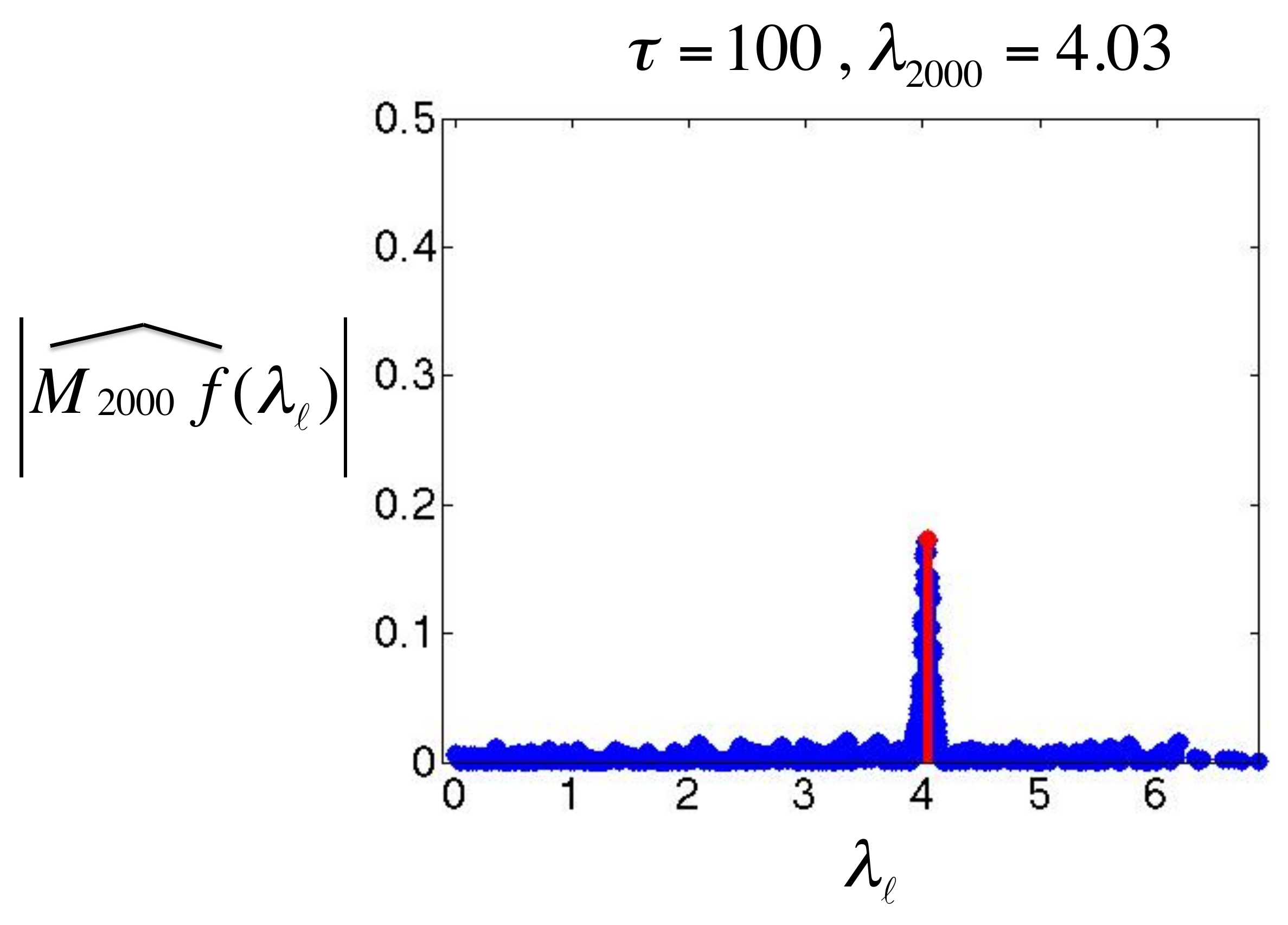}}%{fig_mod2_2000}} %{fig_modulated2}}
\centerline{\small{~~~~~~~~~~~~(c)}}
\end{minipage}
\hfill
\begin{minipage}[b]{.23\linewidth}
   \centering
   \centerline{\includegraphics[width=\linewidth]{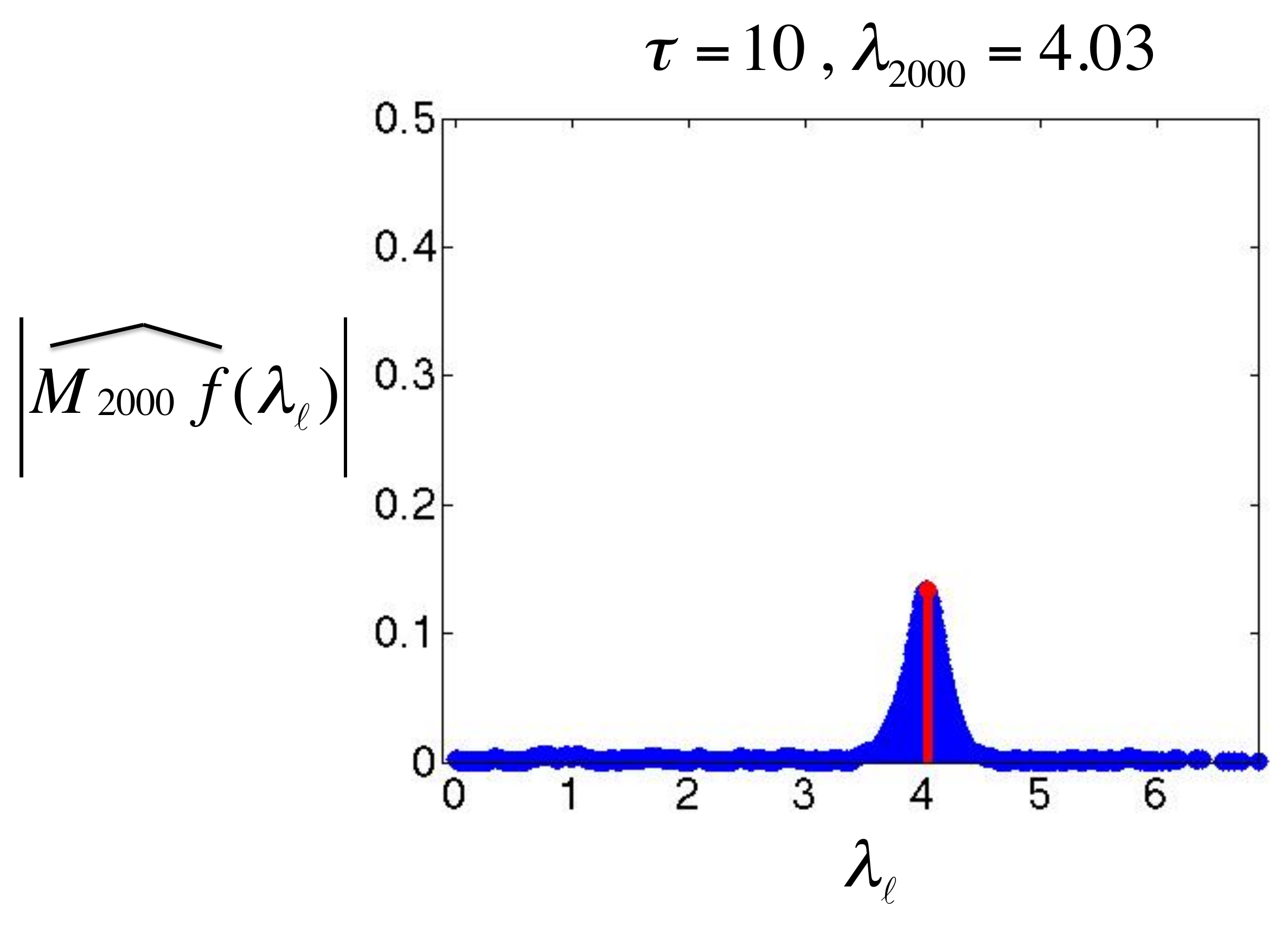}}%{fig_mod2_2000}} %{fig_modulated2}}
\centerline{\small{~~~~~~~~~~~~(d)}}
\end{minipage}
\hfill}
\caption {Generalized modulation on the Minnesota graph $\G$, via generalized translation on $\check{\G}$ with the same kernel $\hat{f}({\lambda}_{\l})=Ce^{-{\tau}{\lambda}_{\l}}$ defined on $\sigma({\L})$ as in Figure \ref{Fig:mod}. The graph $\check{G}$ is a weighted path graph with $\check{W}_{kl}=\frac{1}{|\lambda_k-\lambda_{\l}|+10^{-6}}$. (a) The normalized heat kernel $\hat{f}({\lambda}_{\l})$. (b) and (c) Modulated kernels with two different center frequencies, $\lambda_{1000}$ and $\lambda_{2000}$. In both cases, the modulated kernel is localized around the desired frequency of $\sigma(\L)$ (shown in red). (d) As ${\tau}$ \emph{decreases}, the spread of the modulated kernel around the center frequency in the graph spectral domain of $\G$ generally increases.} %1000=1.57
  \label{Fig:mod_alt1}
\end{figure}

A second option is to define $f$ on the spectrum $\sigma(\check{L})$. For example,
in Figure \ref{Fig:mod_alt2}, we let $\hat{\hat{f}}(\check{\lambda}_{j})=e^{\check{\tau} \check{\lambda}_{j}}$, and then modulate again via \eqref{Eq:alt_mod}. One advantage of defining the kernel in this manner is that we can more easily characterize the spread of the modulated kernel in the graph spectral domain of $\G$, using, e.g., our translation localization results from Section \ref{Se:trans_loc}. For example, with $\hat{\hat{f}}(\cdot)$ a heat kernel and $\check{G}$ a weighted path graph (which has a maximum Laplacian eigenvalue upper bounded by 4) as in Figure \ref{Fig:mod_alt2}, the bound \eqref{Eq:heat_spread_vertex3} becomes
\begin{align*}
\frac{\sum\limits_{\l=0}^{N-1} ({\l}-{k})^2 \left|\widehat{{M}_{k} f} (\lambda_{\l})\right|^2}{\norm{{M}_k f}_2^2} =
\Delta_k^2(\check{T}_{\l} \hat{f})
\leq 8N \check{\tau}^2 e^{\check{\tau}^2},
\end{align*} 
which is close to form of the desired bound on the spread of modulated kernels we mentioned in \eqref{Eq:correct_spectral_spread}, especially for a graph $\G$ whose spectrum is close to uniformly distributed on $[0,\lambda_{\max}]$.
%{\color{red}
%\begin{itemize}
%\item Translation on spectral domain
%\item Different than $M_k$ picture
%\item Two ways to define kernel
%\item Way one - directly comparable to other modulation, spread decreases as tau increases
%\item Way two - free localization result, spread increases as tau-check increases
%\item Add third picture with larger $\tau$
%\item Which includes free localization result
%\item Move comment currently at the end of Section 5.3 to here
%\item Comparing XXX to \eqref{Eq:heat_spread_vertex3}, we see that as the diffusion parameter $\tau$ increases, our guarantee on the localization of $\widehat{M_k g}$ around frequency $\lambda_k$ in the graph spectral domain improves, whereas the guarantee on the localization of $T_i g$ around vertex $i$ in the vertex domain becomes weaker.
%\item Add example to show those bounds?
%\item Maybe mention other definition?
%\item Throughout the remainder of the paper, we stick with the original definition \eqref{Eq:mod_def1}
%\item Add matrix function interpretation to Section 2.
%\end{itemize}
%} 

\begin{figure}[h]
\centering
{\hfill
\begin{minipage}[b]{.23\linewidth}
   \centering
   \centerline{\includegraphics[width=\linewidth]{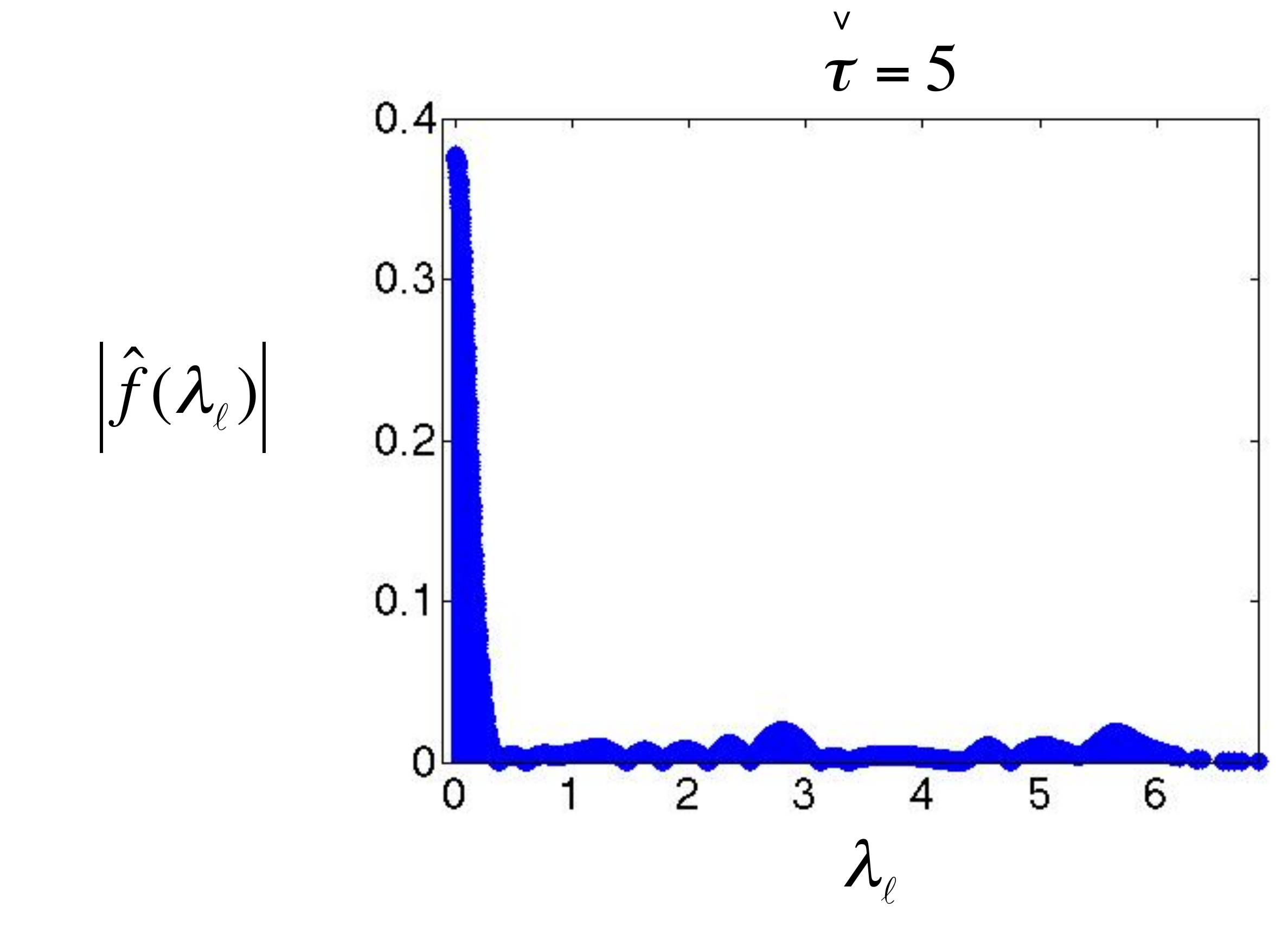}}%{fig_mod2_1000}} %{fig_spectral_rep2}}
\centerline{\small{~~~~~~~~~~~~(a)}}
\end{minipage}
\hfill
\begin{minipage}[b]{.23\linewidth}
   \centering
   \centerline{\includegraphics[width=\linewidth]{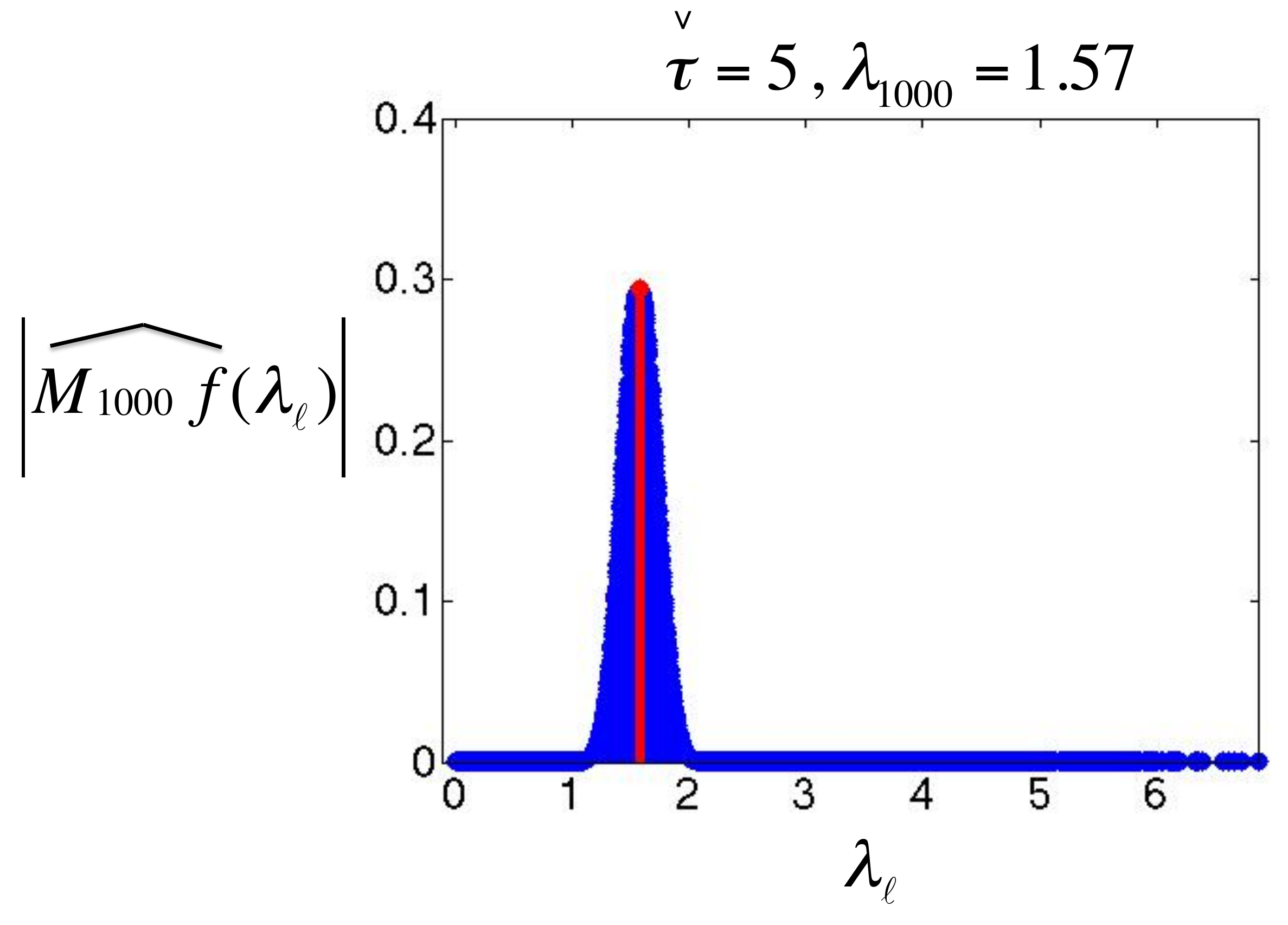}}%{fig_mod2_2000}} %{fig_modulated2}}
\centerline{\small{~~~~~~~~~~~~(b)}}
\end{minipage}
%\hfill \\
\hfill
\begin{minipage}[b]{.23\linewidth}
   \centering
   \centerline{\includegraphics[width=\linewidth]{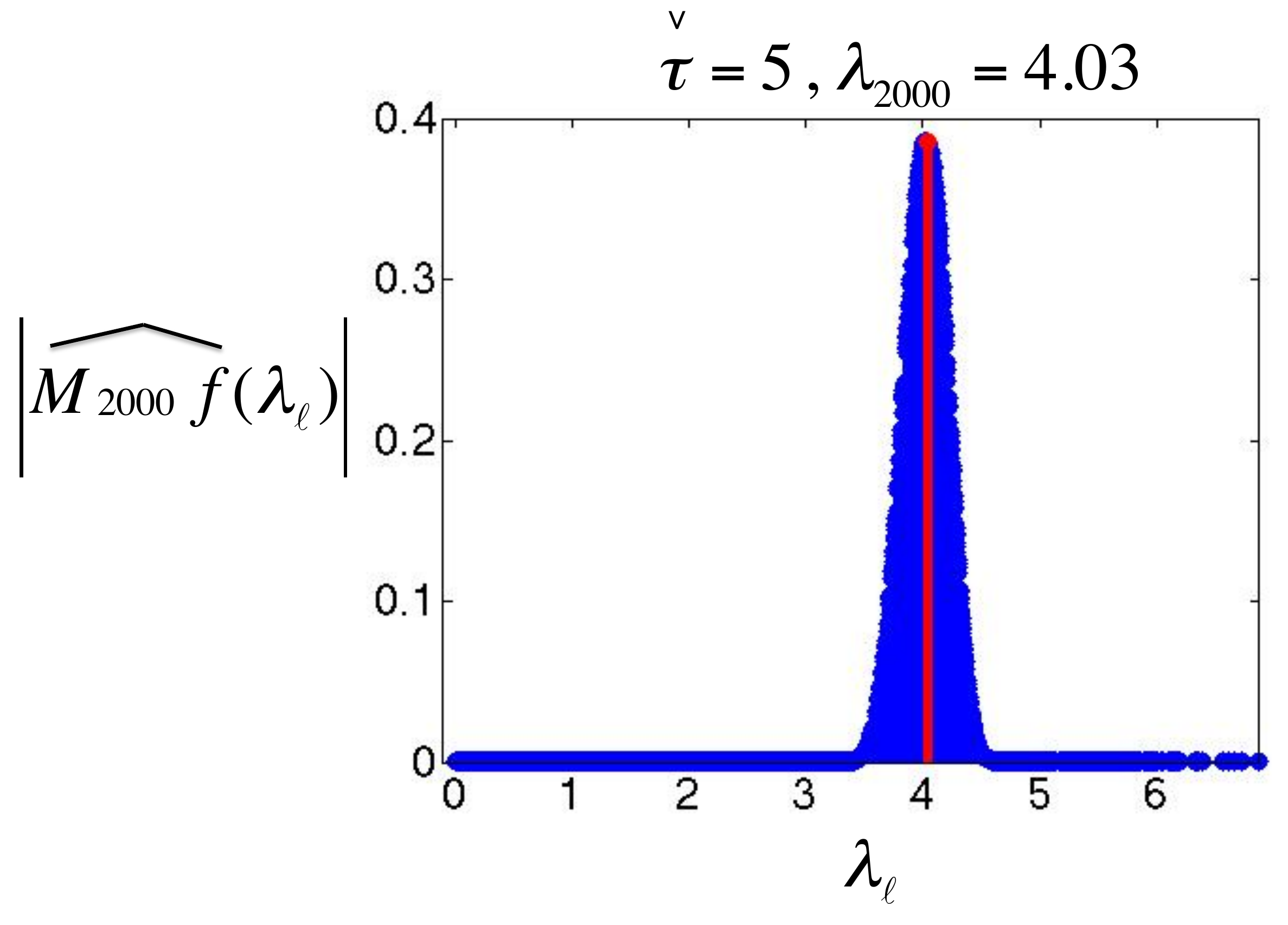}}%{fig_mod2_2000}} %{fig_modulated2}}
\centerline{\small{~~~~~~~~~~~~(c)}}
\end{minipage}
\hfill
\begin{minipage}[b]{.23\linewidth}
   \centering
   \centerline{\includegraphics[width=\linewidth]{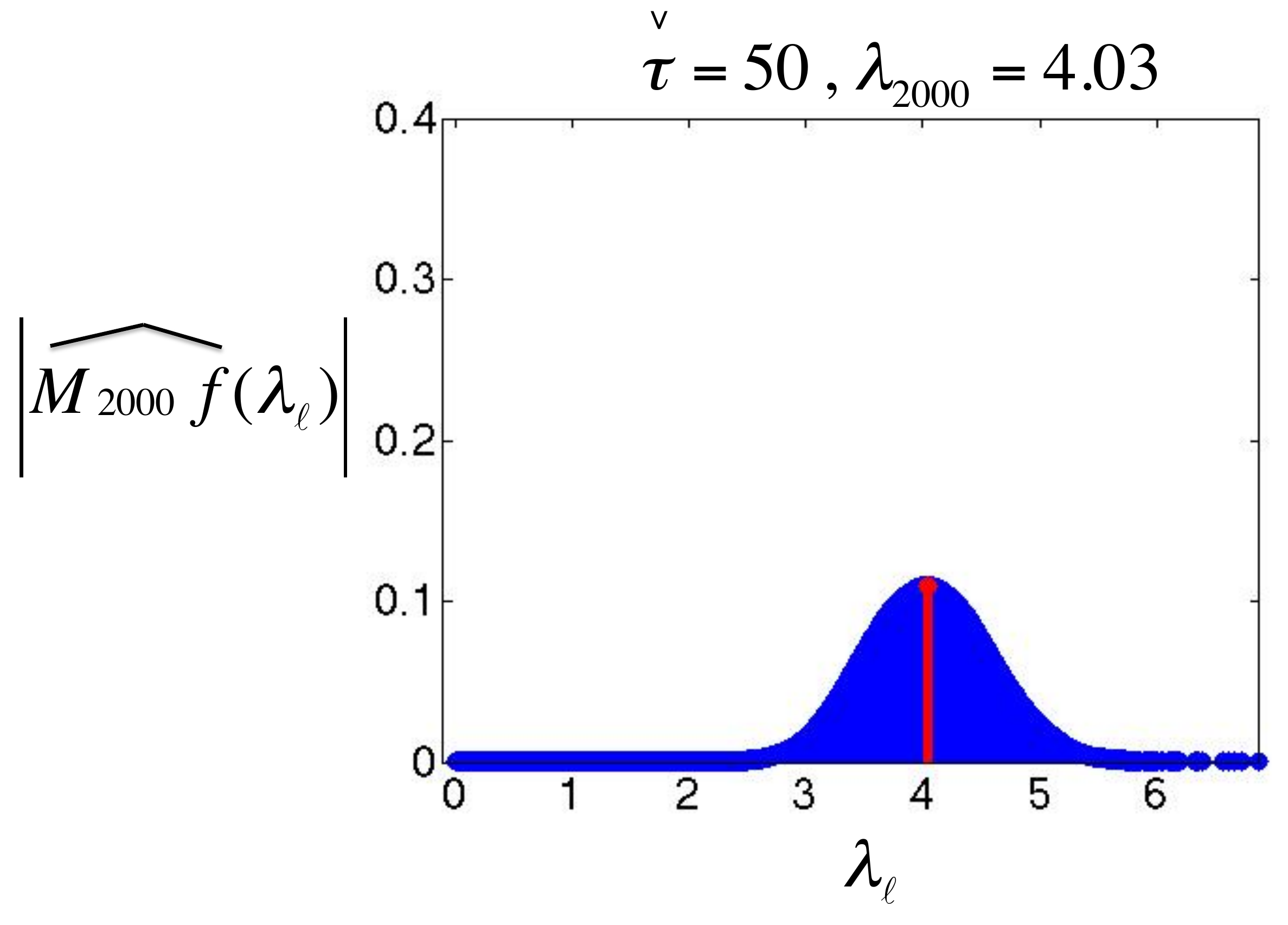}}%{fig_mod2_2000}} %{fig_modulated2}}
\centerline{\small{~~~~~~~~~~~~(d)}}
\end{minipage}
\hfill}
\caption {Generalized modulation on the Minnesota graph $\G$, via generalized translation on the same $\check{\G}$ from Figure \ref{Fig:mod_alt1}, with a kernel $\hat{\hat{f}}(\check{\lambda}_j)=e^{-\check{\tau}\check{\lambda}_j}$ defined on $\sigma(\check{\L})$. (a) We take an inverse graph Fourier transform $\hat{\hat{f}}(\check{\lambda}_j)$ to find $\hat{f}(\lambda_{\l})$. (b) and (c) We modulate the kernel to two different center frequencies, $\lambda_{1000}$ and $\lambda_{2000}$, by translating it on $\check{\G}$. In both cases, the modulated kernel is localized around the desired frequency of $\sigma(\L)$ (shown in red). (d) As $\check{\tau}$ \emph{increases}, the spread of the modulated kernel around the center frequency in the graph spectral domain of $\G$ generally increases.}
%This example is not directly comparable to the generalized modulation \eqref{Eq:mod_def1} used in Figure \ref{Fig:mod}, because the heat kernel $\hat{f}(\check{\lambda}_j):=e^{-5\check{\lambda}_j}$ used here is defined on the spectrum of $\check{L}$, as opposed to the spectrum of $\L$; however, we see that, as in Figure \ref{Fig:mod}, the generalized modulation operator $\check{M}_k$ shifts the kernel so that it is localized around $\lambda_k$ in the graph spectral domain of $\G$.} %1000=1.57
  \label{Fig:mod_alt2}
\end{figure}

\section{References}

\bibliography{bib_WGFT}
\bibliographystyle{elsarticle-num}

\end{document}